\documentclass[11pt,leqno]{amsart}
\usepackage[a4paper,margin=27mm,headheight=14pt]{geometry}
\usepackage[T1]{fontenc}
\usepackage{lmodern,amssymb,mathrsfs}
\usepackage{mathtools,amsthm,amscd,booktabs,array,longtable,microtype}
\usepackage{enumitem}
\usepackage[hidelinks,unicode]{hyperref}
\usepackage{bookmark}
\setlist[enumerate]{label=\textup{(\alph*)},leftmargin=*,itemsep=3pt,topsep=5pt}
\newtheorem{theorem}{Theorem}[section]
\newtheorem{proposition}[theorem]{Proposition}
\newtheorem{lemma}[theorem]{Lemma}
\newtheorem{corollary}[theorem]{Corollary}
\theoremstyle{definition}
\newtheorem{definition}[theorem]{Definition}
\newtheorem{example}[theorem]{Example}
\theoremstyle{remark}

\newcommand{\kk}{\mathbb k}
\newcommand{\QQ}{\mathbb Q}
\newcommand{\RR}{\mathbb R}
\newcommand{\CC}{\mathbb C}

\renewcommand{\AA}{\mathbb A}
\newcommand{\PP}{\mathbb P}
\newcommand{\Gm}{\mathbb G_m}
\newcommand{\cA}{\mathcal A}
\newcommand{\cD}{\mathcal D}
\newcommand{\cI}{\mathcal I}
\newcommand{\cL}{\mathcal L}
\newcommand{\cN}{\mathcal N}
\newcommand{\cR}{\mathcal R}
\newcommand{\cW}{\mathcal W}
\newcommand{\cP}{\mathcal P}
\newcommand{\sF}{\mathscr F}
\newcommand{\sC}{\mathscr C}
\newcommand{\sB}{\mathscr B}
\newcommand{\sT}{\mathscr T}
\newcommand{\bA}{\mathbf A}
\newcommand{\id}{\mathrm{id}}
\newcommand{\one}{\mathbf 1}
\newcommand{\eps}{\varepsilon}
\newcommand{\Istate}{\mathtt I}
\newcommand{\Ostate}{\mathtt O}
\DeclareMathOperator{\coker}{coker}
\DeclareMathOperator{\im}{im}
\DeclareMathOperator{\rank}{rank}
\DeclareMathOperator{\supp}{supp}
\DeclareMathOperator{\Spec}{Spec}
\DeclareMathOperator{\codim}{codim}
\DeclareMathOperator{\diag}{diag}
\DeclareMathOperator{\Tor}{Tor}
\DeclareMathOperator{\Fitt}{Fitt}
\DeclareMathOperator{\Hom}{Hom}
\DeclareMathOperator{\Cone}{Cone}
\DeclareMathOperator{\Gr}{Gr}
\DeclareMathOperator{\cl}{cl}
\DeclareMathOperator{\spanlin}{span}

\DeclareMathOperator{\vecop}{vec}
\newcommand{\arxiv}[1]{\href{https://arxiv.org/abs/#1}{arXiv:#1}}
\newcommand{\stacks}[1]{\href{https://stacks.math.columbia.edu/tag/#1}{Tag #1}}
\allowdisplaybreaks[2]
\title[Zero forcing and support varieties]{Zero forcing, support varieties,\ and boundary complexes}
\newcommand{\HanAffiliation}{School of Mathematics and Statistics, Lanzhou University, Lanzhou, Gansu 730000, China}
\newcommand{\LiangAffiliation}{School of Mathematical Sciences, Nankai University, Tianjin, China}
\newcommand{\HanEmail}{dblnhzj@163.com}
\newcommand{\LiangEmail}{cldlllyr@163.com}
\author[Z. Han]{Zongjian Han}
\address[Zongjian Han]{\HanAffiliation}
\email[Zongjian Han]{\HanEmail}
\author[Y. Liang]{Yiran Liang}
\address[Yiran Liang]{\LiangAffiliation}
\email[Yiran Liang]{\LiangEmail}
\thanks{\raggedright\textsc{Zongjian Han.} \HanAffiliation.\enspace
  \textit{E-mail:} \texttt{\HanEmail}}
\thanks{\raggedright\textsc{Yiran Liang.} \LiangAffiliation.\enspace
  \textit{E-mail:} \texttt{\LiangEmail}}
\makeatletter
\let\enddoc@text\@empty
\makeatother
\date{}
\hypersetup{pdftitle={Zero forcing, support varieties, and boundary complexes},pdfsubject={Cellular sheaves, kernel supports, and boundary gluing for arbitrary finite simple graphs},pdfauthor={Zongjian Han and Yiran Liang}}
\begin{document}
\begin{abstract}
Let $G$ be a finite simple graph and let $F$ be a nonempty vertex set.
We study symmetric matrices of pattern $G$ with a null vector of exact
support $F$. Such matrices exist precisely when $F$ is a fort, that is,
no exterior vertex has exactly one neighbor in $F$.
Over $\CC$, the closure $\mathcal D_F$ of their coefficient locus is
irreducible and has codimension
$|\partial F|+r(F)-\rho_F$. Here $\partial F=N(F)\setminus F$,
$r(F)$ counts the components of $G[F]$, and $\rho_F$ is the largest size
of a forest joining these components with distinct exterior-vertex labels.
The difference $r(F)-\rho_F$ is also the generic restricted nullity.
An incidence parametrization gives a saturated prime decomposition of the
reduced zero-forcing failure locus. For a prescribed common kernel,
we identify the coefficient space with a direct sum of equilibrium
stresses, exterior affine dependencies, and free coefficients.
We then construct boundary complexes which retain local values and row
residuals. Their gluing maps are defined over the coefficient ring and
commute with arbitrary base change. Collecting all support signatures
of each seed set gives a composition formula for the minimum seed cost
and an algorithm for zero forcing.
\end{abstract}
\maketitle
\setcounter{tocdepth}{1}
\tableofcontents
\clearpage
\section{Introduction}
Let $G=(V,E)$ be a finite simple graph. A symmetric matrix of pattern $G$
has arbitrary diagonal entries, nonzero entries at the edges, and zero
entries at the other off-diagonal positions. We study the coefficients
for which such a matrix has a null vector of prescribed support. We also
study how the resulting equations combine under a decomposition of $G$.

Zero forcing supplies a local test for excluding those failures. Starting with
black vertices $S$, a black vertex $u$ may force a white vertex $v$ if $v$ is its
only white neighbour. If $Ax=0$ and the coordinates already known to vanish
are black, this step is precisely
\[
 0=(Ax)_u=a_{uv}x_v,\qquad a_{uv}\ne0,
 \qquad\Longrightarrow\qquad x_v=0.
\]
A zero-forcing set therefore determines every null vector for every choice
of the allowed coefficients. Its minimum cardinality is $Z(G)$.
When a set fails to force, the possible supports divide the exceptional
coefficients into algebraic loci. We determine their dimensions and prime
ideals, and express their gluing in terms of boundary values and residuals.

\subsection{The geometry of a supported null vector}
Fix a characteristic-zero field $\kk$ and define
\begin{equation}\label{eq:intro-ring}
 R_G=\kk[d_v\ (v\in V),\ t_e^{\pm1}\ (e\in E)].
\end{equation}
Let $\bA$ be the universal symmetric matrix with diagonal $d_v$ and edge entry
$t_{\{u,v\}}$. Over $\CC$ its coefficient space is
$\Omega_G=\mathbb A^{|V|}\times\mathbb G_m^{|E|}$.
A nonempty set $F\subseteq V$ is a \emph{fort} if
\[
 |N(u)\cap F|\ne1\qquad(u\notin F).
\]
These are exactly the possible supports of nonzero null vectors as the
coefficients vary; the support results of \cite{Spark,Compatible} provide
this starting point. Proposition~\ref{prop:support-witness} gives explicit
integer symmetric witnesses.

For a fort $F$, let $\mathcal D_F$ be the Zariski closure in $\Omega_G$ of the
coefficients admitting a null vector of \emph{exact} support $F$. Write
$\partial F=N(F)\setminus F$, and let $r(F)$ be the number of components of
$G[F]$. An exterior vertex $u\in\partial F$ labels any pair of support
components that it meets. Let $\rho_F$ be the largest number of such labeled
pairs which form a forest and use each exterior label at most once. Our first
main result is
\begin{equation}\label{eq:intro-codim}
 \operatorname{codim}_{\Omega_G}\mathcal D_F
 =|\partial F|+h_F,\qquad h_F=r(F)-\rho_F.
\end{equation}
The variety $\mathcal D_F$ is irreducible, and $h_F$ is the generic dimension
of the nullspace restricted to $F$ (Theorem~\ref{thm:fort-codimension}). The three-leaf example after Definition~\ref{def:labeled-forest}
illustrates the restriction on repeated exterior labels.

There are two parts to the proof. The exact-support incidence variety has an
explicit product parametrization by a torus, an affine space, and complements
of coordinate hyperplanes in zero-sum hyperplanes. This computes its dimension.
The fibers of its coefficient projection are open subsets of projectivized
restricted kernels. To compute their generic dimension, we normalize the null
vector to $\one_F$: the internal rows then become weighted Laplacians, and the
external rows impose labeled zero-sum relations on their componentwise
constants. Linear independence of those rows is exactly the forest condition.
For $J\subseteq\partial F$, let $c_F(J)$ be the number of classes of support
components joined by labels in $J$. The same deficiency also satisfies
\begin{equation}\label{eq:intro-deficiency}
 h_F=\max_{J\subseteq\partial F}
       \bigl(c_F(J)-|\partial F\setminus J|\bigr).
\end{equation}
This follows from the independent-representative argument proved in
Lemma~\ref{lem:linear-representatives}.

For $U=V\setminus S$, put
\begin{equation}\label{eq:intro-complex}
 K_S^\bullet=[R_G^U\xrightarrow{\bA_{V,U}}R_G^V],
 \qquad M_S=H^1(K_S^\bullet)=\coker\bA_{V,U}.
\end{equation}
The Fitting ideals of $M_S$ record all restricted-kernel rank failures. We
resolve their reduced first failure locus by support primes:
\[
 \sqrt{I_{|U|}(\bA_{V,U})}
 =\bigcap_{\substack{F\subseteq U\text{ a fort}\\h_F=1}}\mathfrak p_F.
\]
Each $\mathfrak p_F$ is obtained by saturated elimination from the
exact-support equations. The inclusion-minimal primes in this finite family
are the irreducible components (Theorem~\ref{thm:prime-decomp}). Higher
nullities are treated by diagonal deformation and Grassmann incidence charts.
Example~\ref{ex:crossing} gives nested supports with incomparable
coefficient loci whose intersection is singular at constant nullity.

The module in \eqref{eq:intro-complex} is the fundamental module of Jekel,
Levy, Dana, Stromme, and Litterell \cite{JLDSL}, after the change of variables
\begin{equation}\label{eq:intro-jekel}
 w_{uv}=-t_{uv},\qquad \eta_u=d_u+\sum_{v\sim u}t_{uv}.
\end{equation}
Their universal flatness theorem characterizes zero forcing. We give its
forcing-rule proof and the corresponding chain contraction in Section~2.
We describe the exceptional coefficient loci of this module; related
graph-matrix loci are studied in \cite{AL,DK}.

\subsection{A common kernel and its coefficient sheaf}
A prescribed subspace $W\subseteq\kk^V$ requires one symmetric matrix to
annihilate several vectors simultaneously. If $\ell_u\in W^*$ is the
coordinate functional, its local coefficient space is
\[
 \mathcal C_W(u)=
 \left\{(d,(t_{uv})_{v\sim u}):
 d\ell_u+\sum_{v\sim u}t_{uv}\ell_v=0\right\}.
\]
Restriction to an edge records its coefficient. Global sections therefore
identify the coefficients used by its two ends and are exactly the symmetric
matrices satisfying $AW=0$.

Theorem~\ref{thm:stress} identifies these sections, after a diagonal
normalization, with the direct sum of internal equilibrium stresses,
independent exterior affine dependencies, and free exterior coefficients.
It also characterizes when every edge coefficient can be nonzero. The stress equations of \cite{CGT,CGH} describe the internal coefficients;
the exterior dependencies account for coordinates vanishing identically
on the prescribed kernel. When the local coefficient spaces are lines, their restriction
ratios give a multiplicative transport class; its cycle products are the
remaining obstruction to a global coefficient choice.

\subsection{Open assembly before boundary closure}
For a block with interior $I$ and boundary $B$, a local vector has a boundary
value $b$ and a residual $j$, the block's contribution to the boundary rows.
We retain both in a free two-term complex $\mathcal L_H^\bullet$. Its open
assembly identifies values and adds residuals:
\[
 b_1=\cdots=b_s=b,\qquad j=j_1+\cdots+j_s.
\]
Proposition~\ref{prop:open-port-assembly} gives an isomorphism from this
assembly to the open complex of the union plus free identity complexes.
Closing the single total residual $j=0$ then yields the global equation
complex. For two blocks,
\[
 K_{G,S}^\bullet\simeq
 \operatorname{Cone}\!\left(
 \mathcal L_{H_1}^\bullet\oplus\mathcal L_{H_2}^\bullet
 \xrightarrow{(b_1-b_2,\,j_1+j_2)}\mathcal W_B
 \right)[-1].
\]
The source and target substitutions use identity blocks, so the gluing
maps and chain contractions commute with coefficient base change. Invisible interior solutions survive in the kernel
of the port observation. For an unseeded symmetric block over a field, the
boundary image is Lagrangian, even when the interior matrix is singular;
this is the network interpretation of \cite{BF} in the present notation.

The same assembly has a finite support model. For one seed set we retain
\emph{all} its surviving local support signatures: support membership and a
neighbor count truncated at two at each boundary vertex, together with a
nonemptiness bit. Exact composition of these complete profiles gives a
minimum-cost recurrence. Equal cost functions permit replacement of a
block in every compatible exterior graph.

\subsection{Organization}
Section~2 constructs the equation sheaf and its causal unit contractions.
Sections~3 and~4 prove the support and common-kernel results. Sections~5 and~6
establish open assembly and complete-profile composition; Section~7 gives
calculations on triangulations. The appendices give the profile algorithm
and two matrix calculations. The capacity--phase formula for maximal
biconnected outerplanar graphs is developed in \cite{HLouter}.
The related constructions of finite boundary models, admissible cohomology,
and realization families appear in
\cite{HLcomparison,HLadmissible,HLfamilies}.

\paragraph{Notations.}
Graphs are finite, simple, and undirected unless stated otherwise. We write
$N[u]=N(u)\cup\{u\}$; singleton paths are permitted and $Z(\varnothing)=0$.
Linear statements use a characteristic-zero field $\kk$; geometric closures
and dimensions are over $\CC$. Two-term complexes $[P\to Q]$ have degrees zero
and one. For a matrix $B$, $I_j(B)$ denotes its $j$-minor ideal, with $I_j(B)=R$
for $j\le0$ and $I_j(B)=0$ when $j$ exceeds the matrix dimensions.

\section{Equation sheaves and zero forcing}\label{sec:equations}

The rows of $Ax=0$ define an equation sheaf. Its two-term complex relates
zero forcing to the flatness of a universal coefficient module. A legal
force selects an invertible edge coefficient and gives a chain cancellation.

\subsection{The forcing closure and forts}
For $u\in V$, write $N(u)=\{v:uv\in E\}$ and $N[u]=N(u)\cup\{u\}$. For a current black set $B$, put
\begin{equation}\label{eq:closure-step}
 \Phi(B)=B\cup\{v\notin B:\text{some }u\in B\text{ satisfies }N(u)\setminus B=\{v\}\}.
\end{equation}
Starting with $B_0=S$, define $B_{i+1}=\Phi(B_i)$. Finiteness gives a stationary set, denoted $\cl_G(S)$.

A force remains legal when further vertices are colored black, unless its target has already become black. This monotonicity identifies the set produced by \eqref{eq:closure-step} with every maximal sequential execution.

\begin{lemma}\label{lem:closure}
The set $\cl_G(S)$ is independent of the order of legal forces. It is the least fixed point of $\Phi$ containing $S$.
\end{lemma}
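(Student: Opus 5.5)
The plan is a standard monotonicity argument for closure operators. Since $B\subseteq\Phi(B)$ by definition, the iterates $B_0\subseteq B_1\subseteq\cdots$ increase inside the finite set $V$. Hence they stabilize at $\cl_G(S)$, which satisfies $\Phi(\cl_G(S))=\cl_G(S)$ and contains $S$. I will first record the key monotonicity property: if $B\subseteq B'$ and $v\in\Phi(B)$, then $v\in\Phi(B')$ or $v\in B'$. Indeed, if $v\in\Phi(B)\setminus B$ via a forcer $u\in B$ with $N(u)\setminus B=\{v\}$, then $u\in B'$ and $N(u)\setminus B'\subseteq\{v\}$. So either $v\in B'$, or $N(u)\setminus B'=\{v\}$ and $v\in\Phi(B')$. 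Consequently $\Phi$ is monotone: $B\subseteq B'$ implies $\Phi(B)\subseteq\Phi(B')$.

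For the least fixed point claim, I let $T$ be any fixed point of $\Phi$ with $S\subseteq T$. By induction, $B_i\subseteq T$ implies $B_{i+1}=\Phi(B_i)\subseteq\Phi(T)=T$. Therefore $\cl_G(S)\subseteq T$, and $\cl_G(S)$ is itself a fixed point containing $S$, so it is the least one.

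For order independence, a sequential execution is a chain $S=C_0\subset C_1\subset\cdots\subset C_m$. Each step $C_{k+1}=C_k\cup\{v_k\}$ uses a legal force, meaning some $u\in C_k$ has $N(u)\setminus C_k=\{v_k\}$. The execution is maximal if no legal force remains at $C_m$, i.e.\ $\Phi(C_m)=C_m$. I show $C_m=\cl_G(S)$ by two inclusions. First, each $v_k\in\Phi(C_k)$, so induction with monotonicity gives $C_k\subseteq\cl_G(S)$: if $C_k\subseteq\cl_G(S)$, then $C_{k+1}\subseteq\Phi(C_k)\subseteq\Phi(\cl_G(S))=\cl_G(S)$. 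Conversely, $C_m$ is a fixed point containing $S$, so minimality gives $\cl_G(S)\subseteq C_m$. The same argument covers parallel executions that apply several forces at once, and in particular the synchronous iteration $B_{i+1}=\Phi(B_i)$.

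The only delicate point is the monotonicity step, where a force legal at $B$ may have its target already black at $B'$. This is exactly the caveat in the sentence preceding the lemma, and it is handled by the dichotomy ``$v\in B'$ or $v\in\Phi(B')$''. The rest is routine.
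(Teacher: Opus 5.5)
Your proof is correct and follows essentially the same route as the paper. Both arguments derive monotonicity of $\Phi$ from $N(u)\setminus B'\subseteq N(u)\setminus B=\{v\}$, obtain leastness of the stationary iterate by induction against an arbitrary fixed point containing $S$, and prove order independence by trapping the last set of a maximal execution between $\cl_G(S)$ and itself (your remark about parallel executions is a harmless extra that the same argument covers).
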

\begin{proof}
Let $B\subseteq C$. If $u\to v$ is legal at $B$ and $v\notin C$, then
\[
 \{v\}\subseteq N(u)\setminus C
 \subseteq N(u)\setminus B=\{v\}.
\]
Hence
\[
 B\subseteq C\quad\Longrightarrow\quad\Phi(B)\subseteq\Phi(C).
\]
Put $C_*=\bigcup_{i\ge0}\Phi^i(S)$. Finiteness gives $\Phi(C_*)=C_*$. If $D$ is any fixed point with $S\subseteq D$, induction gives
\[
 \Phi^i(S)\subseteq\Phi^i(D)=D,
 \qquad C_*\subseteq D.
\]
For a sequential legal execution $S=C_0\subset C_1\subset\cdots$, the fixed-point property of $C_*$ gives $C_i\subseteq C_*$ by induction. If the execution is maximal, its last set $C_r$ satisfies $\Phi(C_r)=C_r$, whence
\[
 C_*\subseteq C_r\subseteq C_*.
\]
Thus every maximal execution ends at $C_*=\cl_G(S)$.
\end{proof}

If the closure is proper, its complement has no unique neighbor from a black vertex. We give this stopping condition a name, together with the associated minimum-size problem.

\begin{definition}
A \emph{zero-forcing set} is a set $S$ with $\cl_G(S)=V$, and
\[
 Z(G)=\min\{|S|:S\subseteq V,\ \cl_G(S)=V\}.
\]
A \emph{fort} is a nonempty set $F\subseteq V$ such that
\begin{equation}\label{eq:fort}
 |N(u)\cap F|\ne1\qquad(u\in V\setminus F).
\end{equation}
Write $\mathfrak F(G)$ for the finite family of forts.
\end{definition}

A set $F$ satisfying \eqref{eq:fort} cannot receive a first force while all its vertices are white. Conversely, the white set at termination satisfies exactly the same condition.

\begin{proposition}\label{prop:fort-hit}
A set $S$ is zero forcing if and only if it meets every fort. Consequently,
\begin{equation}\label{eq:fort-cover}
 Z(G)=\min\{|S|:S\cap F\ne\varnothing\text{ for every }F\in\mathfrak F(G)\}.
\end{equation}
\end{proposition}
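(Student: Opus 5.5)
The proof has two directions, followed by the formula \eqref{eq:fort-cover}. Both directions use Lemma~\ref{lem:closure}, which identifies $\cl_G(S)$ with the least fixed point of $\Phi$ containing $S$. The plan is to show that the white set at termination is always a fort (if nonempty), and that a fort disjoint from $S$ stays white forever.

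\emph{Meeting every fort implies zero forcing.} Suppose $S$ meets every fort, and put $C=\cl_G(S)$ and $F=V\setminus C$. Assume for contradiction that $F\ne\varnothing$. We check that $F$ satisfies \eqref{eq:fort}. Take $u\in V\setminus F=C$. If $|N(u)\cap F|=1$, say $N(u)\cap F=\{v\}$, then $N(u)\setminus C=\{v\}$, so $v\in\Phi(C)$. But $\Phi(C)=C$ and $v\notin C$, a contradiction. Hence $F$ is a fort. It is disjoint from $S\subseteq C$, which contradicts the hypothesis. Therefore $C=V$.

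\emph{Zero forcing implies meeting every fort.} Suppose $F$ is a fort with $F\cap S=\varnothing$. We show that $D=V\setminus F$ is a fixed point of $\Phi$ containing $S$. Suppose $v\notin D$, that is $v\in F$, and some $u\in D$ has $N(u)\setminus D=\{v\}$. Then $N(u)\cap F=\{v\}$ with $u\notin F$, which contradicts \eqref{eq:fort}. Hence $\Phi(D)=D$. By the least-fixed-point characterization in Lemma~\ref{lem:closure}, $\cl_G(S)\subseteq D\ne V$, since $F$ is nonempty. So $S$ is not zero forcing.

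\emph{The formula.} The equivalence shows that the set of zero-forcing sets equals the set of sets hitting every member of $\mathfrak F(G)$. Taking minimum cardinalities over these equal families gives \eqref{eq:fort-cover}. If $\mathfrak F(G)$ is empty, the condition is vacuous, $\varnothing$ is zero forcing, and both sides are $0$; this is consistent with the convention $Z(\varnothing)=0$.

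No step is a real obstacle. The one point needing care is that $\Phi$ is applied to the complement of a fort, which is an arbitrary set and not a stage of the iteration. Lemma~\ref{lem:closure} is stated for any fixed point containing $S$, so this is covered.
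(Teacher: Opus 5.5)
Your proof is correct and is essentially the paper's. The direction showing that $V\setminus\cl_G(S)$ is a fort is identical; for the other direction you check that $V\setminus F$ is a fixed point of $\Phi$ and invoke the least-fixed-point clause of Lemma~\ref{lem:closure}, whereas the paper argues that a first force $u\to v$ into $F$ would give $N(u)\cap F=\{v\}$, which is the same observation phrased order-theoretically rather than through a sequential chronology.
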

\begin{proof}
Suppose $S\cap F=\varnothing$ and $F$ is a fort. If $u\to v$ were the first force into $F$, the black set $B$ immediately before that force would satisfy
\[
 B\cap F=\varnothing,\qquad u\in B,\qquad v\in F,
 \qquad N(u)\setminus B=\{v\}.
\]
Consequently $N(u)\cap F=\{v\}$, contrary to \eqref{eq:fort}.

Conversely, let $C=\cl_G(S)\ne V$ and $F=V\setminus C$. Since $\Phi(C)=C$,
\[
 F\ne\varnothing,\qquad S\cap F=\varnothing,\qquad
 |N(u)\cap F|=|N(u)\setminus C|\ne1\quad(u\in C).
\]
Thus $F\in\mathfrak F(G)$. We have proved
\[
 \cl_G(S)=V
 \quad\Longleftrightarrow\quad
 S\cap F\ne\varnothing\quad\text{for every }F\in\mathfrak F(G).
\]
Taking the minimum of $|S|$ gives \eqref{eq:fort-cover}.
\end{proof}

\subsection{Symmetric witnesses}
For a characteristic-zero field $\kk$, let $\cA_\kk(G)$ be the symmetric matrices over $\kk$ for which, off the diagonal,
\[
 a_{uv}\ne0\quad\Longleftrightarrow\quad uv\in E.
\]
The diagonal entries are arbitrary. At a zero coordinate, the row equation
excludes exactly one nonzero neighboring term. Conversely, a fort permits
a symmetric choice of nonzero edge coefficients, as in the support
results of \cite{Spark,Compatible}.

\begin{proposition}\label{prop:support-witness}
The supports of nonzero vectors in kernels of matrices in $\cA_\kk(G)$ are exactly the forts. More precisely, every fort $F$ has a rational, indeed integer, symmetric witness $A$ with $A\one_F=0$ and the prescribed nonzero edge pattern.
\end{proposition}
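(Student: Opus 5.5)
The proof has two directions. The necessity of the fort condition comes directly from the row equations. Sufficiency comes from an explicit integer construction with null vector $\one_F$.

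\emph{Necessity.} Let $A\in\cA_\kk(G)$ and $Ax=0$ with $x\ne0$, and put $F=\supp x$. For $u\notin F$, the $u$-th row reads
\[
 0=(Ax)_u=\sum_{v\in N(u)\cap F}a_{uv}x_v,
\]
since $x_u=0$ and $x_v=0$ off $F$. If $N(u)\cap F=\{v\}$, this gives $a_{uv}x_v=0$ with both factors nonzero, a contradiction. So $F$ is a fort.

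\emph{Sufficiency.} Fix a fort $F$. I will choose integer edge weights $a_{uv}\ne0$ for $uv\in E$ such that, for every $u\notin F$,
\[
 \sum_{v\in N(u)\cap F}a_{uv}=0.
\]
Then I will set the diagonal so that the rows indexed by $F$ vanish on $\one_F$. The rows outside $F$ vanish on $\one_F$ by the displayed condition, with $d_u$ arbitrary (take $d_u=0$).

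The edges split into three kinds.
\begin{enumerate}
\item Edges inside $F$ are unconstrained; give them weight $1$. Each row $u\in F$ then reads $d_u+\sum_{v\in N(u)\cap F}a_{uv}$, so I set $d_u=-\sum_{v\in N(u)\cap F}a_{uv}\in\mathbb Z$.
\item Edges with both ends outside $F$ do not affect $A\one_F$; give them weight $1$.
\item Edges between $u\notin F$ and $F$ are handled vertex by vertex. Each such edge has exactly one exterior endpoint, so the constraints for different $u$ involve disjoint sets of edges and can be solved independently.
\end{enumerate}

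The only real step is the third kind: for each $u\notin F$ with $m=|N(u)\cap F|$, I need nonzero integers on the $m$ edges from $u$ into $F$ summing to zero. The fort condition says $m\ne1$. If $m=0$ there is nothing to do. If $m\ge2$, take weights $1,\dots,1,-(m-1)$: all are nonzero and they sum to zero.

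Symmetry holds because each edge weight is assigned once and used in both positions $a_{uv}=a_{vu}$. All entries are integers, the off-diagonal pattern is exactly $E$, and $A\one_F=0$.

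The construction involves no genuine obstacle. The point needing care is that the exterior constraints decouple; this is because no edge joins two exterior vertices while also meeting $F$.
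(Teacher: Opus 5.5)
Your proposal is correct and matches the paper's proof essentially line by line. The exterior row rules out a unique support neighbour, and the witness is $\one_F$ with crossing weights $1,\dots,1,-(k-1)$, weight $1$ on all other edges, and diagonals $-\sum_{v\in N(u)\cap F}a_{uv}$ on $F$ (zero off $F$). The one point the paper makes explicit that you leave implicit is that these nonzero integers remain nonzero in $\kk$ because $\kk$ has characteristic zero.
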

\begin{proof}
Let $Ax=0$, $x\ne0$, and $F=\supp x$. If $u\notin F$ and $N(u)\cap F=\{v\}$, then
\[
 0=(Ax)_u=a_{uv}x_v\ne0.
\]
Thus $F$ is a fort.

Conversely, let $F$ be a fort and set $x=\one_F$. For $u\notin F$, write $N(u)\cap F=\{v_1,\ldots,v_k\}$. When $k\ne0$, \eqref{eq:fort} gives $k\ge2$. Choose
\[
 a_{uv_i}=a_{v_i u}=
 \begin{cases}1,&1\le i<k,\\-(k-1),&i=k.\end{cases}
\]
Then
\[
 a_{uv_i}\ne0,\qquad
 \sum_{v\in N(u)\cap F}a_{uv}=(k-1)-(k-1)=0.
\]
Every crossing edge has exactly one endpoint outside $F$, so these assignments are consistent. Put $a_{uv}=a_{vu}=1$ on all remaining edges and set
\[
 a_{uu}=
 \begin{cases}
 -\displaystyle\sum_{v\in N(u)\cap F}a_{uv},&u\in F,\\
 0,&u\notin F.
 \end{cases}
\]
For every $u\in V$,
\[
 (A\one_F)_u=
 \begin{cases}
 a_{uu}+\displaystyle\sum_{v\in N(u)\cap F}a_{uv}=0,&u\in F,\\
 \displaystyle\sum_{v\in N(u)\cap F}a_{uv}=0,&u\notin F.
 \end{cases}
\]
All chosen coefficients are integers, and each prescribed edge coefficient remains nonzero in a characteristic-zero field.
\end{proof}

Extending a vector on $U=V\setminus S$ by zero converts the equation $A_{V,U}x_U=0$ into a full null vector whose support avoids $S$.

\begin{corollary}\label{cor:uniform-kernel}
For $U=V\setminus S$,
\begin{equation}\label{eq:uniform-kernel}
 S\text{ is zero forcing}
 \quad\Longleftrightarrow\quad
 \ker A_{V,U}=0\quad\text{for every }A\in\cA_\kk(G).
\end{equation}
\end{corollary}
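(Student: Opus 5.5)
The plan is to combine Proposition~\ref{prop:fort-hit} with Proposition~\ref{prop:support-witness}, after translating the restricted kernel into full null vectors. For $x_U\in\kk^U$ let $x\in\kk^V$ be its extension by zero on $S$. Then
\[
 A_{V,U}x_U=Ax,\qquad \supp x=\supp x_U\subseteq U.
\]
This gives a bijection between $\ker A_{V,U}$ and the vectors of $\ker A$ whose support avoids $S$. So the right-hand side of \eqref{eq:uniform-kernel} says exactly this: no matrix in $\cA_\kk(G)$ has a nonzero null vector with support disjoint from $S$.

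For ($\Rightarrow$), suppose $S$ is zero forcing and $A\in\cA_\kk(G)$ has $A_{V,U}x_U=0$ with $x_U\ne0$. The first half of Proposition~\ref{prop:support-witness} shows that $F=\supp x$ is a fort, and $F\subseteq U$ means $F\cap S=\varnothing$. This contradicts Proposition~\ref{prop:fort-hit}. One could also argue directly by induction along the forcing steps, using $0=(Ax)_u=a_{uv}x_v$ as in the introduction, but invoking the two propositions avoids redoing that.

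For ($\Leftarrow$), argue by contraposition. If $S$ is not zero forcing, Proposition~\ref{prop:fort-hit} gives a fort $F$ with $F\cap S=\varnothing$, namely $F=V\setminus\cl_G(S)$. Proposition~\ref{prop:support-witness} then gives an integer witness $A\in\cA_\kk(G)$ with $A\one_F=0$. Since $F\subseteq U$, the restriction $(\one_F)_U$ is a nonzero element of $\ker A_{V,U}$.

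There is no genuine obstacle here. The only point to check is that the witness lies in $\cA_\kk(G)$ for the given field: its edge entries are nonzero integers, and they stay nonzero because $\kk$ has characteristic zero.
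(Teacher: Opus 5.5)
Your proposal is correct and follows essentially the same route as the paper: zero extension identifies $\ker A_{V,U}$ with the null vectors of $A$ supported in $U$, and Propositions~\ref{prop:fort-hit} and~\ref{prop:support-witness} then give both directions. The paper writes this as a single chain of equivalences rather than two separate implications, but the argument is the same.
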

\begin{proof}
Propositions~\ref{prop:fort-hit} and \ref{prop:support-witness} give
\begin{align*}
 S\text{ is not zero forcing}
 &\Longleftrightarrow
 \exists F\in\mathfrak F(G),\quad F\subseteq U\\
 &\Longleftrightarrow
 \exists A\in\cA_\kk(G),\ \exists x\ne0,
 \quad Ax=0,\quad\supp x\subseteq U\\
 &\Longleftrightarrow
 \exists A\in\cA_\kk(G),\quad\ker A_{V,U}\ne0.
\end{align*}
\end{proof}

\subsection{The equation sheaf}
To retain the individual rows of $Ax=0$, we introduce one variable object for each $x_v$ and one equation object for each row. Equality of their shared coordinates will be imposed by restriction maps. We first specify the finite topology used for these maps.
\begin{definition}\label{def:cellular}
Let $X$ be a finite graph, allowing parallel edges but no loops. Its \emph{face space} has one point for each vertex and each edge. The order is $v\le e$ when $v$ is an endpoint of $e$, and its topology consists of upward-closed subsets. A cellular sheaf of modules is a collection of modules $\sF(v),\sF(e)$ and maps
\[
 \rho_{v,e}:\sF(v)\longrightarrow\sF(e)\qquad(v\in e).
\]
For an upward-closed set $O$, its sections are the families $(s_c)_{c\in O}$ satisfying $\rho_{v,e}s_v=s_e$ at every incidence contained in $O$.
\end{definition}
For open sets $O'\subseteq O$, the restriction
$\Gamma(O;\sF)\to\Gamma(O';\sF)$ forgets the coordinates indexed by cells
outside $O'$. The compatibility conditions on the retained cells are unchanged.
For a cover $O=\bigcup_iO_i$, compatible sections $(s^{(i)})$ determine
\[
 s_c=s^{(i)}_c\quad(c\in O_i).
\]
Agreement on $O_i\cap O_j$ makes this assignment well-defined; every incidence relation holds in an open set containing its vertex. Hence the assignment is the unique section on $O$ restricting to the given family. The same construction applies to sheaves of sets.

Choose an orientation of each edge. The cellular cochain complex is
\begin{equation}\label{eq:cell-cochains}
 C^0(X;\sF)=\bigoplus_v\sF(v),\qquad
 C^1(X;\sF)=\bigoplus_e\sF(e),\qquad
 (\delta s)_e=\rho_{t(e),e}s_{t(e)}-\rho_{s(e),e}s_{s(e)}.
\end{equation}
Reversing an orientation multiplies one edge coordinate by $-1$ and does not change its cohomology.

The minimal open stars and their intersections are acyclic for sections.
Their \v{C}ech complex is therefore the complex in \eqref{eq:cell-cochains}.

\begin{lemma}\label{lem:cell-derived}
The cohomology of \eqref{eq:cell-cochains} is the sheaf cohomology of the face space. In particular, $H^0=\ker\delta$, $H^1=\coker\delta$, and higher cohomology vanishes.
\end{lemma}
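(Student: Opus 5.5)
The plan is to realize the cellular cochain complex \eqref{eq:cell-cochains} as the \v{C}ech complex of a cover by sets that are acyclic for sections, and then to invoke Leray's theorem. For each vertex $v$, its minimal open set is the star $U_v=\{v\}\cup\{e:v\in e\}$. For each edge $e$ it is $U_e=\{e\}$. Every open set is a union of these, and the stars $U_v$ together cover the face space, since every edge lies in the star of an endpoint.

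The first step is to show that each minimal open set $U_c$ is acyclic for every sheaf. Since $c$ is the least element of $U_c$, the section functor $\Gamma(U_c;-)$ is isomorphic to evaluation at the stalk $\sF\mapsto\sF(c)$. Indeed, a section on $U_c$ is determined by $s_c$, because $s_e=\rho_{c,e}s_c$, and any $s_c$ extends. Evaluation at a point is exact on sheaves on a finite poset space, since stalks compute exactly this value. Hence the higher derived functors of $\Gamma(U_c;-)$ vanish.

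The second step concerns intersections. Since there are no loops, $U_v\cap U_w$ is the discrete set of edges joining $v$ and $w$, possibly several parallel ones. Sections over it are $\bigoplus_e\sF(e)$, a finite direct sum of exact evaluations, so it is again acyclic. Triple intersections $U_u\cap U_v\cap U_w$ with $u,v,w$ distinct are empty, because an edge has only two endpoints. The alternating \v{C}ech complex of the cover $\{U_v\}$ is therefore
\[
 \bigoplus_v\sF(v)\longrightarrow\bigoplus_{v<w}\ \bigoplus_{e\in U_v\cap U_w}\sF(e).
\]
The \v{C}ech differential is the difference of the two restrictions. After ordering the vertices compatibly with the chosen orientation, this is $\delta$ up to signs on edge coordinates, and such sign changes do not alter cohomology.

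By Leray's theorem for a cover whose finite intersections are all acyclic, \v{C}ech cohomology equals sheaf cohomology. This gives $H^0=\ker\delta$, $H^1=\coker\delta$, and $H^i=0$ for $i\ge2$, since the \v{C}ech complex has length two. The main point needing care is the first step: the identification of $\Gamma(U_c;-)$ with an exact stalk functor. If one prefers to avoid Leray, the same conclusion follows from an explicit injective (Godement-type) resolution, built from sheaves pushed forward from points of the poset, applied to the complex above.
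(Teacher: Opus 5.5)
Your proposal is correct and follows essentially the same route as the paper. Both arguments use the cover by vertex stars $U_v$, exactness of evaluation at the least cell to get acyclicity of the stars and of the discrete edge intersections, empty triple intersections, and identification of the resulting two-term \v{C}ech complex with $\delta$ up to edge signs. The only difference is that you cite Leray's theorem, whereas the paper proves that comparison inline through the augmented \v{C}ech double complex of an injective resolution.
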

\begin{proof}
For each vertex $v$, let $U_v=\{v\}\cup\{e:v\in e\}$. A section on $U_v$ is determined by its value at $v$, since its value on an incident edge must be the image of that value. Thus
\[
 \Gamma(U_v;\sF)=\sF(v).
\]
A short exact sequence of sheaves on this finite face space is exact at every cell. Evaluation at $v$ is therefore exact, and $H^q(U_v;\sF)=0$ for $q>0$.
For distinct vertices $v,w$, the intersection $U_v\cap U_w$ is the discrete set of edges joining them. Consequently
\[
 \Gamma(U_v\cap U_w;\sF)
 =\bigoplus_{e:\,e=vw}\sF(e),\qquad
 H^q(U_v\cap U_w;\sF)=0\quad(q>0).
\]
Three distinct vertex stars have empty intersection, because a graph edge has exactly two endpoints. Isolated vertices cause no exception: their stars are singleton open sets.

To see why this cover computes derived sections, take an injective resolution of $\sF$ and its augmented \v{C}ech double complex. At a fixed cell $c$, the indices of stars containing $c$ form a nonempty simplex. Inserting any one of these indices contracts its augmented \v{C}ech complex, so the augmentation is exact on stalks. In the other direction, the cohomology on every nonempty intersection is concentrated in degree zero by the preceding exact evaluation calculations. Hence the total complex has the same cohomology as the ordinary \v{C}ech complex of the cover. After orienting each edge this is
\[
 R\Gamma(X;\sF)\simeq
 \left[\bigoplus_v\sF(v)\xrightarrow{\delta}
       \bigoplus_e\sF(e)\right],\qquad
 (\delta s)_e=\rho_{t(e),e}s_{t(e)}-\rho_{s(e),e}s_{s(e)}.
\]
There are no higher \v{C}ech terms. This proves the claim, including the empty face space, whose cochain complex is zero. See also~\cite{CGN} for the cellular formulation.
\end{proof}

Construct a graph $X_G$ with variable vertices $p_v$, equation vertices $q_u$, and zero-anchor vertices $o_u$. It has an edge $p_vq_u$ for each $v\in N[u]$, and an edge $o_uq_u$ for every $u$. This graph uses only the adjacency relation of $G$.

Over any commutative coefficient ring and for a matrix $A$ with the graph zero pattern, assign
\[
 \sF_A(p_v)=R,\qquad \sF_A(q_u)=R^{N[u]},\qquad \sF_A(o_u)=0,
\]
and assign $R$ to every edge. On $p_vq_u$, use the identity from $p_v$ and the $v$th coordinate projection from $q_u$. On $o_uq_u$, use zero from $o_u$ and the row functional
\[
 (y_{uv})_{v\in N[u]}\longmapsto\sum_{v\in N[u]}a_{uv}y_{uv}
\]
from $q_u$. Let $\sF_{A,S}$ be obtained by replacing $\sF_A(p_s)$ by zero for every $s\in S$.

A section of $\sF_{A,S}$ assigns a local copy $y_{uv}$ of $x_v$ to the row at $u$. The equalities $y_{uv}=x_v$ introduce no extra cohomology, as the following change of coordinates shows.

\begin{proposition}\label{prop:equation-sheaf}
There is an isomorphism of complexes
\begin{equation}\label{eq:sheaf-splitting}
 C^\bullet(X_G;\sF_{A,S})
 \cong
 [R^{\mathcal I}\xrightarrow{\id}R^{\mathcal I}]
 \oplus[R^U\xrightarrow{A_{V,U}}R^V],
 \qquad \mathcal I=\{(u,v):v\in N[u]\}.
\end{equation}
It is valid over any commutative ring and commutes with base change.
\end{proposition}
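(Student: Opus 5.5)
First I would write the cochain complex explicitly. Set $\mathcal I=\{(u,v):v\in N[u]\}$. The degree-zero term is
\[
 C^0=\bigoplus_{v\in U}R\;\oplus\;\bigoplus_u R^{N[u]}\;\oplus\;0
 =R^U\oplus R^{\mathcal I}.
\]
The vertices $p_s$ with $s\in S$ and all $o_u$ contribute zero. The degree-one term has one copy of $R$ for each edge $p_vq_u$ and one for each $o_uq_u$, so $C^1=R^{\mathcal I}\oplus R^V$. Orient every edge $p_vq_u$ from $p_v$ to $q_u$ and every edge $o_uq_u$ from $o_u$ to $q_u$. Then the differential \eqref{eq:cell-cochains} reads
\[
 \delta(x,y)=\bigl((y_{uv}-\tilde x_v)_{(u,v)\in\mathcal I},\ (\textstyle\sum_{v\in N[u]}a_{uv}y_{uv})_{u\in V}\bigr).
\]
Here $\tilde x$ denotes $x$ extended by zero on $S$; this works because the restriction from the zero stalk $\sF_{A,S}(p_s)=0$ is zero. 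By the remark after \eqref{eq:cell-cochains}, the choice of orientation only changes signs.

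Next I would change coordinates by unitriangular automorphisms with integer coefficients. In degree zero, put $y'_{uv}=y_{uv}-\tilde x_v$ and keep $x$. In degree one, write a cochain as $(z,w)\in R^{\mathcal I}\oplus R^V$ and put $w'_u=w_u-\sum_{v\in N[u]}a_{uv}z_{uv}$, keeping $z$. In these coordinates the differential sends $(x,y')$ to $z=y'$ and to
\[
 w'_u=\sum_{v\in N[u]}a_{uv}(y'_{uv}+\tilde x_v)-\sum_{v\in N[u]} a_{uv}y'_{uv}=(A\tilde x)_u=(A_{V,U}x)_u .
\]
The row sum may be taken over $N[u]$ because $A$ has the graph zero pattern. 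So the differential becomes $\id\oplus A_{V,U}$, and the two coordinate changes assemble into the isomorphism \eqref{eq:sheaf-splitting}.

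Finally, base change. Each coordinate change is given by a matrix whose entries are $0$, $\pm1$ or entries of $A$, and it is inverted by the analogous matrix with the sign flipped. Both constructions are therefore compatible with any ring map $R\to R'$ applied entrywise to $A$, and forming the complex $C^\bullet(X_G;\sF_{A,S})$ also commutes with base change, because all stalks are free and all restriction maps are matrices. The only step needing care is bookkeeping: the degree-one change must use the same entries $a_{uv}$ as the row functional, so that the correction terms cancel exactly. Once the formula for $w'_u$ above is checked, nothing else is needed.
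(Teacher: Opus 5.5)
Your proposal is correct and is essentially the paper's own proof: it uses the same orientations ($p_v\to q_u$, $o_u\to q_u$), the same source substitution $y'=y-\tilde x$ (the paper's $y'=y-Jx$) and the same target substitution $w'=w-L_Az$, so the differential becomes $\id\oplus A_{V,U}$. Base change is handled the same way, since both substitutions are division-free and are inverted by flipping signs.
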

\begin{proof}
Define
\[
 J:R^U\longrightarrow R^{\mathcal I},\qquad
 (Jx)_{uv}=\begin{cases}x_v,&v\in U,\\0,&v\in S,\end{cases}
\]
and
\[
 L_A:R^{\mathcal I}\longrightarrow R^V,\qquad
 (L_Ay)_u=\sum_{v\in N[u]}a_{uv}y_{uv}.
\]
Then $L_AJ=A_{V,U}$. Orient each edge $p_vq_u$ from $p_v$ to $q_u$ and
each edge $o_uq_u$ from $o_u$ to $q_u$. With these orientations,
\[
 \delta(x,y)=(y-Jx,L_Ay).
\]
Use the source coordinates $y'=y-Jx$ and the target map
$Q(z,r)=(z,r-L_Az)$. Thus
\begin{align*}
 Q\delta(x,y'+Jx)
 &=Q(y',L_Ay'+A_{V,U}x)\\
 &=(y',L_Ay'+A_{V,U}x-L_Ay')\\
 &=(y',A_{V,U}x).
\end{align*}
Both coordinate changes are invertible over $R$, with inverses obtained by changing the minus signs to plus signs. They use no division, and commute with every homomorphism $R\to R'$.
\end{proof}
Consequently,
\begin{equation}\label{eq:sheaf-H}
 H^0(X_G;\sF_{A,S})=\{x:Ax=0,\ x|_S=0\},\qquad
 H^1(X_G;\sF_{A,S})=\coker A_{V,U}.
\end{equation}

Prescribed values $b\in R^S$ extend to $Ax=0$ precisely when
$A_{V,S}b$ vanishes in $\coker A_{V,U}$. We use the connecting homomorphism
$\partial(b)=[A_{V,S}b]$; a lift then satisfies
\[
 A_{V,U}x_U=-A_{V,S}b.
\]
The corresponding exact sequence also describes the difference of two extensions.

\begin{proposition}\label{prop:extension}
There is an exact sequence
\begin{equation}\label{eq:boundary-les}
\begin{split}
0\longrightarrow H^0(\sF_{A,S})\longrightarrow\ker A
&\xrightarrow{\mathrm{res}_S}R^S
\xrightarrow{\partial}H^1(\sF_{A,S})\\
&\longrightarrow\coker A\longrightarrow0,
\qquad \partial(b)=[A_{V,S}b].
\end{split}
\end{equation}
Thus $b$ extends to a solution of $Ax=0$ if and only if $\partial(b)=0$; two extensions differ by an element of $H^0(\sF_{A,S})$.
\end{proposition}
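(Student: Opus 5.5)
The exact sequence \eqref{eq:boundary-les} should come from a short exact sequence of two-term complexes; I will not verify each map by hand. Write $U=V\setminus S$ and order the coordinates of $R^V$ as $(U,S)$. Consider
\[
 0\longrightarrow [R^U\xrightarrow{A_{V,U}}R^V]
 \longrightarrow [R^V\xrightarrow{A}R^V]
 \longrightarrow [R^S\longrightarrow 0]\longrightarrow 0 .
\]
The first map is the inclusion $R^U\hookrightarrow R^V$ (extension by zero) in degree zero and the identity in degree one. The second map is the restriction $R^V\to R^S$ in degree zero and zero in degree one. Both are chain maps, since $A$ restricted to $R^U$ is $A_{V,U}$, and the sequence is exact termwise.

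By Proposition~\ref{prop:equation-sheaf} and \eqref{eq:sheaf-H}, the cohomology of the left complex is $H^0(\sF_{A,S})$ and $H^1(\sF_{A,S})$. The middle complex has cohomology $\ker A$ and $\coker A$. The right complex has $H^0=R^S$ and $H^1=0$. The long exact cohomology sequence therefore reads
\[
 0\to H^0(\sF_{A,S})\to\ker A\xrightarrow{\mathrm{res}_S}R^S\xrightarrow{\partial}H^1(\sF_{A,S})\to\coker A\to 0 .
\]
It terminates because $H^1$ of the quotient complex vanishes. The map $H^1(\sF_{A,S})=\coker A_{V,U}\to\coker A$ is the evident projection, which is visibly surjective. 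This agrees with the sequence in \eqref{eq:boundary-les}.

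The one computation to check is the connecting map. Lift $b\in R^S$ to $(0,b)\in R^V$ and apply $A$ to get $A_{V,S}b$. This vector lies in the degree-one term $R^V$ of the subcomplex, because the degree-one map of the sequence is the identity. So $\partial(b)=[A_{V,S}b]\in\coker A_{V,U}$, as stated. I expect the only real care to lie here, in confirming that the lift and the identification via Proposition~\ref{prop:equation-sheaf} carry no sign.

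The two consequences then follow from exactness. An element $b\in R^S$ extends to a solution of $Ax=0$ exactly when $b\in\im\mathrm{res}_S=\ker\partial$. Concretely, $\partial(b)=0$ means there is $x_U$ with $A_{V,U}x_U=-A_{V,S}b$, and then $x=(x_U,b)$ satisfies $Ax=0$. Two extensions have the same restriction to $S$, so their difference lies in $\ker\mathrm{res}_S$. By exactness at $\ker A$, this kernel is the image of $H^0(\sF_{A,S})=\{x:Ax=0,\ x|_S=0\}$, and that map is injective.
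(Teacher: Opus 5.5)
Your proposal is correct and matches the paper's proof. Both use the same short exact sequence of two-term complexes $0\to[R^U\xrightarrow{A_{V,U}}R^V]\to[R^V\xrightarrow{A}R^V]\to[R^S\to0]\to0$, compute $\partial(b)=[A_{V,S}b]$ from the lift of $b$ by zero on $U$, and read off the two consequences from exactness at $R^S$ and at $\ker A$. The paper also remarks that this sequence is the one induced by the quotient sheaf $\sF_A/\sF_{A,S}$ under Proposition~\ref{prop:equation-sheaf}, but the argument does not depend on that.
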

\begin{proof}
Put $U=V\setminus S$. There is a short exact sequence of two-term complexes
\[
 0\longrightarrow[R^U\xrightarrow{A_{V,U}}R^V]
 \longrightarrow[R^V\xrightarrow A R^V]
 \longrightarrow[R^S\longrightarrow0]\longrightarrow0.
\]
The first map extends unknowns by zero and is the identity in degree one; the last map is restriction to $S$ in degree zero. Under Proposition~\ref{prop:equation-sheaf}, this is also the sequence obtained from the quotient sheaf $\mathcal Q=\sF_A/\sF_{A,S}$, whose only nonzero stalks are $\mathcal Q(p_s)=R$.

Lift $b\in R^S$ to $\widetilde b\in R^V$ by setting its $U$ coordinates to zero. The differential of the lift is
\[
 A\widetilde b=A_{V,S}b,
\]
so the connecting homomorphism is $\partial(b)=[A_{V,S}b]$ with this convention. Its kernel consists exactly of the extendable boundary values:
\[
 \partial(b)=0
 \Longleftrightarrow A_{V,S}b\in\im A_{V,U}
 \Longleftrightarrow \exists x_U,\quad
 A_{V,U}x_U=-A_{V,S}b.
\]
The map $\coker A_{V,U}\to\coker A$ is induced by the identity on $R^V$. It is onto, and its kernel is
\[
 \frac{\im A_{V,U}+\im A_{V,S}}{\im A_{V,U}}
 =\im\partial.
\]
At the other terms, a null vector has zero trace exactly when it is the zero extension of a vector in $\ker A_{V,U}$. These computations verify exactness at every term of~\eqref{eq:boundary-les}. The difference of two extensions has zero trace and lies in that same kernel.
\end{proof}

\subsection{The universal module and specialization}
Return to $R=R_G$ and the universal $\bA$ from \eqref{eq:intro-ring}.

Before specializing the coefficients, the independent diagonal variables prevent a column relation. This gives the free resolution needed to compute the effect of specialization.

\begin{lemma}\label{lem:universal-injective}
The map $\bA_{V,U}:R^U\to R^V$ is injective.
\end{lemma}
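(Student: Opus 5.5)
To show that $\bA_{V,U}\colon R_G^U\to R_G^V$ is injective, I will find a maximal minor of $\bA_{V,U}$ that is a nonzerodivisor in $R_G$. Since $R_G=\kk[d_v,t_e^{\pm1}]$ is a Laurent polynomial ring over a field, it is an integral domain. So it suffices to find one nonzero $|U|\times|U|$ minor. Then, by the adjugate (Cramer's rule), any $x\in R_G^U$ with $\bA_{V,U}x=0$ satisfies $\det(\bA_{U,U})\,x=0$, and hence $x=0$.

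The natural candidate is the square submatrix $\bA_{U,U}$, consisting of the rows indexed by $U$. Its determinant is a polynomial in the $d_u$ ($u\in U$) and in the $t_e$. In the Leibniz expansion, the term from the identity permutation is $\prod_{u\in U}d_u$. Every other permutation moves some index, so its term omits at least one diagonal variable $d_u$ and cannot produce the monomial $\prod_{u\in U}d_u$. Off-diagonal contributions involve the $t$ variables, which are independent of the $d$'s. Therefore the coefficient of $\prod_{u\in U}d_u$ in $\det\bA_{U,U}$ is exactly $1$, and the determinant is nonzero in $R_G$.

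The conclusion then follows from the adjugate identity $\operatorname{adj}(\bA_{U,U})\bA_{U,U}=\det(\bA_{U,U})\,\id$. Suppose $\bA_{V,U}x=0$. Restricting to the rows in $U$ gives $\bA_{U,U}x=0$, so $\det(\bA_{U,U})\,x=0$, and $x=0$ because $R_G$ is a domain. When $U=\varnothing$ the statement is vacuous.

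There is no real obstacle here. The only point needing care is that the monomial $\prod_{u\in U}d_u$ arises from no permutation other than the identity, and this is immediate because each $d_u$ appears only in the diagonal entry $(u,u)$.
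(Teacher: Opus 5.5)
Your proof is correct and follows essentially the same route as the paper: the identity permutation contributes a product of diagonal variables with coefficient one, so a determinant is nonzero, and the adjugate identity over the domain $R_G$ kills any kernel vector. The only difference is that you apply this to the principal minor $\det\bA_{U,U}$, which directly exhibits a nonzero maximal minor of $\bA_{V,U}$ (the paper uses the same argument later, in Theorem~\ref{thm:relative-forcing}); here the paper instead uses $\det\bA$ on all of $V$ after extending $x$ by zero on $S$.
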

\begin{proof}
In the determinant expansion,
\[
 \det\bA=\prod_{v\in V}d_v+
 \sum_{\substack{\sigma\in\operatorname{Sym}(V)\\\sigma\ne\id}}
 \operatorname{sgn}(\sigma)\prod_{v\in V}\bA_{v,\sigma(v)}.
\]
No summand in the sum contains $\prod_vd_v$. Hence $\det\bA\ne0$.
If $\bA_{V,U}x=0$, extend $x$ by zero on $S$ to $\widetilde x$. Then
\[
 \bA\widetilde x=0
 \quad\Longrightarrow\quad
 \operatorname{adj}(\bA)\bA\widetilde x
 =(\det\bA)\widetilde x=0
 \quad\Longrightarrow\quad\widetilde x=0,
\]
since $R$ is an integral domain. If $U=\varnothing$, injectivity is immediate.
\end{proof}
We therefore have a free resolution
\begin{equation}\label{eq:resolution}
 0\longrightarrow R^U\xrightarrow{\bA_{V,U}}R^V
 \longrightarrow M_S\longrightarrow0.
\end{equation}
For any $R$-module $N$, tensoring this resolution gives
\begin{equation}\label{eq:tor-kernel}
 \Tor_1^R(M_S,N)=\ker(\bA_{V,U}:N^U\to N^V).
\end{equation}
Thus specialization produces the kernel measured by this Tor group.

For a finite presentation $R^a\xrightarrow{B}R^b\to M\to0$, our convention is
\[
 \Fitt_j(M)=I_{b-j}(B),
\]
with the same conventions for nonpositive and oversized minor indices as above. The ideals are independent of the presentation~\cite[\stacks{07Z6}]{Stacks}. In the presentation~\eqref{eq:resolution}, $b=|V|$ and $a=|U|$, so for $1\le q\le |U|$,
\begin{equation}\label{eq:fitting-jump}
 \Fitt_{|S|+q-1}(M_S)
 =I_{|V|-|S|-q+1}(\bA_{V,U})
 =I_{|U|-q+1}(\bA_{V,U}).
\end{equation}
On a field fiber, the last ideal vanishes precisely when
\[
 \rank\bA_{V,U}(\alpha)\le |U|-q
 \quad\Longleftrightarrow\quad
 \dim\Tor_1^R(M_S,k_\alpha)\ge q.
\]
These Fitting ideals define the kernel-jump schemes. Section~\ref{sec:support}
decomposes the first such locus by the supports of its null vectors.

A module $M$ over $R$ is \emph{flat} if tensoring with $M$ preserves short exact sequences of $R$-modules. For the length-one free resolution \eqref{eq:resolution}, its failure in a coefficient module is measured explicitly by \eqref{eq:tor-kernel}.

A forcing sequence selects a triangular minor whose diagonal entries are edge
units. Its cokernel is therefore free over the coefficient ring. The following
is the zero-forcing form of the universal flatness theorem in
\cite[Theorem~5.16]{JLDSL}.

\begin{theorem}\label{thm:flatness}
For every finite simple graph and $S\subseteq V$, the following are equivalent:
\begin{enumerate}
\item $S$ is a zero-forcing set;
\item $M_S\cong R^{|S|}$;
\item $M_S$ is flat over $R$;
\item $I_{|U|}(\bA_{V,U})=R$.
\end{enumerate}
Forcing also implies the kernel vanishing in \eqref{eq:tor-kernel} for every $R$-module $N$.
\end{theorem}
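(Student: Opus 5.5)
I would prove the cycle (a)$\Rightarrow$(b)$\Rightarrow$(c)$\Rightarrow$(d)$\Rightarrow$(a). The substantive step is (a)$\Rightarrow$(b), which I would prove by a chain cancellation along a forcing sequence.

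\emph{(a)$\Rightarrow$(b).} Let $S$ be zero forcing, with a sequential execution $u_1\to v_1,\dots,u_m\to v_m$, where $m=|U|$ and $U=\{v_1,\dots,v_m\}$ (Lemma~\ref{lem:closure}). At step $i$ the forcing vertex $u_i$ is black, and $N(u_i)\setminus\{v_i\}$ is black. Hence row $u_i$ of $\bA_{V,U}$ has entry $t_{u_iv_i}$ in column $v_i$ and zeros in the columns $v_j$ with $j>i$. The $u_i$ are distinct, since a vertex forces at most once: after forcing it has no white neighbour. So the rows $u_1,\dots,u_m$ give an $m\times m$ submatrix that is lower triangular in these orderings, with unit diagonal entries $t_{u_iv_i}\in R^\times$.

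Row operations with entries in $R$ then make $\bA_{V,U}$ equivalent to $\begin{pmatrix}T\\0\end{pmatrix}$, where $T$ is invertible. Concretely, I would subtract multiples of the rows $u_i$ from the other rows after column reduction, or simply use $T^{-1}$. This gives $\coker\bA_{V,U}\cong R^{V\setminus\{u_i\}}$, which is free of rank $|V|-|U|=|S|$. The same triangular structure gives the chain contraction compatible with base change, and it gives injectivity of $\bA_{V,U}\otimes N$ for every $N$, hence the final clause via \eqref{eq:tor-kernel}.

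\emph{(b)$\Rightarrow$(c).} This is immediate, since free modules are flat.

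\emph{(c)$\Rightarrow$(d).} For a finitely presented flat module over a domain, hence projective, the rank is constant. At the generic point, Lemma~\ref{lem:universal-injective} gives rank $|S|$. A finite projective module of constant rank $r$ has $\Fitt_r=R$ and $\Fitt_{r-1}=0$. By \eqref{eq:fitting-jump} with $q=1$, this gives $I_{|U|}(\bA_{V,U})=\Fitt_{|S|}(M_S)=R$. Alternatively, flatness gives $\Tor_1(M_S,k_\alpha)=0$ at every point, so the maximal minors have no common zero. Since $R$ is a finitely generated $\kk$-algebra, the Nullstellensatz gives the unit ideal after base change to $\overline\kk$; then descend by faithful flatness of $\overline\kk$ over $\kk$.

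\emph{(d)$\Rightarrow$(a).} If $I_{|U|}=R$, then every specialization $A\in\cA_\kk(G)$ is a $\kk$-point of $R_G$: the edge entries are nonzero, so the $t_e$ are invertible. At such a point the maximal minors of $A_{V,U}$ generate $\kk$, so $A_{V,U}$ has full column rank and $\ker A_{V,U}=0$. Corollary~\ref{cor:uniform-kernel} then gives that $S$ is zero forcing.

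The main obstacle I expect is the bookkeeping in (a)$\Rightarrow$(b). One must check the distinctness of the $u_i$, and that the triangularity uses exactly the zero pattern of $\bA$ and not merely genericity. The rest is standard commutative algebra, where the only care needed is the Nullstellensatz descent step.
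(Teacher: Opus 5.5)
Your proof is correct. The step (a)$\Rightarrow$(b) is the same triangular unit-minor cancellation the paper uses, but the rest of the argument is organized differently.

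The paper uses a star centred at (a). The block form $\binom{I_m}{0}$ gives (b), (c) and (d) simultaneously. For the converse it takes a single integer fort witness $\alpha\colon R\to\kk$. There $\Tor_1^R(M_S,\kk_\alpha)\neq0$ refutes flatness, hence freeness. The same rank failure at $\alpha$ puts $I_{|U|}(\bA_{V,U})$ inside $\ker\alpha$, refuting (d). So the paper never needs (c)$\Rightarrow$(d) and uses no structure theory of flat modules.

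Your cycle does need (c)$\Rightarrow$(d), and you supply it with general commutative algebra: a finitely presented flat module is locally free, its constant rank is $|S|$ by Lemma~\ref{lem:universal-injective}, hence $\Fitt_{|S|}(M_S)=R$. The fort witness then enters only through Corollary~\ref{cor:uniform-kernel} in (d)$\Rightarrow$(a).

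What your route gains is a clear separation. It shows that (b)$\Leftrightarrow$(c)$\Leftrightarrow$(d) is just the standard criterion for the cokernel of an injective map of free modules over a domain. The graph then matters only in (a)$\Rightarrow$(b) and (d)$\Rightarrow$(a).

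In your alternative argument for (c)$\Rightarrow$(d), the Nullstellensatz and the descent step are unnecessary. Fix a maximal ideal $\mathfrak m$ of $R$. Flatness gives $\Tor_1^R(M_S,R/\mathfrak m)=0$, so $\bA_{V,U}$ stays injective over the field $R/\mathfrak m$. Some maximal minor is therefore a unit modulo $\mathfrak m$. Hence $I_{|U|}(\bA_{V,U})$ lies in no maximal ideal and equals $R$.
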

\begin{proof}
Write $m=|U|$. If $m=0$, the complex is $[0\to R^V]$ and all four
conditions hold by the minor convention. Suppose first that $m>0$ and take
a complete chronology $u_1\to v_1,\ldots,u_m\to v_m$. Every nonseed is a
target exactly once. A vertex cannot force twice: immediately after its first
force all its neighbours are black. Thus the $u_i$ are also distinct.

At step $i$, every later target $v_j$, $j>i$, is white. It is distinct from
the black source $u_i$ and cannot be another neighbour of that source.
Consequently
\[
 \bA_{u_i,v_j}=0\ (j>i),\qquad
 \bA_{u_i,v_i}=t_{\{u_i,v_i\}}\in R^\times.
\]
The selected row--column minor $B=(\bA_{u_i,v_j})$ is lower triangular with
\[
 \det B=\prod_{i=1}^m t_{\{u_i,v_i\}}\in R^\times.
\]
Permute the source rows first and write $\bA_{V,U}=\binom{B}{C}$. The source
map $x\mapsto B^{-1}x$ and target map
\[
 Q(y,z)=(y,z-CB^{-1}y),\qquad
 Q^{-1}(y,z)=(y,z+CB^{-1}y)
\]
are invertible over $R$, and
\[
 Q\binom{B}{C}B^{-1}=\binom{I_m}{0}.
\]
Hence the equation complex is the direct sum of an identity complex of rank
$m$ and $[0\to R^{|S|}]$. Its cokernel is $R^{|S|}$, and the displayed
unit minor gives $I_m(\bA_{V,U})=R$. In particular the cokernel is flat.

For the converse assume that $S$ does not force. A fort $F\subseteq U$ and
the integer witness of Proposition~\ref{prop:support-witness} give an
$R$-algebra evaluation $\alpha:R\to\kk$ with every edge parameter nonzero,
and a nonzero $x\in\kk^U$ such that $\bA_{V,U}(\alpha)x=0$. Applying the
resolution~\eqref{eq:resolution} to the $R$-module $\kk_\alpha$ yields
\[
 \Tor_1^R(M_S,\kk_\alpha)\ne0.
\]
Thus $M_S$ is not flat and cannot be free. The same rank failure annihilates
every $m$-minor under $\alpha$, so
$I_m(\bA_{V,U})\subseteq\ker\alpha\ne R$. This proves the converse
implications.

Finally, for any $R$-module $N$, a row used by a legal force reduces to
$t_{uv}x_v=0$. Multiplication by the unit $t_{uv}$ is invertible on $N$.
Induction over the chronology therefore gives $x_v=0$ for every nonseed,
which is the asserted kernel vanishing over arbitrary coefficient modules.
\end{proof}

To compare the last theorem with the graph-with-boundary formulation, view black vertices as boundary vertices. Removing edges between them turns a force $u\to v$ into a boundary spike.

\begin{proposition}\label{prop:layerability}
With boundary $S$ and interior $U$, ordinary zero forcing is equivalent to complete reduction by deleting boundary--boundary edges, deleting isolated boundary vertices, and contracting a boundary spike whose boundary endpoint has degree one. Under \eqref{eq:intro-jekel}, $M_S$ is the fundamental module of \cite[Definition 2.3]{JLDSL}.
\end{proposition}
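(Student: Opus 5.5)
The plan has two parts. First I will show that one forcing step is one reduction move, and that the reductions are confluent in the same way as the closure, so that Lemma~\ref{lem:closure} applies. Then I will check that the change of variables \eqref{eq:intro-jekel} turns $\bA_{V,U}$ into the presentation matrix of the fundamental module.

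The reduction moves run on a graph with boundary. The current boundary is the current black set $B$, and the current interior is the white set. Deleting boundary--boundary edges and isolated boundary vertices never changes the set of white vertices, nor their adjacency to black vertices. For a black vertex $u$ that still carries some edge, every remaining edge at $u$ then goes to a white vertex. So $u$ has degree one exactly when $N(u)\setminus B=\{v\}$, i.e.\ when $u\to v$ is a legal force. In that situation $uv$ is a boundary spike with boundary endpoint of degree one. Contracting it removes $u$, moves $v$ into the boundary, and keeps the white neighbourhoods of all other black vertices unchanged. This is exactly the update $B\mapsto B\cup\{v\}$ restricted to those vertices that still matter: $u$ has no white neighbour left, so it can never force again, as already observed in the proof of Theorem~\ref{thm:flatness}.

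By induction on the number of moves, every reduction sequence therefore corresponds to a legal forcing chronology, and every chronology can be realised by reductions. Complete reduction means the interior becomes empty, which happens iff $\cl_G(S)=V$. By Lemma~\ref{lem:closure} this does not depend on the order of moves.

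For the module statement I will substitute $t_{uv}=-w_{uv}$ and $d_u=\eta_u+\sum_{v\sim u}w_{uv}$. Row $u$ of $\bA x$ then becomes
\[
 \eta_ux_u+\sum_{v\sim u}w_{uv}(x_u-x_v),
\]
which is a weighted Laplacian plus a potential. This is the response operator used in \cite[Definition 2.3]{JLDSL}, with the variables at $S$ set to zero. The cokernel of its restriction to the columns in $U$, taken over all rows $V$, is their fundamental module. The change of variables is an invertible triangular $\kk$-algebra automorphism of $R_G$ that preserves the units $t_e^{\pm1}$, so the two modules correspond under it.

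The main obstacle is this convention match. Their definition may index the generators by the boundary and the relations by the interior rows only, instead of by all rows as in \eqref{eq:resolution}. In that case I would identify the two presentations using the free summand splitting of Proposition~\ref{prop:equation-sheaf}, and follow the sign convention exactly as they fix it.
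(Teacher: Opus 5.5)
Your proposal is correct and follows essentially the same route as the paper. On the forcing side, you match a legal force $u\to v$ with contraction of the spike $uv$ after the boundary--boundary edges at $u$ are deleted, using that the two deletion moves leave the white set and its adjacency to black vertices unchanged. On the module side, you identify $\bA$ with the generalized Laplacian under \eqref{eq:intro-jekel}, so that $R^V/L(R^U)=\coker\bA_{V,U}=M_S$, exactly as the paper does.

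Your appeal to Lemma~\ref{lem:closure} for order-independence of maximal reductions goes slightly beyond what the statement requires, but it is harmless. The fallback in your last paragraph is not needed, since the paper uses the same presentation $R^V/L(R^U)$ with all rows and the unknown columns.
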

\begin{proof}
Let $B$ be the black set. Delete the edges whose two endpoints lie in $B$. Then
\[
 N(u)\setminus B=\{v\}
 \quad\Longleftrightarrow\quad
 \text{the remaining boundary vertex }u\text{ has only the edge }uv.
\]
Contract this edge, discard the used source $u$, and regard $v$ as boundary. No later force needs $u$: all its neighbors were black after $u\to v$. Iteration, together with deletion of isolated boundary vertices, converts a complete forcing sequence into the stated reduction.

Conversely, a deleted boundary--boundary edge has two black endpoints, and a deleted isolated boundary vertex has no white neighbor. Immediately before a spike contraction, every original neighbor of its boundary endpoint other than its retained endpoint is therefore black. The contraction is a legal force. Reading the contractions in order gives a complete forcing sequence.

For the matrix identification, the generalized Laplacian has
\[
 L_{uv}=-w_{uv}\quad(u\ne v),\qquad
 L_{uu}=\eta_u+\sum_{v\sim u}w_{uv}.
\]
Using \eqref{eq:intro-jekel},
\[
 L_{uv}=t_{uv},\qquad
 L_{uu}=d_u+\sum_{v\sim u}t_{uv}-\sum_{v\sim u}t_{uv}=d_u.
\]
Hence $L=\bA$ and
$RV/L(RU)=\coker\bA_{V,U}=M_S$.
\end{proof}

The flatness condition in Theorem~\ref{thm:flatness} cannot be replaced by torsion-freeness. The path with its middle vertex initially black already distinguishes the two properties.

\begin{example}\label{ex:path-ideal}
For the path $1-2-3$ with $S=\{2\}$,
\[
 \bA_{V,\{1,3\}}=
 \begin{pmatrix}d_1&0\\a&b\\0&d_3\end{pmatrix},\qquad a,b\in R^\times.
\]
Using the unit $a$ to remove one variable--equation pair gives
\[
 M_S\cong\coker\left(R\xrightarrow{(-ba^{-1}d_1,\ d_3)^{\mathsf T}}R^2\right)
 \cong(d_1,d_3).
\]
For the second isomorphism use $(r,s)\mapsto d_3r+ba^{-1}d_1s$. Its image is $(d_1,d_3)$ and its kernel is the displayed cyclic relation, since $d_1,d_3$ are relatively prime polynomial variables. The ideal is torsion-free of rank one, but at the prime $(d_1,d_3)$ it needs two generators and is not free. A finitely presented flat module is locally free \cite[\stacks{00NX}]{Stacks}; hence this module is not flat. The rank-drop locus is $d_1=d_3=0$, where $(b,0,-a)$ is an invisible kernel vector.
\end{example}

\subsection{Cancellation along the forcing closure}
The proof of Theorem~\ref{thm:flatness} can be stopped before all vertices are forced. Each completed force still gives a unit cancellation, and the unforced columns are left unchanged.
For $C=\cl_G(S)$ put $T=C\setminus S$, $W=V\setminus C$, and $k=|T|$.

\begin{theorem}\label{thm:closure-cancel}
For a chosen legal sequence from $S$ to $C$, there is a free complex $K_{\mathrm{res}}$ with
\begin{align}
 K_S&\cong[R^k\xrightarrow{\id}R^k]\oplus K_{\mathrm{res}},\label{eq:closure-split1}\\
 K_C&\cong K_{\mathrm{res}}\oplus[0\longrightarrow R^k].\label{eq:closure-split2}
\end{align}
In particular $M_C\cong M_S\oplus R^k$. For every $q\ge1$,
\begin{equation}\label{eq:closure-minors}
 I_{|V\setminus S|-q+1}(\bA_{V,V\setminus S})
 =I_{|V\setminus C|-q+1}(\bA_{V,V\setminus C}).
\end{equation}
All chain operations commute with arbitrary base change.
\end{theorem}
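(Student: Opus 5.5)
The plan is to rerun the triangular-minor argument from the proof of Theorem~\ref{thm:flatness} and stop it at the closure. Fix a legal chronology $u_1\to v_1,\ldots,u_k\to v_k$ from $S$ to $C$, so that $T=\{v_1,\ldots,v_k\}$. As in that proof, the sources $u_i$ are distinct, and every later target $v_j$ with $j>i$ is white at step $i$. Hence $\bA_{u_i,v_j}=0$ for $j>i$ and $\bA_{u_i,v_i}=t_{\{u_i,v_i\}}$ is a unit.

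Order the columns of $\bA_{V,V\setminus S}$ as $(T,W)$ and the rows as (sources $P=\{u_1,\dots,u_k\}$, the rest $V\setminus P$). In block form
\[
 \bA_{V,V\setminus S}=\begin{pmatrix}B&D\\C'&E\end{pmatrix},
\]
with $B$ lower triangular and $\det B=\prod_i t_{\{u_i,v_i\}}\in R^\times$. The key extra observation is that the $W$-columns of the source rows vanish, that is $D=0$. Each $u_i$ has all neighbours inside $C$ once its force has occurred, and $u_i\notin W$ since $u_i$ is black. So the $W$-columns are untouched by the source rows, and $\bA_{V,W}$ is supported on the rows $V\setminus P$.

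For the splitting, use the source change of coordinates $(x_T,x_W)\mapsto(B x_T,x_W)$ and the target map $Q(y,z)=(y,z-C'B^{-1}y)$. As in Theorem~\ref{thm:flatness}, these give
\[
 K_S\cong[R^k\xrightarrow{\id}R^k]\oplus[R^W\xrightarrow{E}R^{V\setminus P}].
\]
The second summand is $K_{\mathrm{res}}$. This coordinate change works only because $D=0$, since then $Q$ does not alter the $W$-columns. I expect this to be the main point needing care. Without $D=0$ one would instead subtract $C'B^{-1}D$ from $E$ via a source change $x_T\mapsto x_T+B^{-1}Dx_W$, which also works. So the argument is robust either way, and the residual map becomes the Schur complement $E-C'B^{-1}D$.

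For $K_C=[R^W\xrightarrow{\bA_{V,W}}R^V]$, the matrix $\bA_{V,W}$ is $\binom{0}{E}$ in the row order $(P,V\setminus P)$. It is therefore literally $K_{\mathrm{res}}\oplus[0\to R^P]$ with $|P|=k$, which proves \eqref{eq:closure-split2}. Taking cokernels gives $M_C\cong M_S\oplus R^k$.

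For \eqref{eq:closure-minors}, note that minor ideals are invariant under invertible row and column operations. Adding an identity block of size $k$ raises the relevant minor index by $k$ without changing the ideal: $I_{j+k}(I_k\oplus E)=I_j(E)$. Likewise, appending $k$ zero rows leaves $I_j$ unchanged. With $|V\setminus S|=|W|+k$, both sides equal $I_{|W|-q+1}(E)$; alternatively one can quote Fitting-ideal invariance through $M_C\cong M_S\oplus R^k$ and \eqref{eq:fitting-jump}. Finally, all maps used are polynomial in the entries and in the unit $\det B^{-1}$ and involve no other division. They therefore commute with any ring map $R\to R'$, exactly as in Proposition~\ref{prop:equation-sheaf}.
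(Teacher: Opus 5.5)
Your proposal is correct and follows essentially the paper's proof. The common steps are: the vanishing $D=0$ of the source rows on the $W$-columns, the triangular unit block, the residual complex $K_{\mathrm{res}}=[R^W\xrightarrow{E}R^{V\setminus P}]$ shared by both splittings, and the identity-block/zero-row comparison of minor ideals. The paper packages your separate source and target changes into the single target operation $\begin{pmatrix}B^{-1}&0\\-C'B^{-1}&I\end{pmatrix}$. One caution: your remark that the argument is ``robust either way'' only applies to \eqref{eq:closure-split1}. The vanishing $D=0$ is genuinely needed for the same residual complex to appear in \eqref{eq:closure-split2}, but you prove it, so nothing is missing.
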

\begin{proof}
Put $T=C\setminus S$, $W=V\setminus C$, and let $|T|=k$. Order $T$ by the
chosen forces $u_i\to v_i$, and put $Q=\{u_1,\ldots,u_k\}$. The sources are
distinct, and the argument of Theorem~\ref{thm:flatness} gives a triangular
matrix
\[
 P=\bA_{Q,T},\qquad
 \det P=\prod_{i=1}^k t_{\{u_i,v_i\}}\in R^\times.
\]
Every $w\in W$ stays white throughout this partial chronology. It is not a
source and cannot be another white neighbour of a source at its force.
Hence $Q\cap W=\varnothing$ and $\bA_{Q,W}=0$.

With rows $(Q,V\setminus Q)$ and columns $(T,W)$, write
\[
 \bA_{V,V\setminus S}=\begin{pmatrix}P&0\\D&B\end{pmatrix},
 \qquad \bA_{V,W}=\binom0B.
\]
The invertible target operation
\[
 U_0=\begin{pmatrix}P^{-1}&0\\-DP^{-1}&I\end{pmatrix},
 \qquad U_0^{-1}=\begin{pmatrix}P&0\\D&I\end{pmatrix}
\]
sends the first matrix to $\operatorname{diag}(I_k,B)$ and leaves the second
matrix equal to $\binom0B$. Thus the same residual complex
$K_{\mathrm{res}}=[R^W\xrightarrow B R^{V\setminus Q}]$ occurs in
\eqref{eq:closure-split1} and~\eqref{eq:closure-split2}. Taking cokernels
gives $M_C\cong M_S\oplus R^k$.

For the minors let $s=|W|-q+1$. If $s>0$, expansion along the selected rows
and columns of the identity block expresses any $(s+k)$-minor of
$\operatorname{diag}(I_k,B)$ as zero or a signed minor of $B$ of size at
least $s$. A larger minor belongs to $I_s(B)$ by repeated Laplace expansion.
Conversely, adjoining all $k$ identity rows and columns to an $s$-minor of
$B$ produces that same minor. Hence
\[
 I_{s+k}(\operatorname{diag}(I_k,B))=I_s(B)
 =I_s\!\binom0B.
\]
For $s\le0$, the right side is $R$; the left side either has nonpositive
index or contains a minor of the identity block equal to one. If $s$ exceeds
the dimensions of $B$, both ideals are zero. Invertible source and target
operations preserve determinantal ideals, proving~\eqref{eq:closure-minors}.
Every inverse above uses only the already invertible edge parameters, so the
complex isomorphisms and the ideal identity commute with any base change.
\end{proof}

For every $R$-module $N$, the same row induction gives the stronger equality inside $N^V$:
\[
 \{x\in N^V:\bA x=0,\ x|_S=0\}
 =\{x\in N^V:\bA x=0,\ x|_C=0\}.
\]
The equality of minor ideals in \eqref{eq:closure-minors} retains the scheme structure as well as the pointwise kernel dimensions.

\subsection{Forcing paths and positive cochains}\label{sec:causal}
A complete forcing sequence selects at most one outgoing arrow from each vertex. We first identify the paths formed by these arrows, and then express all prerequisites for a proposed path as inequalities on its coloring times. The closed neighborhood in these inequalities includes the source itself.

\begin{lemma}\label{lem:forcing-paths}
The arrows of a complete forcing process form a vertex-disjoint induced path cover. Its initial vertices are the seeds, and its number of paths is the number of seeds.
\end{lemma}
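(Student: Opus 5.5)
Fix a complete forcing process from the seed set $S$, with chronology $u_1\to v_1,\ldots,u_m\to v_m$, and consider the directed graph $D$ on $V$ whose arrows are the forces $u_i\to v_i$. The plan is to prove four things in order: every vertex has in-degree and out-degree at most one in $D$; $D$ has no directed cycles; the resulting directed paths are induced in $G$; and the count of initial vertices equals $|S|$.

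The degree bounds follow from the proof of Theorem~\ref{thm:flatness}. Each nonseed is forced exactly once, because it is white until its force and black afterward. Seeds are never targets. Hence in-degree is one on $U=V\setminus S$ and zero on $S$. A vertex forces at most once, since immediately after $u\to v$ every neighbour of $u$ is black; so out-degree is at most one. Along any arrow $u_i\to v_i$ the source was black before the target, so coloring times strictly increase along arrows and $D$ is acyclic. A finite acyclic digraph with all in- and out-degrees at most one is a disjoint union of directed paths, singleton paths included. The paths cover $V$ and are vertex-disjoint. Their initial vertices are exactly the vertices of in-degree zero, namely $S$. Since each path has exactly one initial vertex, the number of paths is $|S|$.

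The main obstacle is inducedness. Take a path $w_0\to w_1\to\cdots\to w_\ell$ and suppose $w_aw_b\in E$ with $b\ge a+2$. Let $\beta$ be the black set immediately before the force $w_a\to w_{a+1}$. Legality gives $N(w_a)\setminus\beta=\{w_{a+1}\}$. Since $w_b$ is a neighbour of $w_a$ and is different from $w_{a+1}$, it follows that $w_b\in\beta$.

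However, $w_b$ is colored only at the force $w_{b-1}\to w_b$. Strict monotonicity of coloring times gives
\[
 \mathrm{time}(w_{b-1})\ge\mathrm{time}(w_{a+1})>\mathrm{time}(w_a),
\]
so $w_b$ is still white at the moment $w_a$ forces. This contradicts $w_b\in\beta$. Hence the only edges of $G$ among the vertices of a path are its consecutive pairs, and each path is induced. Here "coloring time" means the index in the chronology, with time $0$ for seeds; I would record it explicitly so that the monotonicity along arrows is a stated fact, not an appeal to intuition.
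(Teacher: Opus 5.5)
Your proof is correct and follows the paper's argument essentially step for step. The degree bounds and strictly increasing coloring times give a disjoint union of directed paths whose initial vertices are the seeds, and inducedness follows because a chord $w_aw_b$ with $b\ge a+2$ would leave two white neighbours of $w_a$ just before the force $w_a\to w_{a+1}$.
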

\begin{proof}
Let $\Gamma$ be the digraph of the forces. Every nonseed has one incoming arrow, every seed has none, and every vertex has at most one outgoing arrow. If $t_v$ is its coloring time, then
\[
 u\to v\quad\Longrightarrow\quad t_u<t_v.
\]
Thus $\Gamma$ has no directed cycle and is a disjoint union of directed paths, including singleton seeds.

Suppose a path $v_0\to\cdots\to v_k$ has an edge $v_iv_j$ with $j\ge i+2$. Immediately before $v_i\to v_{i+1}$, both $v_{i+1}$ and $v_j$ are white, so
\[
 |N(v_i)\setminus B|\ge2,
\]
contrary to the forcing rule. Hence each path is induced. Its unique vertex with no incoming arrow is a seed, so the number of paths is $|S|$.
\end{proof}
Write $P(G)$ for the minimum size of a vertex-disjoint induced path cover. In particular $P(G)\le Z(G)$.

A proposed path arrow $u\to v$ can be executed only after every vertex of $N[u]\setminus\{v\}$ is black. We encode all these prerequisites, including edges not belonging to the path cover.

\begin{definition}
For an arbitrary oriented induced path cover $\cP$, let $R_{\cP}$ be its initial vertices. For each path arrow $u\to v$, insert the dependencies
\begin{equation}\label{eq:cover-dependencies}
 w\longrightarrow v\qquad(w\in N[u]\setminus\{v\}).
\end{equation}
The resulting digraph on $V$ is $D_{\cP}$. Each directed edge is a
separate one-cell in its cell realization, including a pair of opposite edges.
\end{definition}

The digraph $D_{\cP}$ separates the choice of paths from the question of whether those paths admit a legal ordering.

This formulation of the forcing-arc and chain-twist criterion
in~\cite{CJMZ} uses closed neighborhoods. Parallel increasing path covers
give a related formulation~\cite{PIP}.

\begin{proposition}\label{prop:causal-cover}
The cover $\cP$ occurs as the full set of forcing paths of a legal process if and only if $D_{\cP}$ is acyclic. Equivalently, there is a function $t:V\to\RR$ such that
\begin{equation}\label{eq:causal-time}
 t|_{R_{\cP}}=0,\qquad t_v-t_w>0\quad(w\to v\text{ in }D_{\cP}).
\end{equation}
\end{proposition}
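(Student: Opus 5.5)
The proof has two directions, and afterwards the reformulation by a time function. The first direction, from a legal process to an acyclic digraph, uses coloring times. Suppose $\cP$ is the full set of forcing paths of a legal process. Let $t_v$ be the step at which $v$ turns black, with $t_v=0$ for seeds. By Lemma~\ref{lem:forcing-paths} the seeds are exactly the initial vertices $R_{\cP}$. For a path arrow $u\to v$, the forcing rule says that just before the force every vertex of $N[u]\setminus\{v\}$ is black. This includes $u$ itself, since a source must be black. Hence $t_w<t_v$ for every dependency $w\to v$ in \eqref{eq:cover-dependencies}. So $t$ strictly increases along every arc of $D_{\cP}$. This shows at once that $D_{\cP}$ is acyclic and that $t$ satisfies \eqref{eq:causal-time}.

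For the converse, assume a function $t$ satisfying \eqref{eq:causal-time} exists. Acyclicity gives one via a topological order, e.g.\ $t_v$ equal to the length of a longest directed path ending at $v$; initial vertices have no incoming arcs, since every dependency points to the head $v$ of a path arrow. Take $S=R_{\cP}$ as seeds and execute the path arrows in increasing order of $t_v$ of their targets, breaking ties arbitrarily. I claim each arrow $u\to v$ is legal when its turn comes. Every vertex $w\in N[u]\setminus\{v\}$ has $t_w<t_v$. Such a $w$ is either a seed or the head of a path arrow, because each vertex of the cover is initial or has a unique predecessor. In the second case its arrow was executed earlier. So $N(u)\setminus B\subseteq\{v\}$, and $u\in B$.

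It remains to check that $v$ is still white at its turn. Each vertex is the target of at most one path arrow, and only that arrow colors it, so $v$ is white and $N(u)\setminus B=\{v\}$ exactly. The arrows also need distinct sources. This holds because in a path cover each vertex has at most one outgoing arrow. After all arrows are executed every vertex is black, and the forcing paths are exactly those of $\cP$.

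The only delicate point is the induction in the converse. We must ensure that a force does not become illegal because extra vertices turned black, and that its target is not colored by some other arrow. Both follow from the cover structure: each vertex has at most one incoming path arrow and at most one outgoing one. The equivalence between acyclicity of the finite digraph $D_{\cP}$ and the existence of $t$ is standard. Parallel opposite edges in the cell realization create a $2$-cycle and are correctly excluded by strict inequalities.
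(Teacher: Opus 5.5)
Your proposal is correct and follows essentially the same route as the paper: coloring times strictly increase along every dependency arc, which gives the forward direction, and for the converse a longest-path time function is used to execute the path arrows in increasing order of their targets' times. Two details differ slightly. Taking $t_v$ as the longest path ending at $v$, rather than starting from $R_{\cP}$, spares you from identifying the sources of $D_{\cP}$ with $R_{\cP}$. You also check explicitly that each target is still white and that the sources are distinct, which the paper leaves implicit.
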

\begin{proof}
A legal coloring time, with initial times zero, satisfies
\[
 w\in N[u]\setminus\{v\},\quad u\to v
 \quad\Longrightarrow\quad t_w<t_v.
\]
For a directed cycle $v_0\to\cdots\to v_r=v_0$, this would give
\[
 0=t_{v_r}-t_{v_0}
   =\sum_{i=0}^{r-1}(t_{v_{i+1}}-t_{v_i})>0.
\]
Hence $D_{\cP}$ is acyclic.

Conversely, the sources of $D_{\cP}$ are exactly $R_{\cP}$: no dependency ends at an initial vertex, and every other vertex receives an arrow from its path predecessor. For an acyclic $D_{\cP}$, put
\[
 t_v=\max\{\text{length of a directed path from }R_{\cP}\text{ to }v\}.
\]
Then
\[
 t|_{R_{\cP}}=0,\qquad t_v\ge t_w+1\quad(w\to v).
\]
Process the noninitial vertices in increasing order of $t$. If $u$ is the path predecessor of $v$, every vertex of $N[u]\setminus\{v\}$ has already been colored. Thus $u\to v$ is legal. The same argument applies to any function satisfying \eqref{eq:causal-time}.
\end{proof}

The time inequalities can also be expressed in the cohomology of a sheaf. To impose the initial values $t|_{R_{\cP}}=0$, we use zero vertex stalks at the path starts.
On the face space of $D_{\cP}$ put a time sheaf $\sT_{\cP}$ with stalk zero at $R_{\cP}$ and stalk $\RR$ at every other vertex and at every edge. Nonzero incidence maps are identities. Its complex is
\[
 C^0=\{t\in\RR^V:t|_{R_{\cP}}=0\},\qquad
 C^1=\RR^{E(D_{\cP})},\qquad
 (\delta t)_{w\to v}=t_v-t_w.
\]
Therefore Proposition~\ref{prop:causal-cover} says
\begin{equation}\label{eq:positive-obstruction}
 \cP\text{ is realizable}
 \quad\Longleftrightarrow\quad
 \text{some }\ell\in(0,\infty)^{E(D_{\cP})}
 \text{ has }[\ell]=0\text{ in }H^1(D_{\cP};\sT_{\cP}).
\end{equation}
For an empty edge set, the positive cone is the singleton in the zero-dimensional vector space.

For a directed cycle $\gamma$, every positive cochain and every coboundary satisfy, respectively,
\[
 \langle\ell,\gamma\rangle>0,\qquad
 \langle\delta t,\gamma\rangle=0.
\]
If $D_{\cP}$ is acyclic, Proposition~\ref{prop:causal-cover} gives a positive coboundary $\ell=\delta t$. The edge differences need not be equal: for $a\to b$, $b\to c$, and $a\to c$ one has
\[
 \ell_{a\to c}=\ell_{a\to b}+\ell_{b\to c}.
\]
Thus the realizability condition is
\[
 (0,\infty)^{E(D_{\cP})}\cap\im\delta\ne\varnothing.
\]

\subsection{The additional cost of a path cover}
If $D_{\cP}$ has a directed cycle, declaring an additional vertex initially black removes the need to force that vertex. Cutting the selected path there removes every dependency entering it. This operation relates the additional seed cost to directed feedback vertices.
For a digraph $D$, let
\[
 \tau_{\mathrm{dir}}(D)=\min\{|T|:D-T\text{ has no directed cycle}\}
\]
be its directed feedback-vertex number.

Fix the directions of $\cP$. A cut set $T$ means that the selected predecessor arrow of each $v\in T$ is removed and $v$ is made initial; every other selected arrow is retained. We may take $T\cap R_{\cP}=\varnothing$, since an initial vertex has no incoming dependency and lies on no directed cycle. The new dependency graph has edge set
\[
 \{w\to v\in E(D_{\cP}):v\notin T\}.
\]
Vertices of $T$ become sources in this graph. Its directed cycles are therefore exactly the directed cycles of $D_{\cP}-T$. Thus $\tau_{\mathrm{dir}}(D_{\cP})$ is the minimum number of cuts with
the remaining predecessor arrows fixed. The next theorem minimizes over
the choice of those arrows as well.

\begin{theorem}\label{thm:feedback}
For every finite simple graph,
\begin{equation}\label{eq:feedback-formula}
 Z(G)=\min_{\cP}\bigl(|\cP|+\tau_{\mathrm{dir}}(D_{\cP})\bigr),
\end{equation}
where the minimum ranges over all oriented induced path covers. In particular,
\begin{equation}\label{eq:ZP}
 Z(G)=P(G)
\end{equation}
if and only if some minimum induced path cover admits an orientation with acyclic dependency graph.
\end{theorem}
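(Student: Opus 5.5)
We prove \eqref{eq:feedback-formula} by proving the two inequalities separately, using Lemma~\ref{lem:forcing-paths} for one direction and Proposition~\ref{prop:causal-cover}, together with the cut operation described just before the theorem, for the other.

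\emph{Upper bound.} Fix an oriented induced path cover $\cP$ and a directed feedback vertex set $T$ of $D_{\cP}$ with $|T|=\tau_{\mathrm{dir}}(D_{\cP})$. As noted before the theorem, we may take $T\cap R_{\cP}=\varnothing$. Cut each path at the vertices of $T$. The resulting family $\cP'$ consists of subpaths of induced paths, so it is again an oriented induced path cover, and it has $|\cP|+|T|$ paths with initial set $R_{\cP}\cup T$.

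We need $D_{\cP'}$ to be acyclic. The arrows retained in $\cP'$ are exactly those of $\cP$ not ending in $T$, and the dependencies attached to a retained arrow $u\to v$ are the same sets $N[u]\setminus\{v\}$. Hence $D_{\cP'}$ is the graph with edge set $\{w\to v\in E(D_{\cP}):v\notin T\}$. Its directed cycles are those of $D_{\cP}-T$, and there are none. By Proposition~\ref{prop:causal-cover}, $\cP'$ is the set of forcing paths of a legal process. Its initial vertices are therefore a zero-forcing set of size $|\cP|+|T|$, which gives $Z(G)\le|\cP|+\tau_{\mathrm{dir}}(D_{\cP})$.

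\emph{Lower bound.} Take a minimum zero-forcing set $S$ and a complete forcing process from it. By Lemma~\ref{lem:forcing-paths}, its arrows form an oriented induced path cover $\cP$ with $|\cP|=|S|=Z(G)$. By Proposition~\ref{prop:causal-cover}, $D_{\cP}$ is acyclic, so $\tau_{\mathrm{dir}}(D_{\cP})=0$. This cover attains the value $Z(G)$, so the minimum in \eqref{eq:feedback-formula} is at most $Z(G)$. The two bounds together give \eqref{eq:feedback-formula}.

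\emph{The equality $Z(G)=P(G)$.} Suppose some minimum cover $\cP$, with $|\cP|=P(G)$, has an orientation with $D_{\cP}$ acyclic. Then \eqref{eq:feedback-formula} gives $Z(G)\le P(G)$. Combined with $P(G)\le Z(G)$ from Lemma~\ref{lem:forcing-paths}, this yields equality.

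Conversely, suppose $Z(G)=P(G)$. The cover produced in the lower-bound argument has $Z(G)=P(G)$ paths, so it is a minimum induced path cover. Its orientation has acyclic dependency graph by Proposition~\ref{prop:causal-cover}.

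\emph{Main obstacle.} The one step needing care is the identification of $D_{\cP'}$ with the subgraph of $D_{\cP}$ obtained by deleting the arrows into $T$. Cutting removes the predecessor arrow into each $v\in T$ and hence all dependencies $w\to v$ generated by it. Every other arrow of $\cP$ survives unchanged and still generates the same dependency set. Each surviving vertex of $T$ becomes a source of $D_{\cP'}$, so it cannot lie on any directed cycle. These two facts give the claimed edge set of $D_{\cP'}$, and with it the absence of directed cycles.
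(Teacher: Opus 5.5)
Your proof is correct and follows the paper's argument essentially step for step. The upper bound comes from cutting a minimum feedback set $T$ disjoint from $R_{\cP}$ and identifying the new dependency edges with the arrows of $D_{\cP}$ not ending in $T$; the lower bound comes from the path cover of an optimal forcing process, and your handling of $Z(G)=P(G)$ via $P(G)\le Z(G)$ is only a minor rephrasing of the paper's observation that $|\cP|+\tau_{\mathrm{dir}}(D_{\cP})=P(G)$ forces both $|\cP|=P(G)$ and $\tau_{\mathrm{dir}}(D_{\cP})=0$.
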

\begin{proof}
Fix $\cP$ and choose $T\subseteq V$ with
\[
 |T|=\tau_{\mathrm{dir}}(D_{\cP}),\qquad
 D_{\cP}-T\text{ acyclic}.
\]
No initial vertex lies on a directed cycle, so $T\cap R_{\cP}=\varnothing$ may be assumed. Cut the path edge entering each $v\in T$. The resulting induced path cover $\cP_T$ satisfies
\[
 R_{\cP_T}=R_{\cP}\sqcup T,\qquad
 |\cP_T|=|\cP|+|T|,
\]
and its dependency edges are
\[
 E(D_{\cP_T})
 =\{w\to v\in E(D_{\cP}):v\notin T\}.
\]
A directed cycle of $D_{\cP_T}$ would avoid $T$, and would therefore be a cycle of $D_{\cP}-T$. Hence $D_{\cP_T}$ is acyclic. Proposition~\ref{prop:causal-cover} gives
\[
 Z(G)\le|\cP_T|
 =|\cP|+\tau_{\mathrm{dir}}(D_{\cP}).
\]

Conversely, an optimal forcing process gives, by Lemma~\ref{lem:forcing-paths}, an induced path cover $\cP$ with
\[
 |\cP|=Z(G),\qquad \tau_{\mathrm{dir}}(D_{\cP})=0.
\]
This proves \eqref{eq:feedback-formula}. Finally,
\[
 |\cP|\ge P(G),\qquad \tau_{\mathrm{dir}}(D_{\cP})\ge0.
\]
Their sum equals $P(G)$ precisely when both inequalities are equalities, proving the assertion about \eqref{eq:ZP}.
\end{proof}
Equivalently,
\[
 Z(G)-P(G)=\min_{\cP}
 \bigl(|\cP|-P(G)+\tau_{\mathrm{dir}}(D_{\cP})\bigr).
\]
For example, if $n\ge2$, every induced path in $K_n$ has at most two vertices, and any black vertex with at least two white neighbors is unable to force. Therefore
\[
 P(K_n)=\lceil n/2\rceil,\qquad Z(K_n)=n-1,\qquad
 Z(K_n)-P(K_n)=\lfloor n/2\rfloor-1.
\]
For a fixed $\cP$, the number $\tau_{\mathrm{dir}}(D_{\cP})$ counts the
prescribed predecessor cuts. Formula~\eqref{eq:feedback-formula} also minimizes
over $\cP$. The next example shows why that minimization is necessary.

\begin{example}\label{ex:prescribed-repair}
On vertices $0,\ldots,4$, take the three triangular faces $(0,1,4),(1,2,4),(2,3,4)$. Consider the directed induced paths
\[
 (0,1),\qquad(2,4),\qquad(3).
\]
Their starts are $S=\{0,2,3\}$. The prescribed arrows are $0\to1$ and $2\to4$. The first waits for vertex 4, while the second waits for vertex 1. In the dependency graph this gives the directed two-cycle $1\to4\to1$. Every other dependency starts at a seed, so its feedback-vertex number is one. Thus one predecessor cut is necessary if these path arrows are retained.

Nevertheless the same initial set already forces the whole graph by different arrows:
\[
 3\to4,\qquad0\to1.
\]
The source 3 has neighbours 2 and 4, and 2 is initial; after this force the source 0 has only 1 white. No extra seed is required after changing the selected arrows.
\end{example}

\subsection{Acyclic matchings and chain contractions}
The temporal dependencies also describe the unit entries that can be eliminated in $\bA_{V,U}$. This is the form of the matching construction that will be used for the equation complex; related constrained matchings and algebraic Morse contractions are treated in \cite{TD,Skoldberg,CGN}.
Let $U=V\setminus S$. An \emph{edge-unit matching} is an injection $\phi:U\to V$ with $\phi(v)\sim v$. Its dependency graph on $U$ has
\begin{equation}\label{eq:matching-dependencies}
 w\to v\quad\text{when}\quad w\in U\cap(N[\phi(v)]\setminus\{v\}).
\end{equation}
The matched entries are edge units. The closed neighborhood in
\eqref{eq:matching-dependencies} also imposes the prerequisite that a
matched source be black.

An acyclic matching orders the chosen row--column pairs so that every off-diagonal prerequisite precedes its target. Conversely, a forcing sequence supplies exactly such an order.

\begin{theorem}\label{thm:matching}
The set $S$ is zero forcing if and only if there is an edge-unit matching covering $U$ whose dependency graph is acyclic.
\end{theorem}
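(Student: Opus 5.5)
We prove both directions by comparing coloring times with topological orders, following the pattern of Proposition~\ref{prop:causal-cover}.

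\emph{Forcing gives an acyclic matching.} Suppose $S$ is zero forcing and fix a complete chronology $u_1\to v_1,\ldots,u_m\to v_m$, where $m=|U|$. Define $\phi(v_i)=u_i$. As in the proof of Theorem~\ref{thm:flatness}, every nonseed is a target exactly once and the sources are distinct. Hence $\phi$ is an injection $U\to V$ with $\phi(v)\sim v$, that is, an edge-unit matching.

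Let $t_v$ denote the coloring time, with $t_{v_i}=i$. Consider a dependency $w\to v_i$, so $w\in U\cap(N[u_i]\setminus\{v_i\})$. Immediately before the force $u_i\to v_i$, every vertex of $N[u_i]\setminus\{v_i\}$ is black. This covers $u_i$ itself, since a source must be black, and also every neighbor other than the target. Since $w$ is a nonseed, it was forced earlier, so $t_w<t_{v_i}$. Thus every dependency edge strictly increases $t$, and the dependency graph has no directed cycle.

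\emph{An acyclic matching gives forcing.} Conversely, let $\phi$ be an edge-unit matching covering $U$ with acyclic dependency graph. Choose a topological order $v_1,\ldots,v_m$ of $U$, so every dependency $w\to v_i$ has $w=v_j$ with $j<i$. We show by induction on $i$ that $\phi(v_i)\to v_i$ is legal once $S\cup\{v_1,\ldots,v_{i-1}\}$ is black.

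\begin{itemize}
\item[(i)] The source $u=\phi(v_i)$ is black. If $u\in S$ this is immediate. Otherwise $u\in U\cap(N[u]\setminus\{v_i\})$, using $u\ne v_i$ since $u\sim v_i$. So $u\to v_i$ is a dependency edge, and $u$ precedes $v_i$.
\item[(ii)] Every other neighbor of $u$ is black. Each $w\in N(u)\setminus\{v_i\}$ lies either in $S$ or in $U\cap(N[u]\setminus\{v_i\})$. In the second case $w\to v_i$ is a dependency, so $w$ precedes $v_i$.
\item[(iii)] The target $v_i$ is still white, because it has not yet appeared in the order.
\end{itemize}

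Hence $N(u)\setminus B=\{v_i\}$ and the force is legal. Executing all $m$ forces colors $U$, so $\cl_G(S)=V$; by Lemma~\ref{lem:closure} the order of execution is irrelevant.

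The main point requiring care is that the closed neighborhood in \eqref{eq:matching-dependencies} is essential. It is exactly what forces the source $\phi(v)$ to be black when it is a nonseed. With the open neighborhood the converse would fail, because a matched source could still be white at its turn. Injectivity of $\phi$ is not needed in this direction.
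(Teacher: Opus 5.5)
Your proof is correct and follows the paper's own argument. In one direction, coloring times from a complete chronology strictly increase along every dependency edge. In the other, a topological order of an acyclic matching leaves $N[\phi(v)]\setminus\{v\}$ black and $v$ white at each step, so $\phi(v)\to v$ is legal.
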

\begin{proof}
For a complete forcing sequence, let $\phi(v)$ be the source that forces $v$. Distinct targets have distinct sources, and
\[
 w\in U\cap(N[\phi(v)]\setminus\{v\})
 \quad\Longrightarrow\quad t_w<t_v.
\]
Thus $\phi$ is an edge-unit matching with acyclic dependency graph.

Conversely, choose a topological order for an acyclic matching. Immediately before the step for $v$, its earlier vertices and all seeds are black. By \eqref{eq:matching-dependencies},
\[
 N[\phi(v)]\setminus\{v\}\subseteq B,
 \qquad v\notin B.
\]
In particular $\phi(v)\in B$ and $N(\phi(v))\setminus B=\{v\}$. Hence $\phi(v)\to v$ is legal. Executing the order forces every vertex of $U$.
\end{proof}

In the forcing order, the matched block has the form $B=\Delta(I-L)$,
where $\Delta$ is invertible diagonal and $L$ is strictly triangular.
Thus its inverse is the finite path sum in~\eqref{eq:path-inverse}.

\begin{theorem}\label{thm:contraction}
For such a matching put $Q=\phi(U)$, $J=V\setminus Q$, and order the rows of $Q$ by their matched columns. Write
\[
 \bA_{V,U}=\binom BC,\qquad
 \Delta=\diag(t_{\phi(v),v}),\qquad L=I-\Delta^{-1}B.
\]
If $m=|U|>0$, then $L^m=0$ and
\begin{equation}\label{eq:path-inverse}
 B^{-1}=(I+L+\cdots+L^{m-1})\Delta^{-1}.
\end{equation}
The maps
\begin{equation}\label{eq:contraction-maps}
 h(y,z)=B^{-1}y,\qquad
 p(y,z)=z-CB^{-1}y,\qquad i(z)=(0,z)
\end{equation}
satisfy
\begin{equation}\label{eq:contraction-identities}
 h\bA_{V,U}=I,\qquad
 \bA_{V,U}h+ip=I,\qquad pi=I.
\end{equation}
They give a chain contraction onto $[0\to R^J]$, compatible with every coefficient base change. If $\mathfrak m,\mathfrak n$ are two such matchings, their coordinate changes
\[
 T_{\mathfrak n\mathfrak m}=p_{\mathfrak n}i_{\mathfrak m}
\]
are inverse isomorphisms and satisfy
\begin{equation}\label{eq:coherent-changes}
 T_{\mathfrak p\mathfrak n}T_{\mathfrak n\mathfrak m}=T_{\mathfrak p\mathfrak m}.
\end{equation}
\end{theorem}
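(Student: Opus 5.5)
We first establish the nilpotence of $L$, and then derive the contraction identities and the coherence of coordinate changes.

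\emph{Nilpotence and the inverse.} Index the rows of $B=\bA_{Q,U}$ by $\phi(v)$ and its columns by $v\in U$, both listed in a topological order of the acyclic dependency graph~\eqref{eq:matching-dependencies}. The entry $B_{\phi(v),w}$ for $w\ne v$ is nonzero only if $w\in U\cap(N[\phi(v)]\setminus\{v\})$, that is, only if $w\to v$ is a dependency. Hence $w$ precedes $v$, and $B$ is triangular with diagonal $\Delta$. Since $\Delta$ consists of edge units, $\Delta^{-1}$ exists over $R$. The matrix $L=I-\Delta^{-1}B$ has zero diagonal and is strictly triangular, so $L^m=0$. The identity $(I-L)(I+L+\cdots+L^{m-1})=I-L^m=I$, together with $B=\Delta(I-L)$, gives~\eqref{eq:path-inverse}. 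No division beyond the units $t_e$ occurs, so the formula is stable under every base change.

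\emph{Contraction identities.} These follow from the block form $\bA_{V,U}=\binom BC$. First, $h\bA_{V,U}x=B^{-1}Bx=x$. Next,
\[
 \bA_{V,U}h(y,z)+ip(y,z)=(y,CB^{-1}y)+(0,z-CB^{-1}y)=(y,z).
\]
Finally, $pi(z)=z$. Read as maps of two-term complexes, $i:[0\to R^J]\to K_S$ and $p:K_S\to[0\to R^J]$ are chain maps. For $p$ this is the relation $p\bA_{V,U}=CB^{-1}B\cdot(\text{first})$; more precisely, $p(Bx,Cx)=Cx-CB^{-1}Bx=0$. The map $h$ is a homotopy from $ip$ to the identity. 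In degree $0$ the identity $h\bA=I$ says $ip$ vanishes there, consistent with $[0\to R^J]$. This is the same computation as in Theorem~\ref{thm:flatness}, now with an arbitrary acyclic matching in place of a chronology.

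\emph{Coherence.} This is the only delicate point, because two matchings $\mathfrak m,\mathfrak n$ have different row sets $Q_{\mathfrak m},Q_{\mathfrak n}$ and hence different targets $R^{J_{\mathfrak m}}$ and $R^{J_{\mathfrak n}}$. The key is the identity $i_{\mathfrak m}p_{\mathfrak m}=I-\bA_{V,U}h_{\mathfrak m}$ in degree one. Since $p_{\mathfrak n}\bA_{V,U}=0$, we get
\[
 p_{\mathfrak n}i_{\mathfrak m}p_{\mathfrak m}=p_{\mathfrak n}.
\]
Multiplying on the right by $i_{\mathfrak p}$ and using $p_{\mathfrak m}i_{\mathfrak m}=I$ would give $T_{\mathfrak n\mathfrak m}T_{\mathfrak m\mathfrak p}=p_{\mathfrak n}i_{\mathfrak p}=T_{\mathfrak n\mathfrak p}$, which is~\eqref{eq:coherent-changes} after renaming the matchings. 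Taking $\mathfrak p=\mathfrak n$ gives $T_{\mathfrak n\mathfrak m}T_{\mathfrak m\mathfrak n}=p_{\mathfrak n}i_{\mathfrak n}=I$. The symmetric computation shows that $T_{\mathfrak m\mathfrak n}$ and $T_{\mathfrak n\mathfrak m}$ are mutually inverse. All of these maps are built from $B^{-1}$ and matrix entries, so they commute with base change.
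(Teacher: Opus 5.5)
Your proposal is correct and follows essentially the paper's route: nilpotence of $L$ comes from acyclicity of the dependency graph, and the contraction identities come from the same block computation. For nilpotence you choose a topological order so that $L$ is strictly triangular, while the paper expands $(L^r)_{v,w}$ as a sum over directed walks; coherence is proved as in the paper by inserting $i_{\mathfrak m}p_{\mathfrak m}=I-\bA_{V,U}h_{\mathfrak m}$ and using $p_{\mathfrak n}\bA_{V,U}=0$, where your appeal to $p_{\mathfrak m}i_{\mathfrak m}=I$ is superfluous and the side condition $hi=0$ noted by the paper is immediate from $h(0,z)=0$.
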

\begin{proof}
Every nonzero entry $L_{v,w}$ follows an arrow $w\to v$. For $r\ge1$,
\[
 (L^r)_{v,w}
 =\sum_{w=v_0\to v_1\to\cdots\to v_r=v}
 L_{v_r,v_{r-1}}\cdots L_{v_1,v_0}.
\]
There is no directed walk of length $m$ in an acyclic graph on $m$ vertices, so $L^m=0$. As $B=\Delta(I-L)$,
\[
 (I-L)\sum_{r=0}^{m-1}L^r=I-L^m=I,
 \qquad B^{-1}=\sum_{r=0}^{m-1}L^r\Delta^{-1}.
\]
For the maps in \eqref{eq:contraction-maps},
\begin{align*}
 h\bA_{V,U}x&=h(Bx,Cx)=B^{-1}Bx=x,\\
 (\bA_{V,U}h+ip)(y,z)
 &=(y,CB^{-1}y)+(0,z-CB^{-1}y)=(y,z),\\
 pi(z)&=p(0,z)=z.
\end{align*}
Also $p\bA_{V,U}=0$ and $hi=0$, proving the contraction onto $[0\to R^J]$. For $m=0$, use the identity map on $[0\to R^V]$.

For two matchings, write the identity on the common degree-one module as
\[
 i_{\mathfrak n}p_{\mathfrak n}
 =I-\bA_{V,U}h_{\mathfrak n}.
\]
Then
\begin{align*}
 T_{\mathfrak p\mathfrak n}T_{\mathfrak n\mathfrak m}
 &=p_{\mathfrak p}i_{\mathfrak n}p_{\mathfrak n}i_{\mathfrak m}\\
 &=p_{\mathfrak p}(I-\bA_{V,U}h_{\mathfrak n})i_{\mathfrak m}\\
 &=p_{\mathfrak p}i_{\mathfrak m}=T_{\mathfrak p\mathfrak m}.
\end{align*}
Taking $\mathfrak p=\mathfrak m$ gives the inverse identities. All matrices are defined over $R$, with inverses only of edge units; every coefficient base change preserves the displayed identities.
\end{proof}

The same matched block can be eliminated without fixing the values on $S$. The remaining equations relate the seed values to the rows at vertices which were not used as sources.

\begin{corollary}\label{cor:seed-terminal}
For a forcing matching as above, put
\begin{equation}\label{eq:psi}
 \Psi_S=\bA_{J,S}-CB^{-1}\bA_{Q,S}.
\end{equation}
Then the full equation complex has a decomposition, up to a contractible identity summand,
\begin{equation}\label{eq:seed-terminal}
 [R^V\xrightarrow{\bA}R^V]\simeq[R^S\xrightarrow{\Psi_S}R^J].
\end{equation}
In every field fiber,
\[
 \ker\bA\cong\ker\Psi_S,\qquad
 \rank\bA=|U|+\rank\Psi_S.
\]
\end{corollary}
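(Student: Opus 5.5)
We prove Corollary~\ref{cor:seed-terminal} by running the elimination of Theorem~\ref{thm:contraction} on the full matrix $\bA$, keeping the seed columns as unknowns. Order the rows as $(Q,J)$ and the columns as $(U,S)$. Then
\[
 \bA=\begin{pmatrix}B&\bA_{Q,S}\\ C&\bA_{J,S}\end{pmatrix},
\]
where $B=\bA_{Q,U}$ and $C=\bA_{J,U}$ are the blocks of Theorem~\ref{thm:contraction}. By \eqref{eq:path-inverse}, $B$ is invertible over $R$, and its inverse uses only the edge units $t_{\phi(v),v}$.

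Next we apply the block Gauss elimination. Define the target automorphism
\[
 Q_0(y,z)=(B^{-1}y,\ z-CB^{-1}y),
\]
with inverse $(y',z')\mapsto(By',\,z'+Cy')$. Define the source automorphism
\[
 P_0(x_U,x_S)=(x_U+B^{-1}\bA_{Q,S}x_S,\ x_S),
\]
whose inverse flips the sign. A direct multiplication, which is the only routine calculation, gives
\[
 Q_0\,\bA\,P_0^{-1}=\begin{pmatrix}I_{|U|}&0\\0&\Psi_S\end{pmatrix}.
\]
The lower-right entry is $\bA_{J,S}-CB^{-1}\bA_{Q,S}=\Psi_S$, as in \eqref{eq:psi}.

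This identity is an isomorphism of two-term complexes
\[
 [R^V\xrightarrow{\bA}R^V]\cong[R^U\xrightarrow{\id}R^U]\oplus[R^S\xrightarrow{\Psi_S}R^J].
\]
The identity summand is contractible, with the homotopy supplied by its inverse, which gives \eqref{eq:seed-terminal}. The degree counts match because $|Q|=|U|$ and $|J|=|S|$.

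All matrices involved have entries in $R$ and invert only edge units, exactly as in Theorem~\ref{thm:contraction}. The isomorphism therefore commutes with every base change $R\to\kk_\alpha$.

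On a field fiber, the decomposition gives $\ker\bA\cong\ker\id\oplus\ker\Psi_S=\ker\Psi_S$. Explicitly, a null vector is determined by its seed values $x_S\in\ker\Psi_S$ via $x_U=-B^{-1}\bA_{Q,S}x_S$. Ranks add under direct sums, so $\rank\bA=|U|+\rank\Psi_S$.

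The argument has no real obstacle. The only point needing care is that the column ordering $(U,S)$ and row ordering $(Q,J)$ are honest permutations. This holds because $\phi$ is injective and $J$ is the complement of its image. The case $U=\varnothing$ is trivial, since then $Q=\varnothing$, $J=V$ and $\Psi_S=\bA$.
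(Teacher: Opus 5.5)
Your proposal is correct and follows the paper's proof: the same block elimination with rows ordered $(Q,J)$ and columns $(U,S)$, the same Schur complement $\Psi_S$, and the same explicit kernel map $x_S\mapsto(-B^{-1}\bA_{Q,S}x_S,\,x_S)$. The only cosmetic difference is that you also normalize $B$ to $I_{|U|}$ through the target change, whereas the paper stops at $\operatorname{diag}(B,\Psi_S)$ and uses invertibility of $B$ to discard that summand.
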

\begin{proof}
Order the rows by $(Q,J)$ and the columns by $(U,S)$. Then
\[
 \begin{pmatrix}I&0\\-CB^{-1}&I\end{pmatrix}
 \begin{pmatrix}B&\bA_{Q,S}\\C&\bA_{J,S}\end{pmatrix}
 \begin{pmatrix}I&-B^{-1}\bA_{Q,S}\\0&I\end{pmatrix}
 =\begin{pmatrix}B&0\\0&\Psi_S\end{pmatrix}.
\]
The block $B$ is invertible, and hence gives a contractible summand. On kernels the remaining identification is
\[
 b\longmapsto(-B^{-1}\bA_{Q,S}b,b),\qquad
 b\in\ker\Psi_S.
\]
This proves \eqref{eq:seed-terminal} and
$\rank\bA=|U|+\rank\Psi_S$ after specialization to any field.
\end{proof}
The transition identities \eqref{eq:coherent-changes} show that different complete forcing sequences give compatible coordinates on $M_S$. The noncontractible equations of the full matrix are those of $\Psi_S$ in \eqref{eq:psi}.

\section{Support varieties and rank strata}\label{sec:support}

We now determine the coefficient loci admitting a prescribed kernel support.
Work over $\CC$, and set $n=|V|$, $e=|E|$,
$R=\CC[d_v,t_a^{\pm1}]$, and
\[
 \Omega_G=\Spec R=\AA^n\times\Gm^e.
\]
Fix $S\subseteq V$, write $U=V\setminus S$, and set $m=|U|$. A coefficient point is denoted by $\alpha$; its matrix is $\bA(\alpha)$.

\subsection{The incidence variety of a fort}
Proposition~\ref{prop:support-witness} identifies the possible nonzero supports. To keep track of the coefficients realizing one such support, we retain both the matrix parameter and the projective null vector.
For a nonempty subset $F\subseteq U$, define
\begin{equation}\label{eq:exact-incidence}
 \cI_F=\{(\alpha,[x]):\bA(\alpha)x=0,\ \supp x=F\}
 \subseteq\Omega_G\times\PP(\CC^F).
\end{equation}
The vector is extended by zero on $V\setminus F$. The condition of exact support is open in its projective coordinate space. Let
\[
 \cD_F=\overline{\pi(\cI_F)}\subseteq\Omega_G,
\]
where $\pi$ forgets $[x]$ and the closure is Zariski closure. Write
\[
 \partial F=N(F)\setminus F,\quad k_u=|N(u)\cap F|\ (u\in\partial F),\quad
 e_\times(F)=|E(F,V\setminus F)|.
\]

At a vertex outside $F$, the equation $\bA x=0$ is a zero sum of nonzero neighboring terms. The following hyperplane complements describe those equations.

\begin{definition}
For $j\ge2$, let
\[
 \mathcal U_j=\{(z_1,\ldots,z_j)\in\Gm^j:z_1+\cdots+z_j=0\}.
\]
Thus $\mathcal U_j$ is the complement of the coordinate hyperplanes in the hyperplane $z_1+\cdots+z_j=0$.
\end{definition}
Since
\[
 (1,\ldots,1,-(j-1))\in\mathcal U_j,
\]
this is a nonempty open subset of an affine space of dimension $j-1$. In particular, it is smooth and irreducible.

The equations at vertices of $F$ determine their diagonal coefficients. The remaining equations use disjoint groups of crossing-edge variables, one group for each $u\in\partial F$.

\begin{theorem}\label{thm:incidence-product}
The variety $\cI_F$ is nonempty exactly when $F$ is a fort. For a fort,
\begin{equation}\label{eq:incidence-product}
 \cI_F\cong
 \Gm^{|F|-1+e-e_\times(F)}\times\AA^{n-|F|}
 \times\prod_{u\in\partial F}\mathcal U_{k_u}.
\end{equation}
Consequently it is smooth and irreducible, and
\begin{equation}\label{eq:incidence-dim}
 \dim\cI_F=n+e-|\partial F|-1.
\end{equation}
\end{theorem}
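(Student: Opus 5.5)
The plan is to write down the equations of $\cI_F$ explicitly, normalize the projective null vector, and solve them variable by variable. Nonemptiness for a fort is Proposition~\ref{prop:support-witness}. Conversely, a point of $\cI_F$ gives a nonzero null vector of support $F$, so $F$ is a fort by the same proposition. For the rest we assume $F$ is a fort.

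First we remove the projective scaling. A point $[x]$ with $\supp x=F$ has a unique representative with $x_{f_0}=1$ for a fixed $f_0\in F$. This identifies the open exact-support locus of $\PP(\CC^F)$ with $\{x_{f_0}=1\}\times\Gm^{|F|-1}$. The remaining coordinates of $\alpha$ are the diagonal entries $d_v$, the edge variables inside $F$ or inside $V\setminus F$, and the crossing edge variables. We then split the equations $(\bA x)_u=0$ into three groups.

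\textbf{Rows $u\in F$.} The equation reads $d_ux_u+\sum_{v\in N(u)\cap F}t_{uv}x_v=0$. Since $x_u$ is a unit, it determines $d_u$ uniquely as a regular function of the other coordinates. These rows therefore eliminate the $|F|$ diagonal variables on $F$.

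\textbf{Rows $u\notin N[F]$.} These equations are vacuous.

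\textbf{Rows $u\in\partial F$.} The equation is $\sum_{v\in N(u)\cap F}t_{uv}x_v=0$. It involves only the $k_u\ge2$ crossing variables at $u$, and different $u$ use disjoint groups, since each crossing edge has exactly one exterior endpoint. We substitute $z_v=t_{uv}x_v$. Because $x_v$ is a unit, this is an invertible change of coordinates on $\Gm^{k_u}$, depending regularly on $x$. In the new coordinates the condition is exactly $(z_v)\in\mathcal U_{k_u}$.

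Assembling the three groups gives the isomorphism. The free factors are $x$ in $\Gm^{|F|-1}$, the noncrossing edge variables in $\Gm^{e-e_\times(F)}$, and the diagonal entries off $F$ in $\AA^{n-|F|}$. For each $u\in\partial F$ we get a factor $\mathcal U_{k_u}$. The inverse map recovers $t_{uv}=z_v/x_v$ and $d_u$ for $u\in F$ by the formulas above. Both directions are regular, so this is an isomorphism of varieties and not just a bijection. Each factor is smooth and irreducible, with $\mathcal U_j$ smooth and irreducible as noted above, so the product is too.

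Since $\sum_{u\in\partial F}k_u=e_\times(F)$ and $\dim\mathcal U_{k_u}=k_u-1$, the dimension is
\[
 (|F|-1)+(e-e_\times(F))+(n-|F|)+\sum_{u\in\partial F}(k_u-1)
 =n+e-|\partial F|-1,
\]
which is \eqref{eq:incidence-dim}.

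The only delicate point is checking that the coordinate changes are genuinely biregular on the stated open sets, which relies on $x_v\ne0$ for $v\in F$. We also need the fort condition to rule out $k_u=1$, since $\mathcal U_1$ would be empty and the corresponding factor would vanish.
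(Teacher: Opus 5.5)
Your proof is correct and follows the paper's argument. You normalize one coordinate of the exact-support vector, solve the interior rows for the diagonals $d_u$, and substitute $z_{uv}=t_{uv}x_v$ so that each exterior row becomes a factor $\mathcal U_{k_u}$. The paper differs only cosmetically: it also rescales the internal edges by $c_{uv}=t_{uv}x_ux_v$, which is unnecessary, and it reads off nonemptiness from the product instead of citing Proposition~\ref{prop:support-witness}.
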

\begin{proof}
Choose $r\in F$ and normalize $x_r=1$. Since the support is exactly $F$, this is a global chart on the projective support torus. Its free coordinates are
\[
 (x_v)_{v\in F\setminus\{r\}}\in\Gm^{|F|-1}.
\]
Change the edge coordinates by
\[
 c_{uv}=t_{uv}x_ux_v\quad(u,v\in F),\qquad
 z_{uv}=t_{uv}x_v\quad(u\notin F,\,v\in F).
\]
The inverse substitutions are $t_{uv}=c_{uv}/(x_ux_v)$ and $t_{uv}=z_{uv}/x_v$. All denominators are units on the support torus.

For an exterior vertex $u$,
\begin{equation}\label{eq:outside-zero-sum}
 0=(\bA x)_u
   =\sum_{v\in N(u)\cap F}t_{uv}x_v
   =\sum_{v\in N(u)\cap F}z_{uv}.
\end{equation}
This has no solution if $k_u=1$, and gives $\mathcal U_{k_u}$ if $k_u\ge2$. The coordinate groups for different $u$ are disjoint.

For $u\in F$,
\[
 0=x_u(\bA x)_u
   =d_ux_u^2+\sum_{v\in N(u)\cap F}c_{uv},
\]
and hence
\begin{equation}\label{eq:inside-diag}
 d_u=-x_u^{-2}\sum_{v\in N(u)\cap F}c_{uv}.
\end{equation}
Thus the internal diagonals are determined, the $n-|F|$ exterior diagonals are free, and the $e-e_\times(F)$ noncrossing edge variables are free and nonzero. These coordinate substitutions give the isomorphism \eqref{eq:incidence-product}, with regular inverse.

The product is nonempty exactly when every nonzero $k_u$ is at least two, equivalently when $F$ is a fort. For such $F$,
\begin{align*}
 \dim\cI_F
 &=|F|-1+e-e_\times(F)+n-|F|
     +\sum_{u\in\partial F}(k_u-1)\\
 &=n+e-|\partial F|-1,
\end{align*}
because $\sum_uk_u=e_\times(F)$. Smoothness and irreducibility follow from the same properties of the factors.
\end{proof}

The dimension of $\cD_F$ is obtained by subtracting the generic fiber dimension of $\pi$ from~\eqref{eq:incidence-dim}. We next compute that fiber through the restricted kernel on $F$.

\subsection{Support components and the dimension of the coefficient image}
Let $F_1,\ldots,F_r$ be the connected components of $G[F]$. For $u\in\partial F$, write
\[
 C(u)=\{i:N(u)\cap F_i\ne\varnothing\}.
\]

Each component $F_i$ supplies one multiple of an exact-support null vector that solves all internal rows. An exterior row gives a relation among the components it meets. Since a single row contributes at most one independent relation, its exterior vertex must be used as a label at most once.

\begin{definition}\label{def:labeled-forest}
A \emph{support-component forest} selects, for some exterior vertices $u$, one pair of distinct indices in $C(u)$, and regards it as an edge labeled by $u$. Each exterior label is used at most once, and the selected edges form a forest on $\{1,\ldots,r\}$. Define
\begin{equation}\label{eq:rho-h}
 \rho_F=\max\{\text{number of edges in such a forest}\},\qquad h_F=r-\rho_F.
\end{equation}
Parallel edges with different labels are allowed as candidates, but two parallel edges form a cycle and cannot both belong to the selected forest. A label with $|C(u)|=1$ supplies no edge. An empty forest is allowed.
\end{definition}

For example, let $F$ be the three leaves of a star. Its support graph has
three components, but the center supplies only one exterior label. Hence
$r(F)=3$, $\rho_F=1$, and $h_F=2$. A vector of exact support $F$ forces
all three leaf diagonals to vanish. The center row then gives one relation
among the three leaf coordinates, with all three coefficients nonzero.
Its two-dimensional kernel contains a vector with all coordinates nonzero:
none of the three coordinate hyperplanes contains the kernel, and their
union cannot cover it over $\CC$. Thus the restricted nullity is two and
the coefficient locus has codimension three, as
Theorem~\ref{thm:fort-codimension} predicts.

For every exterior vertex, define the subspace
\begin{equation}\label{eq:component-space}
 L_u=\{z\in\CC^r:\supp z\subseteq C(u),\ \sum_i z_i=0\}.
\end{equation}
It is generated by the differences $\mathbf e_i-\mathbf e_j$ for $i,j\in C(u)$.

The difference vectors generating $L_u$ are the rows of an incidence matrix on the support components. Their independent choices are precisely the labeled forests just defined.

\begin{lemma}\label{lem:row-forest}
The maximum rank of a matrix obtained by selecting one row in each $L_u$ is $\rho_F$.
\end{lemma}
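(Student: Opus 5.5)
The plan is to reduce the question to graphic matroids. A general vector of $L_u$ is $\sum_{i\in C(u)}z_i\mathbf e_i$ with $\sum_i z_i=0$. Each $L_u$ is spanned by the differences $\mathbf e_i-\mathbf e_j$ ($i,j\in C(u)$), which are the rows of the oriented incidence matrix of the complete graph on $\{1,\ldots,r\}$. The task is therefore to show that the maximum, over choices of one vector $z^{(u)}\in L_u$ for each exterior $u$, of $\rank(z^{(u)})_u$ equals $\rho_F$, the largest size of a labeled forest using each label at most once. I will prove the two inequalities separately.

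For the lower bound, take a support-component forest with $\rho_F$ edges, where the edge $\{i_u,j_u\}$ carries label $u$. For each such label select the row $\mathbf e_{i_u}-\mathbf e_{j_u}\in L_u$, and select the zero row for the unused labels; zero is allowed because $0\in L_u$. The selected nonzero rows are the incidence vectors of the edges of a forest. Incidence vectors of a forest are linearly independent: a leaf coordinate appears in exactly one row, so induction on the number of edges, deleting a leaf edge each time, shows that any vanishing combination has all coefficients zero. Hence the rank is at least $\rho_F$.

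For the upper bound, fix any selection $z^{(u)}\in L_u$ with rank $\rho$. I will produce a labeled forest with $\rho$ edges, and I expect this to be the main step. Choose a set $J$ of $\rho$ labels whose rows are independent. Consider the family of spaces $(\spanlin\{\mathbf e_i-\mathbf e_j: i,j\in C(u)\})_{u\in J}$, each containing the chosen vector $z^{(u)}$. I then apply a Rado-type independent-transversal argument: each $z^{(u)}$ is a combination of edge vectors from the complete graph on $C(u)$. The goal is to find, for each $u\in J$, an edge $\{i_u,j_u\}\subseteq C(u)$ such that the edge vectors are independent. Rado's theorem for the graphic matroid says such a transversal exists if and only if, for every $J'\subseteq J$, the graphic rank of $\bigcup_{u\in J'}K_{C(u)}$ is at least $|J'|$. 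This condition holds, because that rank equals $\dim\sum_{u\in J'}L_u\ge\rank(z^{(u)})_{u\in J'}=|J'|$. Independent edge vectors form a forest, giving a labeled forest with $|J|=\rho$ edges and distinct labels. Hence $\rho\le\rho_F$.

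If invoking Rado's theorem is undesirable, the same conclusion can be reached by a direct exchange argument. Start with $z^{(u)}$ and replace it, one label at a time, by a single edge vector of $K_{C(u)}$ while preserving independence with the other current rows. This is possible because $z^{(u)}$ lies in the span of those edge vectors, so if every edge vector lay in the span $W$ of the other rows, then $z^{(u)}$ would lie in $W$ as well, contradicting independence. Iterating over $J$ directly produces independent edge rows. This elementary version avoids Rado entirely and is the one I would write out.
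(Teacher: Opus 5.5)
Your proof is correct and follows the paper's. The lower bound is identical: difference vectors of a maximum labeled forest, with zero rows for the unused labels. Your row-by-row exchange for the upper bound is the sequential form of the paper's argument, which expands a nonzero $s$-minor multilinearly into minors built from one difference vector per row and notes that a nonzero summand must come from a forest with distinct labels. Your Rado alternative is also valid, but it is heavier than needed here.
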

\begin{proof}
Suppose selected rows $c_{u_1},\ldots,c_{u_s}$ have a nonzero $s$-minor. Write
\[
 c_{u_a}=\sum_{\{i,j\}\subseteq C(u_a)}
 \lambda_{a,ij}(\mathbf e_i-\mathbf e_j).
\]
By multilinearity, that minor is a sum of minors obtained by choosing one difference vector from each row. At least one summand is nonzero. The chosen labels $u_a$ are distinct.

For difference vectors, a cycle gives the relation
\[
 (\mathbf e_{i_1}-\mathbf e_{i_2})+\cdots+
 (\mathbf e_{i_k}-\mathbf e_{i_1})=0.
\]
Conversely, if their edges form a forest, the coordinate at a leaf forces the coefficient of its unique incident edge in any linear relation to vanish. Repeating at the remaining leaves proves independence. Thus the nonzero summand has $s$ forest edges, and $s\le\rho_F$.

Choose the difference vectors of a maximum labeled forest, and zero rows for unused exterior vertices. They have rank $\rho_F$ and belong to the required spaces. Hence the maximum rank equals $\rho_F$.
\end{proof}

The rank bound for exterior rows gives a lower bound for every restricted kernel. It remains to choose internal weights with no additional internal kernel and exterior weights attaining that rank bound.

\begin{theorem}\label{thm:fort-codimension}
For every fort $F\subseteq U$, each point $(\alpha,[x])\in\cI_F$ satisfies
\begin{equation}\label{eq:lower-nullity}
 \dim\ker\bA_{V,F}(\alpha)\ge h_F.
\end{equation}
Equality holds on a nonempty dense open subset of $\cI_F$. Moreover,
\begin{equation}\label{eq:fort-codimension}
 \codim_{\Omega_G}\cD_F=|\partial F|+h_F.
\end{equation}
\end{theorem}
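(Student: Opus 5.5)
The plan is to pass to the normalized coordinates of Theorem~\ref{thm:incidence-product}. There the null vector becomes $\one_F$, the internal block becomes a weighted Laplacian, and the exterior rows become labeled zero-sum relations on componentwise constants. Lemma~\ref{lem:row-forest} then controls the rank of those relations, and the codimension follows from the fiber dimension of $\pi:\cI_F\to\cD_F$.

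\emph{Lower bound.} Fix $(\alpha,[x])\in\cI_F$ and put $X=\diag(x_v)_{v\in F}$, which is invertible on the support torus. Then $y\mapsto Xy$ identifies $\ker\bA_{V,F}(\alpha)$ with the kernel of $\bA_{V,F}(\alpha)X$. In this matrix the row of $u\in F$ is $d_ux_u\mathbf e_u+\sum_{v}t_{uv}x_v\mathbf e_v$. Multiplying these rows by $x_u$ gives, by \eqref{eq:inside-diag}, the weighted Laplacian $L_c$ of $G[F]$ with weights $c_{uv}$. The row of $u\in\partial F$ has entries $z_{uv}$.

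Every component indicator $\one_{F_i}$ lies in $\ker L_c$, so $\ker L_c$ contains an $r$-dimensional space $Z=\spanlin\{\one_{F_1},\ldots,\one_{F_r}\}$. On $Z$, the exterior row of $u$ acts as the vector $\bigl(\sum_{v\in F_i\cap N(u)}z_{uv}\bigr)_i$. This vector is supported in $C(u)$ and has total sum zero by \eqref{eq:outside-zero-sum}, so it lies in $L_u$. By Lemma~\ref{lem:row-forest} these exterior functionals have rank at most $\rho_F$ on $Z$. Hence the kernel has dimension at least $r-\rho_F=h_F$, which proves \eqref{eq:lower-nullity}.

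\emph{Generic equality.} Under the isomorphism \eqref{eq:incidence-product}, $\cI_F$ is irreducible. The following two conditions are Zariski open in the coordinates $(c,z)$, since each is the nonvanishing of some minor:
\begin{enumerate}
\item $\rank L_c=|F|-r$, i.e.\ $\ker L_c=Z$;
\item the exterior functionals have rank $\rho_F$ on $Z$.
\end{enumerate}
Condition (a) is nonempty: positive real weights make the kernel of a connected weighted Laplacian exactly the constants. For (b), the component-sum map $\mathcal U_{k_u}\to L_u$ is the restriction of a surjective linear map from the zero-sum hyperplane onto $L_u$ to a dense open subset. Its image is therefore dense in $L_u$, so a product choice attains the maximal rank $\rho_F$ of Lemma~\ref{lem:row-forest}. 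Two nonempty open subsets of an irreducible variety meet. On their intersection the kernel is exactly the $h_F$-dimensional subspace of $Z$ cut out by the exterior rows, so equality in \eqref{eq:lower-nullity} holds on a dense open set.

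\emph{Codimension.} The map $\pi:\cI_F\to\cD_F$ is dominant by definition. The fiber over $\alpha$ is the set of points of $\PP(\ker\bA_{V,F}(\alpha))$ with all coordinates nonzero. It is open in this projective space and nonempty whenever $\alpha\in\pi(\cI_F)$. On the dense open set just found, the fiber therefore has dimension $h_F-1$. By generic fiber dimension for a dominant morphism of irreducible varieties, together with \eqref{eq:incidence-dim},
\[
\dim\cD_F=n+e-|\partial F|-1-(h_F-1).
\]
This gives \eqref{eq:fort-codimension}.

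The main obstacle is this last matching step. The dense open subset of $\cI_F$ on which the nullity equals $h_F$ has to be shown to contain the fibers over a dense open subset of $\cD_F$. I plan to handle it as follows. The nullity of $\bA_{V,F}(\alpha)$ depends only on $\alpha$, so the bad locus is $\pi^{-1}(Y)$, where $Y\subseteq\Omega_G$ is the closed set on which the nullity exceeds $h_F$. This preimage is a proper closed subset of $\cI_F$ and is not dense. Consequently $\pi(\cI_F)\not\subseteq Y$, and the generic point of $\cD_F$ lies off $Y$. Off $Y$ every nonempty fiber has dimension exactly $h_F-1$.
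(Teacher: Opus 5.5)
Your proposal is correct and follows essentially the same route as the paper. The lower bound comes from componentwise multiples of $x$ together with Lemma~\ref{lem:row-forest}; generic equality comes from positive internal Laplacian weights plus a surjectivity and density choice of crossing weights; and the codimension comes from fibers $\PP(\ker\bA_{V,F}(\alpha))\cap\{x_v\ne0\}$ of dimension $h_F-1$. The only difference is cosmetic: you impose the two rank conditions as open sets directly in the product coordinates of Theorem~\ref{thm:incidence-product} and remove the closed locus $Y$, whereas the paper builds one explicit witness at $x=\one_F$ and spreads it to an open subset of $\cD_F$ using a nonvanishing minor $\mu$.
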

\begin{proof}
Write $F_1,\ldots,F_r$ for the components of $G[F]$. At a point
$(\alpha,[x])\in\cI_F$, every coordinate $x_v$, $v\in F$, is nonzero.
Define an injection $\iota_x:\CC^r\to\CC^F$ by
$(\iota_x(a))_v=a_ix_v$ for $v\in F_i$. Each row in $F_i$ vanishes on this
vector because it is the scalar $a_i$ times the same row evaluated on $x$;
there are no edges between distinct support components. The remaining
possibly nonzero rows are
\begin{equation}\label{eq:grouped-rows}
 (\bA\iota_x(a))_u=\sum_{i=1}^rC_{u,i}a_i,
 \qquad C_{u,i}=\sum_{v\in N(u)\cap F_i}\bA_{uv}(\alpha)x_v
 \quad(u\in\partial F).
\end{equation}
Each grouped row belongs to $L_u$: it is supported on the components met by
$u$, and its total sum is $(\bA x)_u=0$. Lemma~\ref{lem:row-forest} gives
$\rank C\le\rho_F$, whence
\[
 \iota_x(\ker C)\subseteq\ker\bA_{V,F}(\alpha),\qquad
 \dim\ker\bA_{V,F}(\alpha)\ge r-\rho_F=h_F.
\]
To attain equality, set $x=\one_F$. Choose positive rational weights
on all edges internal to $F$, and let the diagonal at $v\in F$ be minus the
sum of those internal weights at $v$. For real $y$ on a component,
\[
 y^{\mathsf T}\bA_{F_i,F_i}y
 =-\sum_{vw\in E(F_i)}t_{vw}(y_v-y_w)^2.
\]
Thus its kernel consists of constant vectors. The same is true over $\CC$,
since the rank of this real matrix is the size of a nonzero minor and does
not change under scalar extension. Singleton components have the zero
one-by-one matrix and satisfy the same assertion. Hence
\[
 \ker\bA_{F,F}=\bigoplus_i\CC\one_{F_i}.
\]

For each $u\in\partial F$ let
\[
 H_u=\{(z_v)_{v\in N(u)\cap F}:\sum_vz_v=0\},
 \qquad g_u(z)_i=\sum_{v\in N(u)\cap F_i}z_v.
\]
The map $g_u:H_u\to L_u$ is onto: for a prescribed row $c\in L_u$, choose
one neighbour in each component met by $u$, give it value $c_i$, and set
the other coordinates to zero. Their total sum is zero. Therefore the
product map $g:\prod_uH_u\to\prod_uL_u$ is a surjective linear map of
affine spaces.

A forest attaining $\rho_F$ in Lemma~\ref{lem:row-forest} gives a nonzero
$\rho_F$-minor in the grouped-row coordinates. The nonvanishing of that
minor defines a nonempty open subset of $\prod_uL_u$, and its inverse image
under $g$ is nonempty and open. If $\rho_F=0$, use the constant minor one.
The subset in $\prod_uH_u$ where every crossing coordinate is nonzero is
also nonempty and open: on each factor one can use
$(1,\ldots,1,-(k_u-1))$, since $k_u=|N(u)\cap F|\ge2$.
The product $\prod_uH_u$ is irreducible, so these two opens intersect.
Choose the crossing weights in their intersection. Then every crossing weight is nonzero and $\rank C=\rho_F$. Give edges wholly outside $F$ arbitrary nonzero values and choose
outside diagonals arbitrarily. The equations on $F$ force any restricted
null vector to be componentwise constant, while the external rows impose
exactly $Ca=0$. Consequently
\[
 \ker\bA_{V,F}=\iota_{\one_F}(\ker C),\qquad
 \dim\ker\bA_{V,F}=h_F.
\]

To pass from this point to generic coefficients, put $s=|F|-h_F$. All
$(s+1)$-minors vanish on the coefficient image of $\cI_F$, by the lower
bound. They therefore vanish on its closure $\cD_F$. At the point just
constructed some $s$-minor $\mu$ is nonzero (use $\mu=1$ if $s=0$).
Thus
\[
 O=\cD_F\cap\{\mu\ne0\}
\]
is a nonempty open on which the restricted rank is exactly $s$. Its inverse
image in the irreducible variety $\cI_F$ is a nonempty dense open, proving
the generic equality in~\eqref{eq:lower-nullity}. The same coefficient open set $O$ determines the fibers of $\pi$.

The image of $\pi:\cI_F\to\cD_F$ is constructible and dense. Since
$\cD_F$ is irreducible, it contains a nonempty open subset $O'$. Restrict to
$O\cap O'$. Pivot elimination using $\mu$ makes the restricted kernels a
vector bundle of rank $h_F$. For every $\alpha$ there,
\[
 \pi^{-1}(\alpha)=
 \PP(\ker\bA_{V,F}(\alpha))\cap
 \{x_v\ne0\text{ for every }v\in F\}.
\]
This is nonempty because $\alpha\in O'$, and is open in a projective space
of dimension $h_F-1$. The generic fiber therefore has dimension $h_F-1$.
By the incidence dimension already computed in
Theorem~\ref{thm:incidence-product},
\[
 \begin{aligned}
 \dim\cD_F
 &=\dim\cI_F-(h_F-1)\\
 &=(n+e-|\partial F|-1)-(h_F-1)
  =n+e-|\partial F|-h_F.
 \end{aligned}
\]
Subtracting from $\dim\Omega_G=n+e$ proves~\eqref{eq:fort-codimension}.
\end{proof}

Connected support leaves only one componentwise multiple. Minimal fort support has the same effect, since two independent restricted null vectors would produce a smaller support by canceling one coordinate.

\begin{corollary}\label{cor:connected-minimal}
If $G[F]$ is connected, then $h_F=1$ and $\codim\cD_F=|\partial F|+1$. The same conclusion holds if $F$ is an inclusion-minimal fort, even when $G[F]$ is disconnected.
\end{corollary}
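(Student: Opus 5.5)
Both assertions follow from Theorem~\ref{thm:fort-codimension} once we show $h_F=1$. The codimension formula~\eqref{eq:fort-codimension} then gives $\codim\cD_F=|\partial F|+1$. So the plan is to prove $h_F=1$ in each case, either combinatorially or through the generic restricted nullity.

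\emph{Connected case.} If $G[F]$ is connected then $r(F)=1$. A forest on a single vertex has no edges, so $\rho_F=0$ and $h_F=r(F)-\rho_F=1$ directly from~\eqref{eq:rho-h}. No geometry is needed here.

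\emph{Minimal fort case.} Here we use the second half of Theorem~\ref{thm:fort-codimension}: $h_F$ equals $\dim\ker\bA_{V,F}(\alpha)$ on a nonempty dense open subset of $\cI_F$.

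First, $h_F\ge1$. At any point $(\alpha,[x])\in\cI_F$ the vector $x$ itself lies in $\ker\bA_{V,F}(\alpha)$.

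Next, suppose the generic restricted nullity were at least two. Pick a point $(\alpha,[x])$ in the dense open set, so that $\dim\ker\bA_{V,F}(\alpha)\ge2$, and choose $y$ in this kernel independent of $x$. Every coordinate $x_v$ with $v\in F$ is nonzero. Fix $v_0\in F$ and put
\[
 z=y-\frac{y_{v_0}}{x_{v_0}}\,x .
\]
This $z$ is nonzero, since $y$ and $x$ are independent. It lies in $\ker\bA_{V,F}(\alpha)$, and after extension by zero it is a null vector of $\bA(\alpha)$ with $\supp z\subseteq F\setminus\{v_0\}$.

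By the first part of the proof of Proposition~\ref{prop:support-witness}, $\supp z$ is a fort. It is nonempty and strictly contained in $F$, which contradicts the minimality of $F$. Hence the generic nullity is exactly one, $h_F=1$, and the codimension follows as before.

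The main obstacle is modest. We must apply the support-is-a-fort argument to the specialized matrix $\bA(\alpha)$, which has nonzero edge entries because $\alpha\in\Omega_G$, so it is a genuine element of $\cA_\CC(G)$. We must also note that the equality of generic nullity with $h_F$ is used only at one point of the open set. In a disconnected minimal fort this recovers the combinatorial identity $\rho_F=r(F)-1$; we do not verify that identity directly.
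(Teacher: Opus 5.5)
Your proposal is correct and follows the paper's proof. The connected case is read off from $r(F)=1$ and $\rho_F=0$. In the minimal-fort case, your $z=y-(y_{v_0}/x_{v_0})x$ is a rescaling of the paper's $x_vy-y_vx$; it yields a strictly smaller fort via Proposition~\ref{prop:support-witness}, and Theorem~\ref{thm:fort-codimension} then gives $h_F=1$ and the codimension. The only cosmetic difference is that the paper argues at an arbitrary exact-support witness, whereas you argue at a point of the generic open set; either suffices.
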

\begin{proof}
If $G[F]$ is connected, then
\[
 r(F)=1,\qquad\rho_F=0,\qquad h_F=1.
\]
Now suppose that $F$ is an inclusion-minimal fort. Choose an exact-support witness $x$. If $\dim\ker\bA_{V,F}\ge2$, choose a kernel vector $y\notin\CC x$ and $v\in F$. Then
\[
 z=x_vy-y_vx\ne0,\qquad z_v=0,\qquad
 \supp z\subsetneq F.
\]
Proposition~\ref{prop:support-witness} makes $\supp z$ a smaller fort, a contradiction. Thus the restricted nullity is one, and Theorem~\ref{thm:fort-codimension} gives $h_F=1$ and the stated codimension.
\end{proof}

\subsection{A second formula for the component nullity}
The integer $\rho_F$ can also be computed by the deficit of independent exterior equations on subsets of the labels. We derive this form because it describes $h_F$ without choosing a maximizing forest.
For $J\subseteq\partial F$, join all component indices within each $C(u)$, $u\in J$. Let $c_F(J)$ denote the number of connected classes of the resulting graph on all $r$ indices, including isolated ones.

For a family of row spaces, the sum of the spaces indexed by $J$ bounds how
many independent rows can be selected from those indices. The following is the
linear independent-representative form of Rado's theorem~\cite{Rado}.
Its min--max expression gives the second formula for $h_F$.

\begin{lemma}\label{lem:linear-representatives}
For subspaces $L_1,\ldots,L_m$ of a finite-dimensional vector space, the maximum number of independent vectors chosen from distinct $L_i$ is
\begin{equation}\label{eq:linear-representatives}
 \rho=\min_{J\subseteq\{1,\ldots,m\}}
 \left(m-|J|+\dim\sum_{j\in J}L_j\right).
\end{equation}
\end{lemma}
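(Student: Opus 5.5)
The easy inequality comes first. Suppose $v_1\in L_{i_1},\ldots,v_s\in L_{i_s}$ are independent vectors chosen from distinct indices. Fix any $J\subseteq\{1,\ldots,m\}$. At most $m-|J|$ of the chosen indices lie outside $J$. The vectors whose indices lie in $J$ are independent elements of $\sum_{j\in J}L_j$, so there are at most $\dim\sum_{j\in J}L_j$ of them. Therefore
\[
 s\le m-|J|+\dim\sum_{j\in J}L_j
\]
for every $J$, and $\rho$ is at most the right side of \eqref{eq:linear-representatives}.

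For the reverse inequality I would not reprove Rado's theorem from scratch. Instead I would reduce it to matroid union, equivalently matroid intersection. Let $E=\bigsqcup_i B_i$, where $B_i$ is a basis of $L_i$ and the copies are kept disjoint. On $E$ consider two matroids:
\begin{enumerate}
 \item $M_1$, the linear matroid of the ambient space restricted to $E$;
 \item $M_2$, the partition matroid in which a set is independent when it contains at most one element from each block $B_i$.
\end{enumerate}
Zero vectors do no harm: an empty basis simply contributes no elements. A common independent set of $M_1$ and $M_2$ is exactly an independent family of representatives. Conversely, any independent family of representatives $v_{i_1},\ldots,v_{i_s}$ can be replaced by one drawn from $E$. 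The step is the multilinear expansion already used in Lemma~\ref{lem:row-forest}: expand each $v_{i_a}$ in the basis $B_{i_a}$; some nonzero $s$-minor survives after choosing one basis vector from each factor. So $\rho$ equals the maximum size of a common independent set.

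Edmonds' matroid intersection theorem then gives
\[
 \rho=\min_{X\subseteq E}\bigl(r_1(X)+r_2(E\setminus X)\bigr).
\]
Here $r_2(E\setminus X)$ is the number of blocks that meet $E\setminus X$. Take a minimizing $X$ and let $J$ be the set of blocks contained entirely in $X$. Shrinking $X$ to $\bigcup_{j\in J}B_j$ does not increase $r_1$, and it leaves $r_2(E\setminus X)=m-|J|$ unchanged. Then $r_1(X)=\dim\sum_{j\in J}L_j$, which yields the right side of \eqref{eq:linear-representatives}.

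The main obstacle is the min--max step itself. If citing matroid intersection is unwanted, I would instead run an augmenting-path/exchange argument, with $\rho$ the maximum in the statement:
\begin{enumerate}
 \item Start from a maximum representative family with $\rho<m$.
 \item Let $J$ be the set of indices reachable from an unused index by alternating exchanges. An exchange at an index replaces its current representative by another vector of that space while keeping the family independent.
 \item Maximality means no exchange path ends at an enlargement of the family. From this, show that $\sum_{j\in J}L_j$ is spanned by the representatives chosen in $J$.
 \item Conclude $\dim\sum_{j\in J}L_j=\rho-(m-|J|)$, which is exactly the missing inequality.
\end{enumerate}
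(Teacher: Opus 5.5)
Your argument is correct, but it takes a different route from the paper for the lower bound. The upper bound is the same counting argument.

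\emph{The paper's route.} It stays entirely within linear algebra. It first proves the Hall-type full-selection statement: if $\dim\sum_{j\in J}L_j\ge|J|$ for every $J$, then all $m$ spaces admit independent representatives. The proof is by induction on $m$ and splits according to whether some nonempty proper $J$ is tight. If one is, the paper selects representatives inside $\sum_{j\in J}L_j$ and recurses in the quotient by that sum. If none is, it quotients by a single line in $L_m$. It then obtains the deficiency form by adjoining to every $L_i$ a common auxiliary summand of dimension $m-k$, where $k$ is the right-hand minimum, and projecting the resulting $m$ representatives back.

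\emph{Your route.} You discretize each $L_i$ to a basis. The reduction to basis vectors is correctly justified by the multilinear expansion, as in Lemma~\ref{lem:row-forest}. You then identify independent transversals with common independent sets of a linear matroid and a partition matroid, and invoke Edmonds' intersection theorem. Your normalization of a minimizing $X$ to the union of the blocks it contains is sound. Counting blocks with empty basis as vacuously contained in $X$, and hence in $J$, is exactly what makes $r_2(E\setminus X)=m-|J|$ hold.

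\emph{What each approach buys.} Yours is shorter, and the deficiency version falls out directly from a max--min theorem, so the auxiliary-summand trick is unnecessary. The cost is that the whole difficulty is outsourced to Edmonds' theorem, of which Rado's theorem is a textbook corollary. The paper's argument is self-contained.

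Your fallback exchange sketch is not a proof as written. Step (c) contains all the content and is left unproved. Making ``reachable by alternating exchanges'' precise requires the shortest-augmenting-path lemma from the proof of matroid intersection: that lemma is what guarantees that the composite of the exchanges along a path is again an independent family. Since your main route does not need the sketch, this does not affect correctness.
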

\begin{proof}
Write $L_J=\sum_{j\in J}L_j$. If independent vectors have been selected at the distinct indices in $I$, then for every $J\subseteq\{1,\ldots,m\}$,
\[
 |I|=|I\setminus J|+|I\cap J|
 \le m-|J|+\dim L_J.
\]
This proves the upper bound.

We first prove the following full-selection statement by induction on $m$: if $\dim L_J\ge|J|$ for every $J$, then all $m$ spaces admit independent representatives. The assertion for $m=0$ is empty, and for $m=1$ choose a nonzero vector in $L_1$.

Suppose $m>1$. If a nonempty proper subset $J$ satisfies $\dim L_J=|J|$, apply induction to its subfamily and choose $|J|$ independent representatives. They span $L_J$. For a set $K$ disjoint from $J$, the images of the other spaces in the quotient by $L_J$ satisfy
\[
 \dim\frac{L_J+L_K}{L_J}
 =\dim L_{J\cup K}-\dim L_J
 \ge |J|+|K|-|J|=|K|.
\]
Induction supplies independent representatives for the remaining spaces in that quotient. Lift them to their original spaces. A linear relation among all chosen representatives first projects to a zero relation in the quotient, killing the lifted coefficients; independence inside $L_J$ kills the rest.

If no such proper subset is tight, then $\dim L_J\ge|J|+1$ for every nonempty proper $J$. Choose $0\ne v_m\in L_m$ and quotient by its line. For a nonempty $J\subseteq\{1,\ldots,m-1\}$, passing to this quotient decreases $\dim L_J$ by at most one, leaving dimension at least $|J|$. Induction in the quotient, followed by lifts and $v_m$, again gives $m$ independent representatives. This completes the full-selection statement.

Let $k$ be the minimum on the right of~\eqref{eq:linear-representatives}. The choice $J=\varnothing$ gives $k\le m$, and every expression in the minimum is nonnegative. Introduce a vector space $E$ of dimension $m-k$, the same auxiliary summand for every index, and replace $L_i$ by $L_i\oplus E$. For each nonempty $J$,
\[
 \dim\sum_{j\in J}(L_j\oplus E)=\dim L_J+m-k\ge|J|.
\]
The full-selection result provides $m$ independent vectors in these enlarged spaces. Their $m$-dimensional span has a projection to the original ambient space whose kernel has dimension at most $m-k$. Hence its image has dimension at least $k$. It is spanned by the $m$ projected representatives, so a subset of at least $k$ of them is independent. Their original indices are distinct, and each belongs to its original $L_i$. This proves the required lower bound.
\end{proof}
For the spaces $L_u$ in \eqref{eq:component-space}, the dimension of a sum is determined by the connected classes joined by its labels.

\begin{proposition}\label{prop:deficiency}
The integer in \eqref{eq:rho-h} satisfies
\begin{equation}\label{eq:deficiency}
 h_F=\max_{J\subseteq\partial F}\bigl(c_F(J)-|\partial F\setminus J|\bigr).
\end{equation}
\end{proposition}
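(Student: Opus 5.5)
We plan to apply Lemma~\ref{lem:linear-representatives} to the family $(L_u)_{u\in\partial F}$ of subspaces of $\CC^r$ from~\eqref{eq:component-space}, with $m=|\partial F|$. By Lemma~\ref{lem:row-forest}, $\rho_F$ is the maximum rank of a matrix with one row chosen from each $L_u$. This is exactly the maximum number of independent vectors chosen from distinct $L_u$: unused labels contribute zero rows, and a selection of rows of rank $s$ contains $s$ independent rows at distinct labels. Hence
\[
 \rho_F=\min_{J\subseteq\partial F}\Bigl(|\partial F|-|J|+\dim\sum_{u\in J}L_u\Bigr).
\]

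The key computation is
\[
 \dim\sum_{u\in J}L_u=r-c_F(J).
\]
The sum is spanned by the differences $\mathbf e_i-\mathbf e_j$ with $i,j\in C(u)$ for some $u\in J$. These are the incidence vectors of the edges of the graph $\Gamma_J$ on $\{1,\ldots,r\}$ obtained by joining all indices within each $C(u)$, $u\in J$. The span of the edge differences of a graph is the space of vectors with zero sum on each connected class. Its dimension is therefore $r$ minus the number of classes, which is $r-c_F(J)$.

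One way to see this is to take a spanning forest of $\Gamma_J$, which has $r-c_F(J)$ edges. Its differences are independent by the leaf argument already used in Lemma~\ref{lem:row-forest}. Every other edge difference is a telescoping sum along a forest path, so these forest differences also span. Isolated indices and labels with $|C(u)|\le1$ contribute nothing, which is consistent with the formula.

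Substituting gives
\[
 \rho_F=\min_J\bigl(|\partial F\setminus J|+r-c_F(J)\bigr),
\]
and then $h_F=r-\rho_F=\max_J\bigl(c_F(J)-|\partial F\setminus J|\bigr)$. The only point requiring care is the identification of $\rho_F$ with the representative count of Lemma~\ref{lem:linear-representatives}. That identification is immediate from the statement of Lemma~\ref{lem:row-forest}. There is no real obstacle beyond checking the edge cases $\partial F=\varnothing$ and $J=\varnothing$, both of which give $c_F=r$ consistently.
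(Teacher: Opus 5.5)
Your proposal is correct and follows essentially the same route as the paper. It combines Lemma~\ref{lem:row-forest} with Lemma~\ref{lem:linear-representatives} to get the min formula for $\rho_F$, and then uses the count $\dim\sum_{u\in J}L_u=r-c_F(J)$ over connected classes (your spanning-forest argument just spells out the paper's one-line justification) to obtain the max formula for $h_F$.
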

\begin{proof}
Within a connected class of $s$ indices, the differences $\mathbf e_i-\mathbf e_j$ span a space of dimension $s-1$. Summing over the $c_F(J)$ classes gives
\[
 \dim\sum_{u\in J}L_u=r-c_F(J).
\]
Lemmas~\ref{lem:row-forest} and \ref{lem:linear-representatives} now give
\begin{align*}
 \rho_F
 &=\min_{J\subseteq\partial F}
   \left(|\partial F\setminus J|+
          \dim\sum_{u\in J}L_u\right)\\
 &=\min_{J\subseteq\partial F}
   \bigl(|\partial F\setminus J|+r-c_F(J)\bigr),\\
 h_F=r-\rho_F
 &=\max_{J\subseteq\partial F}
   \bigl(c_F(J)-|\partial F\setminus J|\bigr).
\end{align*}
\end{proof}

\subsection{The prime ideals of support images}
The exact-support condition is removed from the equations only after the vector coordinates have been made invertible. This gives the ideal of the coefficient-image closure and permits a comparison of different supports.
Fix $r\in F$ and introduce variables $x_v$ for $v\in F$. In $R[x_v:v\in F]$ put
\begin{equation}\label{eq:support-prime}
 \mathfrak p_F=
 \left(\bigl(\bA_{V,F}x,\ x_r-1\bigr):
 \left(\prod_{v\in F}x_v\right)^\infty\right)\cap R.
\end{equation}
Saturation removes components on coordinate hyperplanes; contraction to $R$ eliminates the vector variables. These are finite polynomial-ideal operations. Laurent variables can equivalently be treated by adjoining $s_e$ with $s_et_e-1=0$.

The coordinate ring in Theorem~\ref{thm:incidence-product} is an integral domain. Its contraction to the coefficient ring is therefore prime. Taking the union over all possible supports recovers the full first rank-drop locus.

\begin{theorem}\label{thm:prime-decomp}
For every fort $F$, $\mathfrak p_F$ is prime and $V(\mathfrak p_F)=\cD_F$. Moreover,
\begin{equation}\label{eq:radical-support}
 \sqrt{I_m(\bA_{V,U})}
 =\bigcap_{\substack{F\subseteq U\\F\text{ fort}}}\mathfrak p_F
 =\bigcap_{\substack{F\subseteq U\\F\text{ fort},\ h_F=1}}\mathfrak p_F.
\end{equation}
Empty intersections are understood to equal $R$. The irreducible components of the reduced failure locus correspond to the inclusion-minimal primes among the last finite family, after repetitions are removed.
\end{theorem}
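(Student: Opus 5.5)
The proof has three parts: primality and the equality $V(\mathfrak p_F)=\cD_F$, then the two equalities in \eqref{eq:radical-support}, then the statement about components.

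\emph{Primality and $V(\mathfrak p_F)=\cD_F$.} Let $P=R[x_v:v\in F]$ and $I=(\bA_{V,F}x,\,x_r-1)$. The saturation $I:(\prod_v x_v)^\infty$ is the contraction of $IP_{\prod x}$. The localized ring $P_{\prod x}/IP_{\prod x}$ is the coordinate ring of the affine chart of $\cI_F$ given by $x_r=1$ on the support torus. We first check that $I$ is radical after localization. The substitutions in the proof of Theorem~\ref{thm:incidence-product} are invertible over $P_{\prod x}$. They turn the generators of $I$ into:
\begin{itemize}
\item the linear equations \eqref{eq:outside-zero-sum};
\item the equations solving \eqref{eq:inside-diag} for the internal diagonals;
\item $x_r-1$.
\end{itemize}
The quotient is therefore a localization of a polynomial ring. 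This is the coordinate ring of \eqref{eq:incidence-product}, which is a domain. The contraction of a prime ideal is prime, so $\mathfrak p_F$ is prime. The kernel of $R\to\CC[\cI_F]$ is the ideal of the closure of the image of $\pi$. Hence $V(\mathfrak p_F)=\overline{\pi(\cI_F)}=\cD_F$. If $F$ is not a fort, $\cI_F$ is empty and $\mathfrak p_F=R$, so the family may equally be indexed by all nonempty $F\subseteq U$.

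\emph{The first equality in \eqref{eq:radical-support}.} We use the Nullstellensatz and compare zero sets. The zero set of $I_m(\bA_{V,U})$ is the set of $\alpha$ with $\ker\bA_{V,U}(\alpha)\ne0$.
\begin{itemize}
\item Suppose such a kernel vector is nonzero, and let $F$ be its support. Then $F\subseteq U$ and $F$ is a fort by Proposition~\ref{prop:support-witness}, so $\alpha\in\pi(\cI_F)\subseteq\cD_F$.
\item Conversely, $\pi(\cI_F)\subseteq V(I_m)$ for each fort $F\subseteq U$. Since $V(I_m)$ is closed, $\cD_F\subseteq V(I_m)$.
\end{itemize}
So $V(I_m)=\bigcup_F\cD_F$. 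Taking radicals, using that the $\mathfrak p_F$ are prime, gives the first equality.

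\emph{The second equality in \eqref{eq:radical-support}.} This is the main step. It suffices to show that for every fort $F$ with $h_F\ge2$, the locus $\cD_F$ lies in $\cD_{F'}$ for some fort $F'\subsetneq F$. Then induction on $|F|$ reduces to forts with $h_F=1$. The plan is to work on the dense open set of $\cI_F$ where the restricted nullity equals $h_F\ge2$ (Theorem~\ref{thm:fort-codimension}).
\begin{itemize}
\item At such a point, take the exact-support vector $x$ and an independent kernel vector $y$. The cancellation of Corollary~\ref{cor:connected-minimal}, $z=x_vy-y_vx$, produces a null vector supported on a proper subset of $F$.
\item Hence a dense subset of $\pi(\cI_F)$ lies in $\bigcup_{F'\subsetneq F}\pi(\cI_{F'})$.
\item This is a finite union of closed sets $\cD_{F'}$. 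By irreducibility of $\cD_F$, the whole of $\cD_F$ lies in a single $\cD_{F'}$. Equivalently, $\mathfrak p_F\supseteq\mathfrak p_{F'}$.
\end{itemize}
If $h_{F'}\ge2$, repeat the argument on $F'$. The recursion terminates because the sets decrease strictly, and every nonempty fort has $h\ge1$.

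\emph{The components.} The radical $\sqrt{I_m}$ is now a finite intersection of primes. Its minimal primes are exactly the inclusion-minimal members of this family, after repetitions are removed. These minimal primes correspond to the irreducible components of the reduced locus.
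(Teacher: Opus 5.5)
Your proof is correct. The primality argument, the first equality, and the statement about components match the paper's proof: the localized quotient is the coordinate ring of the product chart, hence a domain, and the paper then establishes $V(I_m(\bA_{V,U}))=\bigcup_F\cD_F$ and applies the Nullstellensatz. The second equality is where you take a different route.

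The paper argues pointwise. At a failure point it chooses a kernel vector of inclusion-minimal support $F$. The cancellation $z=x_vy-y_vx$ then gives $\dim\ker\bA_{V,F}=1$. The uniform lower bound \eqref{eq:lower-nullity}, together with $\rho_F\le r(F)-1$, forces $h_F=1$. So every failure point already lies in an image $\pi(\cI_F)$ with $h_F=1$, with no closure, irreducibility, or induction needed.

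You run the same cancellation from the other side. You start with a fort of deficiency $h_F\ge2$ and use irreducibility of $\cD_F$ to get $\mathfrak p_F\supseteq\mathfrak p_{F'}$ for some proper subfort $F'$. Induction on $|F|$ then brings you to $h=1$. This costs an extra induction, but it gives slightly more structure: a fort of deficiency at least two is dominated by one of its own subforts with $h=1$, so it never contributes a component.

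Your restriction to the generic open set is unnecessary. The bound \eqref{eq:lower-nullity} holds at every point of $\cI_F$, so for $h_F\ge2$ you already get $\pi(\cI_F)\subseteq\bigcup_{F'\subsetneq F}\pi(\cI_{F'})$ pointwise. Irreducibility is only needed if you want containment in a single $\cD_{F'}$ rather than in the union.
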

\begin{proof}
Fix the normalization coordinate $r\in F$, put $x_F=\prod_{v\in F}x_v$, and let $J_F=(\bA_{V,F}x,x_r-1)$ in $R[x_v:v\in F]$. The ring
\[
 B_F=R[x_v:v\in F]_{x_F}/J_F
\]
is the coordinate ring of the explicit product in Theorem~\ref{thm:incidence-product}. It is an integral domain. The kernel of the coefficient homomorphism $R\to B_F$ is
\[
 \ker(R\to B_F)=J_F R[x]_{x_F}\cap R
             =(J_F:x_F^\infty)\cap R=\mathfrak p_F.
\]
The equality with saturation follows directly from localization: a polynomial maps to zero exactly when some power of $x_F$ times it belongs to $J_F$. A kernel of a homomorphism to an integral domain is prime. The vanishing ideal of the closure of the image on coefficient space is that same kernel, so $V(\mathfrak p_F)=\cD_F$.

Now suppose $m=|U|>0$. A coefficient point belongs to $V(I_m(\bA_{V,U}))$ exactly when the column map has a nonzero kernel vector. Extend it by zero outside $U$ and take its support $F$. Proposition~\ref{prop:support-witness} makes $F$ a fort, and normalizing one of its nonzero coordinates gives a point of $\cI_F$ above the coefficient point. Conversely every such incidence point produces a deficient column map. Hence, on complex points,
\[
 V(I_m(\bA_{V,U}))=
 \bigcup_{\substack{F\subseteq U\\F\text{ fort}}}\pi(\cI_F).
\]
The set on the left is closed. Every image closure $\cD_F$ is contained in it, and every image is contained in its closure. These two inclusions give
\[
 V(I_m(\bA_{V,U}))=
 \bigcup_{\substack{F\subseteq U\\F\text{ fort}}}\cD_F.
\]
There are only finitely many subsets of $U$, so this is a finite union. The Nullstellensatz over $\CC$, applied in the Laurent coefficient ring, turns equality of these closed sets into the first radical intersection in~\eqref{eq:radical-support}.

To restrict the family to $h_F=1$, fix a coefficient point in the deficient locus and choose a nonzero kernel vector $x$ with inclusion-minimal support among vectors of this particular kernel. This is a finite minimization. If $y$ were a linearly independent vector supported in the same $F$, choose $v\in F$ and form
\[
 z=x_vy-y_vx.
\]
Since $x_v\ne0$, independence implies $z\ne0$; its $v$ coordinate is zero and its support is a proper subset of $F$. This contradicts the choice of $x$. Therefore $\dim\ker\bA_{V,F}=1$. The uniform lower bound and the fact that all row spaces annihilate $\one$ give
\[
 1\ge h_F=r(F)-\rho_F\ge1.
\]
Thus this point is already covered by an $h_F=1$ support. The same finite-union argument proves the second radical intersection.

Finally every $\cD_F$ is irreducible, so the irreducible components of their finite union are exactly the inclusion-maximal members after repetitions are removed. Their ideals are the inclusion-minimal primes in the stated family. If $U=\varnothing$, the column map has no nonzero kernel, $I_0=R$, and there is no nonempty support; the empty intersection convention gives the same result.
\end{proof}
In particular,
\begin{equation}\label{eq:image-inclusion}
 \cD_F\subseteq\cD_H\quad\Longleftrightarrow\quad
 \mathfrak p_H\subseteq\mathfrak p_F.
\end{equation}
Equation~\eqref{eq:radical-support} gives the prime decomposition of the reduced rank-drop locus; the original minors specify its determinantal scheme structure. For results concerning submaximal minors of full sparse symmetric matrices, see \cite{DK}. Example~\ref{ex:crossing} below will also show that the inclusion test \eqref{eq:image-inclusion} cannot be replaced by inclusion of the supports.

\subsection{Diagonal changes and kernel jumps}
The edge parameters must remain nonzero, but the diagonal parameters are unrestricted. Symmetry allows a change of one diagonal entry to remove exactly one kernel direction while preserving the remaining directions.
\begin{lemma}\label{lem:diagonal}
Let $A$ be a symmetric matrix over a field, and let $K=\ker A_{V,U}$. If some $x\in K$ has $x_v\ne0$ for $v\in U$, then for every nonzero $\lambda$,
\begin{equation}\label{eq:diagonal-kernel}
 \ker(A+\lambda E_{vv})_{V,U}=K\cap\{y:y_v=0\}.
\end{equation}
\end{lemma}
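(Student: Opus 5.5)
The plan is to prove the two inclusions in \eqref{eq:diagonal-kernel} directly, using only the symmetry of $A$ and the given vector $x$. Write $A'=A+\lambda E_{vv}$. For $y\in\kk^U$, extend by zero to $\kk^V$. Then $(A'y)_w=(Ay)_w$ for $w\ne v$, and $(A'y)_v=(Ay)_v+\lambda y_v$. So $A'_{V,U}y=A_{V,U}y+\lambda y_v\mathbf e_v$.

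For the inclusion $\supseteq$: take $y\in K$ with $y_v=0$. The perturbation term vanishes, so $A'_{V,U}y=0$.

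For the inclusion $\subseteq$: suppose $A'_{V,U}y=0$, that is, $A_{V,U}y=-\lambda y_v\mathbf e_v$. Pair this with the zero extension of $x$ and use symmetry. Since $A_{V,U}x=0$ and both $x,y$ vanish on $S$,
\[
 0=y^{\mathsf T}Ax=x^{\mathsf T}Ay=-\lambda y_vx_v.
\]
Because $\lambda\ne0$ and $x_v\ne0$, this forces $y_v=0$. Substituting back gives $A_{V,U}y=0$, so $y\in K$ with $y_v=0$.

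There is no real obstacle here. The only point needing care is that the identity $x^{\mathsf T}Ay=y^{\mathsf T}Ax$ is applied to full vectors extended by zero on $S$. This is legitimate because $A$ is symmetric on all of $V$, and $A_{V,U}x=0$ gives $Ax=0$ for the extension. It then follows that the nullity drops by exactly one, since $y\mapsto y_v$ is a nonzero functional on $K$.
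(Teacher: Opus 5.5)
Your proof is correct and takes the same route as the paper. Both pair the zero-extended $x$ with $(A+\lambda E_{vv})_{V,U}y$, use $x^{\mathsf T}A_{V,U}=0$ from symmetry to get $\lambda x_vy_v=0$, and then substitute $y_v=0$ back, with the reverse inclusion immediate. Your closing remark that the nullity drops by exactly one is also correct; this is how the lemma is iterated in Proposition~\ref{prop:nullity-deformation}.
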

\begin{proof}
Extend $x$ by zero on $S$. Since $Ax=0$ and $A=A^{\mathsf T}$,
\[
 x^{\mathsf T}A_{V,U}=0.
\]
For $y\in\ker(A+\lambda E_{vv})_{V,U}$,
\[
 0=x^{\mathsf T}(A+\lambda E_{vv})_{V,U}y
   =\lambda x_vy_v.
\]
Thus $y_v=0$, and
\[
 A_{V,U}y=(A+\lambda E_{vv})_{V,U}y-\lambda y_v\mathbf e_v=0.
\]
Conversely, $A_{V,U}y=0$ and $y_v=0$ imply
$(A+\lambda E_{vv})_{V,U}y=0$.
\end{proof}

To apply this operation to every kernel dimension, we use the minors of $\bA_{V,U}$ that impose nullity at least $q$.

\begin{definition}
For $1\le q\le m$, the \emph{kernel-jump scheme} is
\begin{equation}\label{eq:jump-scheme}
 \cD_{S,q}=V(\mathfrak d_{S,q}),\qquad
 \mathfrak d_{S,q}=I_{m-q+1}(\bA_{V,U}).
\end{equation}
Set $\cD_{S,m+1}=\varnothing$. Its closed points have column-kernel dimension at least $q$. The parameter $q$ is the column nullity.
\end{definition}

Choosing independent coordinate functionals on a $k$-dimensional kernel allows Lemma~\ref{lem:diagonal} to be iterated exactly $k-q$ times.

\begin{proposition}\label{prop:nullity-deformation}
If $\dim\ker A_{V,U}=k\ge q\ge1$, there is a set $T\subseteq U$ of size $k-q$ such that
\[
 A(\eps)=A+\eps\sum_{v\in T}E_{vv}
\]
has kernel dimension exactly $q$ for every $\eps\ne0$, and $A(0)=A$. Thus, as reduced spaces,
\begin{equation}\label{eq:jump-dense}
 |\cD_{S,q}|=
 \overline{|\cD_{S,q}|\setminus|\cD_{S,q+1}|}.
\end{equation}
Every irreducible component of $\cD_{S,q}$ has generic nullity exactly $q$.
\end{proposition}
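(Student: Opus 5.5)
The plan is to iterate Lemma~\ref{lem:diagonal} along a chain of coordinate hyperplanes cutting the kernel, and then read off the density statement and the generic nullity from the one-parameter family $A(\eps)$.

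\textbf{Choosing $T$.} Put $K=\ker A_{V,U}$, of dimension $k$. Choose $T=\{v_1,\ldots,v_{k-q}\}\subseteq U$ so that the coordinate functionals $y\mapsto y_{v_i}$ are linearly independent on $K$. Such a choice exists because the coordinate functionals indexed by $U$ separate points of $K\subseteq\kk^U$; hence they span $K^*$, and any $k-q\le k$ of them forming part of a basis will do. Set
\[
 K_i=K\cap\{y_{v_1}=\cdots=y_{v_i}=0\}.
\]
Independence gives $\dim K_i=k-i$. In particular, for $i<k-q$ the space $K_i$ contains a vector with nonzero $v_{i+1}$ coordinate.

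\textbf{Induction.} Let $A_i=A+\eps\sum_{j\le i}E_{v_jv_j}$ with $\eps\ne0$ fixed, and prove $\ker(A_i)_{V,U}=K_i$ by induction on $i$. Each $A_i$ is symmetric. At step $i+1$, apply Lemma~\ref{lem:diagonal} to $A_i$, the vertex $v_{i+1}$, and $\lambda=\eps$; it yields $\ker(A_{i+1})_{V,U}=K_i\cap\{y_{v_{i+1}}=0\}=K_{i+1}$. At $i=k-q$ the kernel has dimension exactly $q$, and $A(0)=A$ is immediate. The only care needed is that the hypothesis of the lemma is checked against the current kernel $K_i$, not the original $K$. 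This is exactly why the functionals must be independent on $K$, and it is the one step that needs attention.

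\textbf{Density and generic nullity.} The diagonal deformation changes only the unrestricted $d_v$ coordinates and keeps the edge units, so $\eps\mapsto A(\eps)$ is a line $\AA^1\to\Omega_G$. Every point of $|\cD_{S,q}|$ has nullity $k\ge q$. For $\eps\ne0$ the line lies in $|\cD_{S,q}|\setminus|\cD_{S,q+1}|$, and at $\eps=0$ it passes through the given point, so that point lies in the closure of the open stratum; this proves \eqref{eq:jump-dense}. For an irreducible component $Z$, suppose instead that its generic nullity were at least $q+1$, i.e.\ $Z\subseteq|\cD_{S,q+1}|$. Take a point $\alpha\in Z$ lying on no other component. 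Points of the nullity-$q$ stratum accumulate at $\alpha$ by the line above. Near $\alpha$ they lie in $Z$, since the other components are closed and miss $\alpha$. This contradicts $Z\subseteq|\cD_{S,q+1}|$. Since nullity is upper semicontinuous and at least $q$ throughout, the generic nullity on $Z$ is exactly $q$.
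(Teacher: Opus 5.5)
Your proposal is correct and follows the paper's proof essentially step for step. You choose $T$ so that the coordinate functionals are independent on $K$, then iterate Lemma~\ref{lem:diagonal} along the flag $K_i$, checking the hypothesis against the current kernel as you note. Specializing the diagonal line at $\eps=0$ gives \eqref{eq:jump-dense}, and you obtain the generic nullity by working away from the other components. Your contradiction argument at a point $\alpha$ lying on no other component is just a pointwise version of the paper's step of removing a component's intersections with the other components.
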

\begin{proof}
Put $K=\ker A_{V,U}$. The restrictions of the coordinate functionals span $K^*$. Choose indices $T=\{v_1,\ldots,v_{k-q}\}$ for which these restrictions are independent, and write
\[
 K_i=K\cap\{y_{v_1}=\cdots=y_{v_i}=0\}.
\]
Then
\[
 \dim K_i=k-i,\qquad
 y\mapsto y_{v_i}\text{ is nonzero on }K_{i-1}.
\]
For each fixed $\eps\ne0$, successive application of Lemma~\ref{lem:diagonal} gives
\[
 \ker\left(A+\eps\sum_{j=1}^iE_{v_jv_j}\right)_{V,U}=K_i.
\]
At $i=k-q$, the kernel therefore has dimension $q$. No edge entry changes. The resulting curve specializes at $\eps=0$ to $A$, proving \eqref{eq:jump-dense}.

Let $Y$ be an irreducible component of the reduced jump locus and remove its intersections with the other components. On this nonempty open subset, \eqref{eq:jump-dense} forces the exact-nullity-$q$ locus to be dense. Thus its generic nullity is $q$.
\end{proof}

The restricted matrix $\bA_{V,F}$ is unchanged by diagonal changes at vertices outside $F$. We may therefore remove kernel directions outside $F$ without removing its exact-support vectors.

\begin{proposition}\label{prop:full-generic-kernel}
On a dense open subset of $\cD_F$, the full unknown-column kernel has dimension $h_F$, is supported in $F$, and contains a vector of exact support $F$. If $h_F=1$, then
\[
 \cD_F\subseteq\cD_H\quad\Longrightarrow\quad F\subseteq H.
\]
\end{proposition}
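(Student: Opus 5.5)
We build a single good coefficient point and then spread its properties over a dense open subset of $\cD_F$, using lower semicontinuity of kernel dimension on the irreducible variety $\cD_F$.

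\emph{Construction of the point.} Start from the point $\alpha_0$ built in the proof of Theorem~\ref{thm:fort-codimension}. It has the following properties:
\begin{itemize}[label={}]
\item $\ker\bA_{V,F}(\alpha_0)=\iota_{\one_F}(\ker C)$;
\item this kernel has dimension $h_F$;
\item it contains an exact-support vector.
\end{itemize}
Let $K=\ker\bA_{V,U}(\alpha_0)$ be the full unknown-column kernel. It contains $K_F=\ker\bA_{V,F}(\alpha_0)$, extended by zero. If $K\ne K_F$, the coordinate functionals $y\mapsto y_v$ with $v\in U\setminus F$ cut out $K_F$ inside $K$, so they span $(K/K_F)^*$. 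Choose $v_1,\dots,v_{k}\in U\setminus F$, $k=\dim K-h_F$, whose functionals are independent on $K$ modulo $K_F$. Apply Lemma~\ref{lem:diagonal} iteratively, exactly as in Proposition~\ref{prop:nullity-deformation}, adding $\eps E_{v_iv_i}$. Each step cuts the kernel by $\{y_{v_i}=0\}$. After $k$ steps the kernel is $K\cap\{y_{v_1}=\cdots=y_{v_k}=0\}$, which contains $K_F$ and has dimension $h_F$, hence equals $K_F$.

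Since $\bA_{V,F}$ has no column in $F^c$, its entries are unchanged by these diagonal changes outside $F$. So the new point $\alpha_1$ still lies in $\pi(\cI_F)$, with the same exact-support vector. It satisfies
\[
\dim\ker\bA_{V,U}(\alpha_1)=h_F,\qquad \ker\bA_{V,U}(\alpha_1)\subseteq\CC^F.
\]

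\emph{Spreading to a dense open.} On $\cD_F$, the full kernel dimension is at least the restricted one, which is at least $h_F$ everywhere by closure of the minor conditions in Theorem~\ref{thm:fort-codimension}. Nonvanishing of a suitable $(m-h_F)$-minor of $\bA_{V,U}$, nonzero at $\alpha_1$, defines a nonempty open $O_1\subseteq\cD_F$ where the full kernel has dimension exactly $h_F$. The open set $O$ of Theorem~\ref{thm:fort-codimension}, where the restricted kernel has dimension exactly $h_F$, is also nonempty. On $O_1\cap O$ the two kernels coincide, so the full kernel is supported in $F$. Finally, intersect with the open set $O'\subseteq\pi(\cI_F)$ from that proof, which guarantees an exact-support vector. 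All three opens are nonempty in the irreducible $\cD_F$, so their intersection is dense. This proves the first assertion.

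\emph{The inclusion statement.} Assume $h_F=1$ and $\cD_F\subseteq\cD_H$. Take $\alpha$ in the dense open just found for $F$. There the full kernel is one-dimensional, spanned by an exact-support-$F$ vector $x$. We need a nearby point of $\pi(\cI_H)$ whose kernel contains a vector of support exactly $H$. Because $\pi(\cI_H)$ is only dense in $\cD_H$, pick $\alpha$ generic in $\cD_F$. Then every neighbourhood of $\alpha$ in $\cD_H$ meets $\pi(\cI_H)$. Take a sequence $\alpha_n\to\alpha$ in $\pi(\cI_H)$ with unit vectors $x_n$ of support $H$ in the kernels. A limit point $x_\infty$ lies in $\ker\bA_{V,U}(\alpha)=\CC x$ and satisfies $\supp x_\infty\subseteq H$. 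Since $x_\infty\ne0$, it is a nonzero multiple of $x$, so $F=\supp x\subseteq H$.

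\emph{Main obstacle.} The delicate step is this limiting argument. One has to use the classical (Euclidean) topology, and justify that Zariski density of $\pi(\cI_H)$ in $\cD_H$ gives Euclidean approximations at points of the subvariety $\cD_F$. This holds because a constructible dense set contains a Zariski-dense open of $\cD_H$, and Zariski-open dense subsets are Euclidean dense. The key property is that the support condition $\supp\subseteq H$ is closed under limits.
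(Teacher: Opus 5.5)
Your proof is correct. For the first assertion you follow the paper's route essentially verbatim. You use the same diagonal deformation at vertices of $U\setminus F$ via Lemma~\ref{lem:diagonal}, which leaves $\bA_{V,F}$ and the exact-support vector untouched. You then take the open set cut out by a nonzero $(|U|-h_F)$-minor and intersect it with an open subset of $\pi(\cI_F)$. Your extra open set $O$ is redundant: on $O_1$ the full kernel already has dimension $h_F$ and contains the restricted kernel, whose dimension is at least $h_F$, so the two coincide there.

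For the implication $\cD_F\subseteq\cD_H\Rightarrow F\subseteq H$ you take a genuinely different route. You approximate $\alpha$ in the Euclidean topology by points of $\pi(\cI_H)$ and extract a limit of normalized null vectors supported in $H$. This needs the comparison theorem that Zariski-dense constructible (or open) sets are Euclidean-dense over $\CC$, together with compactness. The paper instead observes that ``$\bA_{V,H}$ has a nonzero kernel'' is the vanishing of all $|H|$-minors of $\bA_{V,H}$, which is a Zariski-closed condition. It holds on $\pi(\cI_H)$, hence on $\cD_H\supseteq\cD_F$. The zero-extended kernel vector then lies in the line $\CC x$ spanned by the exact-support-$F$ vector, which forces $F\subseteq H$.

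So the step you flag as the main obstacle disappears. The algebraic argument is shorter, applies at every point of the open subset of $\cD_F$, and avoids the analytic topology. Your limit argument in effect reproves that closedness by hand, though it does make the degeneration of support-$H$ null vectors to the support-$F$ line more concrete. Your phrase ``pick $\alpha$ generic in $\cD_F$'' plays no role: genericity in $\cD_F$ gives no control over $\pi(\cI_H)$ near $\alpha$, and your approximation already holds at every point of $\cD_H$.
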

\begin{proof}
By Theorem~\ref{thm:fort-codimension}, choose $\alpha\in\pi(\cI_F)$ for which
\[
 K_F:=\ker\bA_{V,F}(\alpha)\text{ has dimension }h_F
\]
and contains an exact-support vector $x$. Put $A^{(0)}=\bA(\alpha)$ and
$K_U^{(j)}=\ker A^{(j)}_{V,U}$, and regard $K_F$ as a subspace of
$K_U^{(0)}$ by zero extension. If $K_U^{(j)}$ is not supported in $F$,
choose $v_j\in U\setminus F$ whose coordinate functional is nonzero on
$K_U^{(j)}$, and put
\[
 A^{(j+1)}=A^{(j)}+\lambda_jE_{v_jv_j},\qquad \lambda_j\ne0.
\]
Lemma~\ref{lem:diagonal} gives
\[
 K_U^{(j+1)}=K_U^{(j)}\cap\{y_{v_j}=0\},\qquad
 \dim K_U^{(j+1)}=\dim K_U^{(j)}-1.
\]
Since $v_j\notin F$, the submatrix $A^{(j)}_{V,F}$ does not change.
Hence $K_F$, its dimension $h_F$, and the vector $x$ are preserved.
After at most $\dim K_U^{(0)}-h_F$ steps, the full kernel is supported
in $F$ and therefore equals $K_F$.

The closed minor conditions $\dim\ker\bA_{V,F}\ge h_F$ hold on $\cD_F$. The full kernel always contains that restricted kernel. At the constructed point its dimension is exactly $h_F$. A corresponding nonzero minor of size $|U|-h_F$ gives a nonempty open subset $O_1\subseteq\cD_F$ where the full kernel has dimension at most $h_F$, and hence exactly $h_F$. On $O_1$ the full and restricted kernels coincide by inclusion and equality of dimensions.

The morphism $\cI_F\to\cD_F$ has constructible image by the finite-type image theorem, and its image is dense by the definition of $\cD_F$. A dense constructible subset of an irreducible variety contains a nonempty open subset: write it as a finite union of locally closed sets; one of their closures must be the whole irreducible variety, and that locally closed set then contains an open subset of it. Thus there is a nonempty open $O_2\subseteq\pi(\cI_F)$. On the dense open $O_1\cap O_2$, the coinciding kernels contain a vector of exact support $F$, as required.

Now assume $h_F=1$ and $\cD_F\subseteq\cD_H$. At a point of this open subset the full kernel is the line $\CC x$ with $\supp x=F$. The minor condition $\ker\bA_{V,H}\ne0$ holds on $\pi(\cI_H)$ and therefore on its closure $\cD_H$. It gives a nonzero full kernel vector supported in $H$. This vector is a nonzero scalar multiple of $x$, so $F\subseteq H$.
\end{proof}

\subsection{Incidence spaces for higher nullity}
We parametrize subspaces annihilated by a single symmetric matrix.
Define
\begin{equation}\label{eq:grass-incidence}
 \mathcal J_{S,q}=
 \{(\alpha,W)\in\Omega_G\times\Gr(q,\CC^U):\bA(\alpha)W=0\}.
\end{equation}
On the Grassmannian this is the vanishing of the map from its tautological bundle to the trivial bundle $\CC^V$. Its fiber over $\alpha$ is
\[
 \Gr(q,\ker\bA_{V,U}(\alpha)),
\]
empty when the kernel is too small. The Grassmannian is projective, so this projection is proper and its image is the closed set $|\cD_{S,q}|$; see \cite[\stacks{089R}, \stacks{01W7}]{Stacks}.

A $q$-plane has a basis matrix with an invertible $q$-row minor. Normalizing that minor to $I_q$ gives finitely many affine charts, on each of which the common-kernel equations are polynomial.

\begin{proposition}\label{prop:grass-elimination}
For $P\subseteq U$ of cardinality $q$, let $X^{(P)}$ be a $V\times q$ matrix whose $P$-rows form $I_q$, whose $S$-rows are zero, and whose other rows consist of free chart variables. Then
\begin{equation}\label{eq:grass-elimination}
 \sqrt{\mathfrak d_{S,q}}
 =\bigcap_{\substack{P\subseteq U\\|P|=q}}
 \sqrt{\bigl(\bA X^{(P)}\bigr)\cap R}.
\end{equation}
The ideal inside the radical is contracted from the polynomial ring containing the chart variables.
\end{proposition}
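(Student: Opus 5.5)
The plan is to compare closed sets over $\CC$ and then pass to radicals with the Nullstellensatz in the Laurent coefficient ring, exactly as in the proof of Theorem~\ref{thm:prime-decomp}. Write $\mathfrak e_P=\bigl(\bA X^{(P)}\bigr)\cap R$, where $\bA X^{(P)}$ is the ideal generated by the $V\times q$ entries of that matrix in $R[X^{(P)}]$. Since the radical of an intersection of ideals has zero set equal to the union of their zero sets, it suffices to prove
\[
 |\cD_{S,q}|=\bigcup_{|P|=q}V(\mathfrak e_P)
\]
as subsets of $\Omega_G$. The left side is the closed set of points with $\dim\ker\bA_{V,U}(\alpha)\ge q$, by the Fitting identification~\eqref{eq:fitting-jump}.

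Next I interpret each $V(\mathfrak e_P)$ geometrically. The chart variables parametrize the affine open $\Gr_P\subseteq\Gr(q,\CC^U)$ of $q$-planes whose $P$-coordinate minor is invertible, and every point of this chart has a unique basis matrix of the form $X^{(P)}$. The scheme $V(\bA X^{(P)})\subseteq\Omega_G\times\AA^{q(m-q)}$ is therefore $\mathcal J_{S,q}\cap(\Omega_G\times\Gr_P)$. Contraction to $R$ is the kernel of $R\to R[X^{(P)}]/(\bA X^{(P)})$, so $V(\mathfrak e_P)$ is the Zariski closure of the projection $\pi(\mathcal J_{S,q}\cap(\Omega_G\times\Gr_P))$. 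Here I use that the vanishing ideal of the closure of an image is the contracted ideal, up to radical.

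Two inclusions then remain. First, every image $\pi(\mathcal J_{S,q}\cap(\Omega_G\times\Gr_P))$ lies in $|\cD_{S,q}|$, because a $q$-plane annihilated by $\bA(\alpha)$ forces nullity at least $q$. Since $|\cD_{S,q}|$ is closed, each closure $V(\mathfrak e_P)$ also lies in it. Second, let $\alpha\in|\cD_{S,q}|$. Choose a $q$-dimensional subspace $W\subseteq\ker\bA_{V,U}(\alpha)$. Some $q\times q$ row minor of a basis of $W$ is nonzero; call its row set $P$. Normalizing that minor to $I_q$ gives a point of $V(\bA X^{(P)})$ over $\alpha$, so $\alpha\in V(\mathfrak e_P)$. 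Together these give the displayed equality. The Nullstellensatz then converts it into~\eqref{eq:grass-elimination}, with radicals on both sides because $\mathfrak d_{S,q}$ need not be radical.

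The main obstacle I expect is the identification of $V(\mathfrak e_P)$ with the closure of the image, since the projection from an affine chart is not proper. The image itself need not be closed, so one cannot avoid taking closures. The resolution is that the argument only needs the sandwich
\[
 \text{image}\subseteq V(\mathfrak e_P)\subseteq|\cD_{S,q}|.
\]
For the second inclusion it is enough that $\mathfrak e_P$ contains the relevant minors up to radical, and this follows from the pointwise argument together with the closedness of $|\cD_{S,q}|$, which comes from the proper projection of $\mathcal J_{S,q}$. The Laurent variables cause no trouble: adjoin inverses $s_e$ as in~\eqref{eq:support-prime}, so that the Nullstellensatz applies to a finitely generated $\CC$-algebra.
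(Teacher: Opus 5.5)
Your proposal is correct and follows the paper's route. Like the paper, you identify each $V(\mathfrak e_P)$ with the closure of the coefficient image of the chart $P$, show that the finitely many chart images cover the closed jump locus $|\cD_{S,q}|$ while their closures stay inside it, and pass to radicals by the Nullstellensatz. One small remark: $|\cD_{S,q}|$ is closed simply because it is the zero set of the minor ideal, so your sandwich argument does not need the properness of the Grassmannian projection.
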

\begin{proof}
Let $\mathcal J_{S,q}$ be the incidence of coefficient points $\alpha$ and
$q$-planes $W\subseteq\ker\bA_{V,U}(\alpha)$. A $q$-plane has a full-rank
basis matrix $X$ with zero $S$-rows. Some $q$-row block $X_P$ is invertible.
Replacing the basis by $XX_P^{-1}$ leaves its column space unchanged and
makes exactly those rows the identity. Conversely every chart matrix
$X^{(P)}$ has rank $q$, because its $P$-rows are $I_q$. Hence on this chart
\[
 W\subseteq\ker\bA_{V,U}
 \quad\Longleftrightarrow\quad \bA X^{(P)}=0.
\]
Put $E_P=(\bA X^{(P)})\cap R$, where contraction is taken from the polynomial
ring containing the remaining entries of $X^{(P)}$. Elimination identifies
$V(E_P)$ with the closure of the coefficient image of the chart. The
projection of the entire incidence is closed, since the Grassmannian is
projective; its image is precisely $|\cD_{S,q}|$. There are finitely many
charts, so
\[
 |\cD_{S,q}|=
 \overline{\bigcup_{|P|=q}\pi(\mathcal J_{S,q}|_P)}
 =\bigcup_{|P|=q}\overline{\pi(\mathcal J_{S,q}|_P)}
 =\bigcup_{|P|=q}V(E_P).
\]
Taking radical ideals over $\CC$ gives~\eqref{eq:grass-elimination}.
If there is no $q$-subset of $U$, both the incidence and rank-jump locus
are empty, consistently with the determinantal conventions.
\end{proof}

\subsection{First-order equations and examples}
At a point of exact nullity $q$, we compute the tangent space to the
determinantal scheme defined by the minors, before passing to its reduced
support. The first variation is a map from the kernel to the cokernel.
\begin{proposition}\label{prop:tangent}
At a point $\alpha$ of exact nullity $q$, set
\[
 K=\ker\bA_{V,U}(\alpha),\qquad
 C=\coker\bA_{V,U}(\alpha).
\]
Define the linear map on coefficient variations
\[
 \chi_\alpha:T_\alpha\Omega_G\longrightarrow\Hom_{\CC}(K,C),
 \qquad \chi_\alpha(\dot A)(x)=[\dot A_{V,U}x].
\]
The Zariski tangent space of the determinantal scheme is
\begin{equation}\label{eq:tangent}
 T_\alpha\cD_{S,q}=\ker\chi_\alpha.
\end{equation}
\end{proposition}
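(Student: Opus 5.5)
The plan is to reduce to a standard fact about determinantal schemes at points of exact corank. Put $m=|U|$ and $s=m-q$, so that $\cD_{S,q}=V(I_{s+1}(\bA_{V,U}))$ and $\rank\bA_{V,U}(\alpha)=s$. Since the Zariski tangent space is computed from first-order data, it suffices to determine which tangent vectors $\dot A\in T_\alpha\Omega_G$ make every $(s+1)$-minor of $\bA_{V,U}(\alpha+\epsilon\dot A)$ vanish modulo $\epsilon^2$. In other words, we will show that
\[
 \{\dot A:\ I_{s+1}\bigl(\bA_{V,U}(\alpha)+\epsilon\dot A_{V,U}\bigr)\subseteq(\epsilon^2)\}=\ker\chi_\alpha .
\]
Here $\dot A$ is an arbitrary symmetric matrix supported on the diagonal and on the edges. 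The torus coordinates $t_e$ impose no constraint on tangent directions, because $\Omega_G$ is smooth and open in affine space.

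The first step is a normal form. Choose invertible matrices $P\in GL_V(\CC)$ and $Q\in GL_U(\CC)$ with
\[
 P\,\bA_{V,U}(\alpha)\,Q=\begin{pmatrix}I_s&0\\0&0\end{pmatrix}.
\]
The columns of $Q$ in the last $q$ positions give a basis of $K$. The last $|V|-s$ rows of $P$ give coordinates on $C$. Constant invertible matrices preserve minor ideals, so the problem reduces to the perturbation
\[
 \begin{pmatrix}I_s+\epsilon X_{11}&\epsilon X_{12}\\ \epsilon X_{21}&\epsilon X_{22}\end{pmatrix},
 \qquad X=P\dot A_{V,U}Q .
\]
The block $X_{22}$ is exactly the matrix of $\chi_\alpha(\dot A)$ in the chosen bases of $K$ and $C$.

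The second step is to eliminate over the dual numbers $\CC[\epsilon]/\epsilon^2$. The block $I_s+\epsilon X_{11}$ is a unit there, so the same row and column cancellation used in Theorem~\ref{thm:closure-cancel} applies. After the cancellation, the matrix becomes $\operatorname{diag}(I_s,\ \epsilon X_{22}-\epsilon^2(\cdots))$, which equals $\operatorname{diag}(I_s,\epsilon X_{22})$. By the identity $I_{s+1}(\operatorname{diag}(I_s,B))=I_1(B)$, established in the proof of Theorem~\ref{thm:closure-cancel}, the ideal $I_{s+1}$ becomes the ideal generated by the entries of $\epsilon X_{22}$. This ideal vanishes exactly when $X_{22}=0$, that is, when $\chi_\alpha(\dot A)=0$.

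It remains to check that $\chi_\alpha$ is independent of the representative. If $\dot A_{V,U}x$ is changed by an element of $\im\bA_{V,U}(\alpha)$, its class in $C$ does not change. Hence $\chi_\alpha$ is well defined, and the identification above is canonical.

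The main obstacle is bookkeeping rather than the computation. The $(s+1)$-minors must be read as the scheme-theoretic equations, and base change to dual numbers must be justified for the Laurent ring $R$. This is handled by noting that $T_\alpha\cD_{S,q}$ consists of the $\CC$-algebra maps $R/\mathfrak d_{S,q}\to\CC[\epsilon]/\epsilon^2$ lifting $\alpha$. Such maps send each $t_e$ to a unit automatically, since $t_e(\alpha)\ne0$. With this, the elimination argument applies verbatim.
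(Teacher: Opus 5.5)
Your proof is correct and is essentially the paper's Schur-complement argument. The paper eliminates an invertible $r$-minor on a Zariski chart of the coefficient space, so that $I_{r+1}$ is generated there by the entries of the complement $T$, and then linearizes $T$. You instead normalize $\bA_{V,U}(\alpha)$ by constant invertible matrices and eliminate directly over $\CC[\epsilon]/(\epsilon^2)$. That makes the identification of $X_{22}$ with $\chi_\alpha(\dot A)$ immediate, at the cost of not producing the local equations of the determinantal scheme near $\alpha$ that the paper's chart gives.
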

\begin{proof}
Let $B_0=\bA_{V,U}(\alpha)$ have rank $r=m-q$. A vector in
$T_\alpha\Omega_G$ varies the diagonal and edge coefficients while
preserving the symmetric zero pattern.

Choose an invertible $r$-minor of $B_0$ and use its row and column order. On the resulting coefficient open set write
\[
 B=\begin{pmatrix}P&Q_0\\R_0&D_0\end{pmatrix},\qquad
 T=D_0-R_0P^{-1}Q_0.
\]
The invertible row and column changes
\[
 \begin{pmatrix}P^{-1}&0\\-R_0P^{-1}&I\end{pmatrix}
 B
 \begin{pmatrix}I&-P^{-1}Q_0\\0&I\end{pmatrix}
 =\begin{pmatrix}I&0\\0&T\end{pmatrix}
\]
show that $I_{r+1}(B)$ is generated on this chart by the entries of $T$. At $\alpha$, $T=0$. The kernel and cokernel identifications at that point are
\[
 \CC^q\longrightarrow K,\quad
 u\longmapsto\binom{-P^{-1}Q_0u}{u},
\]
\[
 C\longrightarrow\CC^{n-r},\quad
 [(y,z)]\longmapsto z-R_0P^{-1}y.
\]
All unmarked block entries in these two maps are evaluated at $\alpha$.

Over $\CC[\eps]/(\eps^2)$,
\[
 (P+\eps\dot P)^{-1}
 =P^{-1}-\eps P^{-1}\dot P P^{-1},
\]
so the first variation of the Schur complement is
\[
 \dot T=\dot D_0-\dot R_0P^{-1}Q_0
       +R_0P^{-1}\dot P P^{-1}Q_0-R_0P^{-1}\dot Q_0.
\]
Applying the displayed cokernel map to
$\dot B\binom{-P^{-1}Q_0u}{u}$ gives precisely $\dot T u$.
Hence the linearized minor equations are
\[
 \dot T=0
 \Longleftrightarrow\chi_\alpha(\dot A)=0.
\]
This proves~\eqref{eq:tangent}. If $r=0$, the empty pivot block has the
usual empty inverse and the calculation reduces to $T=B$.
\end{proof}

The next example separates three notions: inclusion of forts, inclusion of their coefficient images, and an increase in kernel dimension. Two distinct support components can meet singularly even when the nullity remains one.

\begin{example}\label{ex:crossing}
Take seeds $S=\{s,t\}$, unknown vertices $U=\{1,2,3\}$, and edges $s1,s2,s3,t1,t2$ with respective unit coefficients $a,b,h,c,d$. Write the unknown diagonals as $\delta_1,\delta_2,\delta_3$. Then
\[
 B_S=\begin{pmatrix}
 \delta_1&0&0\\0&\delta_2&0\\0&0&\delta_3\\a&b&h\\c&d&0
 \end{pmatrix},\qquad \Delta=ad-bc.
\]
Three maximal minors are unit multiples of $\delta_1$, $\delta_2$, and $\delta_3\Delta$. Expanding the other minors shows that they belong to the resulting ideal. Hence
\begin{equation}\label{eq:crossing-ideal}
 I_3(B_S)=(\delta_1,\delta_2,\delta_3\Delta)
 =(\delta_1,\delta_2,\delta_3)\cap(\delta_1,\delta_2,\Delta).
\end{equation}
For the last equality, quotient by $(\delta_1,\delta_2)$. The independent variable $\delta_3$ and the prime element $\Delta$ are relatively prime, so their principal ideals intersect in their product. The relation $\Delta=0$ is prime in the Laurent ring since it eliminates $a=bc/d$.

The only forts in $U$ are $F=\{1,2\}$ and $H=\{1,2,3\}$. A witness of support $F$ is a multiple of $(b,-a,0)$ and requires $\delta_1=\delta_2=\Delta=0$. A witness of support $H$ is a multiple of $(-dh,ch,\Delta)$ and requires all three diagonals to vanish and $\Delta\ne0$. Therefore the two primes in \eqref{eq:crossing-ideal} are exactly $\mathfrak p_H$ and $\mathfrak p_F$. They are incomparable even though $F\subsetneq H$. In particular, the nonminimal fort $H$ contributes a component.

The $2$-minor on rows $(s,t)$ and columns $(1,3)$ equals $-ch$, a unit. Thus $I_2(B_S)=R$ and $\cD_{S,2}=\varnothing$. Nevertheless, at $\delta_1=\delta_2=\delta_3=\Delta=0$ the reduced failure variety has a crossing. Indeed $\partial\Delta/\partial a=d$ is a unit, so locally its equations are two coordinate equations and $xy=0$. The intersection is singular at constant nullity one. Together with the nested supports above, this separates support minimality from the geometry of coefficient-image components; compare~\cite{Compatible}.
\end{example}

For a star whose center belongs to $S$, the centre row is a single nonzero linear relation among the leaf coordinates. The higher jump ideals can consequently be computed in every degree.

\begin{example}\label{ex:star}
Let $S$ be the center of a star with $\ell$ leaves. Then
\[
 B_S=\begin{pmatrix}\diag(\delta_1,\ldots,\delta_\ell)\\t_1\ \cdots\ t_\ell\end{pmatrix},
 \qquad t_i\in R^\times.
\]
If precisely $z$ leaf diagonals vanish, only those $z$ coordinates can support a kernel vector, and they satisfy one nonzero linear equation. Thus the nullity is $\max(z-1,0)$.

For $1\le q\le\ell-1$, expansion of minors containing the last row gives
\begin{equation}\label{eq:star-ideal}
 \mathfrak d_{S,q}
 =\left(\prod_{i\in J}\delta_i:|J|=\ell-q\right)
 =\bigcap_{\substack{T\subseteq\{1,\ldots,\ell\}\\|T|=q+1}}
 (\delta_i:i\in T).
\end{equation}
To verify the ideal equality, every product of $\ell-q$ distinct diagonals occurs in a minor with a remaining leaf column supplying a unit $t_j$. Minors not using the last row are higher-degree products and add no generators. A monomial belongs to every coordinate prime on the right exactly when its set of variables meets every $(q+1)$-set, equivalently when it contains at least $\ell-q$ distinct variables. This is exactly divisibility by a generator on the left. The ideals are monomial, so this proves equality. For $q=\ell$, the $1$-minor ideal contains a unit and the jump scheme is empty.
\end{example}

\section{Common kernels and coefficient sheaves}\label{sec:common-kernels}

Fix a subspace $W\subseteq\kk^V$. The matrices satisfying $AW=0$ form
a linear coefficient space. We describe this space by a sheaf of local
row solutions and then by equilibrium stresses and exterior dependencies.

\subsection{The coefficient sheaf of a common kernel}\label{sec:coefficients}

For fixed $W$, the equations in \eqref{eq:grass-incidence} are linear
in each row's coefficients. The shared coefficients at the two ends of an
edge must agree. We impose this agreement on the face space of $G$.

In the rest of this section, $\kk$ is an infinite characteristic-zero field and $0\ne W\subseteq\kk^V$ has dimension $q$. Prescribed seeds mean $W\subseteq\kk^{V\setminus S}$, with zero extension to $V$.

Let $\ell_v\in W^*$ be the restriction of the $v$th coordinate functional.

The $u$th row annihilates $W$ precisely when $d\ell_u+\sum_{v\sim u}t_{uv}\ell_v=0$ in $W^*$. We take that local solution space as the vertex stalk and use the shared edge coefficient as its restriction.

\begin{definition}\label{def:coefficient-sheaf}
The \emph{coefficient sheaf} on the face space of the original graph $G$ is
\begin{equation}\label{eq:coefficient-stalk}
 \sC_W(u)=\left\{(d,(t_{uv})_{v\sim u}):
 d\ell_u+\sum_{v\sim u}t_{uv}\ell_v=0\text{ in }W^*\right\},
 \qquad \sC_W(e)=\kk.
\end{equation}
The restriction to an incident edge extracts its coordinate $t_{uv}$.
\end{definition}
The topology and sheaf axiom are those in Definition~\ref{def:cellular}. This definition uses $W$ itself and is independent of a basis. Edge coefficients are allowed to vanish in the linear stalks; excluding their zero coordinate hyperplanes is a separate open condition.

Choose a basis matrix $X$ of $W$, with $v$th row $X_v\in\kk^q$. A coefficient vector $a\in\kk^{n+e}$ determines a symmetric matrix $A(a)$ with zero entries at nonedges. Define
\begin{equation}\label{eq:theta}
 \Theta_X:\kk^{n+e}\longrightarrow\kk^{nq},
 \qquad a\longmapsto\vecop(A(a)X).
\end{equation}
Changing the basis of $W$ multiplies the equations by an invertible matrix and does not change the coefficient solution space.

Agreement of the two restrictions on $uv$ is the symmetry condition $t_{uv}=t_{vu}$. Thus global sections can be computed either from the stalks or from the single matrix $\Theta_X$.

\begin{proposition}\label{prop:coefficient-global}
There are natural identifications
\begin{equation}\label{eq:coefficient-global}
 H^0(G;\sC_W)=\ker\Theta_X
 =\{A=A^{\mathsf T}:A_{uv}=0\text{ at nonedges},\ AW=0\}.
\end{equation}
Put $r_u=\rank X_{N[u]}$. Then
\begin{equation}\label{eq:coefficient-H1}
 \dim H^1(G;\sC_W)=\sum_{u\in V}r_u-\rank\Theta_X.
\end{equation}
An allowed realization of $W$ exists precisely when this common coefficient space contains a point with all edge coordinates nonzero.
\end{proposition}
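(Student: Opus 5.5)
The plan is to compute everything through the cellular cochain complex of Lemma~\ref{lem:cell-derived}. For the sheaf $\sC_W$ on the face space of $G$ it reads
\[
 C^0=\bigoplus_{u\in V}\sC_W(u)\xrightarrow{\ \delta\ }C^1=\bigoplus_{e\in E}\kk,\qquad
 (\delta s)_{uv}=t^{(v)}_{uv}-t^{(u)}_{uv},
\]
where $t^{(u)}_{uv}$ denotes the edge coordinate recorded in the stalk at $u$. By Lemma~\ref{lem:cell-derived}, $H^0=\ker\delta$ and $H^1=\coker\delta$.

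\textbf{Global sections.} An element of $\ker\delta$ is a family of local row solutions whose two recorded values on each edge agree. Such a family is exactly a symmetric matrix $A$ with zero entries at nonedges whose rows satisfy $d\ell_u+\sum_{v\sim u}t_{uv}\ell_v=0$ in $W^*$. Evaluating these functional identities on the basis columns of $X$ shows that this condition says $(AX)_{u,\ast}=0$ for every $u$. Hence the families are exactly $\ker\Theta_X$, which is the set of symmetric pattern matrices with $AW=0$. This identification uses only $W$, so it is natural. It gives \eqref{eq:coefficient-global}.

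\textbf{Dimension of $H^1$.} By rank--nullity,
\[
 \dim H^1=\dim C^1-\dim C^0+\dim H^0 .
\]
Here $\dim C^1=e$, and $\dim H^0=(n+e)-\rank\Theta_X$ by the previous step. For the stalk dimension, note that $\sC_W(u)$ is the kernel of the linear map $\kk^{N[u]}\to W^*$ sending $(d,(t_{uv}))$ to $d\ell_u+\sum t_{uv}\ell_v$. Its image is spanned by the functionals $\ell_v$ with $v\in N[u]$. The dimension of that span equals the rank of the row block $X_{N[u]}$, namely $r_u$. Therefore
\[
 \dim\sC_W(u)=|N[u]|-r_u=1+\deg u-r_u,
 \qquad
 \dim C^0=n+2e-\sum_u r_u .
\]
Substituting,
\[
 \dim H^1=e-\Bigl(n+2e-\sum_u r_u\Bigr)+n+e-\rank\Theta_X=\sum_u r_u-\rank\Theta_X,
\]
which is \eqref{eq:coefficient-H1}.

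\textbf{Realizations.} An allowed realization is a matrix in $\cA_\kk(G)$ annihilating $W$. By the identification above, this is the same as a point of the linear space $\ker\Theta_X$ whose edge coordinates are all nonzero.

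The only real care needed is the stalk-dimension computation: one must confirm that the rank of the map $\kk^{N[u]}\to W^*$ is $\rank X_{N[u]}$, and this holds because $\ell_v$ corresponds to the row $X_v$. The edge orientation signs in $\delta$ do not affect any of the dimensions, so they need no further attention.
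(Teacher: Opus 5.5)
Your proposal is correct and follows essentially the same route as the paper. Global sections are the compatible families of local rows, i.e.\ $\ker\Theta_X$; the local coefficient map has rank $r_u=\rank X_{N[u]}$, so $\dim\sC_W(u)=1+\deg u-r_u$; and the Euler identity for the two-term cellular complex then yields \eqref{eq:coefficient-H1}. You also justify the stalk rank explicitly through $\ell_v\leftrightarrow X_v$, a step the paper states without comment.
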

\begin{proof}
A family of vertex sections satisfies
\[
 d_uX_u+\sum_{v\sim u}t_{uv}X_v=0\quad(u\in V).
\]
It is a global section if and only if $t_{uv}=t_{vu}$ for every edge. Hence
\[
 H^0(G;\sC_W)
 =\{a:A(a)X=0\}=\ker\Theta_X,
\]
which proves \eqref{eq:coefficient-global}.

The local coefficient map has rank $r_u=\rank X_{N[u]}$. Therefore
\[
 \dim\sC_W(u)=1+\deg(u)-r_u,
\]
and
\begin{align*}
 \dim C^0&=n+2e-\sum_ur_u,&
 \dim C^1&=e,\\
 \dim H^0&=n+e-\rank\Theta_X.&
\end{align*}
The Euler identity gives
\[
 \dim H^1=\dim H^0-\dim C^0+\dim C^1
 =\sum_ur_u-\rank\Theta_X.
\]
Finally, the prescribed graph pattern requires exactly that the edge coordinates of a global section be nonzero.
\end{proof}

An edge that is zero on every vector of $\ker\Theta_X$ prevents realization. Over an infinite field this is the only additional obstruction: finitely many proper edge-coordinate hyperplanes cannot cover the solution space.

\begin{proposition}\label{prop:theta-rank}
For an edge $e$, let $\epsilon_e^*$ be its coefficient coordinate functional on $\kk^{n+e}$. Then $W$ has an allowed symmetric realization if and only if
\begin{equation}\label{eq:theta-rank}
 \rank\begin{pmatrix}\Theta_X\\\epsilon_e^*\end{pmatrix}
 =\rank\Theta_X+1\qquad(e\in E).
\end{equation}
\end{proposition}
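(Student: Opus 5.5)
The plan has three steps: reformulate the rank condition, prove necessity, and prove sufficiency by an avoidance argument over the infinite field $\kk$.

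\emph{Reformulation.} For a fixed edge $e$, the rank condition \eqref{eq:theta-rank} says that the row $\epsilon_e^*$ is not in the row space of $\Theta_X$. Over a field, the row space of $\Theta_X$ is the annihilator of $\ker\Theta_X$. Hence \eqref{eq:theta-rank} holds exactly when $\epsilon_e^*$ does not vanish identically on $\ker\Theta_X$. Equivalently, the hyperplane $\{\epsilon_e^*=0\}$ meets $\ker\Theta_X$ in a proper subspace. So the proof reduces to comparing this linear-algebra statement with the existence criterion of Proposition~\ref{prop:coefficient-global}.

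\emph{Necessity.} Suppose an allowed realization exists. By Proposition~\ref{prop:coefficient-global}, there is an $a\in\ker\Theta_X$ with every edge coordinate nonzero. Then for each $e$ we have $\epsilon_e^*(a)\neq0$, so $\epsilon_e^*$ does not vanish on $\ker\Theta_X$. This gives \eqref{eq:theta-rank} for all $e$ simultaneously.

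\emph{Sufficiency.} Suppose \eqref{eq:theta-rank} holds for every edge. Then each $H_e=\ker\Theta_X\cap\{\epsilon_e^*=0\}$ is a proper linear subspace of $\ker\Theta_X$.

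The key step is that a vector space over an infinite field is not a finite union of proper subspaces. I would prove this as follows.
\begin{enumerate}
\item Choose a nonzero linear functional $f_e$ on $\ker\Theta_X$ vanishing on $H_e$, for each edge; $f_e$ is just the restriction of $\epsilon_e^*$.
\item The product $\prod_e f_e$ is a nonzero polynomial function on $\ker\Theta_X$, since it is a product of nonzero linear forms in a polynomial ring, which is a domain.
\item A nonzero polynomial over an infinite field has a non-root. This follows by the standard induction on the number of variables, using that a nonzero one-variable polynomial has finitely many roots.
\end{enumerate}
Such a non-root $a\in\ker\Theta_X$ has all edge coordinates nonzero. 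By Proposition~\ref{prop:coefficient-global}, $A(a)$ is symmetric, has the prescribed pattern, and satisfies $A(a)W=0$, so it is an allowed realization. If $E=\varnothing$, the empty product is $1$, and any point of $\ker\Theta_X$ works, for instance $0$.

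The only delicate point is the avoidance lemma. It genuinely needs $\kk$ to be infinite; this is assumed in the section, and in any case characteristic zero already implies it. It also needs the edge set to be finite, which holds because $G$ is finite. Otherwise the argument is a routine translation through Proposition~\ref{prop:coefficient-global}.
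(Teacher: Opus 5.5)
Your proof is correct and follows essentially the same route as the paper. You identify the row space of $\Theta_X$ with the annihilator of $\ker\Theta_X$, so that the rank condition becomes nonvanishing of $\epsilon_e^*$ on the kernel. You then get sufficiency by choosing a point of $\ker\Theta_X$ where the product of these nonzero linear forms is nonzero, which uses that $\kk$ is infinite, and you handle the empty-edge case as the paper does.
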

\begin{proof}
Let $K=\ker\Theta_X$. For a finite-dimensional linear map, the annihilator $K^\perp$ equals its row space. Indeed, each row vanishes on $K$, and both spaces have dimension $\rank\Theta_X$ by rank-nullity. Thus for each edge $e$,
\[
 \epsilon_e^*|_K\ne0
 \ \Longleftrightarrow\ \epsilon_e^*\notin\operatorname{row}(\Theta_X)
 \ \Longleftrightarrow\
 \rank\begin{pmatrix}\Theta_X\\\epsilon_e^*\end{pmatrix}
       =\rank\Theta_X+1.
\]
An allowed coefficient vector lies in $K$ and has nonzero value under every $\epsilon_e^*$, so it implies these conditions.

Conversely, choose coordinates on the vector space $K\cong\kk^d$. Under the rank conditions every restricted coordinate $\epsilon_e^*|_K$ is a nonzero linear polynomial. Their product is nonzero in the polynomial ring $\kk[K]$, which is an integral domain:
\[
 f=\prod_{e\in E}\epsilon_e^*|_K\ne0.
\]
Over an infinite field a nonzero polynomial cannot vanish at every point. One proves this by induction on $d$: expand in the last variable, choose the first $d-1$ coordinates where a nonzero coefficient does not vanish, then avoid the finitely many roots in the last coordinate. Choose $a\in K$ with $f(a)\ne0$. Its matrix is symmetric, annihilates $W$, vanishes at nonedges, and has all edge coefficients nonzero, as required. If $E=\varnothing$, the product is one and the zero matrix is allowed. 
\end{proof}

\subsection{Local coefficient solutions and forts}
For a single vector, Proposition~\ref{prop:support-witness} gives a global symmetric realization from its fort support. For a subspace $W$, the same support condition on every vector first controls only the local coefficient rows.
\begin{theorem}\label{thm:local-forts}
The following are equivalent:
\begin{enumerate}
\item every nonzero vector of $W$ has fort support;
\item at each vertex $u$, the stalk $\sC_W(u)$ contains a row with all incident edge coefficients nonzero;
\item for every incidence $u\sim v$,
\begin{equation}\label{eq:local-span}
 \ell_v\in\spanlin\bigl(\ell_u,\{\ell_w:w\in N(u)\setminus\{v\}\}\bigr).
\end{equation}
\end{enumerate}
\end{theorem}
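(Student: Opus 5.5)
The plan is to route both equivalences through condition (c), using duality in $W^*$ for (a)$\Leftrightarrow$(c) and the linear structure of the stalk $\sC_W(u)$ for (b)$\Leftrightarrow$(c). The basic observation is that for $y\in W$ one has $\ell_v(y)=y_v$. For a subset $L\subseteq W^*$, the finite-dimensional duality statement reads
\[
 \ell_v\in\spanlin(L)
 \iff
 \ell_v \text{ vanishes on every } y\in W \text{ annihilated by } L.
\]

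\emph{(a)$\Leftrightarrow$(c).} I will argue through the negations.

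Suppose (c) fails at an incidence $u\sim v$. By duality there is $y\in W$ with
\[
 y_u=0,\qquad y_w=0\quad(w\in N(u)\setminus\{v\}),\qquad y_v\ne0 .
\]
Then $y\ne0$ and $u\notin\supp y$. Moreover $N(u)\cap\supp y=\{v\}$, so $\supp y$ violates \eqref{eq:fort}, and (a) fails.

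Conversely, suppose some nonzero $y\in W$ has non-fort support $F=\supp y$. Then there is $u\notin F$ with $N(u)\cap F=\{v\}$. This $y$ is annihilated by $\ell_u$ and by all $\ell_w$ with $w\in N(u)\setminus\{v\}$, but not by $\ell_v$. By duality, \eqref{eq:local-span} fails at the incidence $u\sim v$.

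\emph{(b)$\Rightarrow$(c).} Take a row $(d,(t_{uw})_{w\sim u})\in\sC_W(u)$ with every $t_{uw}\ne0$. For a given neighbour $v$, divide the defining relation $d\ell_u+\sum_{w\sim u}t_{uw}\ell_w=0$ by $t_{uv}$. This expresses $\ell_v$ in the span of $\ell_u$ and $\{\ell_w:w\in N(u)\setminus\{v\}\}$.

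\emph{(c)$\Rightarrow$(b).} This is the step needing care, since one must produce a single row in which all incident coefficients are nonzero simultaneously.

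\begin{enumerate}
\item If $u$ is isolated, any row, for instance the zero row, qualifies vacuously.
\item Otherwise, for each $v\in N(u)$, rewrite the relation \eqref{eq:local-span} as a stalk element $s^{(v)}\in\sC_W(u)$ whose $uv$-coordinate equals $1$.
\item Since $\sC_W(u)$ is a vector space, every combination $s(\lambda)=\sum_v\lambda_v s^{(v)}$ lies in the stalk.
\item The $uw$-coordinate of $s(\lambda)$ is
\[
 \lambda_w+\sum_{v\ne w}\lambda_v\,s^{(v)}_{uw},
\]
a linear form in $\lambda$ with coefficient $1$ on $\lambda_w$. In particular each of these forms is nonzero.
\item Over the infinite field $\kk$, finitely many proper hyperplanes cannot cover $\kk^{N(u)}$; this is the argument of Proposition~\ref{prop:theta-rank}. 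So some $\lambda$ makes every edge coordinate of $s(\lambda)$ nonzero, which gives (b).
\end{enumerate}

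I expect no real obstacle. The only points to watch are that the duality in $W^*$ is applied to the correct finite set of functionals, and that genericity is used in (c)$\Rightarrow$(b) rather than a single relation per neighbour.
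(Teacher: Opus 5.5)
Your proposal is correct and follows essentially the same route as the paper. Both arguments prove (a)$\Leftrightarrow$(c) by duality in $W^*$, which produces $x\in W$ with $N(u)\cap\supp x=\{v\}$; both get (b)$\Rightarrow$(c) by dividing by $t_{uv}$, and (c)$\Rightarrow$(b) by avoiding finitely many proper hyperplanes over the infinite field $\kk$. The only cosmetic difference is that you parametrize candidate rows by $\lambda\in\kk^{N(u)}$ through the combinations $\sum_v\lambda_v s^{(v)}$, while the paper applies the product-of-coordinates argument directly on the stalk $\sC_W(u)$.
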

\begin{proof}
Fix an incidence $u\sim v$. A coefficient vector in $\sC_W(u)$ with
$t_{uv}\ne0$ satisfies
\[
 \ell_v=-\frac{d}{t_{uv}}\ell_u
 -\sum_{w\in N(u)\setminus\{v\}}
                    \frac{t_{uw}}{t_{uv}}\ell_w.
\]
Conversely any such expression supplies a local row by taking $t_{uv}=1$
and moving the other terms to the left. Thus the $uv$ coordinate is not
identically zero on the local solution space exactly when
\eqref{eq:local-span} holds. If this is true for every incident edge, the
product of the corresponding nonzero coordinate linear functions is a
nonzero polynomial on that finite-dimensional solution space. Since $\kk$
is infinite, it has a nonzero value somewhere. At that one row all incident
edge coefficients are nonzero. This proves the equivalence of (b) and (c),
including an isolated vertex, where the empty product is one.

For (a), suppose first that \eqref{eq:local-span} fails and put
\[
 L=\spanlin\bigl(\ell_u,\{\ell_w:w\in N(u)\setminus\{v\}\}\bigr)
 \subseteq W^*.
\]
The image of $\ell_v$ in $W^*/L$ is nonzero. Choose a linear functional on
$W^*/L$ nonzero at that image and pull it back to $W^*$. The finite-dimensional
identification $W^{**}=W$ gives $x\in W$ with
\[
 \ell_u(x)=0,\quad \ell_w(x)=0\ (w\in N(u)\setminus\{v\}),
 \quad\ell_v(x)\ne0.
\]
Thus $x\ne0$ and its support has the unique exterior neighbour incidence
$N(u)\cap\supp x=\{v\}$, so it is not a fort. Conversely, a nonfort support
of a nonzero $x\in W$ gives exactly these equations at its witnessing
incidence, and evaluation on $x$ rules out \eqref{eq:local-span}. This proves
(a) equivalent to (c).
\end{proof}
The coefficients obtained in Theorem~\ref{thm:local-forts} need not agree at the two ends of an edge. The remaining condition is the existence of a global section with all edge coordinates nonzero. See \cite{Compatible} for compatible support families and the distinction between symmetric and combinatorially symmetric realizations.

For $W=\kk x$, the image of $a\mapsto A(a)x$ determines the dimension
of $H^1(G;\sC_W)$.

\begin{proposition}\label{prop:q1-H1}
If $q=1$, then $H^1(G;\sC_W)=0$, whether or not the nonzero vector spanning $W$ has fort support.
\end{proposition}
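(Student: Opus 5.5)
The plan is to use the dimension formula \eqref{eq:coefficient-H1} of Proposition~\ref{prop:coefficient-global},
\[
 \dim H^1(G;\sC_W)=\sum_{u\in V}r_u-\rank\Theta_X .
\]
It then suffices to show that $\rank\Theta_X=\sum_u r_u$ when $W=\kk x$, that is, that $\Theta_X$ is onto a space of that dimension. With $q=1$ the basis matrix $X$ is the column $x$, and $\Theta_X(a)=A(a)x\in\kk^n$. The local rank is $r_u=\rank x_{N[u]}$, which equals $1$ if $x$ is not identically zero on $N[u]$ and $0$ otherwise.

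The key steps are these. First, each coordinate $(A(a)x)_u=d_ux_u+\sum_{v\sim u}t_{uv}x_v$ is a linear form in the coefficients. It vanishes identically exactly when $x|_{N[u]}=0$, that is, when $r_u=0$. So the image of $\Theta_X$ lies in $\kk^{V_0}$, where $V_0=\{u:r_u=1\}$, and it remains to show that it is all of $\kk^{V_0}$.

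Second, for $u\in V_0$ we produce a coefficient vector whose image is the unit vector $\mathbf e_u$, or at least a triangular family of images. If $x_u\ne0$, set $d_u=1/x_u$ and all other coefficients to zero; then $A(a)x=\mathbf e_u$. If $x_u=0$ but $x_v\ne0$ for some neighbour $v$, set the single edge coefficient $t_{uv}=1/x_v$. The image is then $\mathbf e_u+x_u x_v^{-1}\mathbf e_v=\mathbf e_u$, because $x_u=0$. Edge coefficients are allowed to vanish in the linear stalks, so this single-coordinate choice is legitimate inside $\kk^{n+e}$. In both cases $\mathbf e_u$ lies in the image, which gives $\rank\Theta_X=|V_0|=\sum_u r_u$ and hence $H^1=0$.

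Third, the fort hypothesis on $\supp x$ is never used, which accounts for the clause ``whether or not''. The only point needing care is the symmetry in the edge case: a symmetric edge coefficient also contributes $t_{uv}x_u$ to row $v$, and this is exactly why we choose $v$ with $x_u=0$. We expect no real obstacle beyond this bookkeeping, together with the degenerate case $x|_{N[u]}=0$, where both sides contribute zero.
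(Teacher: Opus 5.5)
Your proposal is correct and follows the paper's proof essentially step for step. The paper also uses \eqref{eq:coefficient-H1}, identifies the rows with $r_u=1$ as $F\cup\partial F$ for $F=\supp x$, and reaches each $\mathbf e_u$ in the image of $\Theta_X$ in two ways: by varying $d_u$ when $u\in F$, and by varying a single crossing edge $uv$ with $v\in F$ when $u\in\partial F$, where $x_u=0$ kills the symmetric contribution to row $v$.
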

\begin{proof}
Let $W=\kk x$ and $F=\supp x$. The local rank is
\[
 r_u=\begin{cases}1,&u\in F\cup\partial F,\\0,&u\notin F\cup\partial F.
 \end{cases}
\]
For $u\in F$, varying only $d_u$ changes $A(a)x$ by a multiple of
$x_u\mathbf e_u$, with $x_u\ne0$. For $u\in\partial F$, choose $v\in N(u)\cap F$. Varying only the coefficient of $uv$ changes the output by
\[
 x_v\mathbf e_u+x_u\mathbf e_v=x_v\mathbf e_u.
\]
Every other output coordinate is identically zero. Hence
\[
 \im\Theta_X=\kk^{F\cup\partial F},\qquad
 \rank\Theta_X=|F|+|\partial F|=\sum_ur_u.
\]
Equation~\eqref{eq:coefficient-H1} now gives $H^1(G;\sC_W)=0$.
\end{proof}
By Proposition~\ref{prop:theta-rank}, realization is equivalent to
\[
 H^0(G;\sC_W)\cap\{t_e\ne0\text{ for all }e\}\ne\varnothing.
\]

\subsection{Equilibrium stresses and exterior dependencies}\label{sec:stress}
A subspace $W$ contains a vector nonzero at every vertex occurring in any of its supports. Rescaling by this vector turns one basis vector into a constant on the support, so that the other row equations become equilibrium equations.
Let
\[
 F=\bigcup_{x\in W}\supp x.
\]
Choose $x^0\in W$ nonzero at every $v\in F$. Such a vector exists by the finite-hyperplane argument, since each coordinate functional indexed by $F$ is nonzero. Define $D_{vv}=x^0_v$ for $v\in F$ and $D_{vv}=1$ outside $F$. Then
\begin{equation}\label{eq:normalization}
 W'=D^{-1}W,\qquad A'=DAD,
 \qquad AW=0\Longleftrightarrow A'W'=0.
\end{equation}
This is an invertible congruence on coefficient matrices, preserving symmetry and the nonzero edge pattern. The normalized space contains $\one_F$. Choose its basis matrix with rows
\begin{equation}\label{eq:framework-rows}
 X_v=(1,p_v)\quad(v\in F),\qquad X_v=0\quad(v\notin F),
 \qquad p_v\in\kk^{q-1}.
\end{equation}

Write
\[
 \mathcal S_G(W)=
 \{A=A^{\mathsf T}:A_{uv}=0\text{ at nonedges},\ AW=0\}.
\]
Congruence by $D$ identifies $\mathcal S_G(W)$ with
$\mathcal S_G(W')$. We keep $A',d'_u,t'_{uv}$ for the normalized matrix
and its entries, and $X$ for the basis matrix of $W'$ in
\eqref{eq:framework-rows}. The original matrix is $D^{-1}A'D^{-1}$.

For $u\in F$, the first coordinate of $(A'X)_u=0$ determines $d'_u$.
Substitution in the other coordinates gives the equilibrium equation
$\sum_vt'_{uv}(p_v-p_u)=0$.

\begin{definition}\label{def:rigidity}
Put $G_F=G[F]$. The rigidity matrix $\mathcal R_{G_F}(p)$ has one row
for each edge $uv\in E(G_F)$ and a $(q-1)$-column block for each vertex
of $F$. Its edge row has $p_u-p_v$ in the $u$ block, $p_v-p_u$ in the
$v$ block, and zero elsewhere. Its equilibrium stress space is
$\ker\mathcal R_{G_F}(p)^{\mathsf T}$. For $e\in E(G_F)$, the matrix
$\mathcal R_{G_F-e}(p)$ is obtained by deleting the row indexed by $e$.
Write $N_F(u)=N(u)\cap F$ for every $u\in V$. For $u\in\partial F$,
let $X_{N_F(u)}$ have rows $(1,p_v)$, $v\in N_F(u)$.
\end{definition}
The rigidity matrix and its stress space are used in \cite{CGT}; related
sheaf and cosheaf constructions appear in \cite{CGH}. Here each exterior
vertex contributes a dependency among the rows of $X_{N_F(u)}$.

\begin{theorem}\label{thm:stress}
In the normalized coordinates \eqref{eq:normalization}, there is a linear
isomorphism
\begin{equation}\label{eq:stress-decomp}
\begin{split}
 \mathcal S_G(W')
 \cong{}&\ker\mathcal R_{G_F}(p)^{\mathsf T}\\
 &\oplus\bigoplus_{u\in\partial F}\ker X_{N_F(u)}^{\mathsf T}\\
 &\oplus\kk^{|V\setminus F|+|E(G[V\setminus F])|}.
\end{split}
\end{equation}
Composing its inverse with $A'\mapsto D^{-1}A'D^{-1}$ parametrizes
$\mathcal S_G(W)$. This space contains a matrix with every edge coefficient
nonzero if and only if
\begin{equation}\label{eq:stress-edge}
 \rank\mathcal R_{G_F}(p)=\rank\mathcal R_{G_F-e}(p)
 \quad(e\in E(G_F))
\end{equation}
and, for each $u\in\partial F$ and $v\in N_F(u)$,
\begin{equation}\label{eq:exterior-affine}
 (1,p_v)\in\spanlin\{(1,p_w):w\in N_F(u)\setminus\{v\}\}.
\end{equation}
\end{theorem}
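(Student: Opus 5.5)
The plan is to decompose the linear equations $A'X=0$ row by row, using the block structure of $X$ in \eqref{eq:framework-rows}, and to sort the coefficients of $A'$ into three disjoint groups: entries inside $F$ (internal diagonals and edges of $G_F$), crossing edges $uv$ with $u\in\partial F$, $v\in F$, and the remaining entries (exterior diagonals and edges of $G[V\setminus F]$). Since $X_v=0$ for $v\notin F$, a row $u\notin F\cup\partial F$ imposes no condition. Exterior diagonals and exterior edges never multiply a nonzero row of $X$, so they are free and give the last summand $\kk^{|V\setminus F|+|E(G[V\setminus F])|}$.

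For $u\in\partial F$, the row equation is $\sum_{v\in N_F(u)}t'_{uv}(1,p_v)=0$. This says exactly that $(t'_{uv})_v\in\ker X_{N_F(u)}^{\mathsf T}$. These variables are disjoint for distinct $u$, and they do not appear in any row indexed by $F$: in a row $v\in F$, the crossing term $t'_{vu}X_u$ vanishes because $X_u=0$. This gives the middle summand.

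For $u\in F$, the first coordinate of the row equation gives $d'_u=-\sum_{v\in N_F(u)}t'_{uv}$. Substituting this into the remaining $q-1$ coordinates gives $\sum_{v\in N_F(u)}t'_{uv}(p_v-p_u)=0$. This is precisely the statement that $(t'_e)_{e\in E(G_F)}$ lies in $\ker\mathcal R_{G_F}(p)^{\mathsf T}$, by reading off the column blocks of the rigidity matrix (the sign convention is symmetric, so there is no issue). The map $A'\mapsto$ (internal edge weights, crossing weights grouped by $u$, free exterior entries) is linear and injective, because the internal diagonals are determined by the internal weights. It is surjective because any triple defines a symmetric $A'$ satisfying all rows. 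The congruence in \eqref{eq:normalization} then transports this to $\mathcal S_G(W)$.

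For the nonvanishing criterion, the direct sum decouples the edge coordinates. By the hyperplane argument of Proposition~\ref{prop:theta-rank} over the infinite field $\kk$, a matrix with all edge coordinates nonzero exists if and only if no edge coordinate vanishes identically on its summand. Free exterior edges are never identically zero. For $e\in E(G_F)$, the coordinate $t'_e$ vanishes on the whole stress space if and only if every stress is supported off $e$, that is, $\ker\mathcal R_{G_F}^{\mathsf T}=\ker\mathcal R_{G_F-e}^{\mathsf T}$. Comparing dimensions via rank–nullity, this happens exactly when $\rank\mathcal R_{G_F-e}=\rank\mathcal R_{G_F}-1$; its negation is \eqref{eq:stress-edge}. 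For a crossing edge $uv$, a dependency among the rows $(1,p_w)$ with nonzero coefficient at $v$ exists if and only if \eqref{eq:exterior-affine} holds, as in the proof of Theorem~\ref{thm:local-forts}. Congruence by the diagonal $D$ preserves the nonzero pattern, which finishes the argument.

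The main obstacle is purely bookkeeping: confirming that crossing variables never enter rows of $F$, and that the equilibrium equation matches the transpose of the rigidity matrix with the stated sign convention. Everything else is linear algebra already used earlier in the paper.
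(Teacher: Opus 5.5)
Your proposal is correct and follows the paper's proof essentially step for step. Both arguments eliminate $d'_u$ row by row, observe that the crossing variables are disjoint for distinct $u$ and never enter rows indexed by $F$, construct the explicit inverse, and finish with the finite-hyperplane argument for nonvanishing. The only difference is cosmetic: you detect an identically vanishing internal coordinate by comparing the dimensions of $\ker\mathcal R_{G_F}(p)^{\mathsf T}$ and $\ker\mathcal R_{G_F-e}(p)^{\mathsf T}$, whereas the paper states the same fact as the row indexed by $e$ lying in the span of the other rows.
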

\begin{proof}
For $u\in F$, the first and remaining coordinates of $(A'X)_u=0$ are
\[
 d'_u+\sum_{v\in N_F(u)}t'_{uv}=0,\qquad
 d'_up_u+\sum_{v\in N_F(u)}t'_{uv}p_v=0.
\]
Eliminating $d'_u$ gives
\[
 d'_u=-\sum_{v\in N_F(u)}t'_{uv},\qquad
 \sum_{v\in N_F(u)}t'_{uv}(p_v-p_u)=0.
\]
Thus the internal edge coefficients form
$\ker\mathcal R_{G_F}(p)^{\mathsf T}$ and determine every internal diagonal.

For $u\notin F$, one has $X_u=0$, so
\[
 (A'X)_u=\sum_{v\in N_F(u)}t'_{uv}(1,p_v)=0.
\]
When $u\in\partial F$, this says
$t'_{u,F}\in\ker X_{N_F(u)}^{\mathsf T}$; otherwise the equation is zero.
Each crossing edge has a unique exterior endpoint, so these variable sets
are disjoint for distinct $u$. Exterior diagonals and edges in
$G[V\setminus F]$ do not appear in $A'X$ and give the last summand of
\eqref{eq:stress-decomp}.

Conversely, take a stress $\sigma$, a vector
$\zeta_u\in\ker X_{N_F(u)}^{\mathsf T}$ for each $u\in\partial F$, and
arbitrary exterior coefficients. Define the normalized symmetric matrix by
\[
 t'_{vw}=\sigma_{vw}\quad(vw\in E(G_F)),\qquad
 t'_{uv}=t'_{vu}=(\zeta_u)_v
       \quad(u\in\partial F,\ v\in N_F(u)),
\]
\[
 d'_v=-\sum_{w\in N_F(v)}\sigma_{vw}\quad(v\in F).
\]
Every coefficient is now assigned once. The stress equation verifies the
rows in $F$, and $X_{N_F(u)}^{\mathsf T}\zeta_u=0$ verifies the exterior
rows. Hence $A'X=0$. This construction is linear and inverse to coefficient
extraction. Since
\[
 (D^{-1}A'D^{-1})W=D^{-1}A'W'=0,
\]
it also gives the stated parametrization of $\mathcal S_G(W)$.

For an internal edge $e$, a stress with nonzero $e$-coordinate exists
precisely when
\[
 \mathcal R_{G_F}(p)_{e,*}
 \in\spanlin\{\mathcal R_{G_F}(p)_{f,*}:f\ne e\},
\]
which is \eqref{eq:stress-edge}. The same row-dependence argument for
$X_{N_F(u)}$ gives \eqref{eq:exterior-affine} at a crossing edge.
If all these conditions hold, the finite-hyperplane argument of
Proposition~\ref{prop:theta-rank} supplies a vector with all required
coordinates nonzero in each summand. Choose nonzero values on the free
exterior edges. The inverse construction gives an allowed $A'$, and
congruence by $D^{-1}$ preserves the nonzero edge pattern. Necessity follows
by extracting these same coordinates from any allowed realization.
\end{proof}
When $q=1$, the rigidity matrix has no columns, so the internal
condition holds. Each exterior row is the scalar $1$; hence
\eqref{eq:exterior-affine} is equivalent to $|N_F(u)|\ne1$.
This recovers Proposition~\ref{prop:support-witness}.

\subsection{One-dimensional stalks and multiplicative transport}
If each coefficient stalk is a line and every incident restriction is nonzero, choosing a local generator reduces all gluing equations to scalar ratios. The product of these ratios around a cycle is then the obstruction to a nonzero global section.
\begin{proposition}\label{prop:line-transport}
Suppose $G$ is connected, each vertex stalk of $\sC_W$ is one-dimensional, and every restriction to an incident edge is nonzero. Choose a generator $f_u$ of each vertex stalk and denote its edge coordinate by $f_{u,e}\in\kk^\times$. For an oriented edge $e:u\to v$, set
\[
 r_e=f_{u,e}/f_{v,e}.
\]
A global coefficient section with every edge coefficient nonzero exists if and only if
\begin{equation}\label{eq:transport}
 r_e=\lambda_v/\lambda_u\quad(e:u\to v)
\end{equation}
for some $\lambda_u\in\kk^\times$. Equivalently, the product of $r_e$ along every oriented closed walk, using inverse factors when an edge is traversed backwards, is one.
\end{proposition}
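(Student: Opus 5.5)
Since every vertex stalk of $\sC_W$ is a line $\kk f_u$, a global section is a family $s_u=\lambda_uf_u$ with $\lambda_u\in\kk$. The sheaf condition of Definition~\ref{def:cellular} on an edge $e$ joining $u$ and $v$ reads $\lambda_uf_{u,e}=\lambda_vf_{v,e}$. Thus the plan is to reduce everything to this scalar gluing equation and then treat the two stated equivalences separately.

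\textbf{The section criterion.} For the oriented edge $e:u\to v$, the gluing equation is exactly
\[
 \lambda_v=r_e\lambda_u,
\]
by the definition of $r_e$. Equivalently it is $r_e=\lambda_v/\lambda_u$ whenever $\lambda_u\ne0$. The edge coefficient of the section at $e$ is $\lambda_uf_{u,e}$. Because $f_{u,e}\in\kk^\times$, this coefficient is nonzero exactly when $\lambda_u\ne0$.
\begin{enumerate}
\item Suppose a global section has all edge coefficients nonzero. Every vertex lying on an edge then has $\lambda_u\ne0$, and the gluing equation gives \eqref{eq:transport}. If $G$ is a single vertex, \eqref{eq:transport} is vacuous and we may take $\lambda=1$.
\item Conversely, suppose $\lambda_u\in\kk^\times$ satisfy \eqref{eq:transport}. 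Then $s_u=\lambda_uf_u$ glues along every edge and has every edge coefficient nonzero. By Proposition~\ref{prop:coefficient-global} this global section is a symmetric matrix annihilating $W$ with the required pattern.
\end{enumerate}

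\textbf{The cycle criterion.} This is the standard fact that a $\kk^\times$-valued $1$-cochain on a connected graph is a coboundary exactly when its holonomy is trivial.
\begin{enumerate}
\item Necessity: along a closed walk the product of the factors $r_e^{\pm1}$ telescopes to $\lambda_{\text{end}}/\lambda_{\text{start}}=1$.
\item Sufficiency: fix a base vertex $o$ and set $\lambda_o=1$. For any other vertex $u$, let $\lambda_u$ be the product of the factors $r_e^{\pm1}$ along a walk from $o$ to $u$; such a walk exists because $G$ is connected.
\item Well-definedness: two such walks differ by a closed walk, whose product is one by hypothesis, so $\lambda_u$ does not depend on the walk chosen.
\item Verification: extending a walk to $u$ by the edge $e:u\to v$ shows $\lambda_v=r_e\lambda_u$. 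Traversing $e$ backwards gives the same relation with the factor $r_e^{-1}$.
\end{enumerate}

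Orientation conventions cause no trouble: reversing the orientation of $e$ replaces $r_e$ by $r_e^{-1}$, and both \eqref{eq:transport} and the closed-walk condition are invariant under this change. The only point needing care is well-definedness in the cycle criterion, and it follows directly from the closed-walk hypothesis.
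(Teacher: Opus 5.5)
Your proof is correct and follows the paper's argument. The scalar gluing equation $\lambda_v=r_e\lambda_u$ gives the first equivalence, and the cycle criterion is the standard holonomy argument. The only difference is cosmetic: the paper defines $\lambda_u$ along paths in a spanning tree and then uses the closed-walk hypothesis to satisfy the equations on nontree edges, while you define $\lambda_u$ along arbitrary walks and use the same hypothesis to show it does not depend on the walk.
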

\begin{proof}
Write the local section at $u$ as $\lambda_uf_u$. On an edge $e:u\to v$,
\[
 \lambda_uf_{u,e}=\lambda_vf_{v,e}
 \quad\Longleftrightarrow\quad
 \frac{\lambda_v}{\lambda_u}
 =\frac{f_{u,e}}{f_{v,e}}=r_e
\]
when $\lambda_u,\lambda_v\ne0$. Multiplying along a closed walk gives a product of one by cancellation.

Conversely, choose a root $u_0$, a spanning tree, and $\lambda_{u_0}\in\kk^\times$. For its unique tree path $\gamma_{u_0,u}$, define
\[
 \lambda_u=\lambda_{u_0}\prod_{e\in\gamma_{u_0,u}}r_e^{\epsilon(e)},
\]
where $\epsilon(e)=1$ for the chosen orientation and $-1$ for the opposite orientation. This solves every tree-edge equation. Each nontree edge completes a closed walk, and the product-one assumption solves its equation as well. Thus the coefficients glue and are all nonzero.

Since every edge restriction is nonzero and $G$ is connected, $\lambda_u=0$ at one vertex of a linear global section implies $\lambda_v=0$ at all vertices.
\end{proof}
Under the hypotheses of Proposition~\ref{prop:line-transport}, remove zero
from every vertex and edge stalk of $\sC_W$. The resulting stalks, with
induced restrictions and scalar multiplication, form a torsor $\mathcal T_W$
under the constant sheaf $\underline{\kk^\times}$ on the face space of $G$.
A global section is a compatible choice of nonzero local coefficients.
With local generators fixed, its transition functions are the edge ratios
$r_e$. Replacing $f_u$ by $c_uf_u$ replaces $r_e$ by
$(c_u/c_v)r_e$, a vertex coboundary. Thus
$[r]\in H^1(G;\underline{\kk^\times})$ is intrinsic, and it vanishes
exactly when $\mathcal T_W$ has a global section
\cite[\stacks{03AG}]{Stacks}.

We give a subspace $W$ for which every nonzero vector has fort support, while the multiplicative transport class is nontrivial. Thus the local criterion of Theorem~\ref{thm:local-forts} does not imply a common symmetric realization.

\begin{example}\label{ex:cube}
On vertices $0,\ldots,7$ take the cube edges
\[
 01,12,23,30,\quad04,17,26,35,\quad45,56,67,74.
\]
Let $X_u=(1,u,u^2)$ and $W=\operatorname{col}X$ over $\QQ$. Every $0\ne x\in W$ is the evaluation of a nonzero polynomial of degree at most two at eight distinct numbers. Therefore
\[
 |V\setminus\supp x|\le2,
 \qquad
 u\notin\supp x\ \Longrightarrow\
 |N(u)\cap\supp x|\ge3-1=2.
\]
Thus every nonzero vector of $W$ has fort support.

The local equation at $u$ is
\[
 d_u+\sum_vt_{uv}=0,\quad
 ud_u+\sum_vvt_{uv}=0,\quad
 u^2d_u+\sum_vv^2t_{uv}=0.
\]
Eliminating $d_u$ and using the first resulting equation gives the equivalent pair
\[
 \sum_{v\in N(u)}(v-u)t_{uv}=0,\qquad
 \sum_{v\in N(u)}(v-u)^2t_{uv}=0.
\]
For distinct neighbors $a,b$ of $u$, the corresponding two-column minor is
\[
 \det\begin{pmatrix}a-u&b-u\\(a-u)^2&(b-u)^2\end{pmatrix}
 =(a-u)(b-u)(b-a)\ne0.
\]
Thus the two equations have rank two in the three edge variables, and the
local coefficient space is one-dimensional. A nonzero generator has
\begin{equation}\label{eq:cube-weights}
 f_{uv}=\frac{1}{(v-u)\prod_{w\in N(u)\setminus\{v\}}(v-w)}.
\end{equation}
To verify \eqref{eq:cube-weights}, write $N(u)=\{a,b,c\}$ and put $s_v=(v-u)f_{uv}$. With $D=(a-b)(a-c)(b-c)$,
\[
 D\sum_vs_v=(b-c)-(a-c)+(a-b)=0,
\]
\[
 D\sum_vvs_v=a(b-c)-b(a-c)+c(a-b)=0.
\]
Hence $\sum_v(v-u)f_{uv}=0$ and
$\sum_v(v-u)^2f_{uv}=\sum_v(v-u)s_v=0$. The vertices are distinct, so no denominator vanishes.

Symmetry would require $\lambda_v/\lambda_u=f_{uv}/f_{vu}$. On the square $0\to1\to2\to3\to0$, these ratios are
\[
 -\frac73,\qquad1,\qquad-1,\qquad\frac15.
\]
Their product is $7/15\ne1$. Proposition~\ref{prop:line-transport} excludes a nonzero global coefficient section. Since the cube is connected and all local restrictions are nonzero, its only global coefficient section is zero. With eight one-dimensional vertex stalks and twelve edge stalks,
\[
 H^0(G;\sC_W)=0,\qquad \dim H^1(G;\sC_W)=12-8=4.
\]
\end{example}

For the following triangle, the transport class vanishes and
$\dim H^1(G;\sC_W)=1$.

\begin{example}\label{ex:triangle-H1}
Let $G=K_3$, and let $X$ have rows $(1,0),(1,1),(1,2)$. For $s=(1,-2,1)^{\mathsf T}$ set
\[
 A=-\tfrac12ss^{\mathsf T}
 =\begin{pmatrix}-1/2&1&-1/2\\1&-2&1\\-1/2&1&-1/2\end{pmatrix}.
\]
Then $AX=0$ and every edge entry is nonzero. Each local coefficient space is one-dimensional. Globally, a symmetric matrix annihilating this two-dimensional space has image contained in $\kk s$ and must be a scalar multiple of $ss^{\mathsf T}$; hence $\dim H^0=1$. Since $C^0$ and $C^1$ both have dimension three, $\dim H^1=1$ as well. This realizes the given kernel with $\dim H^1=1$.
\end{example}

\section{Open boundary complexes}\label{sec:ports}

Under a decomposition of $G$, each boundary row of $Ax=0$ is a sum
of contributions from the incident blocks. Keeping these contributions
as residual variables gives the gluing complex. We first recall the
restriction sequence for a cellular sheaf.

\subsection{Restriction of a fixed sheaf}
Let $X=X_1\cup X_2$ be a subcomplex decomposition and $Y=X_1\cap X_2$. Two restrictions of the same cellular sheaf have identical stalks on $Y$, so their sections can be compared by taking the difference of their traces.

\begin{proposition}\label{prop:MV}
Let $X=X_1\cup X_2$ be a finite graph decomposed into subcomplexes, put $Y=X_1\cap X_2$, and let $\sF$ be one cellular sheaf on $X$. There is a short exact sequence of cochain complexes
\begin{equation}\label{eq:MV-complex}
 0\to C^\bullet(X;\sF)
 \to C^\bullet(X_1;\sF)\oplus C^\bullet(X_2;\sF)
 \xrightarrow{r_1-r_2}C^\bullet(Y;\sF)\to0.
\end{equation}
Over a field, set $N_i=\ker(H^0(X_i)\to H^0(Y))$ and $B_i=\im(H^0(X_i)\to H^0(Y))$. Then
\begin{equation}\label{eq:MV-kernel}
 0\to N_1\oplus N_2\to H^0(X;\sF)\to B_1\cap B_2\to0.
\end{equation}
The full long exact sequence ends with
\begin{equation}\label{eq:MV-long}
\begin{split}
 H^0(X_1)\oplus H^0(X_2)&\to H^0(Y)\to H^1(X)\\
 &\to H^1(X_1)\oplus H^1(X_2)\to H^1(Y)\to0.
\end{split}
\end{equation}
\end{proposition}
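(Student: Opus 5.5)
The plan is to establish the short exact sequence \eqref{eq:MV-complex} degreewise, using the cell description of cochains. Take $C^p(X;\sF)=\bigoplus_{c}\sF(c)$, where $c$ ranges over the $p$-cells of $X$. Since $X_1,X_2$ are subcomplexes, the $p$-cells of $X$ are exactly the union of the $p$-cells of $X_1$ and of $X_2$, and $Y$ consists of the cells common to both. The first map sends $s$ to $(s|_{X_1},s|_{X_2})$. The second map is the difference of the projections onto the coordinates indexed by $Y$.

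Exactness will be checked one cell at a time, which reduces it to the sequence
\[
0\to\sF(c)\to\sF(c)^{\epsilon_1(c)}\oplus\sF(c)^{\epsilon_2(c)}\to\sF(c)^{\epsilon_Y(c)}\to0.
\]
Here $\epsilon$ records membership. For cells of $Y$ this sequence is $0\to M\xrightarrow{\Delta}M\oplus M\xrightarrow{-}M\to0$, and for the other cells it is an isomorphism onto one summand; in both cases it is exact. Both maps also commute with $\delta$. The point is that the restriction maps $\rho_{v,e}$ on $X_i$ and on $Y$ are those of $\sF$, and an edge lying in a subcomplex has both endpoints in it, so forgetting coordinates commutes with the coboundary formula \eqref{eq:cell-cochains}.

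The long exact cohomology sequence of this short exact sequence gives \eqref{eq:MV-long}. By Lemma~\ref{lem:cell-derived} the complexes have length two, so there is no $H^2$; this makes $H^1(X_1)\oplus H^1(X_2)\to H^1(Y)$ surjective, and the sequence ends with $0$. Its initial segment $0\to H^0(X)\to H^0(X_1)\oplus H^0(X_2)\xrightarrow{r_1-r_2}H^0(Y)$ identifies $H^0(X)$ with the pairs $(s_1,s_2)$ satisfying $r_1s_1=r_2s_2$.

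To get \eqref{eq:MV-kernel}, map $H^0(X)$ to $H^0(Y)$ by $(s_1,s_2)\mapsto r_1s_1$. The image lies in $B_1\cap B_2$, and every $b\in B_1\cap B_2$ is hit: choose lifts $s_i$ with $r_is_i=b$, and the pair then lies in $H^0(X)$. The kernel consists of the pairs with $r_1s_1=r_2s_2=0$, which is exactly $N_1\oplus N_2$ embedded via $(n_1,n_2)\mapsto(n_1,n_2)$. The field hypothesis is needed only to speak of dimensions; the sequence itself is exact over any ring.

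The step I expect to need the most care is compatibility with $\delta$ when $Y$ contains an edge of $X_1$ together with only some of its endpoints. Because $Y$ is an intersection of subcomplexes, it is itself a subcomplex, and therefore contains both endpoints of each of its edges. With that, the check is immediate, and the remaining steps are routine.
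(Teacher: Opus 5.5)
Your proposal is correct and follows essentially the same route as the paper. Degreewise exactness holds because a cochain on $X$ is a pair agreeing on common cells, with extension by zero giving surjectivity. Compatibility with $\delta$ holds because subcomplexes contain the endpoints of their edges. The remaining steps are the same two-term long exact sequence and the trace map from $H^0(X)$ onto $B_1\cap B_2$ with kernel $N_1\oplus N_2$; your remark that exactness of \eqref{eq:MV-kernel} does not actually use the field hypothesis is consistent with the paper's argument.
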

\begin{proof}
Let $A^\bullet=C^\bullet(X;\sF)$, $B^\bullet=C^\bullet(X_1;\sF)\oplus C^\bullet(X_2;\sF)$ and $C^\bullet=C^\bullet(Y;\sF)$. In each degree, a cochain on $X$ is exactly a pair agreeing on every common cell. Hence the first map is injective and its image is $\ker(r_1-r_2)$. A cochain on $Y$ extends to $X_1$ by zero on cells outside $Y$, proving degreewise surjectivity. Restrictions commute with the differential, because both endpoints of every edge of a subcomplex belong to that subcomplex. This gives the short exact sequence of complexes.

The connecting map can be written directly. For $a\in H^0(Y;\sF)$ choose a degree-zero lift $(u_1,u_2)$ with $u_1|_Y-u_2|_Y=a$. Since $\delta a=0$, the pair $(\delta u_1,\delta u_2)$ agrees on common edges and is the restriction of a unique one-cochain $w$ on $X$. Set $\partial(a)=[w]\in H^1(X;\sF)$. Another lift changes $(u_1,u_2)$ by a cochain on $X$, so it changes $w$ by a coboundary. Thus $\partial$ is well-defined. Chasing lifts gives the long exact sequence; no term beyond degree one occurs because all three complexes have only degrees zero and one.

For the second displayed sequence, a global section is a pair of local sections with equal trace. Its common trace lies in $B_1\cap B_2$. Every element of that intersection has a local lift on each side, and the lifts agree on $Y$, so the trace map is onto the intersection. Its kernel consists precisely of pairs with both traces zero, namely $N_1\oplus N_2$. This proves the asserted exact sequence.
\end{proof}
Restriction of an equation sheaf keeps the original row functional.
For separately specified block matrices, we instead record each block's
contribution to the boundary rows as follows.

\subsection{Boundary values and residuals}
Let $R$ be a commutative ring. A block $H$ has vertex set $I\sqcup B$, where $I$ is its interior and $B$ its boundary, and a symmetric matrix $A_H$ supported on the block's edges. The following gluing identities hold for symmetric block matrices over $R$; the graph-matrix realization problem additionally takes every edge coefficient to be a unit.

Choose interior seeds $P\subseteq I$ and boundary seeds $S_B\subseteq B$. Write $U=I\setminus P$, $B_0=B\setminus S_B$, and set
\begin{equation}\label{eq:port-blocks}
 D=A_{I,U},\quad E_0=A_{I,B_0},\quad
 F_0=A_{B,U},\quad J_0=A_{B,B_0}.
\end{equation}
A vector $z\in R^U$ represents the unknown interior values, and $b\in R^{B_0}$ the unspecified boundary values. Missing seed coordinates are zero. The interior equations are $Dz+E_0b=0$. Introduce a residual $j\in R^B$ for the contribution of this block to the boundary equations.

A second block may cancel the boundary contribution $F_0z+J_0b$.
We therefore introduce the coordinate $j=F_0z+J_0b$.

\begin{definition}\label{def:port-complex}
The \emph{port complex} is
\begin{equation}\label{eq:port-complex}
 \cL_H^\bullet:
 R^U\oplus R^{B_0}\oplus R^B
 \xrightarrow{d_H}R^I\oplus R^B,
 \qquad
 d_H(z,b,j)=(Dz+E_0b,\ F_0z+J_0b-j).
\end{equation}
The port module is $\cW_B=R^{B_0}\oplus R^B$, in degree zero. The port map is
\[
 \rho_H^0(z,b,j)=(b,j),\qquad \rho_H^1=0.
\]
It is a cochain map because the target has zero differential and no positive-degree term.
\end{definition}
The degree-zero cohomology consists of interior solutions with their
boundary values and residuals. Closing the boundary imposes $j=0$.

For two blocks, a global vector gives one boundary value and a zero sum of the two residuals. These are linear conditions on the direct sum of the port complexes. Their kernel contains only an additional identity summand beyond the global equation complex.

\begin{theorem}\label{thm:port-gluing}
Let $G=H_1\cup_B H_2$, with disjoint interiors and no edge from one interior to the other. Assign every edge to exactly one block and split boundary diagonal contributions so that the zero-extended local matrices satisfy $A_G=A_{H_1}+A_{H_2}$. Give the two blocks the same boundary seed set. Define
\begin{equation}\label{eq:q-port}
 q:\cL_{H_1}^\bullet\oplus\cL_{H_2}^\bullet\longrightarrow\cW_B,
 \qquad q^0=(b_1-b_2,\ j_1+j_2),\quad q^1=0.
\end{equation}
For the combined seed set $S$, there are homotopy equivalences
\begin{equation}\label{eq:derived-glue}
 K_{G,S}^\bullet\simeq\ker q\simeq\Cone(q)[-1].
\end{equation}
They commute with every homomorphism of coefficient rings. More precisely,
\begin{equation}\label{eq:kernel-unit}
 \ker q\cong K_{G,S}^\bullet\oplus[R^B\xrightarrow{\id}R^B].
\end{equation}
\end{theorem}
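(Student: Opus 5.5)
The plan is to write $\ker q$ out explicitly and then cancel one identity summand by an invertible change of coordinates, as in Proposition~\ref{prop:equation-sheaf}.

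\emph{Step 1: the kernel.} Since $q^1=0$, we have $(\ker q)^1=R^{I_1}\oplus R^B\oplus R^{I_2}\oplus R^B$. Because $q^0$ is surjective (take $b_2=0$, $j_2=0$), the degree-zero part
\[
(\ker q)^0=\{(z_1,b_1,j_1,z_2,b_2,j_2):b_1=b_2,\ j_1+j_2=0\}
\]
is a direct summand. It is free with coordinates $(z_1,z_2,b,j_1)$, where $b:=b_1=b_2$ and $j_2=-j_1$. Here $z_i\in R^{U_i}$, $b\in R^{B_0}$ and $j_1\in R^B$. The short exact sequence
\[
0\to\ker q\to\cL_{H_1}^\bullet\oplus\cL_{H_2}^\bullet\xrightarrow{q}\cW_B\to0
\]
is degreewise split. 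This gives $\ker q\simeq\Cone(q)[-1]$ by the standard comparison: the cocone of a degreewise surjective map between degreewise split free complexes is homotopy equivalent to the kernel. The maps are explicit and defined over $R$, so they commute with base change. The homotopy uses the splitting $(b,j)\mapsto(b,j,0,0,0,0)$ of $q^0$; I will write it out and check it.

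\emph{Step 2: the cancellation.} In these coordinates the differential of $\ker q$ is
\[
(z_1,z_2,b,j_1)\mapsto\bigl(D_1z_1+E_{0,1}b,\;F_{0,1}z_1+J_{0,1}b-j_1,\;D_2z_2+E_{0,2}b,\;F_{0,2}z_2+J_{0,2}b+j_1\bigr).
\]
I will apply the target automorphism $Q$ that replaces the fourth component $y_4$ by $y_2+y_4$ and leaves the others unchanged. It is unipotent and defined over $R$. After this change the fourth output becomes
\[
F_{0,1}z_1+F_{0,2}z_2+(J_{0,1}+J_{0,2})b.
\]
Since the interiors are disjoint, there are no interior--interior cross edges, and $A_G=A_{H_1}+A_{H_2}$, this expression is exactly the $B$-rows of $A_{G,V\setminus S}$ applied to $(z_1,z_2,b)$. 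The first and third outputs give the $I_1$- and $I_2$-rows of the same matrix, because $A_{H_1}$ vanishes on the $I_2$ rows and columns and vice versa.

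\emph{Step 3: removing the identity.} The second output is still $F_{0,1}z_1+J_{0,1}b-j_1$. I will change source coordinates by
\[
j_1':=j_1-F_{0,1}z_1-J_{0,1}b,
\]
which is again unipotent. The complex then becomes the direct sum of $[R^B\xrightarrow{-\id}R^B]$ and
\[
[R^{U_1}\oplus R^{U_2}\oplus R^{B_0}\to R^{I_1}\oplus R^{I_2}\oplus R^B].
\]
The second summand is $K_{G,S}^\bullet$ after reordering, since $V\setminus S=U_1\sqcup U_2\sqcup B_0$. Changing the sign on the identity summand gives \eqref{eq:kernel-unit}. An identity complex is contractible, which yields the first equivalence in \eqref{eq:derived-glue}. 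All coordinate changes use only matrix entries and signs, with no inversions, so they commute with every ring homomorphism.

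\emph{Main obstacle.} The delicate point is the bookkeeping in Step 2. We must confirm that the splitting of boundary diagonals and the assignment of each boundary--boundary edge to a single block make the residual sum reproduce exactly the boundary rows $A_{B,B_0}$. This holds with seed columns omitted consistently, because both blocks share $S_B$. We must also check the sign conventions in the cone identification. I expect everything else to be routine.
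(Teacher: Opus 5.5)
Your proposal is correct and follows the paper's proof essentially step for step: the same coordinates $(z_1,z_2,b,j_1)$ on $\ker q$, adding the two residual row blocks, and the source substitution $j_1'=j_1-F_{0,1}z_1-J_{0,1}b$ to split off the identity on $R^B$. For the cone, the paper replaces your appeal to the ``standard comparison'' with an explicit isomorphism. It writes the degree-zero source of the mapping fiber as $k+s(w)$ and applies the target change $(a,w)\mapsto(a-ds(w),w)$, which exhibits $\Cone(q)[-1]\cong\ker q\oplus[\cW_B\xrightarrow{\id}\cW_B]$ directly.
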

\begin{proof}
The map $q$ is degreewise split surjective: for $(b,j)\in\cW_B$, take $b_1=b$, $j_1=j$, and all other source coordinates zero. In $\ker q$, write
\[
 b_1=b_2=b,\qquad j_1=j,\qquad j_2=-j.
\]
Its differential has the four components
\[
 \begin{pmatrix}
 D_1z_1+E_1b\\
 D_2z_2+E_2b\\
 F_1z_1+J_1b-j\\
 F_2z_2+J_2b+j
 \end{pmatrix}.
\]
Add the last two row blocks and put $j'=j-F_1z_1-J_1b$. Then the differential becomes, after reordering and changing one row sign,
\[
 (z_1,z_2,b,j')\longmapsto
 \left(
 \begin{pmatrix}
 D_1z_1+E_1b\\
 D_2z_2+E_2b\\
 F_1z_1+F_2z_2+(J_1+J_2)b
 \end{pmatrix},\ j'\right).
\]
The first component is $(A_G)_{V,V\setminus S}$ applied to the global unknown vector. The second is the identity on $R^B$, proving \eqref{eq:kernel-unit}.

We use the mapping-fiber model for $\Cone(q)[-1]$ with differential $(d,q)$:
\[
 \cL_{H_1}^0\oplus\cL_{H_2}^0
 \xrightarrow{(d,q)}
 (\cL_{H_1}^1\oplus\cL_{H_2}^1)\oplus\cW_B.
\]
Choose a degree-zero section $s$ of $q$ and write a source as $k+s(w)$, where $k\in\ker q^0$. Applying the target change $(a,w)\mapsto(a-ds(w),w)$ gives
\[
 (d,q)(k+s(w))=(dk+ds(w),w)\longmapsto(dk,w).
\]
Thus this model is $\ker q$ plus an identity complex on $\cW_B$.
Reversing the sign of the last target summand gives the shifted-cone
convention of \cite[\stacks{014D}]{Stacks}. Every change is defined over $R$
and every removed block is an identity, so the equivalences commute with
coefficient base change.
\end{proof}

In either displayed splitting, let $\pi$ and $\iota$ be projection onto and
inclusion of the residual complex. The chain homotopy from $\iota\pi$ to
the identity is zero on the residual summand and is the identity map from
degree one to degree zero on each deleted identity block. Thus
$dh+hd=\id-\iota\pi$ over $R$. Tensoring the explicit maps preserves this
equality for every $R$-algebra $R'$. These are bounded complexes of free
modules, so their derived tensor products are computed term by term. In
particular,
\[
 H^0(K_{G,S}^\bullet\otimes_R^{\mathbf L}R')
 =\ker\bigl((A_G)_{V,V\setminus S}\otimes_R R'\bigr).
\]
For the universal complex over $R_G$, the injective free resolution
\eqref{eq:resolution} gives in addition
\[
 H^0(K_S^\bullet\otimes_{R_G}^{\mathbf L}R')
 \cong\Tor_1^{R_G}(M_S,R').
\]

For several blocks, retain the total boundary residual as an output until the final closure. Let $H_1,\ldots,H_r$ have the same boundary $B$, common boundary seed set, pairwise disjoint interiors, and additive coefficient assignments as in Theorem~\ref{thm:port-gluing}. Define the open assembly complex $\mathcal A^\bullet$ by taking tuples
\[
 (z_1,\ldots,z_r,b,j_1,\ldots,j_r)
\]
in degree zero and all local interior and residual rows in degree one. Its differential is the list
\[
 (D_i z_i+E_i b,\ R_i-j_i)_{i=1}^r,
 \qquad R_i=F_i z_i+J_i b,
\]
and its output port is
\[
 (b,j),\qquad j=\sum_{i=1}^r j_i.
\]
The common value is $b$, and the total residual is $j$.

\begin{proposition}\label{prop:open-port-assembly}
With its output port retained, the open assembly has an isomorphism of complexes
\begin{equation}\label{eq:open-port-assembly}
 \mathcal A^\bullet\cong
 \mathcal L_{\cup_iH_i}^\bullet
 \oplus[R^{(r-1)B}\xrightarrow{\id}R^{(r-1)B}],
\end{equation}
where the output map on the contractible summand is zero. The isomorphism commutes with every coefficient base change.
\end{proposition}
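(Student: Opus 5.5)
The proof is an explicit change of coordinates, in the style of Theorem~\ref{thm:port-gluing}. First I fix what the port complex of the union is. The block $\cup_iH_i$ has interior $I=\bigsqcup_iI_i$, boundary $B$, and the same boundary seeds. Its blocks are
\[
 D=\diag(D_1,\ldots,D_r),\qquad
 E_0=\begin{pmatrix}E_1\\ \vdots\\ E_r\end{pmatrix},\qquad
 F_0=(F_1\ \cdots\ F_r),\qquad
 J_0=\sum_iJ_i .
\]
There are no interior--interior edges across blocks, and the coefficient assignment is additive. Hence
\[
 d_{\cup}(z,b,j)=\Bigl(D_iz_i+E_ib\ (i=1,\ldots,r),\ \sum_iR_i-j\Bigr),
\]
and the output port of the union is $(b,j)$.

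Next I change coordinates on the degree-zero term of $\mathcal A^\bullet$. I keep $z_1,\ldots,z_r$ and $b$. I also keep the total residual $j=\sum_ij_i$. For the remaining $r-1$ residual directions I introduce the shifted variables
\[
 j_i'=j_i-R_i\qquad(i=2,\ldots,r).
\]
This is unitriangular over $R$. Its inverse is
\[
 j_i=j_i'+R_i\quad(i\ge2),\qquad
 j_1=j-\sum_{i\ge2}\bigl(j_i'+R_i\bigr).
\]

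On the degree-one term I keep the interior rows. I replace the first residual row by the sum of all residual rows, which is
\[
 \sum_iR_i-j .
\]
For $i\ge2$ the $i$th residual row $R_i-j_i$ becomes $-j_i'$; I then flip its sign. This row operation is again unitriangular with integer coefficients.

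In the new coordinates the differential splits. It is $d_{\cup}$ on $(z,b,j)$, and the identity (up to the sign already absorbed) from $(j_2',\ldots,j_r')$ to the last $r-1$ residual rows. This gives \eqref{eq:open-port-assembly}. The output port $(b,\sum_ij_i)=(b,j)$ involves no $j_i'$, so it is the port map of the union and vanishes on the identity summand.

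The only step needing care is the bookkeeping. I must check that the new degree-zero coordinates are exactly those of $\cL^0_{\cup}\oplus R^{(r-1)B}$. I must also check that the interior rows contain no residual variables, so that the substitution does not disturb them. Since every operation uses only integer entries and the matrices already defined over $R$, with no inversion, the isomorphism commutes with every coefficient base change.
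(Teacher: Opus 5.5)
Your proposal is correct and follows the paper's proof. The paper uses the same source coordinates $j=\sum_i j_i$ and $\eta_i=j_i-R_i$ (for $i<r$, singling out $j_r$ where you single out $j_1$), and the same target operation of adding the other residual rows to the distinguished one and negating them; you additionally write out the union's blocks $D=\diag(D_1,\ldots,D_r)$, $F_0=(F_1\ \cdots\ F_r)$, $J_0=\sum_iJ_i$, which the paper leaves implicit.
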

\begin{proof}
Use $j=\sum_i j_i$ and, for $i<r$, the source coordinates
\[
 \eta_i=j_i-R_i.
\]
The inverse is
\[
 j_i=\eta_i+R_i\quad(i<r),\qquad
 j_r=j-\sum_{i<r}(\eta_i+R_i).
\]
The first $r-1$ residual rows become $-\eta_i$. The final residual row is
\[
 R_r-j_r=\sum_iR_i-j+\sum_{i<r}\eta_i.
\]
Add the first $r-1$ rows to the last and negate those first rows. The residual part is now
\[
 \left(\sum_iR_i-j,\eta_1,\ldots,\eta_{r-1}\right).
\]
The interior rows are unchanged. The first component, together with those interior rows, is the port differential of the union. The output is exactly $(b,j)$ and does not involve the $\eta_i$. Both source and target substitutions are invertible and use only addition and multiplication in $R$, so they survive arbitrary base change.
\end{proof}
At each binary gluing, the output is $(b,j_1+j_2)$. The final closure
sets the total residual equal to zero.

\begin{corollary}\label{cor:many-ports}
For $r$ blocks with common boundary, disjoint interiors, and consistently assigned coefficients, impose $b_i=b_1$ for $2\le i\le r$ and $\sum_i j_i=0$. The resulting kernel complex is the global equation complex plus $r-1$ contractible identity complexes on $R^B$. In particular, iterated open gluing followed by final residual closure gives the same global complex in different parenthesizations, up to the explicit unit cancellations.
\end{corollary}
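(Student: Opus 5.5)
The plan is to obtain the corollary by induction on the number of blocks, using Proposition~\ref{prop:open-port-assembly} for the open part and the binary closure argument of Theorem~\ref{thm:port-gluing} for the closing step. First I would check that imposing $b_i=b_1$ inside the direct sum $\bigoplus_i\cL_{H_i}^\bullet$ gives exactly the open assembly complex $\mathcal A^\bullet$. The equations $b_i=b_1$ are degree-zero linear conditions. They are split by the coordinate $b=b_1$, so the kernel of the map $(b_i-b_1)_{i\ge2}$ is the complex with one common boundary value. Its differential is $(D_iz_i+E_ib,\,R_i-j_i)_i$, which is the differential of $\mathcal A^\bullet$.

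Proposition~\ref{prop:open-port-assembly} then gives
\[
 \mathcal A^\bullet\cong\cL_{\cup_iH_i}^\bullet\oplus[R^{(r-1)B}\xrightarrow{\id}R^{(r-1)B}].
\]
Under this isomorphism the output port $(b,j)$ with $j=\sum_ij_i$ is the port of the union, and the output vanishes on the contractible summand. Imposing $\sum_ij_i=0$ therefore restricts only the first summand. It becomes the subcomplex of $\cL_{\cup H_i}^\bullet$ with $j=0$. In that subcomplex the residual row reads $F z+Jb$ with $F=\sum_iF_i$ and $J=\sum_iJ_i$, and $A_G=\sum_iA_{H_i}$ makes these the boundary rows of $(A_G)_{V,V\setminus S}$. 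Together with the interior rows $D_iz_i+E_ib$, this is the global complex $K_{G,S}^\bullet$.

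The point to watch is that the corollary counts exactly $r-1$ identity complexes, whereas Theorem~\ref{thm:port-gluing} for $r=2$ produced one extra summand $[R^B\to R^B]$ in $\ker q$. These counts agree: setting $j=0$ directly, rather than solving $j_2=-j_1$ and then cancelling $j'$, removes the residual source and the residual target together. I would make this explicit through the substitution in Proposition~\ref{prop:open-port-assembly}. Closing sets the source coordinate $j$ to zero, and the residual row $\sum_iR_i-j$ becomes $\sum_iR_i$ with no further cancellation. Only the $\eta_i$ blocks remain, and there are $r-1$ of them.

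For independence of parenthesization, I would observe that each binary open gluing has output $(b,j_1+j_2)$. Applying Proposition~\ref{prop:open-port-assembly} with $r=2$ at each stage therefore replaces the pair by the port complex of the union plus one identity complex. Iterating over any bracketing yields $\cL_{\cup H_i}^\bullet$ plus $r-1$ identity complexes. Every substitution uses only ring operations, and every removed block is an identity, so base change is preserved. Final closure then gives $K_{G,S}^\bullet$ in every bracketing. I expect the main obstacle to be the bookkeeping of which identity summands the output map sees. That map must be zero on each cancelled block for the closure to commute with the splitting, and this holds because the output involves only $b$ and the total $j$.
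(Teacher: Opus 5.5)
Your proposal is correct and follows essentially the paper's own argument. You apply Proposition~\ref{prop:open-port-assembly}, observe that the output $(b,j)$ does not see the $r-1$ identity summands, and set $j=0$ to obtain $K_{G,S}^\bullet$. The only difference is in the parenthesization step: the paper notes that any iterated binary assembly imposes literally the same equations $b_i=b_1$, $j=\sum_i j_i$ by associativity of addition, and then cancels once, whereas you cancel at each binary node and carry the zero-output identity summands forward; both routes give the claim.
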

\begin{proof}
Apply Proposition~\ref{prop:open-port-assembly} while retaining the common output $(b,j)$. It identifies the open assembly with the port complex of the union and $r-1$ identity pairs, without changing the output. Now impose $j=0$. The union port complex restricted to this condition is exactly $K_{G,S}^\bullet$, so the result is
\[
 K_{G,S}^\bullet\oplus
 [R^{(r-1)B}\xrightarrow{\id}R^{(r-1)B}].
\]
For an iterated binary assembly, the residual at each node is the sum of the residuals of its descendant blocks. Both parenthesizations give the same expressions
\[
 b_i=b_1,\qquad j=\sum_i j_i,
\]
because addition is associative. The displayed coordinate changes eliminate the intermediate residual
differences over $R$. Setting the final $j$ equal to zero gives the same
global complex for every parenthesization.
\end{proof}

\begin{example}\label{ex:premature-closure}
Let three one-edge blocks have common boundary vertex $b$ and respective interior vertices $1,2,3$. Give their edge coefficients the value one, their interior diagonals zero, and fix $b=0$. Each block has no remaining interior equation on $z_i$, and its boundary residual is $j_i=z_i$. The assembled closed equation is
\[
 z_1+z_2+z_3=0.
\]
The vector $(1,1,-2)$ is a valid global interior solution. If the first two blocks were closed before attaching the third, their imposed equation would instead be $z_1+z_2=0$, which excludes this solution. Retaining $j_{12}=z_1+z_2$ and imposing $j_{12}+j_3=0$ gives the
global equation, as in Corollary~\ref{cor:many-ports}.
\end{example}

\subsection{Boundary relations and invisible interior solutions}
Fix a field-valued coefficient fiber. Define
\[
 \cN_H=\ker(H^0(\cL_H)\xrightarrow{\rho_H}\cW_B),\qquad
 \cR_H=\im(H^0(\cL_H)\xrightarrow{\rho_H}\cW_B).
\]
Thus $\cN_H$ is the space of invisible interior solutions and $\cR_H$ is the boundary relation. If $b=0$ and $j=0$, the invisible space is explicitly
\begin{equation}\label{eq:invisible-interior}
 \cN_H=\ker D\cap\ker F_0.
\end{equation}
Let $\iota(b,j)=(b,-j)$.

Two local solutions glue when their boundary values agree and their residuals are opposite. Solutions with both port coordinates zero can be chosen independently inside either block; the remaining solutions are measured by an intersection of boundary relations.

\begin{proposition}\label{prop:port-H0}
There is an exact sequence
\begin{equation}\label{eq:port-H0}
 0\to\cN_{H_1}\oplus\cN_{H_2}
 \to H^0(K_{G,S})
 \to\cR_{H_1}\cap\iota(\cR_{H_2})\to0.
\end{equation}
Furthermore,
\begin{equation}\label{eq:port-LES}
\begin{split}
0\to H^0(K_{G,S})&\to H^0(\cL_{H_1})\oplus H^0(\cL_{H_2})\to\cW_B\\
&\to H^1(K_{G,S})\to H^1(\cL_{H_1})\oplus H^1(\cL_{H_2})\to0.
\end{split}
\end{equation}
In particular,
\begin{equation}\label{eq:port-dimension}
 \dim H^0(K_{G,S})
 =\dim\cN_{H_1}+\dim\cN_{H_2}
 +\dim\bigl(\cR_{H_1}\cap\iota(\cR_{H_2})\bigr).
\end{equation}
\end{proposition}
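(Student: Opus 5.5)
The plan is to derive everything from Theorem~\ref{thm:port-gluing}, specialized to the fixed field fiber. That theorem gives $K_{G,S}^\bullet\simeq\ker q$. Since $q$ is degreewise split surjective, there is a short exact sequence of complexes
\[
 0\to\ker q\to\cL_{H_1}^\bullet\oplus\cL_{H_2}^\bullet\xrightarrow{q}\cW_B\to0,
\]
with $\cW_B$ concentrated in degree zero. The splitting and the identification~\eqref{eq:kernel-unit} use only identity blocks, so they survive base change to the field fiber. Replacing $\ker q$ by $K_{G,S}^\bullet$ changes only a contractible summand, and this does not affect cohomology.

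The long exact sequence of this short exact sequence is exactly~\eqref{eq:port-LES}. Since $\cW_B$ has no degree-one term, its $H^0$ is $\cW_B$ itself and its $H^1$ vanishes. This gives the initial zero, the middle term $\cW_B$, and the final surjection onto $H^1(\cL_{H_1})\oplus H^1(\cL_{H_2})$. The map $H^0(\cL_{H_1})\oplus H^0(\cL_{H_2})\to\cW_B$ is $q$ on cocycles, namely $(b_1-b_2,\,j_1+j_2)$. The connecting map is computed as in Proposition~\ref{prop:MV}.

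For~\eqref{eq:port-H0}, identify $H^0(K_{G,S})$ with $\ker\bigl(H^0(\cL_{H_1})\oplus H^0(\cL_{H_2})\to\cW_B\bigr)$. This kernel consists of pairs of local cocycles with $b_1=b_2$ and $j_1=-j_2$. Send such a pair to $\rho_{H_1}$ of its first entry, namely $(b_1,j_1)$. The image lies in $\cR_{H_1}$, and it lies in $\iota(\cR_{H_2})$ because $\iota(b_2,j_2)=(b_1,j_1)$.

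This map is onto the intersection. Given $(b,j)\in\cR_{H_1}\cap\iota(\cR_{H_2})$, choose local solutions with ports $(b,j)$ in $H_1$ and $(b,-j)$ in $H_2$; together they form a global cocycle.

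Its kernel consists of pairs whose ports are both zero, which is $\cN_{H_1}\oplus\cN_{H_2}$. This mirrors the trace argument for~\eqref{eq:MV-kernel}. Over a field, the dimension formula~\eqref{eq:port-dimension} then follows from additivity of dimension on the short exact sequence~\eqref{eq:port-H0}.

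The main obstacle is bookkeeping. One must check that the isomorphism $H^0(\ker q)\cong H^0(K_{G,S})$ from~\eqref{eq:kernel-unit} is compatible with the port data. Concretely, the coordinate change $j'=j-F_1z_1-J_1b$ must not alter the recorded $(b,j_1)$ on cocycles. On a cocycle $j'=0$, so $j_1=F_1z_1+J_1b$ is determined by the global vector, and the composite map to $\cW_B$ is the intrinsic one. I will verify this explicitly rather than rely on the homotopy equivalence alone.
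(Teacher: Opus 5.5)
Your proposal is correct and follows essentially the same route as the paper. Both arguments use the split short exact sequence $0\to\ker q\to\cL_{H_1}^\bullet\oplus\cL_{H_2}^\bullet\to\cW_B[0]\to0$ to get \eqref{eq:port-LES} after identifying $\ker q$ with $K_{G,S}^\bullet$ via Theorem~\ref{thm:port-gluing}, and both obtain \eqref{eq:port-H0} by sending a glued pair of local sections to its signed port, with surjectivity from local realizations and kernel $\cN_{H_1}\oplus\cN_{H_2}$. The only differences are that you read \eqref{eq:port-H0} off the left end of the long exact sequence rather than proving it first, and your extra check that the port data are compatible is harmless but unnecessary, since any isomorphism $H^0(\ker q)\cong H^0(K_{G,S})$ transports the exact sequence.
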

\begin{proof}
Theorem~\ref{thm:port-gluing} identifies a global null vector with a pair
of local sections $s_1,s_2$ having ports
\[
 \rho_{H_1}(s_1)=(b,j),\qquad
 \rho_{H_2}(s_2)=(b,-j).
\]
Its common signed port lies in
$\cR_{H_1}\cap\iota(\cR_{H_2})$. Conversely, for every point in that
intersection choose one local section on each side realizing it. Their
values agree and residuals sum to zero, so they give a global vector. Thus
the port map is onto the stated intersection. A pair maps to zero exactly
when both local sections have zero value and residual, so its kernel is
$\cN_{H_1}\oplus\cN_{H_2}$. This proves~\eqref{eq:port-H0}; taking dimensions
proves~\eqref{eq:port-dimension}.

For the longer sequence, $q$ in~\eqref{eq:q-port} is split surjective in
every degree. The short exact sequence of complexes
\[
 0\to\ker q\to\cL_{H_1}^\bullet\oplus\cL_{H_2}^\bullet
       \xrightarrow q\cW_B[0]\to0
\]
has connecting map obtained as follows. Lift $(b,j)\in\cW_B$ to a
degree-zero vector $s$ in the middle complex. Its differential $ds$ lies
in $(\ker q)^1$ because $q$ is a chain map. Send $(b,j)$ to $[ds]$.
Another lift differs by $(\ker q)^0$, so this changes $ds$ by a boundary.
Theorem~\ref{thm:port-gluing} identifies the cohomology of $\ker q$ with
that of $K_{G,S}$. The resulting long exact sequence is
\eqref{eq:port-LES}; it ends at degree one because both port complexes have
only two terms and $\cW_B$ has only degree zero.
\end{proof}
Equation~\eqref{eq:port-dimension} separates the invisible interior
solutions from the boundary relation. In coefficient families, the free
complex \eqref{eq:port-complex} also retains the kernels arising under
specialization.

\subsection{Elimination on a coefficient open set}
An invertible internal submatrix permits a smaller port presentation. The residual variable is kept during the elimination, so that the effect of the eliminated variables on the boundary is included.
Suppose a square subblock $P_0$ of the interior matrix $D$ is invertible on a coefficient open set. Arrange the variables and equations as
\[
 D=\begin{pmatrix}P_0&Q_0\\R_0&D_0\end{pmatrix},\quad
 E_0=\binom{E_1}{E_2},\quad F_0=(F_1\ F_2).
\]

\begin{proposition}\label{prop:schur-port}
On this open set, eliminating the pivot variables removes a contractible identity complex and leaves the port equations
\begin{align}
 \overline D z_2+\overline E b&=0,\label{eq:schur-equation}\\
 j&=\overline Fz_2+\overline Jb,\label{eq:schur-residual}
\end{align}
where
\begin{align*}
 \overline D&=D_0-R_0P_0^{-1}Q_0,&
 \overline E&=E_2-R_0P_0^{-1}E_1,\\
 \overline F&=F_2-F_1P_0^{-1}Q_0,&
 \overline J&=J_0-F_1P_0^{-1}E_1.
\end{align*}
If $P_0$ has size $r$ and one restricts to the rank-$r$ subscheme defined on this open set by $I_{r+1}(D)$, then $\overline D=0$, but the variables $z_2$ and the conditions $\overline Eb=0$ remain.
\end{proposition}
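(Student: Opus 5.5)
The plan is to perform Gaussian elimination on the port differential $d_H$ of \eqref{eq:port-complex}, using explicit invertible source and target changes of coordinates over the localized ring. This is the same method as in Theorem~\ref{thm:closure-cancel} and Corollary~\ref{cor:seed-terminal}. Write the source as $(z_1,z_2,b,j)$ and the target as $(y_1,y_2,w)\in R^{I_1}\oplus R^{I_2}\oplus R^B$. Here $z_1$ and $y_1$ are the pivot columns and pivot rows of $P_0$. The differential is then the block matrix
\[
 \begin{pmatrix}
 P_0&Q_0&E_1&0\\
 R_0&D_0&E_2&0\\
 F_1&F_2&J_0&-I
 \end{pmatrix}.
\]

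First I apply the source substitution $z_1'=z_1+P_0^{-1}(Q_0z_2+E_1b)$. Its inverse is obtained by changing the sign. After it, the first target row becomes $P_0z_1'$. Next I apply the target operations $y_2\mapsto y_2-R_0P_0^{-1}y_1$ and $w\mapsto w-F_1P_0^{-1}y_1$, followed by $y_1\mapsto P_0^{-1}y_1$. A direct block multiplication, which I only need to check entrywise, sends the matrix to
\[
 \begin{pmatrix}
 I&0&0&0\\
 0&\overline D&\overline E&0\\
 0&\overline F&\overline J&-I
 \end{pmatrix}.
\]
The four Schur complements are exactly the displayed formulas. The pair $(z_1',y_1)$ therefore splits off as $[R^r\xrightarrow{\id}R^r]$. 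The remaining complex has the equations \eqref{eq:schur-equation} and \eqref{eq:schur-residual}, with $j$ kept as a variable.

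I also need to check that the port map $(b,j)$ is untouched. This holds because no substitution modifies $b$ or $j$. All inverses use only $P_0^{-1}$, so the construction commutes with base change on the open set, as in Theorem~\ref{thm:port-gluing}.

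For the rank statement, I would reuse the argument from the proof of Proposition~\ref{prop:tangent}. The same row and column operations, restricted to $D$, transform $D$ into $\operatorname{diag}(I_r,\overline D)$. Invertible operations preserve determinantal ideals, so on the open set $I_{r+1}(D)=I_{r+1}(\operatorname{diag}(I_r,\overline D))$. By the Laplace expansion argument of Theorem~\ref{thm:closure-cancel}, this ideal equals $I_1(\overline D)$, the ideal of entries of $\overline D$. On the subscheme it defines, $\overline D=0$.

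The residual complex then still has the source variables $z_2$ and the target rows $y_2$. Those rows now read $\overline Eb=0$ and impose nontrivial conditions on the boundary values. Nothing else cancels, because $\overline D=0$ leaves no further unit pivot.

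The main obstacle is bookkeeping rather than concept. The delicate point is checking that the $w$-row correction produces $\overline J=J_0-F_1P_0^{-1}E_1$ and $\overline F$ with the correct signs once the source substitution has been made. The entry $-I$ in the $j$ column must also be confirmed unchanged. I would verify this by multiplying the three block matrices explicitly.
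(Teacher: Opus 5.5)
Your proposal is correct and follows essentially the same route as the paper. It uses the same source substitution $z_1'=z_1+P_0^{-1}(Q_0z_2+E_1b)$, normalizes the pivot rows by $P_0^{-1}$, subtracts their contributions from the other rows to split off $[R^r\xrightarrow{\id}R^r]$, and then uses $D\sim\diag(I_r,\overline D)$ to get $I_{r+1}(D)=I_1(\overline D)$.

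Your explicit block multiplication, and your check that the port coordinates $(b,j)$ are untouched, make precise what the paper leaves implicit. Drop the closing aside that ``nothing else cancels'': the statement only asserts that $z_2$ and $\overline Eb=0$ survive in the displayed equations, and the aside is not literally true, since the $-I$ in the $j$ column is still a unit pivot.
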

\begin{proof}
The pivot equation is
\[
 P_0z_1+Q_0z_2+E_1b=0,
 \qquad z_1=-P_0^{-1}(Q_0z_2+E_1b).
\]
Substitution gives
\begin{align*}
 R_0z_1+D_0z_2+E_2b
 &=(D_0-R_0P_0^{-1}Q_0)z_2
   +(E_2-R_0P_0^{-1}E_1)b,\\
 F_1z_1+F_2z_2+J_0b
 &=(F_2-F_1P_0^{-1}Q_0)z_2
   +(J_0-F_1P_0^{-1}E_1)b.
\end{align*}
These are \eqref{eq:schur-equation}--\eqref{eq:schur-residual}. At the level of complexes, replace the source coordinate by
\[
 z_1'=z_1+P_0^{-1}(Q_0z_2+E_1b),
\]
normalize the pivot row by $P_0^{-1}$, and subtract its contributions from the other rows. The new differential is the identity on $z_1'$ direct summed with the displayed residual complex.

The same operations give $D\sim\diag(I_r,\overline D)$ and therefore
\[
 I_{r+1}(D)=I_1(\overline D).
\]
On the stated rank-$r$ subscheme, $\overline D=0$. The remaining variables $(z_2,b,j)$ satisfy
\[
 \overline Eb=0,\qquad j=\overline Fz_2+\overline Jb.
\]
\end{proof}

\subsection{Symmetric equations and Lagrangian relations}
Suppose there are no seeds in the block, and specialize to a field. Then $D=D^{\mathsf T}$, $E_0=F_0^{\mathsf T}$, and $J_0=J_0^{\mathsf T}$. On $\cW_B=\kk^B\oplus\kk^B$ use the alternating form
\[
 \omega((b,j),(b',j'))=b^{\mathsf T}j'-j^{\mathsf T}b'.
\]
These are the boundary relations of linear networks~\cite{BF}.

Symmetry cancels the boundary pairing of two local solutions. The dimension of the relation is obtained by subtracting the invisible interior kernel from the space of all interior solutions.

\begin{proposition}\label{prop:lagrangian}
For an unseeded symmetric block over a field, the relation
\[
 \cR_H=\{(b,j):Dz+E_0b=0,\ j=F_0z+J_0b\text{ for some }z\}
\]
is Lagrangian: it is isotropic and has dimension $|B|$.
\end{proposition}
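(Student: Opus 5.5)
We prove isotropy and the dimension count separately; together they give the Lagrangian property, since $\cW_B=\kk^B\oplus\kk^B$ has dimension $2|B|$ and $\omega$ is nondegenerate.

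\emph{Isotropy.} Take $(b,j)$ and $(b',j')$ in $\cR_H$ with witnesses $z$ and $z'$. Put $x=(z,b)$ and $x'=(z',b')$. Write $M=\begin{pmatrix}D&E_0\\F_0&J_0\end{pmatrix}=A_H$, which is symmetric because $E_0=F_0^{\mathsf T}$. Then $Mx=(0,j)$ and $Mx'=(0,j')$. Hence
\[
 b^{\mathsf T}j'=x^{\mathsf T}Mx'=(Mx)^{\mathsf T}x'=j^{\mathsf T}b',
\]
so $\omega((b,j),(b',j'))=0$. This is the "symmetry cancels the pairing" step.

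\emph{Dimension.} Let
\[
 \Sigma=\{(z,b,j):Dz+E_0b=0,\ j=F_0z+J_0b\}=H^0(\cL_H).
\]
Since $j$ is determined by $(z,b)$, $\Sigma$ is isomorphic to
\[
 \{(z,b):Dz+E_0b=0\}=\ker\,(D\ E_0).
\]
This has dimension $|I|+|B|-\rank(D\ E_0)$. By \eqref{eq:invisible-interior}, the kernel of $\rho_H$ on $\Sigma$ is $\cN_H=\ker D\cap\ker F_0$. Therefore
\[
 \dim\cR_H=|I|+|B|-\rank(D\ E_0)-\dim(\ker D\cap\ker F_0).
\]
It remains to show $\rank(D\ E_0)+\dim(\ker D\cap\ker F_0)=|I|$. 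This is the main point, and it is also where the possibly singular $D$ enters.

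Since $F_0=E_0^{\mathsf T}$ and $D=D^{\mathsf T}$,
\[
 \ker D\cap\ker F_0=\ker\binom{D}{E_0^{\mathsf T}}=\ker\,(D\ E_0)^{\mathsf T}.
\]
Its dimension is $|I|-\rank(D\ E_0)^{\mathsf T}=|I|-\rank(D\ E_0)$, because $(D\ E_0)^{\mathsf T}$ acts on $\kk^I$ and row rank equals column rank. Substituting gives $\dim\cR_H=|B|$.

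An isotropic subspace of dimension $|B|$ in the $2|B|$-dimensional symplectic space is Lagrangian. No invertibility of $D$ is used, so the argument covers singular interior matrices as claimed.
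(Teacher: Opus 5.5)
Your proof is correct and takes the same approach as the paper. Isotropy comes from the symmetry of the block matrix; your identity $x^{\mathsf T}A_Hx'=(A_Hx)^{\mathsf T}x'$ is a compact form of the paper's term-by-term substitution. The dimension count uses $\ker(D\ E_0)^{\mathsf T}=\ker\binom{D}{F_0}=\ker D\cap\ker F_0$ to get $\dim\ker(D\ E_0)=|B|+\dim(\ker D\cap\ker F_0)$, and then subtracts the invisible kernel.
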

\begin{proof}
Write $m=|I|$ and $b_0=|B|$. With no seed coordinates removed, symmetry says
$D=D^{\mathsf T}$, $E_0=F_0^{\mathsf T}$, and $J_0=J_0^{\mathsf T}$.
For local solutions $(z,b)$ and $(z',b')$, substitution gives
\[
 \begin{aligned}
 \omega((b,j),(b',j'))
 &=b^{\mathsf T}(F_0z'+J_0b')
                  -(F_0z+J_0b)^{\mathsf T}b'\\
 &=(E_0b)^{\mathsf T}z'-z^{\mathsf T}E_0b'\\
 &=(-Dz)^{\mathsf T}z'-z^{\mathsf T}(-Dz')=0.
 \end{aligned}
\]
Thus $\mathcal R_H$ is isotropic.

Its dimension must account for solutions invisible at the ports. Let
$N=\ker D\cap\ker F_0$. The transpose of the $m$-row matrix $(D\ E_0)$
is the column matrix $\binom D{F_0}$, whose kernel is exactly $N$.
Therefore $\rank(D\ E_0)=m-\dim N$, and
\[
 \dim\ker(D\ E_0)=m+b_0-(m-\dim N)=b_0+\dim N.
\]
The map from this solution space to the boundary relation is
$(z,b)\mapsto(b,F_0z+J_0b)$. Its kernel consists of $b=0$, $Dz=0$,
$F_0z=0$, hence is $N$. Subtracting this kernel gives
\[
 \dim\cR_H=(b_0+\dim N)-\dim N=b_0.
\]
The alternating boundary form is nondegenerate on a $2b_0$-dimensional
space. Hence $\mathcal R_H$ is Lagrangian.
\end{proof}

The Lagrangian dimension remains constant when the projection to boundary values drops rank. A single interior vertex already exhibits this distinction.

\begin{example}\label{ex:singular-port}
One interior vertex joined to two boundary vertices, with no boundary diagonal term, has equations
\[
 dz+pb_1+qb_2=0,\qquad (j_1,j_2)=(pz,qz),\qquad p,q\ne0.
\]
When $d\ne0$, every boundary value determines a unique $z$, and the relation is the graph of a linear map in $b$. When $d=0$, the value condition is $pb_1+qb_2=0$, while the residual may be any multiple of $(p,q)$. The relation still has dimension two, but its projection to boundary values drops rank. This projection degeneration contributes to an invisible global solution precisely through the intersection and kernel terms in~\eqref{eq:port-H0}.
\end{example}

For the two single-edge blocks $1-2$ and $2-3$ with the common boundary value fixed to zero, the residuals at $2$ are $az_1$ and $bz_3$. The global condition is $az_1+bz_3=0$, not the two separate conditions $az_1=0$ and $bz_3=0$. At $d_1=d_3=0$ this produces the invisible vector of Example~\ref{ex:path-ideal}. It is the smallest example of an obstruction generated by boundary cancellation rather than by a hidden mode of either isolated block.

\subsection{Forcing with a prescribed set of active boundary vertices}
Fixing all boundary values to zero does not require every boundary residual to vanish. Selecting which residuals vanish corresponds to selecting which initially black boundary vertices are permitted to force.
Let $H$ now be a finite simple graph block. All boundary vertices are initially black. Interior seeds are $P\subseteq I$. Fix $T\subseteq B$ as the boundary vertices allowed to force; vertices of $B\setminus T$ stay black but never act as sources. Every black interior vertex may force by the ordinary rule. Related restrictions on forcing sources occur in leaky forcing~\cite{Leaky}.

When the permitted process stops, the remaining white vertices have no unique neighbor from an eligible source. Only vertices of $I\setminus F$ and of the active set $T$ need to be tested.

\begin{definition}
A \emph{relative fort for $T$} is a nonempty set $F\subseteq I$ such that
\begin{equation}\label{eq:relative-fort}
 |N_H(u)\cap F|\ne1\qquad(u\in(I\setminus F)\cup T).
\end{equation}
Over the universal ring of the block define
\begin{equation}\label{eq:relative-complex}
 K_{H;P,T}^\bullet=
 [R^{I\setminus P}\xrightarrow{\bA_{I\cup T,I\setminus P}}R^{I\cup T}],
 \qquad M_{H;P,T}=H^1(K_{H;P,T}).
\end{equation}
\end{definition}

The retained rows in \eqref{eq:relative-complex} are exactly those that may propagate zero under this boundary rule. The proof of Theorem~\ref{thm:flatness} therefore has a relative form.

\begin{theorem}\label{thm:relative-forcing}
The following are equivalent:
\begin{enumerate}
\item the prescribed boundary rule colors the entire interior from $P\cup B$;
\item every relative fort for $T$ meets $P$;
\item $\bA_{I\cup T,I\setminus P}$ has zero kernel in every characteristic-zero field fiber;
\item $M_{H;P,T}$ is flat;
\item $M_{H;P,T}\cong R^{|P|+|T|}$.
\end{enumerate}
The complex \eqref{eq:relative-complex} is obtained from the port complex by imposing $b=0$, imposing $j|_T=0$, and canceling the unobserved residual coordinates on $B\setminus T$.
\end{theorem}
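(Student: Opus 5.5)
The plan is to transplant the proofs of Proposition~\ref{prop:fort-hit}, Proposition~\ref{prop:support-witness}, Corollary~\ref{cor:uniform-kernel} and Theorem~\ref{thm:flatness} to the relative setting. The only change is that the row set is $I\cup T$ rather than $V$, and the column set is $U=I\setminus P$. Note that a force by an eligible source $u\in P\cup B\cup(\text{black interior})$ with $u\in B\setminus T$ is forbidden. Hence the eligible sources are exactly the vertices of $I\cup T$, and every force reads a row of $\bA_{I\cup T,I\setminus P}$.

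\emph{(a)$\Leftrightarrow$(b).} Define the relative closure operator $\Phi_T(C)=C\cup\{v:\exists u\in C\cap(I\cup T),\ N_H(u)\setminus C=\{v\}\}$. It is monotone exactly as in Lemma~\ref{lem:closure}, so the terminal black set $C$ is order independent. If $C\not\supseteq I$, then $F=I\setminus C$ is nonempty and avoids $P$. For every eligible black $u\in(I\setminus F)\cup T$ we have $|N_H(u)\cap F|\ne1$, so $F$ is a relative fort missing $P$. Conversely, a relative fort avoiding $P$ can receive no first force, by the argument of Proposition~\ref{prop:fort-hit}. Boundary vertices are never targets, and $F\subseteq I$.

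\emph{(b)$\Leftrightarrow$(c).} Suppose $x\ne0$ lies in the kernel over a field. Then $\supp x\subseteq I\setminus P$, and each row $u\in(I\setminus\supp x)\cup T$ shows that $|N(u)\cap\supp x|\ne1$. So the support is a relative fort avoiding $P$. Conversely, given such a fort $F$, use the witness construction of Proposition~\ref{prop:support-witness} with $x=\one_F$. Balance the crossing edges at each $u\in(I\setminus F)\cup T$ meeting $F$; each such $u$ has $k\ge2$. Put arbitrary nonzero values (say $1$) on edges at $B\setminus T$, since those rows are not retained. Set the diagonals on $F$ so that the rows in $F$ vanish. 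Each crossing edge has a unique endpoint outside $F$, so the assignments are consistent and integral.

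\emph{(a)$\Rightarrow$(e)$\Rightarrow$(d)$\Rightarrow$(c).} A complete relative chronology $u_i\to v_i$ has distinct sources in $I\cup T$ and targets exhausting $I\setminus P$. As in Theorem~\ref{thm:flatness}, the selected minor is triangular with unit diagonal $\prod t_{u_iv_i}$. Row and column operations over $R$ split off an identity complex of rank $|I\setminus P|$. This leaves $[0\to R^{|I\cup T|-|I\setminus P|}]=[0\to R^{|P|+|T|}]$, which gives (e). Free implies flat. For (d)$\Rightarrow$(c), injectivity of $\bA_{I\cup T,I\setminus P}$ is needed to identify Tor with the fiber kernel. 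It follows from Lemma~\ref{lem:universal-injective}: the square submatrix $\bA_{I,I}$ has determinant containing the monomial $\prod_{v\in I}d_v$, and the columns indexed by $I\setminus P$ are part of it. Then a nonzero fiber kernel at the integer witness gives $\Tor_1(M,\kk_\alpha)\ne0$, contradicting flatness.

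\emph{Final assertion.} Start from the port complex \eqref{eq:port-complex} with $S_B=B$, so that $B_0=\varnothing$ and $b=0$ is automatic. Imposing $j|_T=0$ leaves the rows $F_0z$ on $T$, together with the interior rows $Dz$. On $B\setminus T$, the rows read $F_0z-j_{B\setminus T}$. The substitution $j'=j_{B\setminus T}-F_0z|_{B\setminus T}$ turns these rows into an identity summand $[R^{B\setminus T}\xrightarrow{\id}R^{B\setminus T}]$, which cancels over $R$ and is compatible with base change. What remains is exactly \eqref{eq:relative-complex}.

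The main obstacle I expect is bookkeeping: checking that the relative witness really has all edge coefficients nonzero and that the forbidden sources correspond precisely to the discarded rows. Everything else is a verbatim transcription of the earlier proofs.
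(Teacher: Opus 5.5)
Your proposal is correct and follows the paper's proof essentially step by step. The steps match: the first-force fort argument gives (a)$\Leftrightarrow$(b); the $\one_F$ witness with balanced crossing weights gives (b)$\Leftrightarrow$(c); the triangular unit minor together with injectivity of the universal column map gives the module statements; and cancelling the $-1$ residual rows on $B\setminus T$ gives the final assertion. The only differences are cosmetic: you get injectivity from $\det\bA_{I,I}\ne0$ rather than from the principal minor on $I\setminus P$, and you take $S_B=B$ from the outset instead of imposing $b=0$ afterwards.
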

\begin{proof}
Suppose $F\subseteq I\setminus P$ is a relative fort. Immediately before a first force $u\to v$ into $F$, the eligible source would lie in $(I\setminus F)\cup T$ and satisfy
\[
 N_H(u)\cap F=\{v\},
\]
contrary to \eqref{eq:relative-fort}. Conversely, if the process stops with a nonempty white set $F\subseteq I\setminus P$, every eligible black vertex has a number of neighbors in $F$ different from one. Thus $F$ satisfies \eqref{eq:relative-fort}. This proves (a)$\Longleftrightarrow$(b).

A nonzero vector in the column kernel, extended by zero on $P\cup B$, has relative-fort support: a retained row seeing only one nonzero neighbor would give
\[
 0=(Ax)_u=a_{uv}x_v\ne0.
\]
Conversely, for a relative fort $F$ avoiding $P$, put $x=\one_F$. At each retained exterior row with $k\ge2$ neighbors in $F$, choose the crossing coefficients
\[
 1,\ldots,1,-(k-1),\qquad
 \sum_{v\in N(u)\cap F}a_{uv}=0.
\]
Choose nonzero weights on all other edges, preserving symmetry, and set
\[
 d_v=-\sum_{w\in N(v)\cap F}a_{vw}\quad(v\in F).
\]
Then
\[
 A_{I\cup T,I\setminus P}\one_F=0.
\]
Each crossing edge has a single exterior endpoint, so the assignments are consistent. This proves (b)$\Longleftrightarrow$(c).

The universal column map is injective: its principal submatrix on $I\setminus P$ has the nonzero diagonal monomial $\prod_{v\in I\setminus P}d_v$ in its determinant. A successful forcing sequence uses source rows in $I\cup T$ and gives a triangular unit minor on every unknown column. The elimination in Theorem~\ref{thm:flatness} yields
\[
 M_{H;P,T}\cong R^{|I|+|T|-|I\setminus P|}
 =R^{|P|+|T|}.
\]
If forcing fails, the rational vector just constructed gives
\[
 \Tor_1^R(M_{H;P,T},\kk_\alpha)
 =\ker\bA_{I\cup T,I\setminus P}(\alpha)\ne0.
\]
Thus the module is not flat. This proves the remaining equivalences.

Finally, impose $b=0$ and $j|_T=0$ in \eqref{eq:port-complex}. The rows indexed by $B\setminus T$ each contain their own residual coordinate with coefficient $-1$. Canceling these identity pairs leaves
\[
 R^{I\setminus P}\xrightarrow{\bA_{I\cup T,I\setminus P}}R^{I\cup T},
\]
which is \eqref{eq:relative-complex}.
\end{proof}

\subsection{Two-port triangular strips}
We apply the relative criterion to the rooted configurations in \cite{BranchDraft}. The graph is defined by successive triangle attachments, so that the boundary rule and the possible obstructing tail can both be written explicitly.
\begin{definition}\label{def:strip}
Start with a root edge $e_0=\{a,b\}$. Add vertices $v_1,\ldots,v_m$, $m\ge2$, by joining $v_i$ to both endpoints of $e_{i-1}$. For $i<m$, choose $e_i$ to contain $v_i$ and one endpoint of $e_{i-1}$. Fix $e_1=\{a,v_1\}$. The boundary is $\{a,b\}$ and the other vertices are interior. The strip is a \emph{fan} if $a$ remains in every frontier $e_i$; otherwise call it a \emph{non-fan strip}.
\end{definition}
In a fan the interior vertices form a path attached to the common vertex $a$. In a non-fan, let $j$ be the first frontier which loses $a$. The tail starting at $v_j$ will distinguish the two active-boundary conditions.

\begin{theorem}\label{thm:strip}
Initially color only $a,b$ black, with no interior seeds. For the partially active rule,
\begin{align}
 H\text{ a fan}:&\quad\text{the interior is forced iff }b\in T,\label{eq:fan-rule}\\
 H\text{ a non-fan}:&\quad\text{the interior is forced iff }T=\{a,b\}.
 \label{eq:nonfan-rule}
\end{align}
Consequently $M_{H;\varnothing,\{b\}}$ is free of rank one for a fan and is nonflat for a non-fan; with both ports active it is free of rank two in either case.
\end{theorem}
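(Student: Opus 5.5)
The plan is to work purely combinatorially with the boundary rule and then read off the module statements from Theorem~\ref{thm:relative-forcing}. First I would record the structure of the strip. Each new vertex $v_i$ is adjacent exactly to the two endpoints of $e_{i-1}$, and later to those $v_k$ ($k>i$) whose attaching frontier contains $v_i$. At the moment $v_i$ is added, the vertices of $e_{i-1}$ are the only earlier neighbours of $v_i$.

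\textbf{Fan case.} Here $e_i=\{a,v_i\}$ for all $i$, so $v_{i+1}$ is joined to $a$ and $v_i$. Hence the interior is the path $v_1v_2\cdots v_m$, $a$ is adjacent to every $v_i$, and $b$ is adjacent only to $a$ and $v_1$.
\begin{itemize}
\item If $b\in T$, then $b$ has $v_1$ as its only white neighbour, so $b\to v_1$. After that, $v_i\to v_{i+1}$ propagates along the path: $N(v_i)\subseteq\{a,v_{i-1},v_{i+1}\}$ (plus $b$ for $i=1$), and all of these except $v_{i+1}$ are black.
\item If $b\notin T$, then $a$ alone sees $m\ge2$ white vertices and cannot force. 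So no force ever occurs.
\end{itemize}
Equivalently, $F=I$ is a relative fort for $T\subseteq\{a\}$.

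\textbf{Non-fan case.} Let $j\ge2$ be minimal with $a\notin e_j$. Then $e_{j-1}=\{a,v_{j-1}\}$ and $e_j=\{v_{j-1},v_j\}$, and $v_j$ is adjacent to $a$ and $v_{j-1}$.
\begin{itemize}
\item \emph{Only $b$ active.} The process $b\to v_1\to\cdots$ runs along the fan part up to $v_{j-1}$. Then $v_{j-1}$ sees the two white vertices $v_j$ and $v_{j+1}$, since $v_{j+1}$ is attached to $e_j\ni v_{j-1}$. I would check that the tail $F=\{v_j,\dots,v_m\}$ is a relative fort for $T=\{b\}$. Its only outside neighbours are $a$ (inactive), $v_{j-1}$, and possibly fan vertices. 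I expect $v_{j-1}$ to have exactly the two neighbours $v_j,v_{j+1}$ in $F$, and no other fan vertex to touch $F$. This is because later frontiers stay inside $\{v_{j-1}\}\cup F$ once $a$ is lost: the frontier always contains $v_i$ plus an endpoint of the previous one. Since $F$ meets no vertex with exactly one neighbour in $F$, it blocks forcing.
\item \emph{Only $a$ active.} The vertex $a$ is adjacent to at least $v_1,v_2$, so $I$ itself is a relative fort. Indeed, $b$'s unique interior neighbour $v_1$ does not matter, because $b$ is inactive.
\item \emph{Both $a,b$ active.} I would prove by induction that the interior is forced in the order $v_1,\dots,v_m$. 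When $v_1,\dots,v_{i}$ are black, the vertex $v_{i+1}$ has exactly two earlier neighbours, namely the endpoints of $e_i$. I would choose as source the endpoint $w$ of $e_i$ whose only white neighbour is $v_{i+1}$. The candidate is the endpoint other than $v_i$ if it has no later attachment, else $v_i$. The key combinatorial fact is this: for each $i$, at least one endpoint of $e_i$ has all its neighbours except $v_{i+1}$ among $\{a,b,v_1,\dots,v_i\}$, namely the endpoint dropped from the frontier at step $i+1$. That endpoint receives no further attachments, since frontiers never regain a dropped vertex. If it is $a$ or $b$, it must be active, which is why both ports are needed; this is exactly where $a$ gets dropped at step $j$. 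I expect this frontier-dropping argument, and its interaction with the last vertex $v_m$ (where $e_m$ is undefined), to be the main obstacle. For $v_m$'s own force I will note that the last step only needs the source of $v_m$, not $v_m$ itself.
\end{itemize}
The fan case with $T=\{a,b\}$ is covered by the same sufficiency argument.

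\textbf{Module statements.} Apply Theorem~\ref{thm:relative-forcing} with $P=\varnothing$. Forcing gives $M\cong R^{|T|}$, so rank one for a fan with $T=\{b\}$ and rank two for $T=\{a,b\}$ in either case. Failure of forcing for a non-fan with $T=\{b\}$ gives nonflatness.
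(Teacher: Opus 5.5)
Your proposal is correct and follows essentially the same route as the paper. Both arguments use the tail relative fort $F=\{v_j,\dots,v_m\}$ for $T=\{b\}$, the observation that $a$ alone sees at least $v_1,v_2$, the dropped-endpoint sources when both ports are active, and Theorem~\ref{thm:relative-forcing} for the module statements.

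Two small remarks. First, $v_{j-1}$ need not have exactly two neighbours in $F$: if, say, $e_{j+1}=\{v_{j-1},v_{j+1}\}$, then $v_{j+2}$ is a third. This does no harm, since the fort condition only needs at least two, which you have already shown. Second, the final force is not an obstacle: once $v_m$ is the only white vertex, its interior neighbour $v_{m-1}$ forces it.
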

\begin{proof}
Let $I=\{v_1,\ldots,v_m\}$. The first-frontier choice gives
\[
 N(b)\cap I=\{v_1\},\qquad
 \{v_1,v_2\}\subseteq N(a)\cap I.
\]
If $b\notin T$, no first force is possible. For a fan, all edges have the form given by the fixed root, the consecutive interior path, and the edges from $a$ to the interior. Thus
\[
 b\to v_1\to v_2\to\cdots\to v_m
\]
is a legal sequence whenever $b\in T$, proving \eqref{eq:fan-rule}.

For a non-fan, take the first $j$ with $a\notin e_j$. Then
\[
 2\le j\le m-1,\qquad
 e_{j-1}=\{a,v_{j-1}\},\qquad
 e_j=\{v_{j-1},v_j\}.
\]
Set $F=\{v_j,\ldots,v_m\}$. Since vertices discarded from a frontier never return,
\[
 \{v_j,v_{j+1}\}\subseteq N(v_{j-1})\cap F,
 \qquad N(v_i)\cap F=\varnothing\quad(i<j-1),
 \qquad N(b)\cap F=\varnothing.
\]
Therefore $|N(u)\cap F|\ne1$ for all $u\in(I\setminus F)\cup\{b\}$. The set $F$ is a relative fort for $T=\{b\}$ and prevents complete forcing.

Suppose now that $T=\{a,b\}$. For $i<m$, denote by $u_i$ the endpoint of $e_{i-1}$ discarded in choosing $e_i$. Before coloring $v_i$, the two vertices of $e_{i-1}$ are black, and
\[
 N(u_i)\cap\{v_i,\ldots,v_m\}=\{v_i\}.
\]
Thus $u_i\to v_i$ is legal. At the last step, a vertex of $e_{m-1}$ forces $v_m$. This proves \eqref{eq:nonfan-rule}. The module statements follow from Theorem~\ref{thm:relative-forcing} with $P=\varnothing$ and $|T|=1$ or $2$.
\end{proof}

The obstructing non-fan tail is seen exactly once from $a$ and not at all from $b$. In the smallest example, the triangles are $(a,b,u)$, $(a,u,v)$, $(u,v,w)$. With interior order $(u,v,w)$ choose
\[
 Q=\begin{pmatrix}0&1&-1\\1&-1&1\\-1&1&-1\end{pmatrix},\qquad
 C=\begin{pmatrix}1&1&0\\1&0&0\end{pmatrix},\qquad z=\begin{pmatrix}0\\1\\1\end{pmatrix},
\]
where the rows of $C$ are $a,b$. Every displayed graph edge has nonzero coefficient, and
\[
 Qz=0,\qquad Cz=\binom10.
\]
Thus $j_b=0$ permits this nonzero interior solution, whereas imposing $j_a=0$ removes it. For a fan, the boundary equation and successive interior equations instead give
\[
 j_b=t_{bv_1}z_{v_1}=0
 \ \Longrightarrow\ z_{v_1}=0
 \ \Longrightarrow\ z_{v_2}=0
 \ \Longrightarrow\cdots\Longrightarrow\ z_{v_m}=0.
\]
These implications use only the nonzero edge coefficients. They give the port interpretation of the distinction in Theorem~\ref{thm:strip}.

\section{Complete supports and minimum-cost gluing}\label{sec:support-boundaries}

The preceding construction fixes the coefficients and keeps the boundary
values and residuals of a local solution. We now vary the coefficients.
Proposition~\ref{prop:support-witness} identifies the possible nonzero
supports with the forts. For a block, we impose the fort condition at
interior vertices and record the boundary data. These data are collected
for one fixed seed set before its cost is minimized.

\subsection{The sheaf of supports}\label{sec:states}

For $u\in V$ define
\begin{equation}\label{eq:support-relation}
 \mathcal Q_u=
 \left\{\sigma\in\{0,1\}^{N[u]}:
 \sigma_u=1\ \text{or}\ \sum_{v\in N(u)}\sigma_v\ne1\right\}.
\end{equation}
The value $\sigma_v=1$ means membership in a candidate kernel support.

Use the bipartite incidence graph $Y_G$ with variable vertices $p_v$, relation vertices $q_u$, and edges $p_vq_u$ for $v\in N[u]$. Define a pointed-set cellular sheaf by
\[
 \sB_{G,S}(p_v)=\begin{cases}\{0\},&v\in S,\\\{0,1\},&v\notin S,\end{cases}
 \qquad \sB_{G,S}(q_u)=\mathcal Q_u,
 \qquad \sB_{G,S}(p_vq_u)=\{0,1\}.
\]
The incidence maps are the inclusion from a variable stalk and the coordinate projection from a relation stalk. All base points are zero. Definition~\ref{def:cellular} gives the sheaf on the face space.

The incidence restrictions force the local tuples to come from a single binary assignment on $V$. The conditions at vertices outside its one-set are exactly the fort conditions.

\begin{proposition}\label{prop:support-sheaf}
Its global sections are naturally
\begin{equation}\label{eq:support-sections}
 \Gamma(Y_G;\sB_{G,S})
 =\{\varnothing\}\cup\{F\subseteq V\setminus S:F\text{ is a fort}\}.
\end{equation}
Thus this section set contains only the base point if and only if $S$ is zero forcing, equivalently if and only if $M_S$ is flat.
\end{proposition}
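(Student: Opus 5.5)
The plan is to compute global sections directly from Definition~\ref{def:cellular}, applied to the pointed-set sheaf $\sB_{G,S}$ on the face space of $Y_G$. A global section assigns a value $\sigma^{(v)}\in\sB_{G,S}(p_v)$ to each variable vertex, a tuple $\tau^{(u)}\in\mathcal Q_u$ to each relation vertex, and a bit to each edge $p_vq_u$. The incidence conditions say that the edge value equals $\sigma^{(v)}$ through the inclusion and equals $\tau^{(u)}_v$ through the coordinate projection. Consequently $\tau^{(u)}_v=\sigma^{(v)}$ for every $v\in N[u]$. Every incidence lies in the face space, so each edge value is determined by its variable endpoint. Each relation tuple is then the restriction $\sigma|_{N[u]}$ of a single binary assignment $\sigma:V\to\{0,1\}$, which vanishes on $S$ because those stalks are $\{0\}$.

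Sending a section to the set $F=\sigma^{-1}(1)$ therefore gives an injection of the section set into subsets of $V\setminus S$. Its image consists of those $F$ for which $\sigma|_{N[u]}\in\mathcal Q_u$ holds at every $u$. For $u\in F$, the condition $\sigma_u=1$ holds automatically. For $u\notin F$, the condition reads $|N(u)\cap F|\ne1$, which is exactly \eqref{eq:fort} when $F\neq\varnothing$. The empty set also satisfies every local condition, since its neighbour sums are zero, and it is the base point. This gives \eqref{eq:support-sections}. Naturality is clear, because both descriptions are compatible with restriction to open sets; in any case the identification is a bijection.

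For the last assertion, the section set is $\{\varnothing\}$ exactly when no fort is contained in $V\setminus S$, that is, when $S$ meets every fort. By Proposition~\ref{prop:fort-hit}, this holds if and only if $S$ is zero forcing. Theorem~\ref{thm:flatness} then makes this equivalent to flatness of $M_S$.

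There is no real obstacle here. The only point requiring care is the bookkeeping of the sheaf axiom: every incidence pairs a variable stalk with a relation stalk, so compatibility is exactly coordinatewise agreement. With that settled, the fort condition at exterior vertices is a direct translation of the definition of $\mathcal Q_u$.
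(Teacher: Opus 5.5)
Your proposal is correct and follows essentially the same route as the paper. Both arguments identify a global section with one binary assignment vanishing on $S$, and both read membership in $\mathcal Q_u$ as the fort condition at vertices outside $F$. The final equivalences then come, as in the paper, from Proposition~\ref{prop:fort-hit} and Theorem~\ref{thm:flatness}.
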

\begin{proof}
Let $\sigma\in\{0,1\}^V$ be the common assignment of a global section, and put $F=\{v:\sigma_v=1\}$. The variable stalks and relation stalks respectively impose
\[
 F\cap S=\varnothing,
\]
\[
 u\notin F\quad\Longrightarrow\quad
 |N(u)\cap F|=\sum_{v\in N(u)}\sigma_v\ne1.
\]
Thus $F$ is either empty or a fort avoiding $S$. Conversely, such a set gives the section by its characteristic function and the corresponding neighborhood tuples. This proves \eqref{eq:support-sections}. Propositions~\ref{prop:fort-hit} and Theorem~\ref{thm:flatness} then give
\[
 \Gamma(Y_G;\sB_{G,S})=\{\varnothing\}
 \quad\Longleftrightarrow\quad
 S\text{ is zero forcing}
 \quad\Longleftrightarrow\quad M_S\text{ is flat}.
\]
\end{proof}
The nonbasepoint sections of $\sB_{G,S}$ are realized by symmetric null vectors, by Proposition~\ref{prop:support-witness}.

\subsection{Four states at each boundary vertex}
Let $H=(I\sqcup B,E_H)$ be a finite block. Every edge incident to an interior vertex belongs to the block; future attachments occur only through $B$. Fix a seed set $S_H\subseteq V(H)$.

At an interior vertex the fort condition can already be imposed. At a boundary vertex, further neighbors may arrive from another block, so we retain membership in the support and the neighbor count truncated at two. A separate bit distinguishes the empty support from a nonempty support invisible at the boundary.

Define the open-boundary sheaf as follows. On $Y_H$, retain the variable stalks $\{0\}$ at seeds and $\{0,1\}$ elsewhere. Put
\[
 \sB^\circ_{H,S_H}(q_u)=
 \begin{cases}
 \mathcal Q_u,&u\in I,\\
 \{0,1\}^{N_H[u]},&u\in B,
 \end{cases}
 \qquad
 \sB^\circ_{H,S_H}(p_vq_u)=\{0,1\}.
\]
All maps are the same inclusions and coordinate projections as before.
This gives a pointed-set sheaf by Definition~\ref{def:cellular}. At boundary
vertices, record support membership and the truncated number of supporting
neighbors contributed by $H$. Impose the boundary relation when the vertex
is closed after assembly.

\begin{proposition}\label{prop:open-support-sections}
There is a natural bijection
\[
 \Gamma(Y_H;\sB^\circ_{H,S_H})
 \longleftrightarrow
 \{F\subseteq V(H)\setminus S_H:
 |N_H(u)\cap F|\ne1\text{ for }u\in I\setminus F\}.
\]
Each set on the right is realized, for some characteristic-zero symmetric coefficient fiber with every edge nonzero, as the exact support of a vector satisfying all interior rows and the given zero seed coordinates. Boundary residuals are left free.
\end{proposition}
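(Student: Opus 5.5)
The proof has two parts: a section computation, which repeats Proposition~\ref{prop:support-sheaf}, and a witness construction, which repeats Proposition~\ref{prop:support-witness} with the boundary rows left unconstrained.

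\emph{Sections.} A global section of $\sB^\circ_{H,S_H}$ on $Y_H$ is a family of local tuples at the relation vertices $q_u$ together with values at the variable vertices $p_v$. The restriction maps are inclusions and coordinate projections. Since every $p_v$ is adjacent to $q_v$ and $v\in N_H[v]$, each local tuple is the restriction of a single assignment $\sigma\in\{0,1\}^{V(H)}$. The variable stalks force $\sigma_v=0$ for $v\in S_H$. Put $F=\{v:\sigma_v=1\}$.
\begin{itemize}
\item At an interior vertex $u$ the stalk is $\mathcal Q_u$. For $u\notin F$ this says exactly $|N_H(u)\cap F|\ne1$.
\item At a boundary vertex the stalk is all of $\{0,1\}^{N_H[u]}$, so it imposes nothing.
\end{itemize}
Conversely, any $F$ on the right-hand side gives a section through its characteristic function and the neighborhood tuples. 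The two constructions are mutually inverse, which gives the bijection. Naturality is clear, since both directions are restriction of $\sigma$.

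\emph{Realization.} The empty set is realized by the zero vector, so take $F\ne\varnothing$ and $x=\one_F$. Every interior vertex $u\notin F$ has $k_u=|N_H(u)\cap F|\ne1$. If $k_u\ge2$, give its edges into $F$ the coefficients $1,\ldots,1,-(k_u-1)$, so that their sum is zero. Each crossing edge from $F$ to an interior vertex outside $F$ has exactly one endpoint outside $F$, so these assignments are consistent. Choose the remaining coefficients as follows:
\begin{itemize}
\item every other edge, including edges from $F$ to boundary vertices outside $F$ and edges inside $F$, gets coefficient $1$;
\item each $v\in F$, interior or boundary, gets diagonal $d_v=-\sum_{w\in N_H(v)\cap F}a_{vw}$;
\item every other diagonal is $0$.
\end{itemize}
Then $(A\one_F)_u=0$ for every interior $u$: for $u\in F$ by the diagonal choice, and for $u\notin F$ by the zero-sum choice or because $k_u=0$. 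The seed coordinates vanish because $F\cap S_H=\varnothing$. All coefficients are integers, so every edge coefficient is nonzero in characteristic zero. The exact support of $\one_F$ is $F$.

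Nothing is imposed on the boundary rows. The residual $j_u=(A\one_F)_u$ at a boundary $u\notin F$ may well be nonzero, for instance when $k_u=1$. The statement leaves these residuals free, so the construction does not need to control them.

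Neither step is a real obstacle. The only point needing care is to check that boundary vertices in $F$ are still handled by the diagonal choice, and that every required zero-sum lives at an interior exterior vertex, so that the edge groups used for different zero-sums are disjoint.
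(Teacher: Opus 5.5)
Your proposal is correct and follows the paper's proof essentially step for step. You identify sections with a single binary assignment, which imposes the $\mathcal Q_u$ condition only at interior vertices, and you realize each support by $\one_F$ with crossing weights $1,\ldots,1,-(k-1)$ at interior vertices outside $F$ and compensating diagonals. The differences are cosmetic: you also cancel the boundary rows in $F$ through the diagonal, which is harmless because residuals are free, and you fix the remaining edge weights to $1$ rather than leaving them arbitrary nonzero.
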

\begin{proof}
The coordinate restrictions identify every local binary tuple with one assignment on $V(H)$. A seed coordinate is zero. At an interior vertex outside the one-set, membership in $\mathcal Q_u$ says exactly that the number of one-neighbours is not one; at a boundary vertex the entire binary cube imposes no condition. This proves the bijection.

For the realization take $x=\one_F$. At each $u\in I\setminus F$ with nonempty $N_H(u)\cap F$, its size is $k\ge2$. Give these crossing edges the weights $1,\ldots,1,-(k-1)$, symmetrically at their two ends. Such an edge has a unique endpoint outside $F$, so assignments at different constrained rows do not conflict. Give all other edges arbitrary nonzero rational weights. At $u\in I\cap F$, set
\[
 d_u=-\sum_{v\in N_H(u)\cap F}t_{uv}.
\]
Other diagonals may be chosen freely. Then $(Ax)_u=0$ for every $u\in I$, all seed coordinates vanish, and the boundary residual is defined by $j=(Ax)|_B$. Conversely any such vector has an admissible support, since an interior zero row cannot see a unique nonzero neighbouring term. The empty support is represented by the zero vector.
\end{proof}
We collect these supports according to their boundary memberships and
truncated neighbor counts.

\begin{definition}\label{def:profile}
An \emph{admissible local support} is any $F\subseteq V(H)\setminus S_H$, including the empty set, such that
\begin{equation}\label{eq:local-support}
 |N_H(u)\cap F|\ne1\qquad(u\in I\setminus F).
\end{equation}
The boundary coordinates have the following four states. At $b\in B$ put
\[
 \sigma_b(F)=
 \begin{cases}
 \Istate,&b\in F,\\
 \Ostate_0,&b\notin F,\ |N_H(b)\cap F|=0,\\
 \Ostate_1,&b\notin F,\ |N_H(b)\cap F|=1,\\
 \Ostate_2,&b\notin F,\ |N_H(b)\cap F|\ge2.
 \end{cases}
\]
Also put $\eps(F)=0$ if $F=\varnothing$ and $\eps(F)=1$ otherwise. Its signature is $q_H(F)=(\sigma(F),\eps(F))$. The \emph{complete support profile} of the fixed seed set is
\begin{equation}\label{eq:profile}
 \Pi_H(S_H)=\{q_H(F):F\text{ satisfies }\eqref{eq:local-support}\}.
\end{equation}
\end{definition}
A profile belongs to one fixed seed set and contains every boundary
signature realized by a surviving support. Thus the later minimization
ranges over seed sets with a prescribed complete profile. If $|B|=b$,
there are at most $2\cdot4^b$ signatures. The extra bit distinguishes an
interior nonempty support from the empty support even when their boundary
counts agree.

Under edge-disjoint gluing, the two support sets must agree on membership of a boundary vertex, and their numbers of incident support edges add. This determines the following product.

\begin{definition}
Two signatures on the same labeled boundary are compatible if, at each boundary vertex, either both states are $\Istate$ or both are outside states. Their product is defined by
\begin{equation}\label{eq:signature-product}
 \Istate\odot\Istate=\Istate,\qquad
 \Ostate_i\odot\Ostate_j=\Ostate_{\min(2,i+j)},
 \qquad \eps=\eps_1\lor\eps_2.
\end{equation}
For profiles $\Pi_1,\Pi_2$, let $\Pi_1\star\Pi_2$ be the set of products of all compatible pairs.
\end{definition}

Every admissible support on the union restricts to such a compatible pair. Conversely, the union of two compatible supports satisfies all interior conditions, since an interior vertex has no neighbors in the other interior.

\begin{theorem}\label{thm:profile-gluing}
Suppose $H=H_1\cup_B H_2$, the interiors are disjoint, the edge sets are disjoint, and there are no edges joining the two interiors. If $S_1\cap B=S_2\cap B$, then
\begin{equation}\label{eq:profile-gluing}
 \Pi_H(S_1\cup S_2)=\Pi_{H_1}(S_1)\star\Pi_{H_2}(S_2).
\end{equation}
The operation $\star$ is associative.
\end{theorem}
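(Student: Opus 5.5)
The plan is to prove \eqref{eq:profile-gluing} by a bijection between admissible local supports on $H$ and compatible pairs of admissible local supports on $H_1$ and $H_2$. The key check is that the signature of the union is the $\odot$-product of the two signatures.

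Write $S=S_1\cup S_2$. Since $S_1\cap B=S_2\cap B$, we have $V(H_i)\cap S=S_i$. The interior of $H$ is $I_1\sqcup I_2$, and the boundary is $B$.

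\textbf{Inclusion $\subseteq$.} Take $F$ admissible on $H$ and put $F_i=F\cap V(H_i)$.
\begin{itemize}
\item $F_i$ avoids $S_i$.
\item Fix $u\in I_1\setminus F_1$. Every edge at $u$ lies in $H_1$, and $u$ has no neighbor in $I_2$. Hence $N_H(u)\cap F=N_{H_1}(u)\cap F_1$, so condition \eqref{eq:local-support} for $H$ gives it for $H_1$. The same holds for $H_2$.
\item At $b\in B$, membership in $F$ and in both $F_i$ agree, so the states are compatible.
\item The edge sets are disjoint. Every neighbor of $b$ in $H$ is a neighbor in exactly one block through its unique edge. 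Therefore $|N_H(b)\cap F|=|N_{H_1}(b)\cap F_1|+|N_{H_2}(b)\cap F_2|$, and truncation at two commutes with this sum as $\min(2,i+j)$. This is the rule $\Ostate_i\odot\Ostate_j$.
\item $F\neq\varnothing$ exactly when some $F_i\neq\varnothing$, so $\eps(F)=\eps(F_1)\lor\eps(F_2)$.
\end{itemize}
Thus $q_H(F)=q_{H_1}(F_1)\odot q_{H_2}(F_2)$.

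\textbf{Inclusion $\supseteq$.} Take admissible $F_i$ with compatible signatures and put $F=F_1\cup F_2$. Compatibility gives $F\cap V(H_i)=F_i$: a boundary vertex lies in $F_1$ exactly when it lies in $F_2$, and the interiors are disjoint. So $F$ avoids $S$. The interior conditions transfer back by the same neighborhood identity used above, and the signature computation is identical. This proves \eqref{eq:profile-gluing}.

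\textbf{Associativity.} I will check it pointwise on signatures, since $\star$ is the image of compatible pairs.
\begin{itemize}
\item Compatibility of a triple is pairwise agreement of the membership pattern $\{\Istate\}$ versus outside states at each boundary vertex. This is transitive and preserved by $\odot$, since products keep the $\Istate$/outside type.
\item $\Istate\odot\Istate=\Istate$ is associative.
\item On outside states, $\min(2,\min(2,i+j)+k)=\min(2,i+j+k)$.
\item $\lor$ is associative.
\end{itemize}
Hence $(\Pi_1\star\Pi_2)\star\Pi_3$ and $\Pi_1\star(\Pi_2\star\Pi_3)$ both equal the set of products over mutually compatible triples.

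The main point to state carefully is the neighbor-count additivity at boundary vertices. It needs the edge sets to be disjoint, so that each edge at $b$ is counted in exactly one block. It also needs the boundary-membership agreement, so that $F_i$ is recovered from $F$. Everything else is formal.
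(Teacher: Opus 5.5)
Your proposal is correct and follows essentially the same route as the paper. For one inclusion you restrict $F$ to $F_i=F\cap V(H_i)$, using the interior identity $N_H(u)\cap F=N_{H_i}(u)\cap F_i$ and edge-disjoint additivity of boundary counts; for the other you take $F_1\cup F_2$ for compatible witnesses; and you check associativity of $\odot$ on compatible triples. Your extra remarks (that $V(H_i)\cap S=S_i$, and that compatibility with a product reduces to pairwise compatibility) only make explicit details the paper leaves implicit.
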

\begin{proof}
Let $F$ be an admissible support of $H$ and put $F_i=F\cap V(H_i)$. For every $u\in I_i\setminus F_i$,
\[
 N_H(u)\cap F=N_{H_i}(u)\cap F_i,
\]
so $F_i$ is admissible. Their memberships agree on $B$. If $b\notin F$, edge-disjointness gives
\[
 |N_H(b)\cap F|
 =|N_{H_1}(b)\cap F_1|+|N_{H_2}(b)\cap F_2|.
\]
Therefore
\[
 q_H(F)=q_{H_1}(F_1)\odot q_{H_2}(F_2),
 \qquad \eps(F)=\eps(F_1)\lor\eps(F_2).
\]
This proves one inclusion in \eqref{eq:profile-gluing}.

Conversely, choose admissible $F_1,F_2$ realizing a compatible pair. Their boundary memberships agree, and their seed sets agree on $B$. Hence $F=F_1\cup F_2$ avoids $S_1\cup S_2$. Its interior conditions hold by the same neighborhood equality, and its signature is the stated product. Thus the reverse inclusion holds.

For three signatures, compatibility requires the same support membership at each boundary vertex. On the outside states,
\[
 \min(2,\min(2,i+j)+k)
 =\min(2,i+j+k)
 =\min(2,i+\min(2,j+k)),
\]
and the nonempty bit is combined by associative disjunction. Taking all compatible witnesses therefore proves associativity of $\star$.
\end{proof}

The product can retain a support which neither separate boundary equation would permit. The path of Example~\ref{ex:path-ideal} gives the smallest calculation.

\begin{example}\label{ex:profile-path}
Take the blocks $1-2$ and $2-3$, with common boundary $2$ and seed set $\{2\}$. Each has complete profile
\[
 \{(\Ostate_0,0),(\Ostate_1,1)\}.
\]
The second state is the support consisting of its leaf. Their product contains $(\Ostate_2,1)$, representing the two leaves together. When the boundary vertex becomes internal this state is admissible, since the forbidden count is one, not two. The resulting fort $\{1,3\}$ prevents forcing. This is the support version of residual cancellation in Example~\ref{ex:path-ideal}.
\end{example}

\subsection{Minimum cost for a complete profile}
For zero forcing, one seed set must eliminate every nonempty support. We minimize its cost for each complete profile.
Fix $P\subseteq B$ and a profile $\Pi$. Define
\begin{equation}\label{eq:profile-cost}
 c_H(P,\Pi)=
 \min\{|S_H\cap I|:S_H\cap B=P,\ \Pi_H(S_H)=\Pi\},
\end{equation}
with minimum $+\infty$ when the set is empty. Boundary seeds are not charged until the corresponding vertices leave the boundary.

Disjoint interiors have additive seed costs. Theorem~\ref{thm:profile-gluing}
therefore gives the following formula.

\begin{theorem}\label{thm:cost-gluing}
Under the assumptions of Theorem~\ref{thm:profile-gluing},
\begin{equation}\label{eq:cost-gluing}
 c_H(P,\Pi)=
 \min_{\Pi_1\star\Pi_2=\Pi}
 \bigl(c_{H_1}(P,\Pi_1)+c_{H_2}(P,\Pi_2)\bigr).
\end{equation}
\end{theorem}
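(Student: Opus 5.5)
The plan is a direct double inequality. It rests on a bijection between seed sets $S_H$ with $S_H\cap B=P$ and pairs $(S_1,S_2)$ with $S_i\cap B=P$. Write $I_1,I_2$ for the two interiors. Since $V(H)=I_1\sqcup I_2\sqcup B$, the map $S_H\mapsto(S_1,S_2)$, $S_i=S_H\cap V(H_i)$, is a bijection onto such pairs, with inverse $(S_1,S_2)\mapsto S_1\cup S_2$. Under this bijection the seed cost splits as
\[
 |S_H\cap I|=|S_1\cap I_1|+|S_2\cap I_2|,
\]
because $I=I_1\sqcup I_2$.

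For the inequality $\le$, fix profiles $\Pi_1,\Pi_2$ with $\Pi_1\star\Pi_2=\Pi$ and finite costs. If either cost is $+\infty$ there is nothing to prove. Choose minimizers $S_1,S_2$ for $c_{H_i}(P,\Pi_i)$ and put $S_H=S_1\cup S_2$. The boundary parts agree, both being $P$, so Theorem~\ref{thm:profile-gluing} applies and gives
\[
 \Pi_H(S_H)=\Pi_{H_1}(S_1)\star\Pi_{H_2}(S_2)=\Pi.
\]
Since also $S_H\cap B=P$, the set $S_H$ is admissible in \eqref{eq:profile-cost}. Its cost is the sum of the two local costs, so $c_H(P,\Pi)$ is at most the right side of \eqref{eq:cost-gluing}.

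For the inequality $\ge$, assume $c_H(P,\Pi)<\infty$; otherwise the inequality is trivial. Take a minimizer $S_H$, split it as $S_i=S_H\cap V(H_i)$, and set $\Pi_i=\Pi_{H_i}(S_i)$. Then $S_1\cap B=S_2\cap B=P$, and Theorem~\ref{thm:profile-gluing} gives $\Pi_1\star\Pi_2=\Pi$. Hence the pair $(\Pi_1,\Pi_2)$ occurs in the minimum on the right. Moreover $c_{H_i}(P,\Pi_i)\le|S_i\cap I_i|$, because $S_i$ itself is a witness for that profile. Summing these two bounds shows that the right side of \eqref{eq:cost-gluing} is at most $c_H(P,\Pi)$.

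The key point is that profiles are complete, so the profile of $S_H$ depends only on the profiles of its two restrictions. This is exactly what Theorem~\ref{thm:profile-gluing} supplies, and it is what lets optimal seed sets be chosen independently on the two blocks. Beyond that, the only care needed is the $+\infty$ convention and checking that the min over profile pairs ranges over a finite set. It is finite, since there are at most $2^{2\cdot4^{|B|}}$ profiles.
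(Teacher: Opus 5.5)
Your proposal is correct and follows the paper's proof essentially step for step. You obtain the lower bound by splitting a seed set of the union into its restrictions, applying Theorem~\ref{thm:profile-gluing}, and using additivity of interior costs; you obtain the upper bound by gluing attained minimizers on the two blocks, and you handle the $+\infty$ cases separately. The explicit bijection between seed sets and pairs with common boundary part $P$, and the remark that only finitely many profile pairs occur, are small clarifications that the paper leaves implicit.
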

\begin{proof}
For a seed set $S_H$ on the union, take $S_i=S_H\cap V(H_i)$ and $\Pi_i=\Pi_{H_i}(S_i)$. Their boundary seed sets both equal $P$, and Theorem~\ref{thm:profile-gluing} gives $\Pi_1\star\Pi_2=\Pi_H(S_H)$. Since the two interiors are disjoint,
\[
 \begin{aligned}
 |S_H\cap I|
 &=|S_1\cap I_1|+|S_2\cap I_2|\\
 &\ge c_{H_1}(P,\Pi_1)+c_{H_2}(P,\Pi_2).
 \end{aligned}
\]
Thus every seed set counted on the left has cost at least one candidate on the right. Minimizing gives the lower bound.

Conversely fix a profile pair with $\Pi_1\star\Pi_2=\Pi$ and finite costs. Finiteness of the two vertex sets supplies one seed set $S_i$ attaining each whole-profile cost. Both restrict to $P$ at the boundary, so $S=S_1\cup S_2$ is a well-defined seed choice. The profile gluing theorem then gives
\[
 \Pi_H(S)=\Pi_1\star\Pi_2=\Pi,
\]
and its interior cost is exactly the sum of the two minima. Thus each finite candidate on the right is realized on the left. If there is no finite candidate, no union seed set can exist by the first implication. This proves equality also for the value $+\infty$.
\end{proof}
The resulting seed set satisfies
\[
 \exists S\quad\forall F\in\mathfrak F(G),\quad S\cap F\ne\varnothing.
\]

\subsection{Vertex and edge operations}\label{sec:state-operations}
To evaluate \eqref{eq:profile-cost}, construct a graph by introducing boundary vertices, introducing edges between boundary vertices, forgetting boundary vertices, and joining blocks. An interior vertex must have all its incident edges present before it is forgotten. We give the profile update in each case.

For a signature $(\sigma,\eps)$, write $\sigma[v\mapsto a]$ for adjoining a new boundary coordinate in state $a$. If $v$ is introduced as a seed, its only possible state is $\Ostate_0$. Otherwise it can either be outside the support or belong to it. Thus the two operations on a whole profile are
\begin{align*}
 \operatorname{add}_v^1(\Pi)
 &=\{(\sigma[v\mapsto\Ostate_0],\eps):(\sigma,\eps)\in\Pi\},\\
 \operatorname{add}_v^0(\Pi)
 &=\operatorname{add}_v^1(\Pi)
   \cup\{(\sigma[v\mapsto\Istate],1):(\sigma,\eps)\in\Pi\}.
\end{align*}
The superscript specifies whether $v$ is chosen as a seed. It updates $P$ but does not yet add to the cost.

When an edge $uv$ is introduced, an endpoint in state $\Istate$ remains in that state. For an endpoint $u$ in state $\Ostate_i$, the update is
\[
 \Ostate_i\longmapsto
 \Ostate_{\min(2,i+\mathbf1_{\{\sigma_v=\Istate\}})}.
\]
Apply the symmetric rule at $v$ and leave all other states and $\eps$ unchanged. Both endpoints are boundary vertices, so no interior condition is altered.

When $v$ is forgotten, impose its fort condition and then remove its boundary coordinate:
\[
 \operatorname{forget}_v(\Pi)
 =\{(\sigma|_{B\setminus\{v\}},\eps):
     (\sigma,\eps)\in\Pi,\ \sigma_v\ne\Ostate_1\}.
\]
The cost and boundary seed set change by
\[
 (P,c)\longmapsto
 \bigl(P\setminus\{v\},\ c+\mathbf1_{\{v\in P\}}\bigr).
\]
The nonempty bit is retained, including when the entire support has become internal.

For a join, use \eqref{eq:profile-gluing} and \eqref{eq:cost-gluing}. If several constructions give the same pair $(P,\Pi)$, retain the least cost. The initial empty graph has the single finite entry
\[
 c_{\varnothing}(\varnothing,\{((),0)\})=0.
\]
The following theorem proves the table invariant for these updates.

\begin{theorem}\label{thm:finite-algorithm}
These rules compute the exact functions \eqref{eq:profile-cost} for every constructed block. With empty final boundary, let $\Pi_0=\{((),0)\}$. Then
\begin{equation}\label{eq:profile-Z}
 Z(G)=c_G(\varnothing,\Pi_0).
\end{equation}
If a construction has $N$ operation nodes and boundary size at most $b$, a direct implementation uses at most
\begin{equation}\label{eq:state-bound}
 2^b\,2^{2\cdot4^b}
\end{equation}
states per node, and $N\,2^{2^{O(b)}}$ arithmetic operations on costs of $O(\log(2+|V|))$ bits.
\end{theorem}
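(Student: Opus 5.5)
The proof is an induction over the construction tree. The invariant at each node is the following: the table entry for a pair $(P,\Pi)$ equals $c_H(P,\Pi)$ from \eqref{eq:profile-cost}, where $+\infty$ means the pair is absent. More precisely, the entry is the minimum interior cost over seed sets $S_H$ with $S_H\cap B=P$ and $\Pi_H(S_H)=\Pi$.

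For the base case, the empty graph has only $S=\varnothing$. Its only admissible support is $F=\varnothing$, whose signature is $((),0)$, so the single entry is correct.

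For each operation I will check the corresponding profile identity for a fixed seed set, using Definition~\ref{def:profile}.
\begin{enumerate}
\item \emph{Introducing a boundary vertex $v$.} The admissible supports of $H+v$ are $F$ and, when $v\notin S$, also $F\cup\{v\}$. Here $F$ ranges over admissible supports of $H$, and no interior condition changes because $v$ has no edges yet. The first kind has state $\Ostate_0$ at $v$; the second has state $\Istate$ and $\eps=1$. This gives $\operatorname{add}^1_v$ and $\operatorname{add}^0_v$.
\item \emph{Introducing an edge $uv$ between boundary vertices.} The admissible supports are unchanged. Only the truncated counts at $u$ and $v$ change, and they change exactly as in the stated update.
\item \emph{Forgetting $v$.} The vertex becomes interior with all of its edges present. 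Admissibility now adds precisely the condition $\sigma_v\ne\Ostate_1$; the case $\sigma_v=\Istate$ imposes nothing. The seed at $v$, if any, becomes interior and is now charged, which matches the stated cost update.
\item \emph{Joining two blocks.} This is Theorems~\ref{thm:profile-gluing} and~\ref{thm:cost-gluing}.
\end{enumerate}

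Each of the first three operations sends the complete profile of $S_H$ to the complete profile of the same seed set on the new block. The seed sets on the new block correspond bijectively to seed sets on the old block together with a choice at $v$ in case (a), and the cost changes by a fixed amount. Hence minimizing over all old pairs that map to a given new pair yields the new minimum exactly. The essential point is that the update acts on whole profiles, not on individual signatures. Retaining the least cost among constructions that produce the same $(P,\Pi)$ is therefore correct.

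For \eqref{eq:profile-Z}, take the final empty boundary. A seed set $S$ has profile $\Pi_0$ exactly when its only admissible support is $\varnothing$. By Proposition~\ref{prop:support-sheaf}, this holds if and only if no fort avoids $S$, that is, if and only if $S$ is zero forcing (Proposition~\ref{prop:fort-hit}). Since the boundary is empty, the cost is $|S|$, so $c_G(\varnothing,\Pi_0)=Z(G)$.

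For the state bound, a state is a pair $(P,\Pi)$. There are $2^b$ choices of $P$. A profile is a subset of the at most $2\cdot4^b$ signatures, giving at most $2^{2\cdot4^b}$ choices of $\Pi$. This proves \eqref{eq:state-bound}.

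Each unary update maps each state to one state, and computing the image takes $O(4^b)$ signature operations. A join ranges over pairs of states, at most the square of the per-node count. Computing $\Pi_1\star\Pi_2$ takes at most $(2\cdot4^b)^2$ signature products. All of these counts are $2^{2^{O(b)}}$.

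Costs never exceed $|V|$, so they fit in $O(\log(2+|V|))$ bits.

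The main point needing care is the invariant under unary operations: showing that whole profiles transform functionally, so that a single table entry per $(P,\Pi)$ loses no information. Making the bijection of supports in case (a) precise resolves this.
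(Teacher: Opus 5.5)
Your proposal is correct and follows essentially the same route as the paper: an induction over the construction tree with the invariant that each table entry equals $c_H(P,\Pi)$, checking that each introduce, edge, forget and join update sends the complete profile of a fixed seed set to the complete profile of the same seed set on the new block. The paper then identifies $\Pi_0$ with the absence of any nonempty fort avoiding $S$, and counts states and operations exactly as you do. One small slip: introducing a vertex sends each state to two states (seed or nonseed at $v$), not one, but this does not affect the bound.
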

\begin{proof}
We prove the table invariant: after each construction step, every finite entry at $(P,\Pi)$ is the minimum number of selected seeds already in the interior among all seed sets whose boundary restriction is $P$ and whose entire admissible support profile is $\Pi$. Every finite minimum is attained. The empty graph has the single empty support and cost zero, so the invariant holds initially.

When an isolated boundary vertex $v$ is introduced, any new seed set uniquely specifies whether $v$ is a seed. For a fixed old admissible support, its extension either omits $v$ or contains it; the latter is allowed exactly when $v$ is not a seed. These are exactly the two displayed $\operatorname{add}$ rules. No interior equation has changed and no new seed is charged yet. Both the restriction of a new support and every allowed extension are present, so the updated profile is exact in both directions.

When an edge $uv$ is introduced, both endpoints are in the boundary. Interior admissibility is unchanged. Support membership is unchanged, and the number of support neighbours at $u$ increases by one precisely when $u$ is outside and $v$ is inside the support; the symmetric statement holds at $v$. Truncation at two is exact for the later test because counts zero, one and at least two are the only possibilities distinguished by the fort condition. Therefore the stated signature update is exactly the profile of the same seed set in the new graph.

When $v$ is forgotten, all incident edges have already been introduced. If $v$ belongs to a support, its own fort condition is vacuous. If it is outside, that condition holds exactly when its state is not $\Ostate_1$. Filtering by this condition and deleting its coordinate therefore gives exactly the new admissible supports. The nonempty bit remains unchanged, even if no support vertex remains on the boundary. A seed at $v$ becomes an interior seed, and only then is one added to the cost. This proves the forget update.

At a join, the exact witness restriction and gluing in Theorem~\ref{thm:profile-gluing} give the product profile, and Theorem~\ref{thm:cost-gluing} gives precisely the summed minimum cost. If several predecessors give the same $(P,\Pi)$, the future updates
are identical. Each predecessor minimum is attained by a seed set, so the
least of their costs is the minimum for $(P,\Pi)$. This proves the invariant by induction over the construction tree.

At empty final boundary, all fort conditions have been imposed. The empty support always survives. A nonempty fort survives exactly when the remaining profile contains $((),1)$. Therefore
\[
 \Pi_G(S)=\{((),0)\}
 \Longleftrightarrow S\text{ meets every nonempty fort}
 \Longleftrightarrow S\text{ is zero forcing}.
\]
Every seed has been counted once, at its forgetting operation. Minimizing this final entry gives~\eqref{eq:profile-Z}.

For a boundary of size $b$, the four vertex states and the global
nonemptiness bit give at most $2\cdot4^b$ signatures. A profile is a subset
of this set. Including the boundary seed set gives at most
$2^b2^{2\cdot4^b}$ table keys, a doubly exponential bound in $b$. A join can examine every pair of keys and every compatible pair of signatures. Squaring this key bound and multiplying by factors exponential in $b$ still gives $2^{2^{O(b)}}$ operations per node. The other updates are no larger. Each finite cost is an integer between zero and $|V|$, and can be stored in $O(\log(2+|V|))$ bits. This proves the stated bound over $N$ operation nodes.
\end{proof}
Every finite graph can be constructed by introducing all vertices, then all edges, and then forgetting the vertices. A decomposition with smaller boundaries reduces \eqref{eq:state-bound}. Treewidth-parameterized algorithms for zero forcing are studied
in~\cite{Scheffler}.

\subsection{Replacement of a graph block}
The complete cost function can be tested against any exterior block with the same labeled boundary. To finish such a gluing, all remaining boundary fort conditions must be imposed.
For a boundary profile define
\[
 \operatorname{Close}_B(\Pi)=
 \{\eps:(\sigma,\eps)\in\Pi,\ \sigma_b\ne\Ostate_1\text{ for every }b\in B\}.
\]
This enforces all boundary fort conditions simultaneously. The empty support always survives, so this set always contains zero for a realizable profile.

Closing a realizable profile leaves the bit $1$ precisely when a nonempty fort survives. Thus the exterior graph depends on the interior block through $c_H(P,\Pi)$, which gives the following sufficient condition for replacement.

\begin{corollary}\label{cor:context-replacement}
Let $H_1,H_2$ have the same labeled boundary and suppose
\[
 c_{H_1}(P,\Pi)=c_{H_2}(P,\Pi)\quad\text{for every }P,\Pi.
\]
For any exterior block $Q$ with that boundary, disjoint interior, and compatible nonduplicated edge assignment,
\[
 Z(H_1\cup_B Q)=Z(H_2\cup_B Q).
\]
More explicitly, for any pair of blocks $H,Q$,
\begin{equation}\label{eq:closed-glue-cost}
 Z(H\cup_B Q)=
 \min_{\substack{P,\Pi_H,\Pi_Q\\
 \operatorname{Close}_B(\Pi_H\star\Pi_Q)=\{0\}}}
 \bigl(|P|+c_H(P,\Pi_H)+c_Q(P,\Pi_Q)\bigr).
\end{equation}
\end{corollary}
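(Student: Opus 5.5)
I would prove the explicit formula \eqref{eq:closed-glue-cost} first; the replacement statement then follows at once. The right-hand side of \eqref{eq:closed-glue-cost} involves $H$ only through the function $c_H(P,\Pi)$. So equality of the cost functions of $H_1$ and $H_2$ gives equal minima for every exterior block $Q$.

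For the formula, fix a seed set $S$ of $G=H\cup_B Q$. Put $P=S\cap B$, $S_H=S\cap V(H)$, and $S_Q=S\cap V(Q)$. Both pieces restrict to $P$ on $B$, so Theorem~\ref{thm:profile-gluing} applies. Writing $\Pi_H=\Pi_H(S_H)$ and $\Pi_Q=\Pi_Q(S_Q)$, it gives that $\Pi_H\star\Pi_Q$ is the complete profile of $S$ on the union, with boundary $B$ still open. The key step is to identify $\operatorname{Close}_B$ with closing the boundary. Regard $G$ as having interior $I_H\cup I_Q\cup B$ and empty boundary. A set $F\subseteq V\setminus S$ is a fort of $G$, or empty, exactly when three conditions hold:
\begin{itemize}
\item it satisfies the interior conditions \eqref{eq:local-support} on $I_H$ and on $I_Q$;
\item at each $b\in B\setminus F$, the total count $|N_G(b)\cap F|$ is not one;
\item it avoids $S$.
\end{itemize}
Edge-disjointness makes the count at $b$ the sum of the two block counts. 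Truncation at two preserves the value one, so the boundary condition is exactly $\sigma_b\neq\Ostate_1$ in the product signature. Hence $\operatorname{Close}_B(\Pi_H\star\Pi_Q)$ is the set of $\eps$-bits of the admissible supports of $G$. The empty support always contributes $0$. Therefore $\operatorname{Close}_B(\Pi_H\star\Pi_Q)=\{0\}$ holds exactly when no fort of $G$ avoids $S$. By Proposition~\ref{prop:fort-hit}, this says $S$ is zero forcing.

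The total cost is $|S|=|P|+|S_H\cap I_H|+|S_Q\cap I_Q|$, and this is at least $|P|+c_H(P,\Pi_H)+c_Q(P,\Pi_Q)$. So $Z(G)$ is bounded below by the right-hand side. For the reverse inequality, take any admissible triple $(P,\Pi_H,\Pi_Q)$ with finite costs. Choose minimizers $S_H$ and $S_Q$ attaining them; both restrict to $P$, so their union is well defined. By the same identification, the union is zero forcing and has exactly the stated cost. If no finite triple exists, then no zero-forcing set exists either. That is impossible since $V$ is zero forcing, so the minimum is finite.

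The main obstacle is bookkeeping rather than depth, in two places. First, verifying that the interior fort conditions of the union at $I_H$ and $I_Q$ see only their own block. Second, checking that truncated counts suffice at the closure step. The first uses the hypothesis that there are no edges between interiors and that every edge at an interior vertex lies in its block. The second is exactly the truncation argument already used for the forget operation in Theorem~\ref{thm:finite-algorithm}. In fact, one could instead derive the closure identity by applying $\operatorname{forget}_b$ successively to all $b\in B$, since these operations commute on signatures. The charged cost $|P|$ then appears exactly as in that theorem.
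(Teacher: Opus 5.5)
Your proof is correct and follows essentially the same route as the paper. Both arguments split a seed set along $B$, apply Theorem~\ref{thm:profile-gluing}, read $\operatorname{Close}_B(\Pi_H\star\Pi_Q)=\{0\}$ as ``no fort avoids $S$'' via Proposition~\ref{prop:fort-hit}, and then minimize profile by profile. You additionally spell out two steps the paper leaves implicit: the truncation check $\min(2,i+j)=1$ iff the true count is one, and the realization of each finite triple by a union of minimizers.
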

\begin{proof}
For a seed set $S$ in $H\cup_BQ$, put
\[
 P=S\cap B,\qquad
 S_H=S\cap V(H),\qquad S_Q=S\cap V(Q).
\]
Theorem~\ref{thm:profile-gluing} and closure of all boundary coordinates give
\[
 S\text{ is zero forcing}
 \quad\Longleftrightarrow\quad
 \operatorname{Close}_B
   (\Pi_H(S_H)\star\Pi_Q(S_Q))=\{0\}.
\]
The total cost is
\[
 |S|=|P|+|S_H\cap I_H|+|S_Q\cap I_Q|.
\]
Minimizing first over seed choices for each complete profile gives
\eqref{eq:closed-glue-cost}. If $c_{H_1}(P,\Pi)=c_{H_2}(P,\Pi)$ for all $P,\Pi$, the two minima have identical terms and conditions. Hence
$Z(H_1\cup_BQ)=Z(H_2\cup_BQ)$.
\end{proof}
Theorem~\ref{thm:strip} gives the active-boundary conditions for a
triangular strip. The full function $c_H$ in
Corollary~\ref{cor:context-replacement} determines its effect in any
compatible exterior graph.

\section{Applications to triangulations}\label{sec:outerplanar}

A maximal outerplanar graph can be constructed by starting with a triangle and repeatedly attaching a new vertex to the ends of an outer edge. Each attachment adds one vertex, two edges, and one bounded triangular face. Thus a graph obtained from $n-2$ triangles in this manner has $n$ vertices and $2n-3$ edges. The weak dual of its bounded faces is a tree.

We use forcing sequences for upper bounds, forts for lower bounds,
and complete support profiles to compare the triangular strips.

\subsection{A fan of arbitrary order}
Let $F_n$, $n\ge3$, have vertices $0,1,\ldots,n-1$ and edges
\begin{equation}\label{eq:fan-edges}
 E(F_n)=\{\{0,i\}:1\le i\le n-1\}
 \cup\{\{i,i+1\}:1\le i\le n-2\}.
\end{equation}
Its bounded triangles are $(0,i,i+1)$, $1\le i\le n-2$.

\begin{proposition}\label{prop:outer-fan}
For $n\ge3$,
\[
 P(F_n)=Z(F_n)=2.
\]
For $S=\{0,1\}$ the universal module satisfies $M_S\cong R_{F_n}^{\,2}$.
\end{proposition}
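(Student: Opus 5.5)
The plan is to obtain $Z(F_n)\le2$ from an explicit forcing chronology, $P(F_n)\ge2$ from the impossibility of covering $F_n$ by one induced path, and then to close the chain using $P(G)\le Z(G)$ from Lemma~\ref{lem:forcing-paths}. The module statement will then follow from Theorem~\ref{thm:flatness}.

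\emph{Upper bound.} Take $S=\{0,1\}$. At the start, $N(1)=\{0,2\}$, so $N(1)\setminus S=\{2\}$ and $1\to2$ is legal. Inductively, once $\{0,1,\ldots,i\}$ is black with $i\le n-2$, vertex $i$ has $N(i)=\{0,i-1,i+1\}$, whose only white member is $i+1$. Hence $i\to i+1$ is legal. This chronology $1\to2\to\cdots\to n-1$ colours every vertex, so $S$ is zero forcing and $Z(F_n)\le2$.

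\emph{Lower bound.} Suppose a single induced path covered $V$. Since $n\ge3$, vertex $0$ is adjacent to every other vertex. If $0$ were an interior vertex of the path, its two path neighbours would exhaust its allowed adjacencies, leaving at least one other path vertex adjacent to $0$, which contradicts inducedness. If $0$ were an endpoint, it would be adjacent to its successor and also to some later path vertex, again a chord. So $P(F_n)\ge2$.

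Combining the two bounds gives $2\le P(F_n)\le Z(F_n)\le2$. Finally, since $S=\{0,1\}$ is zero forcing, Theorem~\ref{thm:flatness}(a)$\Rightarrow$(b) gives $M_S\cong R_{F_n}^{|S|}=R_{F_n}^{\,2}$. There is no real obstacle here; the only point needing care is the inducedness argument for the lower bound, where both positions of the apex vertex $0$ on the path must be handled.
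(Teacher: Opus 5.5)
\textbf{Gap at $n=3$.} Your lower bound $P(F_n)\ge2$ does not cover $n=3$, which the statement includes. Suppose $0$ is the middle vertex of a path $x-0-y$ covering $V=\{0,1,2\}$. Then its two path neighbours are all of its neighbours. There is no ``other path vertex adjacent to $0$'', so the contradiction you state does not arise; the real obstruction is the chord $\{1,2\}$. Your interior-apex argument needs $\deg 0=n-1\ge3$, i.e.\ $n\ge4$. The endpoint case is fine for every $n\ge3$.

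The repair is one line, and it is the paper's argument. For every $n\ge3$ the set $\{0,1,2\}$ spans a triangle in $F_n$, and an induced path never contains three mutually adjacent vertices. Alternatively, $F_n$ has $2n-3>n-1$ edges, so it is not itself a path.

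\textbf{Comparison with the paper.} With that fix your proof is correct and a little more economical than the paper's. You get both $Z(F_n)\ge2$ and $P(F_n)\le2$ from the single inequality $P(G)\le Z(G)$ of Lemma~\ref{lem:forcing-paths}. The paper instead proves $Z(F_n)\ge2$ directly: every vertex has degree at least two, so a single black vertex cannot force. It proves $P(F_n)\le2$ by exhibiting the cover $(0)$, $(1,2,\ldots,n-1)$.

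For the module, you cite Theorem~\ref{thm:flatness}. The paper instead writes down the triangular unit minor $\bA_{Q,U}$ with $Q=(1,\ldots,n-2)$, $U=(2,\ldots,n-1)$ and determinant $\prod_{i=1}^{n-2}t_{i,i+1}$. That is just the theorem's proof specialized to this chronology. The direct version only adds an explicit display of the cancellation.
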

\begin{proof}
Starting from $S=\{0,1\}$, perform
\[
 1\to2,\quad2\to3,\quad\cdots,\quad(n-2)\to(n-1).
\]
Every force is legal since $0$ and the preceding path vertices are black. With
\[
 Q=(1,\ldots,n-2),\qquad U=(2,\ldots,n-1),
\]
the matrix $\bA_{Q,U}$ is triangular with diagonal
\[
 t_{12},\ t_{23},\ \ldots,\ t_{n-2,n-1}.
\]
Consequently,
\[
 \det\bA_{Q,U}=\prod_{i=1}^{n-2}t_{i,i+1}\in R_{F_n}^{\times},
 \qquad M_S\cong R_{F_n}^{\,2}.
\]
Every vertex has degree at least two, hence a single initially black vertex cannot perform a force. Thus $Z(F_n)=2$. The paths $(0)$ and $(1,2,\ldots,n-1)$ form an induced path cover, while the triangle on $\{0,1,2\}$ prevents a single induced path from covering all vertices. Hence $P(F_n)=2$.
\end{proof}

\subsection{A central triangle with three ears}
Let $G_6$ have bounded triangular faces
\begin{equation}\label{eq:sun-faces}
 (0,1,2),\quad(0,1,3),\quad(1,2,4),\quad(2,0,5).
\end{equation}
In particular, $G_6$ is maximal outerplanar, with $6$ vertices and $9$ edges.

\begin{proposition}\label{prop:outer-sun}
For the graph $G_6$,
\[
 P(G_6)=Z(G_6)=3.
\]
\end{proposition}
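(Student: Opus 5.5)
The plan is to prove the two inequalities $Z(G_6)\le 3$ and $P(G_6)\ge 3$. Together with $P(G)\le Z(G)$ from Lemma~\ref{lem:forcing-paths}, these give $P(G_6)=Z(G_6)=3$. From the faces \eqref{eq:sun-faces} the edge set is
\[
 01,\ 02,\ 12,\ 03,\ 13,\ 14,\ 24,\ 05,\ 25.
\]
The ears $3,4,5$ have degree two, with adjacent neighbour pairs $\{0,1\}$, $\{1,2\}$ and $\{0,2\}$ respectively.

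For the upper bound, take $S=\{0,3,4\}$ and exhibit a forcing chronology. Since $N(3)=\{0,1\}$ and $0$ is black, $3\to1$ is legal. Since $N(4)=\{1,2\}$, the force $4\to2$ is then legal. Now $N(0)\setminus B=\{5\}$, so $0\to5$ completes the process. Hence $Z(G_6)\le3$. One could also record the unit triangular minor as in Proposition~\ref{prop:outer-fan}, but that is not needed.

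For the lower bound, I would show that no two vertex-disjoint induced paths cover $V$. First, an ear cannot be an interior vertex of an induced path: its two neighbours would then both lie on the path and be adjacent, giving a chord. So every ear is an endpoint or a singleton path. Second, an induced path contains at most two of the triangle vertices $0,1,2$, and if it contains two of them they are consecutive.

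Now suppose a cover by two paths exists. The three triangle vertices cannot all lie in one path, so they are split $2+1$. One path $P_1$ contains exactly two triangle vertices, say $\{a,c\}$ after using the symmetry of $G_6$. The other path $P_2$ contains the third triangle vertex $w$. Every vertex of $P_2$ other than $w$ is an ear, and $P_2$ is induced, so $P_2$ is $w$ alone or $w$ together with ears adjacent to $w$ at its ends. The vertex $w$ is adjacent to exactly two ears, namely those whose hub pairs contain $w$. Let $y$ be the ear whose hub pair is $\{a,c\}$; it is not adjacent to $w$, so $y$ lies on $P_1$. The path $P_1$ contains the consecutive pair $a,c$, and $y$ is adjacent to both, so $y$ can only sit at an end next to one of them, which creates a chord to the other. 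That contradiction rules out two paths.

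A single path is excluded a fortiori. Hence $P(G_6)\ge3$, and therefore $3\le P(G_6)\le Z(G_6)\le3$. The main obstacle is making the case analysis for two paths airtight. The symmetry of $G_6$ under the permutation cycling $0\to1\to2$ together with the corresponding ears reduces it to the single configuration above, and I expect that single check to be routine.
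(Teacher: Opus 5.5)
Your proof is correct. The argument that no two induced paths cover $G_6$ is essentially the paper's: split the central triangle $2+1$, note that the ear over the pair $\{a,c\}$ cannot lie on the path through $w$, and obtain a chord on the other path. The paper runs the same two steps in the opposite order.

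The difference lies in how the remaining two bounds are obtained.

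\begin{itemize}
\item The paper proves $Z(G_6)\ge 3$ independently. It uses the disjoint forts $\{0,1,2\}$ and $\{3,4,5\}$, and then exhibits the forts $\{2,4,5\}$ and $\{1,2,3,5\}$, which are missed by the two symmetry classes of candidate seed pairs.
\item It proves $P(G_6)\le 3$ separately, by the explicit cover $(0),(1,3),(4,2,5)$.
\end{itemize}

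You route both bounds through the inequality $P(G)\le Z(G)$ stated after Lemma~\ref{lem:forcing-paths}. The chain $3\le P(G_6)\le Z(G_6)\le 3$ then needs only your forcing set $\{0,3,4\}$ and the path-cover lower bound. This is shorter and avoids the fort case analysis entirely.

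What the paper's route buys is independent certificates for each of $Z$ and $P$, phrased in the fort language of the rest of the paper. In particular, the fort $\{2,4,5\}$ is reused immediately afterward to compute $\codim\cD_F=3$ via Theorem~\ref{thm:fort-codimension}.
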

\begin{proof}
The set $S=\{0,1,4\}$ gives the forces
\[
 4\to2,\qquad1\to3,\qquad0\to5.
\]
Selecting the source rows $Q=(4,1,0)$ and the unknown columns $U=(2,3,5)$ gives
\[
 \bA_{Q,U}=
 \begin{pmatrix}
 t_{24}&0&0\\
 t_{12}&t_{13}&0\\
 t_{02}&t_{03}&t_{05}
 \end{pmatrix},
 \qquad
 \det\bA_{Q,U}=t_{24}t_{13}t_{05}\in R_{G_6}^{\times}.
\]
Hence $Z(G_6)\le3$ and $M_S\cong R_{G_6}^{\,3}$.

Both
\[
 C=\{0,1,2\},\qquad L=\{3,4,5\}
\]
are forts, and $C\cap L=\varnothing$. A two-element zero-forcing set would have to contain exactly one vertex of $C$ and one of $L$. Up to the dihedral symmetry of the central triangle, these pairs are $\{0,3\}$ and $\{0,4\}$. They avoid, respectively, the forts
\[
 \{2,4,5\},\qquad\{1,2,3,5\}.
\]
For the first fort, the exterior-neighborhood cardinalities at $0,1,3$ are $2,2,0$; for the second, those at $0,4$ are $4,2$. Thus both satisfy the fort condition. Proposition~\ref{prop:fort-hit} gives $Z(G_6)\ge3$.

The vertex sequences $(0)$, $(1,3)$, and $(4,2,5)$ form an induced path cover, so $P(G_6)\le3$. Suppose there were a cover by two induced paths. Since $\{0,1,2\}$ is a triangle, one path would contain exactly two central vertices, say $a,b$, and the other would contain the remaining central vertex $c$. The ear attached to $\{a,b\}$ cannot lie on the first induced path, which would otherwise contain a triangle. It must therefore lie on the second path. But that ear is not adjacent to $c$ or to either of the other two ears, contradicting connectivity of the second path. Thus $P(G_6)\ge3$.
\end{proof}

For instance, the fort $F=\{2,4,5\}$ is connected, with $\partial F=\{0,1\}$. Theorem~\ref{thm:fort-codimension} also gives
\[
 \codim_{\Omega_{G_6}}\cD_F=|\partial F|+1=3.
\]
Thus the same witness which obstructs a two-vertex seed set determines a codimension-three support locus.

\subsection{Two graphs with the same weak dual tree}
We next use a central triangle and three triangular strips. For $i=0,1,2$, with indices modulo $3$, put
\begin{equation}\label{eq:outer-arm-labels}
 a_i=i,\qquad b_i=i+1\pmod3,\qquad
 (u_i,v_i,w_i)=(3+3i,4+3i,5+3i).
\end{equation}
Starting with $(0,1,2)$, attach the three faces
\begin{equation}\label{eq:outer-snake-faces}
 (a_i,b_i,u_i),\qquad(a_i,u_i,v_i),\qquad(u_i,v_i,w_i)
\end{equation}
for each $i$. Denote the resulting graph by $G_{\mathrm s}$. Define $G_{\mathrm f}$ by replacing only the final face for $i=0$,
\begin{equation}\label{eq:outer-fan-replace}
 (3,4,5)\longmapsto(0,4,5),
 \qquad
 E(G_{\mathrm f})=
 E(G_{\mathrm s})\setminus\{\{3,5\}\}\cup\{\{0,5\}\}.
\end{equation}
Each graph has $12$ vertices, $21$ edges, and $10$ bounded faces. The weak dual in either case is a central vertex joined to three paths of length three.

\begin{proposition}\label{prop:outer-dual-covers}
The induced path covering numbers of the two graphs satisfy
\[
 P(G_{\mathrm s})=P(G_{\mathrm f})=3.
\]
\end{proposition}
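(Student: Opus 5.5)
The plan is an explicit upper bound from three induced paths, followed by a separation argument for the lower bound that reuses the argument of Proposition~\ref{prop:outer-sun} almost verbatim. The feature we exploit is that both graphs keep the central triangle $\{0,1,2\}$, and each arm $\{u_i,v_i,w_i\}$ is attached to the rest of the graph only through its root edge $\{a_i,b_i\}$. This holds even in $G_{\mathrm f}$, because the replacement \eqref{eq:outer-fan-replace} only adds the edge $\{0,5\}$, and $0=a_0$ is a root vertex of arm $0$.

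\emph{Lower bound $P\ge3$.} This part is uniform for $G_{\mathrm s}$ and $G_{\mathrm f}$.
\begin{enumerate}
\item Suppose two induced paths cover $V$. An induced path meets the triangle $\{0,1,2\}$ in at most two vertices, and if it meets it in two, they are consecutive on the path.
\item Hence one path $\gamma_1$ contains exactly two central vertices $a,b$, and the other path $\gamma_2$ contains the third central vertex $c$. The edge $\{a,b\}$ equals $\{a_i,b_i\}$ for exactly one $i$.
\item The vertex $u_i$ is adjacent to both $a_i$ and $b_i$. So $u_i\notin\gamma_1$, since otherwise $\gamma_1$ would contain the triangle $\{a_i,b_i,u_i\}$. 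Therefore $u_i\in\gamma_2$.
\item The path $\gamma_2$ is connected and avoids $a_i,b_i$. However, in $G-\{a_i,b_i\}$ the arm $\{u_i,v_i,w_i\}$ is a union of components not containing $c$. This uses only that every edge leaving arm $i$ ends in $\{a_i,b_i\}$, and I will check this directly from \eqref{eq:outer-snake-faces} and \eqref{eq:outer-fan-replace}.
\item This contradiction shows that no cover by two induced paths exists. A single induced path is impossible as well, since it cannot contain all three central vertices.
\end{enumerate}

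\emph{Upper bound $P\le3$.} For each graph I will write down three vertex-disjoint induced paths covering all $12$ vertices. The natural pattern is to thread each arm through one or two central vertices: start with the far vertices of an arm (for instance $w_i,v_i$, using the edge $a_iv_i$), then pass into the centre, and continue into a neighbouring arm through its root vertex. Each path may use at most two vertices of a triangle, namely of the central triangle or of any arm triangle $\{u_i,v_i,w_i\}$ or $\{a_i,u_i,v_i\}$, and only as a consecutive pair. I will fix candidate paths for $G_{\mathrm s}$ and verify inducedness by listing all edges among their vertices from the face list. For $G_{\mathrm f}$, I will reuse the same covering of arms $1,2$ and adjust only arm $0$. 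There the new triangle $(0,4,5)$ and the deleted edge $35$ change which pairs may be consecutive, for example the ordering $3,4,5$ or $5,4,0$ may become usable.

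\emph{Main obstacle.} The lower bound is essentially the $G_6$ argument. I expect the only real work to be finding the explicit covers in the upper bound and checking by hand that no chord is present, especially at the junctions where a path enters a second arm.
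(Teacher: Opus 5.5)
Your lower bound is correct, and it argues differently from the paper. The paper's proof goes as follows:
\begin{itemize}
\item It deletes the whole clique $C=\{0,1,2\}$.
\item It notes that an induced path meets at most two of the components $H_i=\{u_i,v_i,w_i\}$ of $G-C$. Such a path has at most two vertices in $C$, and they are consecutive, so it has at most two segments outside $C$.
\item It then counts using the far triangles $\{u_i,v_i,w_i\}$. In $G_{\mathrm s}$ all three of them force both paths to meet every $H_i$.
\item In $G_{\mathrm f}$ the replacement \eqref{eq:outer-fan-replace} destroys the triangle on $H_0$, so a modified count is needed: $H_1$ and $H_2$ force both paths, and $H_0$ forces one of them to meet a third component.
\end{itemize}
You instead reuse the ear argument of Proposition~\ref{prop:outer-sun}. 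The root triangle $\{a_i,b_i,u_i\}$ pushes $u_i$ onto the path through the third central vertex $c$, and $\{a_i,b_i\}$ separates arm $i$ from $c$. Your separation check is right in both graphs, since the new edge $05$ still ends at $a_0$. So your argument treats $G_{\mathrm s}$ and $G_{\mathrm f}$ uniformly, with no case split. It also does not depend on how each arm is completed beyond its root triangle, whereas the paper's version genuinely uses the far triangles.

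The upper bound, however, is only announced. $P\le 3$ is an existence claim, and the proof is not complete until you write the covers down and check them. The paper uses
\begin{itemize}
\item $(4,0,9),(5,3,1,7),(8,6,2,10,11)$ for $G_{\mathrm s}$;
\item $(0,9),(5,4,3,1,7),(8,6,2,10,11)$ for $G_{\mathrm f}$.
\end{itemize}
Your threading pattern also succeeds, for instance
\begin{itemize}
\item $(5,4,0,9,11),(8,7,1,3),(6,2,10)$ for $G_{\mathrm s}$;
\item $(5,0,9,11),(8,7,1,3,4),(6,2,10)$ for $G_{\mathrm f}$.
\end{itemize}
There is one slip in your heuristic. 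In $G_{\mathrm f}$ the face $(0,4,5)$ makes $\{0,4,5\}$ a triangle, so the ordering $5,4,0$ becomes \emph{unusable} there; it is admissible only in $G_{\mathrm s}$. The ordering that becomes newly available is $3,4,5$. Consequently the start $w_0,v_0,a_0$ of your pattern must be changed for arm $0$, as in the covers above.
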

\begin{proof}
The following tuples give the vertices of the paths in their traversal order:
\[
 \begin{aligned}
 \cP_{\mathrm s}&=
 \{(4,0,9),\ (5,3,1,7),\ (8,6,2,10,11)\},\\
 \cP_{\mathrm f}&=
 \{(0,9),\ (5,4,3,1,7),\ (8,6,2,10,11)\}.
 \end{aligned}
\]
Directly from \eqref{eq:outer-snake-faces}--\eqref{eq:outer-fan-replace}, each tuple is an induced path, and each family partitions the vertex set. This proves the upper bounds.

Let $C=\{0,1,2\}$. The graph obtained by deleting $C$ has three connected components
\[
 H_i=\{u_i,v_i,w_i\},\qquad i=0,1,2.
\]
An induced path can meet at most two of these components: it contains at most two vertices of the clique $C$; if it contains two, they must occur consecutively, so there can be at most two segments outside $C$.

In $G_{\mathrm s}$ every $G_{\mathrm s}[H_i]$ is a triangle. If two induced paths covered the graph, each of them would have to meet all three $H_i$, since no induced path contains all three vertices of a triangle. This is impossible. In $G_{\mathrm f}$, the components $H_1,H_2$ remain triangles. Both paths of a purported two-path cover must meet $H_1$ and $H_2$, while at least one of them must also meet $H_0$. Again one path would meet three components. Thus both covering numbers are at least three.
\end{proof}

\begin{theorem}\label{thm:outer-dual-Z}
The zero-forcing numbers of these graphs are
\[
 Z(G_{\mathrm s})=4,\qquad Z(G_{\mathrm f})=3.
\]
In particular, the zero-forcing number is not determined by the weak dual tree of a maximal outerplanar graph.
\end{theorem}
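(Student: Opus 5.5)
The plan is to prove each equality by an upper bound from an explicit forcing chronology and a lower bound from forts, using Proposition~\ref{prop:fort-hit}. The strip analysis of Theorem~\ref{thm:strip} explains the difference between the two graphs. Each arm $\{u_i,v_i,w_i\}$ is a two-port triangular strip with root edge $\{a_i,b_i\}$. In $G_{\mathrm s}$ all three arms are non-fans, since $e_2=\{u_i,v_i\}$ drops $a_i$. In $G_{\mathrm f}$ the replacement \eqref{eq:outer-fan-replace} makes arm $0$ a fan at $0$. By \eqref{eq:nonfan-rule}, a non-fan arm is forced from its root only if \emph{both} root vertices act as sources into it. By \eqref{eq:fan-rule}, a fan is forced as soon as the single port $b_0=1$ is active.

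\emph{Upper bounds.} For $G_{\mathrm f}$ I would look for a $3$-set that lets the fan arm feed the triangle from the side of $b_0=1$ while the seeds resolve the other two arms. For $G_{\mathrm s}$ I would look for a $4$-set, for example two seeds in one arm, so that that arm is forced internally and then forces into the triangle, plus one seed in each remaining arm. Candidate sets are tested by running $\Phi$ from \eqref{eq:closure-step}. By Lemma~\ref{lem:closure} any legal order may be used, and once a set works I write out the chronology. Alternatively, Theorem~\ref{thm:matching} applies: exhibit an acyclic edge-unit matching, or equivalently a lower-triangular unit minor as in Proposition~\ref{prop:outer-sun}. I have not yet pinned down the sets. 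Two first attempts for $G_{\mathrm f}$, $\{7,8,11\}$ and $\{4,5\}$ extended by a third vertex, stall after a few forces. So a short systematic search, guided by the port rules above, is the first task.

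\emph{Lower bounds.} For $G_{\mathrm f}$ I would find three pairwise disjoint forts. A natural choice uses the tails $\{w_i,\ldots\}$ together with the obstructing relative forts of Theorem~\ref{thm:strip}, lifted to global forts. If that fails, I would give a pigeonhole case analysis over $2$-sets, modulo the symmetry interchanging arms $1$ and $2$, as in Proposition~\ref{prop:outer-sun}.

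For $G_{\mathrm s}$ I would exploit the order-three rotation $i\mapsto i+1$. Every zero-forcing set meets each arm, because each arm contains a fort inside it: for example $\{v_i,w_i\}$ and its variants, with the fort condition checked at $u_i$ and $a_i$. So a $3$-set has exactly one vertex in each arm and none in $C$. By Theorem~\ref{thm:strip}, no arm can then start forcing into $C$ unless the seed positions allow it. Up to rotation there are at most $3^3/3$ patterns of (seed position in each arm). For each pattern I would exhibit an avoiding fort, made of the non-fan tails that are seen once from $a_i$ but not from $b_i$, together with suitable vertices of $C$.

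The main obstacle is this case analysis for $Z(G_{\mathrm s})\ge4$. If it becomes unwieldy, the fallback is Theorem~\ref{thm:finite-algorithm}. I would decompose $G_{\mathrm s}$ as the triangle $C$ glued along the edges $\{a_i,b_i\}$ to three copies of one strip. I would compute the strip's cost table $c_H(P,\Pi)$ once, by the vertex and edge operations of Section~\ref{sec:state-operations}. Then \eqref{eq:closed-glue-cost} reduces $Z$ to a minimum over finitely many boundary seed sets $P\subseteq C$. Running the same computation with the fan table for arm $0$ gives $3$. Since $G_{\mathrm s}$ and $G_{\mathrm f}$ have the same weak dual, this difference yields the final assertion.
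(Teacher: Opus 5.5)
There are genuine gaps: neither lower bound works as planned, and you have not yet given seed sets for the upper bounds.

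\textbf{The lower bound for $G_{\mathrm s}$.} Your reduction rests on a false claim, because no arm of $G_{\mathrm s}$ contains a fort. Already $\{v_i,w_i\}$ fails at $a_i$, which is adjacent to $v_i$ but not to $w_i$. In general, suppose $F\subseteq\{u_i,v_i,w_i\}$ were a fort. Since $b_i$ has $u_i$ as its only arm neighbour, $u_i\notin F$. Then $a_i$ gives $v_i\notin F$, and finally $v_i$ sees $w_i$ alone. By Proposition~\ref{prop:fort-hit}, $V\setminus\{u_i,v_i,w_i\}$ is therefore zero forcing. So zero-forcing sets need not meet every arm, and restricting to one seed per arm and none in $C$ is unjustified. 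Your case analysis would have to cover all $3$-sets.

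\textbf{The lower bound for $G_{\mathrm f}$.} Your primary plan cannot succeed, because $G_{\mathrm f}$ has no three pairwise disjoint forts. Every fort meets at least two arms. A fort avoiding $\{3,4,5\}$ or $\{9,10,11\}$ omits $0$, a fort avoiding $\{6,7,8\}$ contains $0$ and $3$, and a fort avoiding $\{9,10,11\}$ contains $5$. The traces $\{3\},\{6\},\{7\},\{9\},\{10\}$ are impossible, so at most two disjoint forts meet a given arm. Three disjoint forts would therefore each avoid exactly one arm. The one avoiding $\{6,7,8\}$ contains $0$ and $3$. The other fort meeting $\{3,4,5\}$ then has trace $\{4\}$ or $\{5\}$, and either trace forces it to contain $0$ as well, a contradiction. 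Your fallback symmetry also does not exist. Exchanging arms $1$ and $2$ would exchange their attachment edges $\{1,2\}$ and $\{0,2\}$, hence swap $0$ and $1$, but $0$ is the only vertex of degree $6$.

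\textbf{The missing idea.} The paper uses a first-force degree count. If a $k$-set $S$ forces, its first source $u$ satisfies $N[u]\setminus\{v\}\subseteq S$, so $\deg u\le k$.
\begin{itemize}
\item In $G_{\mathrm s}$ with $k=3$, the source is some $v_i$ or $w_i$. Hence at least two seeds lie in $N[v_i]=\{a_i,u_i,v_i,w_i\}$. The forts $\{6,7,10,11\}$, $\{2,6,8,9,10\}$, $\{2,7,8,9,11\}$, rotated by $i$ arms, avoid this set and have empty common intersection. So the remaining seed cannot meet all three.
\item In $G_{\mathrm f}$ with $k=2$, the source is a degree-two vertex $w\in\{5,8,11\}$, so $S\subseteq N[w]$. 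The forts $\{6,7,10,11\}$, $\{0,3,4,9,10\}$, $\{3,5,7,8\}$ avoid $N[w]$ for $w=5,8,11$ respectively.
\end{itemize}

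\textbf{Upper bounds.} The set $\{0,4,6,8\}$ forces $G_{\mathrm s}$ via $8\to7$, $7\to1$, $6\to2$, $1\to3$, $0\to9$, $2\to10$, $3\to5$, $9\to11$. The set $\{0,6,8\}$ forces $G_{\mathrm f}$ via $8\to7$, $7\to1$, $6\to2$, $1\to3$, $3\to4$, $4\to5$, $0\to9$, $2\to10$, $9\to11$.

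Your profile-table fallback through Theorem~\ref{thm:finite-algorithm} is sound in principle. As written, however, it describes a computation rather than carrying one out.
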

\begin{proof}
For $G_{\mathrm s}$, take $S_{\mathrm s}=\{0,4,6,8\}$. The following forces are legal in the displayed order:
\begin{equation}\label{eq:outer-snake-forces}
 \begin{gathered}
 8\to7,\quad 7\to1,\quad6\to2,\quad1\to3,\\
 0\to9,\quad2\to10,\quad3\to5,\quad9\to11.
 \end{gathered}
\end{equation}
They provide an eight-by-eight triangular unit minor whose determinant, up to sign, is
\[
 t_{87}t_{71}t_{62}t_{13}t_{09}t_{2,10}t_{35}t_{9,11}.
\]
Hence $M_{S_{\mathrm s}}\cong R_{G_{\mathrm s}}^{\,4}$ and $Z(G_{\mathrm s})\le4$.

To obtain the opposite inequality, consider
\begin{equation}\label{eq:outer-snake-forts}
 \begin{aligned}
 F_1&=\{6,7,10,11\},\\
 F_2&=\{2,6,8,9,10\},\\
 F_3&=\{2,7,8,9,11\}.
 \end{aligned}
\end{equation}
These are forts. Their exterior boundaries and corresponding numbers of neighbors in the fort are
\[
 \begin{array}{c|c|c}
 &\partial F_j&(|N(x)\cap F_j|)_{x\in\partial F_j}\\\hline
 F_1&(1,2,8,9)&(2,2,2,2)\\
 F_2&(0,1,7,11)&(2,2,2,2)\\
 F_3&(0,1,6,10)&(2,2,3,3).
 \end{array}
\]
They have the further properties
\begin{equation}\label{eq:outer-snake-forts-avoid}
 F_j\cap\{0,3,4,5\}=\varnothing\quad(j=1,2,3),
 \qquad F_1\cap F_2\cap F_3=\varnothing.
\end{equation}
Suppose that a set $S$ of three initial vertices forces all of $G_{\mathrm s}$. Its first force must start at one of $v_i,w_i$: all the other vertices have degree at least four, whereas three initial black vertices cannot activate such a source. If the source is $w_i$ of degree two, at least two seeds belong to $\{a_i,u_i,v_i,w_i\}$; if the source is $v_i$ of degree three, all three seeds belong to that set. Rotate \eqref{eq:outer-snake-forts} by $i$ arms. The three rotated forts avoid $\{a_i,u_i,v_i,w_i\}$ and have empty triple intersection. The at most one remaining seed cannot meet all three forts. One fort is therefore disjoint from $S$, contradicting Proposition~\ref{prop:fort-hit}. This proves $Z(G_{\mathrm s})\ge4$.

For $G_{\mathrm f}$, set $S_{\mathrm f}=\{0,6,8\}$. The legal sequence
\begin{equation}\label{eq:outer-fan-forces}
 \begin{gathered}
 8\to7,\quad7\to1,\quad6\to2,\quad1\to3,\quad3\to4,\\
 4\to5,\quad0\to9,\quad2\to10,\quad9\to11
 \end{gathered}
\end{equation}
shows $Z(G_{\mathrm f})\le3$ and $M_{S_{\mathrm f}}\cong R_{G_{\mathrm f}}^{\,3}$.

The only degree-two vertices of $G_{\mathrm f}$ are $5,8,11$. For these vertices define, respectively,
\begin{equation}\label{eq:outer-fan-forts}
 E_5=\{6,7,10,11\},\quad
 E_8=\{0,3,4,9,10\},\quad
 E_{11}=\{3,5,7,8\}.
\end{equation}
They are forts, as their exterior-neighbor cardinalities are
\[
 \begin{array}{c|c|c}
 &\partial E_w&(|N(x)\cap E_w|)_{x\in\partial E_w}\\\hline
 E_5&(1,2,8,9)&(2,2,2,2)\\
 E_8&(1,2,5,11)&(2,3,2,2)\\
 E_{11}&(0,1,4,6)&(2,2,2,2).
 \end{array}
\]
Moreover,
\[
 E_w\cap N[w]=\varnothing\qquad(w=5,8,11).
\]
If two initial vertices could force the graph, the first source would have to have degree two; hence the initial set would be $\{w,x\}$ for some $w\in\{5,8,11\}$ and $x\in N(w)$. But this set misses $E_w$, which is impossible for a zero-forcing set. Therefore $Z(G_{\mathrm f})\ge3$.
\end{proof}

The preceding proof also determines the gap with the induced path covering number:
\[
 Z(G_{\mathrm s})-P(G_{\mathrm s})=1,
 \qquad
 Z(G_{\mathrm f})-P(G_{\mathrm f})=0.
\]
The same weak dual tree therefore supports both an acyclic realization of a minimum induced path cover and a configuration for which every such cover has a causal obstruction.

\subsection{The two-port states}
The change of the last triangle can be detected by the support profiles of the individual strips. Let the boundary be $B=\{a,b\}$, with both boundary vertices initially black, and let $I=\{u,v,w\}$ have no seeds. Take
\[
 H_{\mathrm{fan}}:(a,b,u),(a,u,v),(a,v,w),
 \qquad
 H_{\mathrm{serp}}:(a,b,u),(a,u,v),(u,v,w).
\]
Here the notation specifies the bounded triangular faces. For $F\subseteq I$, admissibility means
\[
 |N_H(x)\cap F|\ne1\qquad(x\in I\setminus F).
\]

\begin{proposition}\label{prop:outer-profiles}
The complete profiles of the two strips at $B$ are
\begin{equation}\label{eq:outer-fan-profile}
 \Pi_{\mathrm{fan}}=
 \{(\Ostate_0,\Ostate_0;0),\ (\Ostate_2,\Ostate_1;1)\},
\end{equation}
\begin{equation}\label{eq:outer-serp-profile}
 \Pi_{\mathrm{serp}}=
 \{(\Ostate_0,\Ostate_0;0),\ (\Ostate_1,\Ostate_0;1),
 (\Ostate_1,\Ostate_1;1),\ (\Ostate_2,\Ostate_1;1)\}.
\end{equation}
\end{proposition}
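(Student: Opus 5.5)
The profile is a finite computation: the boundary vertices $a,b$ are seeds, so no admissible support contains them. Since $I=\{u,v,w\}$ carries no seeds, the admissible supports are exactly the subsets $F\subseteq I$ satisfying the interior condition of Definition~\ref{def:profile}. I would therefore list the eight subsets of $I$, test each one, and then read off its boundary signature. First I record the edges from the faces.
\begin{itemize}
\item $H_{\mathrm{fan}}$ has edges $ab,au,bu,uv,av,vw,aw$. Its interior graph is the path $u-v-w$, and $N(a)\cap I=\{u,v,w\}$, $N(b)\cap I=\{u\}$.
\item $H_{\mathrm{serp}}$ has edges $ab,au,bu,av,uv,uw,vw$. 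Its interior graph is the triangle $uvw$, and $N(a)\cap I=\{u,v\}$, $N(b)\cap I=\{u\}$.
\end{itemize}

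For the fan, the admissibility test is taken on the path $u-v-w$. The empty set and $I$ pass trivially. Every singleton fails: the missing neighbour on the path sees exactly one support vertex. The pairs $\{u,v\}$ and $\{v,w\}$ fail because the omitted endpoint sees one support vertex. Only $\{u,w\}$ survives, since $v$ sees two. The surviving nonempty supports $\{u,w\}$ and $\{u,v,w\}$ both give $a$ two support neighbours and $b$ the single neighbour $u$. Hence each has signature $(\Ostate_2,\Ostate_1;1)$. Together with the empty support $(\Ostate_0,\Ostate_0;0)$, this gives \eqref{eq:outer-fan-profile}.

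For the serpentine strip, the interior is a triangle. Every singleton fails, since each other vertex sees exactly one support vertex. Every pair passes, since the missing vertex sees two, and $\varnothing$ and $I$ pass. Reading the counts at $a$ (neighbours $u,v$) and at $b$ (neighbour $u$) gives:
\begin{itemize}
\item $\{u,v\}\mapsto(\Ostate_2,\Ostate_1;1)$;
\item $\{u,w\}\mapsto(\Ostate_1,\Ostate_1;1)$;
\item $\{v,w\}\mapsto(\Ostate_1,\Ostate_0;1)$;
\item $I\mapsto(\Ostate_2,\Ostate_1;1)$;
\item $\varnothing\mapsto(\Ostate_0,\Ostate_0;0)$.
\end{itemize}
Collecting these signatures gives \eqref{eq:outer-serp-profile}.

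There is no real obstacle here. The only care needed is to extract the interior adjacency correctly from the face lists. In particular, for the fan one must note that $w$ is not adjacent to $u$ and that $a$ is adjacent to all three interior vertices. This single difference is what collapses the fan profile to two signatures.
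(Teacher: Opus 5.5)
Your proposal is correct and follows the paper's own argument. Both proofs read the interior graph off the face list (the path $u-v-w$ for the fan, $K_3$ for the serpent) and list the admissible nonempty supports: $\{u,w\}$ and $I$ for the fan, the three pairs and $I$ for the serpent. Both then compute the boundary states at $a,b$ and add the empty support.
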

\begin{proof}
For the fan, $H_{\mathrm{fan}}[I]$ is the path $u-v-w$. Its admissible nonempty supports are
\[
 F=\{u,w\},\quad \{u,v,w\}.
\]
Both give $|N(a)\cap F|\ge2$ and $|N(b)\cap F|=1$.

For the serpent, $H_{\mathrm{serp}}[I]=K_3$. The admissible nonempty supports are precisely the three two-element subsets of $I$ and $I$ itself. Their boundary states are
\[
 \begin{array}{c|cccc}
 F&\{v,w\}&\{u,w\}&\{u,v\}&I\\\hline
 (\sigma_a,\sigma_b)
 &(\Ostate_1,\Ostate_0)&(\Ostate_1,\Ostate_1)
 &(\Ostate_2,\Ostate_1)&(\Ostate_2,\Ostate_1).
 \end{array}
\]
Adding the empty support in both cases proves the assertions.
\end{proof}

In particular, if $b$ is an active boundary vertex and $a$ is inactive, the condition $\sigma_b\ne\Ostate_1$ removes every nonempty fan support but retains the serpent support $\{v,w\}$ with $\sigma_b=\Ostate_0$. This gives the relative forcing distinction of Theorem~\ref{thm:strip}.

\subsection{Exact finite values}
For the displayed vertex order, the complete-profile recursion of Appendix~\ref{app:profile-algorithm} and exhaustive fort transversals give the same values:
\[
\begin{array}{c|rrrrrr}
 \text{graph}&|V|&|E|&P(G)&Z(G)&\#\text{ forts}&\text{max. table entries}\\\hline
 F_8&8&13&2&2&70&38\\
 G_6&6&9&3&3&21&42\\
 G_{\mathrm s}&12&21&3&4&752&296\\
 G_{\mathrm f}&12&21&3&3&690&198
\end{array}
\]
The displayed forcing sequences, fort obstructions, and path covers certify
$Z(G)$ and $P(G)$. The fort counts come from exhaustive support enumeration.
The last column uses the schedule in
Appendix~\ref{app:profile-algorithm}. Vertices and earlier neighbors are
introduced in increasing order, and the earliest eligible active vertex
is forgotten first. The table size is measured after each vertex and its
edges to earlier vertices are introduced, before forgetting.

\appendix
\section{Computation by complete support profiles}\label{app:profile-algorithm}

We specify the input encoding and schedule used for the values in
Section~\ref{sec:outerplanar}. The update rules and their invariant are proved
in Theorem~\ref{thm:finite-algorithm}.

\subsection{Encoding and schedule}
The input is a finite simple graph with a fixed vertex order
$v_1,\ldots,v_n$. An edge is stored as an unordered pair, with no duplicate
pairs. On the ordered active boundary $B$, a signature is encoded as
$(\sigma,\eps)$, where
\[
 \Ostate_0\mapsto0,\quad \Ostate_1\mapsto1,\quad
 \Ostate_2\mapsto2,\quad \Istate\mapsto3,\qquad \eps\in\{0,1\}.
\]
The profile is an immutable set of these tuples. A table key consists of
the boundary seed set $P$ and the complete profile $\Pi$; its value is
\begin{equation}\label{eq:algo-table}
 T_B(P,\Pi)=
 \min\{|S\setminus B|:S\cap B=P,\ \Pi_H(S)=\Pi\},
\end{equation}
where $H$ is the graph built so far. Missing keys have cost $+\infty$.
Start with
\[
 B=P=\varnothing,\qquad \Pi_0=\{((),0)\},\qquad
 T_{\varnothing}(\varnothing,\Pi_0)=0.
\]
For each $v_i$, perform the following operations from
Subsection~\ref{sec:state-operations}.
\begin{enumerate}
\item Append $v_i$ to the active boundary and use the two introduce-vertex
updates, according as $v_i$ is chosen as a seed or not.
\item Introduce every edge $v_jv_i$ with $j<i$, in increasing order of $j$.
\item While an active vertex has no neighbor among
$\{v_{i+1},\ldots,v_n\}$, forget the earliest such vertex in the active order.
\end{enumerate}
After each operation, merge equal table keys by taking the least cost.
A minimizing predecessor and the seed choices can be stored to recover a
seed set. Each vertex is charged once, when it is forgotten. After the
last iteration the boundary is empty and the output is
\begin{equation}\label{eq:algo-output}
 Z(G)=T_{\varnothing}(\varnothing,\{((),0)\}).
\end{equation}
A forcing chronology for the recovered seed set is obtained by the legal
closure algorithm of Lemma~\ref{lem:closure}.

\begin{proposition}\label{prop:algorithm-correctness}
This schedule computes $Z(G)$ for every finite simple graph $G$.
\end{proposition}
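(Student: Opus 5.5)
The plan is to reduce Proposition~\ref{prop:algorithm-correctness} to Theorem~\ref{thm:finite-algorithm}. The schedule is a particular construction sequence built only from the four operation types of Subsection~\ref{sec:state-operations}: introduce a vertex, introduce an edge between boundary vertices, forget a boundary vertex, and join. Here joins do not occur at all. The table invariant proved there, that $T_B(P,\Pi)$ equals the minimum in~\eqref{eq:algo-table}, is preserved by each update. So it suffices to check three points, and then \eqref{eq:algo-output} is \eqref{eq:profile-Z}.

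First, every operation performed is legal in the sense required by that invariant. The vertex $v_i$ is introduced as an isolated boundary vertex. Every edge $v_jv_i$ with $j<i$ is then introduced while both endpoints are still active. I will check this by induction: $v_j$ has the later neighbour $v_i$, so the forget rule in step (c) cannot have removed it before iteration $i$. Conversely, a vertex is forgotten only when it has no neighbour among $v_{i+1},\ldots,v_n$. All of its edges to earlier vertices were introduced at its own iteration or at theirs, and its edges to later vertices do not exist. Hence all incident edges are present, which is the hypothesis used in the forget case of Theorem~\ref{thm:finite-algorithm}. The choice of the earliest eligible vertex only fixes an order and plays no role in correctness.

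Second, every edge of $G$ is introduced exactly once. An edge $\{v_j,v_i\}$ with $j<i$ is introduced at step (b) of iteration $i$ and at no other time, because edges are stored without duplicates. The constructed graph after the last iteration is therefore exactly $G$, and edge counts at boundary vertices are never double counted.

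Third, the boundary is empty at the end. After the last iteration no vertex has a later neighbour, so the while loop of step (c) forgets every remaining active vertex. Each vertex is forgotten exactly once, so each seed is charged exactly once, as the cost rule requires.

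With these points, Theorem~\ref{thm:finite-algorithm} gives $T_\varnothing(\varnothing,\Pi)=c_G(\varnothing,\Pi)$ for every $\Pi$. In particular $T_\varnothing(\varnothing,\{((),0)\})=Z(G)$, since the final profile is $\{((),0)\}$ exactly when $S$ meets every fort, by Proposition~\ref{prop:fort-hit}. The merging of equal keys by minimum cost is the same step already justified in the theorem.

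The only delicate point is the first one: an edge must never be introduced after one of its endpoints has been forgotten. This is where the specific forget criterion, "no neighbour among later vertices", is used. It is a short induction on $i$.
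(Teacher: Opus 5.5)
Your proposal is correct and follows essentially the same route as the paper. Both reduce the claim to the table invariant of Theorem~\ref{thm:finite-algorithm} by checking two things: each edge is introduced while both its endpoints are still active, and each vertex is forgotten only after all its incident edges are present. Both then read off $Z(G)$ from the entry $T_\varnothing(\varnothing,\{((),0)\})$ at the empty final boundary. Your version spells out the induction and the emptiness of the final boundary, which the paper states more briefly.
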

\begin{proof}
The schedule consists of operations in Theorem~\ref{thm:finite-algorithm}.
Every edge is introduced when its second endpoint appears, and a vertex is
forgotten only after all its incident edges have appeared. Hence that
theorem's invariant applies at every step. At the empty boundary, the
profile containing only the empty support has minimum cost $Z(G)$, which
is~\eqref{eq:algo-output}.
\end{proof}

For the examples, the vertex order is $0,1,\ldots,n-1$. The implementation
samples the table after step (b), before step (c). The largest sampled
boundary sizes for $F_8,G_6,G_{\mathrm s},G_{\mathrm f}$ are
$3,4,6,5$, respectively. Their table sizes are the statistics reported in
Section~\ref{sec:outerplanar}. The bound for an arbitrary boundary is
\eqref{eq:state-bound}. A tree decomposition of a maximal outerplanar graph
can instead keep bags of size at most three; its joins use
Theorem~\ref{thm:cost-gluing}.

\subsection{Induced path covers}
The induced path covering number has a subset recurrence. Write $\mathfrak P(G)$ for the nonempty vertex subsets $W\subseteq V$ such that $G[W]$ is an induced path, including singletons. Fix any total order on $V$ and put $m(M)=\min M$ for $M\ne\varnothing$. Define
\begin{equation}\label{eq:algo-pathcover}
 p(\varnothing)=0,\qquad
 p(M)=1+\min_{\substack{W\in\mathfrak P(G)\\m(M)\in W\subseteq M}}p(M\setminus W).
\end{equation}
Every path cover of $G[M]$ contains a unique path through $m(M)$.
Removing that path gives a cover of $M\setminus W$, and adjoining it to
a minimizing cover of $M\setminus W$ gives the reverse construction.
Induction on $|M|$ proves the recurrence and $p(V)=P(G)$.
To recover the paths, retain a minimizing $W$ at each step. For a
nonsingleton $G[W]$, start at an endpoint and successively follow its
unused neighbor.

\section{Two matrix calculations}\label{sec:verification}

\subsection{The cube coefficient matrix}
The transport product in Example~\ref{ex:cube} is $7/15$.
The vanishing of the coefficient kernel can also be read from a maximal minor.
After eliminating the diagonals, order the cube edges lexicographically and
put the two equations
\[
 \sum_{v\sim u}(v-u)t_{uv}=0,\qquad
 \sum_{v\sim u}(v-u)^2t_{uv}=0
\]
in that order for $u=0,\ldots,7$.
For the resulting $16\times12$ matrix $B$, take the zero-indexed row set
\[
 Q=(0,1,2,3,4,5,6,8,9,10,11,12).
\]
Integer elimination gives
\[
 \det B_{Q,*}=622080\ne0.
\]
Thus the edge-coefficient kernel is zero. The diagonal equations then
give $H^0(G;\sC_W)=0$.

\subsection{Coefficients over the dual numbers}
Let $R=\QQ[\eps]/(\eps^2)$. Write an $R$-matrix as
$A=A_0+\eps A_1$ with rational matrices $A_0,A_1$.
Expansion and $\eps^2=0$ give the operations in
Table~\ref{tab:verification-entrypoints}.

\begin{table}[ht]
\caption{Matrix operations over $\QQ[\eps]/(\eps^2)$.}
\label{tab:verification-entrypoints}
\centering\small
\begin{tabular}{@{}ll@{}}
\toprule
Operation & Coefficient matrices \\
\midrule
Addition & $(A_0,A_1)+(B_0,B_1)=(A_0+B_0,A_1+B_1)$ \\
Multiplication & $(A_0,A_1)(B_0,B_1)=(A_0B_0,A_0B_1+A_1B_0)$ \\
Inverse, $B_0$ invertible & $(B_0,B_1)^{-1}=(B_0^{-1},-B_0^{-1}B_1B_0^{-1})$ \\
\bottomrule
\end{tabular}
\end{table}

For example,
\[
\begin{aligned}
 (B_0+\eps B_1)
 (B_0^{-1}-\eps B_0^{-1}B_1B_0^{-1})
 &=I+\eps(B_1B_0^{-1}-B_1B_0^{-1})\\
 &=I.
\end{aligned}
\]
If $d=d_0+\eps d_1$ and $h=h_0+\eps h_1$, the chain-homotopy identity
$dh+hd=I-\iota\pi$ is equivalent to
\[
 d_0h_0+h_0d_0=I-\iota_0\pi_0,
\]
\[
 d_0h_1+d_1h_0+h_0d_1+h_1d_0
 =-\iota_0\pi_1-\iota_1\pi_0.
\]
These are the constant and first-order parts of the contractions in
Theorems~\ref{thm:contraction} and~\ref{thm:port-gluing}.

\section*{Acknowledgments}
We began studying zero forcing in 2021, during our second year as
undergraduates, through a combinatorial argument for maximal biconnected
outerplanar graphs. We later returned to that argument, recast it in terms
of sheaf cohomology, and pursued its extension to general finite graphs.
This research grew out of our own mathematical curiosity and our interest
in exploring these questions.

We used AI tools to assist with mathematical derivations, computations,
and preliminary drafts. We are now checking the arguments and rewriting
both manuscripts ourselves. The versions submitted for publication will
be the manuscripts rewritten by the authors.

\end{document}